\documentclass[12pt,english]{article}
\usepackage[T1]{fontenc}
\usepackage[latin1]{inputenc}
\usepackage{verbatim}
\usepackage{amsmath}
\usepackage{amssymb}
\usepackage{esint}

\makeatletter
\usepackage{amsfonts}

\usepackage{amscd}

\newtheorem{theorem}{Theorem}[section]\newtheorem{lemma}[theorem]{Lemma}\newtheorem{proposition}[theorem]{Proposition}\newtheorem{example}[theorem]{Example}\newtheorem{definition}[theorem]{Definition}\newtheorem{corollary}[theorem]{Corollary}\newtheorem{remark}[theorem]{Remark}\newtheorem{problem}[theorem]{Problem}

\newenvironment{proof}{\bf Proof. \rm}{$\Box$}

\newcommand{\be}{\begin{equation}}\newcommand{\ee}{\end{equation}}

\newcommand{\norm}[1]{\Vert #1
\Vert}\newcommand{\cA}{\mathcal{A}}\newcommand{\cB}{\mathcal{B}}\newcommand{\cE}{\mathcal{E}}\newcommand{\cM}{\mathcal{M}}\newcommand{\cF}{\mathcal{F}}\newcommand{\cL}{
\mathcal{L}}

\newcounter{parno}[paragraph]
\renewcommand{\theparno}{\thesection.\arabic{parno}}\newcommand{\p}{\refstepcounter{parno}\noindent{\textbf{\theparno}}\ }

\newcounter{subparno}[parno]
\renewcommand{\thesubparno}{\thesection.\arabic{parno}.\arabic{subparno}}\newcommand{\subp}{\refstepcounter{subparno}\noindent{\textbf{\thesubparno}}\ }

\makeatother

\usepackage{babel}

\makeatother

\usepackage{babel}

\makeatother

\usepackage{babel}

\begin{document}

\title{Representations of the Hardy Algebra: Absolute Continuity, Intertwiners,
and Superharmonic Operators}

\author{Paul S. Muhly%
\thanks{Supported in part by a grant from the U.S.-Israel Binational Science
Foundation.%
}\\
 Department of Mathematics\\
 University of Iowa\\
 Iowa City, IA 52242\\
 e-mail: muhly@math.uiowa.edu \and Baruch Solel%
\thanks{Supported in part by the U.S.-Israel Binational Science Foundation
and by the Lowengart Research Fund.%
}\\
 Department of Mathematics\\
 Technion\\
 32000 Haifa, Israel\\
 e-mail: mabaruch@techunix.technion.ac.il}
\maketitle
\begin{abstract}
Suppose $\mathcal{T}_{+}(E)$ is the tensor algebra of a $W^{*}$-correspondence
$E$ and $H^{\infty}(E)$ is the associated Hardy algebra. We investigate
the problem of extending completely contractive representations of
$\mathcal{T}_{+}(E)$ on a Hilbert space to ultra-weakly continuous
completely contractive representations of $H^{\infty}(E)$ on the
same Hilbert space. Our work extends the classical Sz.-Nagy - Foia\c{s}
functional calculus and more recent work by Davidson, Li and Pitts
on the representation theory of Popescu's noncommutative disc algebra.
\end{abstract}

\section{Introduction\label{sec:Introduction}}

Suppose $\rho$ is a contractive representation of the disc algebra
$A(\mathbb{D})$ on a Hilbert space $H$, i.e., suppose $\Vert\rho(f)\Vert_{B(H)}\leq\Vert f\Vert_{\infty}$,
where $\Vert\cdot\Vert_{B(H)}$ is the operator norm on the space
of bounded operators on $H$, $B(H)$, and where $\Vert\cdot\Vert_{\infty}$
is the sup norm on $A(\mathbb{D})$, taken over $\overline{\mathbb{D}}$.
(By the maximum modulus principle, the supremum needs only to be evaluated
over the circle, $\mathbb{T}$.) Then $\rho$ is completely determined
by its value at the identity function $z$ in $A(\mathbb{D})$, $T:=\rho(z)$.
Of course $T$ is a contraction operator in $B(H)$. On the other
hand, given a contraction operator $T$ in $B(H)$, then von Neumann's
inequality guarantees that there is a unique contractive representation
$\rho$ of $A(\mathbb{D})$ in $B(H)$ such that $T=\rho(z)$. A natural
question arises: When does $\rho$ extend to a representation of $H^{\infty}(\mathbb{T})$
in $B(H)$ that is continuous with respect to the weak-$*$ topology
on $H^{\infty}(\mathbb{T})$ and the weak-$*$ topology on $B(H)$?
(We follow the convention of calling the weak-$*$ topology on $B(H)$
the ultra-weak topology.) Thanks to the Sz.-Nagy - Foia\c{s} functional
calculus \cite{SzNF70}, a neat succinct answer may be given in terms
of $T$, viz., $\rho$ admits such an extension if and only if the
unitary part of $T$ is absolutely continuous. In a bit more detail,
recall that an arbitrary contraction operator $T$ on a Hilbert space
$H$ decomposes uniquely into the direct sum $T=T_{cnu}\oplus U$,
where $T_{cnu}$ is completely non unitary, meaning that there are
no invariant subspaces for $T_{cnu}$ on which $T_{cnu}$ acts as
a unitary operator, and where $U$ is unitary. Thus the answer to
the question is: $\rho$ extends if and only if the spectral measure
for $U$ is absolutely continuous with respect to Lebesgue measure
on $\mathbb{T}$. The assertion that extension is possible when $U$
is absolutely continuous is \cite[Theorem III.2.1]{SzNF70}. The assertion
that if the extension is possible, then $U$ is absolutely continuous
is essentially \cite[Theorem III.2.3]{SzNF70}. Note in particular
that since the eigenspace of any eigenvalue for $T$ of modulus one
must reduce $T$, it follows that when $H$ is finite dimensional
$\rho$ extends to $H^{\infty}(\mathbb{T})$ as a weak-$*$ continuous
representation if and only if the spectral radius of $T$ is strictly
less than one. And when $\dim(H)=1$, we recover the well-known fact
that a character of $A(\mathbb{D})$ extends to a weak-$*$ continuous
character of $H^{\infty}(\mathbb{T})$ if and only if it comes from
a point in $\mathbb{D}$.

We were drawn to thinking about this perspective on the Sz.-Nagy -
Foia\c{s} functional calculus by recent work we have done in the theory
of tensor and Hardy algebras. Suppose $M$ is a $W^{*}$-algebra and
that $E$ is a $W^{*}$-correspondence over $M$ in the sense of \cite{MSHardy}.
Then, in a fashion that will be discussed more thoroughly in the next
section, one can form both the tensor algebra of $E$, $\mathcal{T}_{+}(E)$,
and its ultra-weak closure, the Hardy algebra of $E$, $H^{\infty}(E)$.
If $M=\mathbb{C}=E$, then $\mathcal{T}_{+}(E)=A(\mathbb{D})$ and
$H^{\infty}(E)=H^{\infty}(\mathbb{T})$. Every completely contractive
representation $\rho:\mathcal{T}_{+}(E)\rightarrow B(H)$ of $\mathcal{T}_{+}(E)$
on a Hilbert space $H$ with the property that $\rho$ restricted
to (the copy of) $M$ in $\mathcal{T}_{+}(E)$ is a normal representation
of $M$ on $H$, that we denote by $\sigma$, is determined uniquely
by a contraction operator $\widetilde{T}:E\otimes_{\sigma}H\to H$
satisfying the intertwining equation \begin{equation}
\widetilde{T}\sigma^{E}\circ\varphi(\cdot)=\sigma(\cdot)\widetilde{T},\label{eq:intertwining}\end{equation}
where $\varphi$ gives the left action of $M$ on $E$ and where $\sigma^{E}$
is the induced representation of $\mathcal{L}(E)$ on $E\otimes_{\sigma}H$
defined by the formula $\sigma^{E}(a)=a\otimes I_{H}$, $a\in\mathcal{L}(E)$.
And conversely, once $\sigma$ is fixed, each contraction $\widetilde{T}$
satisfying this equation determines a completely contractive representation
of $\mathcal{T}_{+}(E)$. We write $\rho=T\times\sigma$. The question
we wanted to address, and which we will discuss here, is:
\begin{quote}
\emph{What conditions must $\widetilde{T}$ satisfy so that $T\times\sigma$
extends from $\mathcal{T}_{+}(E)$ to an ultra-weakly continuous representation
of $H^{\infty}(E)$?}
\end{quote}
It is easy to see that if $\Vert\widetilde{T}\Vert<1$, then $T\times\sigma$
extends from $\mathcal{T}_{+}(E)$ to an ultra-weakly continuous representation
of $H^{\infty}(E)$ \cite[Corollary 2.14]{MSHardy}. Thus, the question
is really about operators $\widetilde{T}$ that have norm equal to
one. With quite a bit more work we showed in \cite[Theorem 7.3]{MSHardy}
that if $\widetilde{T}$ is \emph{completely non-coisometric}, meaning
that there is no subspace of $H$ that is invariant under $T\times\sigma(\mathcal{T}_{+}(E))^{*}$
to which $\widetilde{T}^{*}$ restricts yielding an isometry mapping
to $E\otimes_{\sigma}H$, then $T\times\sigma$ extends to an ultra-weakly
continuous representation of $H^{\infty}(E)$. Thus it looks like
we are well on the way to generalizing the theorems of Sz.-Nagy and
Foia\c{s} that we cited above. ``All we need is a good generalization
of the notion of a completely non-unitary contraction and a good generalization
of an absolutely continuous unitary operator.'' It turns out, however,
that things are not this simple. While a natural generalization of
a unitary operator is a representation $T\times\sigma$, where $\widetilde{T}$
is a Hilbert space isomorphism, it is not quite so clear what it means
for $\widetilde{T}$ to be absolutely continuous. There is no evident
notion of spectral measure for $\widetilde{T}$ in this case. Further,
in the Sz.-Nagy - Foia\c{s} theory, it is important to know about
the minimal unitary extension of the minimal isometric dilation of
the contraction $T$, i.e., it is important to know about the minimal
unitary dilation of $T$. However, it turns out in the theory we are
describing, while there is always a unique (up to unitary equivalence)
minimal isometric dilation of $\widetilde{T}$ there may be many ``unitary''
extensions of the isometric dilation. Straightforward definitions
and results do not appear to exist.

We are not the first to ponder our basic question. We have received
a lot of inspiration from two important papers: \cite{DKP} and \cite{DLP}.
In \cite{DKP}, Davidson, Katsoulis and Pitts weren't directly involved
with this question, but they clearly were influenced by it. They considered
the situation where $M=\mathbb{C}$ and $E=\mathbb{C}^{d}$ for a
suitable $d$. (When $d=\infty$, we view $\mathbb{C}^{d}$ as $\ell^{2}(\mathbb{N})$.)
The tensor algebra, $\mathcal{T}_{+}(\mathbb{C}^{d})$, in this case
is the norm-closed algebra generated by the creation operators on
the full Fock space $\mathcal{F}(\mathbb{C}^{d})$. We fix an orthonormal
basis $\{e_{i}\}_{i=1}^{d}$ for $\mathbb{C}^{d}$ and let $L_{i}$
be the creation operator of tensoring with $e_{i}$. Thus $L_{i}\eta=e_{i}\otimes\eta$
for all $\eta\in\mathcal{F}(\mathbb{C}^{d})$. Then $\mathcal{T}_{+}(\mathbb{C}^{d})$
is generated by the $L_{i}$ and coincides with Popescu's \emph{noncommutative
disc algebra}, denoted $\mathfrak{A}_{d}$. The weakly closed algebra
generated by the $L_{i}$ is denoted $\mathcal{L}_{d}$ and is called
the \emph{noncommutative analytic Toeplitz algebra}. It turns out
that $\mathcal{L}_{d}$ is also the ultra-weak closure of $\mathcal{T}_{+}(\mathbb{C}^{d})$,
and so $\mathcal{L}_{d}$ coincides with $H^{\infty}(\mathbb{C}^{d})$.
In their setting $\mathcal{T}_{+}(\mathbb{C}^{d})$ and $H^{\infty}(\mathbb{C}^{d})$
are concretely defined operator algebras since $\mathcal{F}(\mathbb{C}^{d})$
is a Hilbert space. We single out this special representation of $\mathcal{T}_{+}(\mathbb{C}^{d})$
and $H^{\infty}(\mathbb{C}^{d})$ with the notation $\lambda$ - for
left regular representation, which it is, if $\mathcal{F}(\mathbb{C}^{d})$
is identified with the $\ell^{2}$-space of the free semigroup on
$d$ generators through a choice of basis. If $\rho=T\times\sigma$
is completely contractive representation of $\mathcal{T}_{+}(\mathbb{C}^{d})$
on $H$, then $\sigma$ must be a multiple of the identity representation
of $\mathbb{C}$, namely the Hilbert space dimension of $H$. Also,
$\widetilde{T}$ is simply the row $d$-tuple of operators, $(T_{1},T_{2},\cdots,T_{d}),$
where $T_{i}=\rho(L_{i})$. As an operator from $\mathbb{C}^{d}\otimes H$
to $H$, $\widetilde{T}$ has norm at most $1$; that is $(T_{1},T_{2},\cdots,T_{d})$
a \emph{row contraction}. The equation \eqref{eq:intertwining} is
automatic in this case. Davidson, Katsoulis and Pitts assume in their
work that $(T_{1},T_{2},\cdots,T_{d})$ is a row isometry, i.e., that
$\widetilde{T}$ is an isometry. They are interested in how $(T_{1},T_{2},\cdots,T_{d})$
relates to $(L_{1},L_{2},\cdots,L_{d})$. For this purpose they let
$\mathcal{S}$ be the weakly closed subalgebra of $B(H)$ that is
generated by $\{T_{1},T_{2},\cdots,T_{d}\}$ and the identity, and
they let $\mathcal{S}_{0}$ be the weakly closed \emph{ideal} in $\mathcal{S}$
generated by $\{T_{1},T_{2},\cdots,T_{d}\}$. Their principal result
is \cite[Theorem 2.6]{DKP}, which they call \emph{The Structure Theorem}.
It asserts that if $N$ denotes the von Neumann algebra generated
by $\mathcal{S}$, and if $p$ is the largest projection in $N$ such
that $p\mathcal{S}p$ is self-adjoint, then
\begin{enumerate}
\item $Np=\bigcap_{k\geq1}\mathcal{S}_{0}^{k}$;
\item $\mathcal{S}p=Np$, so in particular, $p\mathcal{S}p=pNp$;
\item $p^{\perp}H$ is invariant under $\mathcal{S}$ and $\mathcal{S}=Np+p^{\perp}\mathcal{S}p^{\perp}$;
and
\item assuming $p\neq I$, $p^{\perp}\mathcal{S}p^{\perp}$ is completely
isometrically isomorphic and ultra-weakly homeomorphic to $\mathcal{L}_{d}$.
\end{enumerate}
Since $\mathcal{L}_{d}=H^{\infty}(\mathbb{C}^{d})$, it follows that
if $p=0$, then the representation of $\mathcal{T}_{+}(\mathbb{C}^{d})$
determined by the tuple $(T_{1},T_{2},\cdots,T_{d})$ extends to $H^{\infty}(\mathbb{C}^{d})$
as an weak-$*$ continuous representation of $H^{\infty}(\mathbb{C}^{d})$.
If $p\neq0$, then the representation may still extend to $H^{\infty}(\mathbb{C}^{d})$,
but the matter becomes more subtle.

As the authors of \cite{DKP} observe, this decomposition, is suggestive
of certain aspects of absolute continuity in the setting of a single
isometry. This point is taken up in \cite{DLP}, where Davidson, Li
and Pitts say that a vector $x$ in the Hilbert space of $(T_{1},T_{2},\cdots,T_{d})$,
$H$, is \emph{absolutely continuous} if the vector functional on
$\mathcal{T}_{+}(\mathbb{C}^{d})$ it determines can be represented
by a vector functional on $\mathcal{L}_{d}$, i.e., if there are vectors
$\xi,\eta\in\mathcal{F}(\mathbb{C}^{d})$ such that $(\rho(a)x,x)=(\lambda(a)\xi,\eta)$
for all $a\in\mathcal{T}_{+}(\mathbb{C}^{d})$. The collection of
all such vectors $x$ is denoted $\mathcal{V}_{ac}(\rho)$. This set
is, in fact, a closed subspace of $H$, and the representation $\rho$
extends to $H^{\infty}(\mathbb{C}^{d})$ as an ultraweakly continuous
representation if and only if $\mathcal{V}_{ac}(\rho)=H$. One of
their main results is \cite[Theorem 3.4]{DLP}, which implies that
$\mathcal{V}_{ac}(\rho)=H$ if and only if the structure projection
for the representation determined by $(L_{1}\oplus T_{1},L_{2}\oplus T_{2},\cdots,L_{d}\oplus T_{d})$
acting on $\mathcal{F}(\mathbb{C}^{d})\oplus H$ is zero.

A central role is played in \cite{DLP} by the operators that intertwine
$\lambda$ and $\rho$, i.e., operators $X:\mathcal{F}(\mathbb{C}^{d})\to H$
that satisfy the equation $\rho(a)X=X\lambda(a)$, for all $a\in\mathcal{T}_{+}(\mathbb{C}^{d})$.
Theorem 2.7 of \cite{DLP} shows that $\mathcal{V}_{ac}(\rho)$ is
the union of the ranges of the $X$'s that intertwine $\lambda$ and
$\rho$. In an aside \cite[Remark 2.12]{DLP}, the authors note that
Popescu \cite[Theorem 3.8]{gP03} has shown that if $X$ is an intertwiner
then $XX^{*}$ is a nonnegative operator on $H$ that satisfies the
two conditions:\begin{equation}
\Phi(XX^{*})\leq XX^{*}\label{eq:Popescu_Intertwiner1}\end{equation}
and\begin{equation}
\Phi^{k}(XX^{*})\to0\label{eq:Popescu_Intertwiner2}\end{equation}
in the strong operator topology, where $\Phi(Q):=\widetilde{T}Q\widetilde{T}^{*}$,
and conversely every nonnegative operator $Q$ on $H$ that satisfies
\eqref{eq:Popescu_Intertwiner1} and \eqref{eq:Popescu_Intertwiner2}
can be factored as $Q=XX^{*}$, where $X$ is an intertwiner.

After contemplating this connection between \cite{DLP} and \cite{gP03},
we realized that there is a very tight connection among all the various
constructs we have discussed and that they all can be generalized
to our setting of tensor and Hardy algebras associated to a $W^{*}$-correspondence.
This is what we do here. In the next section, we draw together a number
of facts that we will use in the sequel. Most are known from the literature.
In  Section \eqref{sec:Absolute-Continuity} we develop the notion
of absolute continuity first for isometric representations of $\mathcal{T}_{+}(E)$,
i.e. for representations $\rho=T\times\sigma$ where $\widetilde{T}$
is an isometry. In Section \ref{sec:CCReps and CPMaps}, we study
absolute continuity in the context of an arbitrary completely contractive
representation $\rho$. Here we show that $\rho$ extends from $\mathcal{T}_{+}(E)$
to an ultraweakly continuous completely contractive representation
of $H^{\infty}(E)$ if and only if $\rho$ is absolutely continuous,
i.e., if and only if $\mathcal{V}_{ac}(\rho)=H$. It turns out that
the absolutely continuous subspace $\mathcal{V}_{ac}(\rho)$ is really
an artifact of the completely positive map attached to $\widetilde{T}$.
This fact, coupled to our work in \cite{MS02}, which shows that every
completely positive map on a von Neumann algebra gives rise to a $W^{*}$-correspondence
and a representation of it, enables us to formulate a notion of absolute
continuity for an arbitrary completely positive map. This formulation
is made in Section \ref{sec:Further-Corollaries}, where other corollaries
of Sections \ref{sec:Absolute-Continuity} and \ref{sec:CCReps and CPMaps}
are drawn. Section \ref{sec:Induced-Representations} is something
of an interlude, where we deal with an issue that does not arise in
\cite{DKP,DLP}. The analogue of the representation $\lambda$ in
our theory is a representation of $\mathcal{T}_{+}(E)$ that is induced
from a representation of $M$ in the sense of Rieffel \cite{mR74a,mR74b}.
Owing to the possibility that the center of $M$ is non-trivial, an
induced representation need not be faithful. This fact creates a number
of technical problems for us with which we deal in Section \ref{sec:Induced-Representations}.
The final section, Section \ref{sec:The-Structure-Theorem}, is devoted
to our generalization of the Structure Theorem of Davidson, Katsoulis
and Pitts \cite[Theorem 2.6]{DKP} and to its connection with the
notion of absolute continuity.

\section{Background and Preliminaries\label{sec:Preliminaries}}

It will be helpful to have at our disposal a number of facts developed
in the literature. Our presentation is only a survey, and a little
discontinuous. Certainly, it is not comprehensive, but we have given
labels to paragraphs for easy reference in the body of the paper.

\p \label{par:Wstar-algebras} Throughout this paper, $M$ will denote
a fixed $W^{*}$-algebra. We do not preclude the possibility that
$M$ may be finite dimensional. Indeed, as we have indicated, the
situation when $M=\mathbb{C}$ can be very interesting. However, we
want to think of $M$ abstractly, as a $C^{*}$-algebra that is a
dual space, without regard to any Hilbert space on which $M$ might
be represented. We will reserve the term {}``von Neumann algebra''
for a concretely represented $W^{*}$-algebra. The weak-$*$ topology
on a $W^{*}$-algebra or on any of its weak-$*$ closed subspaces
will be referred to as the \emph{ultra-weak} topology. If $S$ is
a subset of a $W^{*}$-algebra, we shall write $\overline{S}^{u-w}$
for its ultra-weak closure.

To eliminate unnecessary technicalities we shall always assume $M$
is $\sigma$-finite in the sense that every family of mutually orthogonal
projections in $M$ is countable. Alternatively, to say $M$ is $\sigma$-finite
is to say that $M$ has a faithful normal representation on a separable
Hilbert space. So, unless explicitly indicated otherwise, every Hilbert
space we consider will be assumed to be separable.


\p \label{par:WstarCorsOverWstarAlgs} In addition, $E$ will denote
a $W^{*}$-correspondence over $M$ in the sense of \cite{MSHardy}.
This means first that $E$ is a (right) Hilbert $C^{*}$-module over
$M$ that is self-dual in the sense that each (right) module map $\Phi$
from $E$ into $M$ is induced by a vector in $E$, i.e., there is
an $\eta\in E$ such that $\Phi(\xi)=\langle\eta,\xi\rangle$, for
all $\xi\in E$. Our basic reference for Hilbert $C^{*}$- and $W^{*}$-
modules is \cite{MT05}. It is shown in \cite[Proposition 3.3.4]{MT05}
that when $E$ is a self-dual Hilbert module over a $W^{*}$-algebra
$M$, then $E$ must be a dual space. In fact, it may be viewed as
an ultra-weakly closed subspace of a $W^{*}$-algebra. Further, every
continuous module map on $E$ is adjointable \cite[Corollary 3.3.2]{MT05}
and the algebra $\mathcal{L}(E)$ consisting of all continuous module
maps on $E$ is a $W^{*}$-algebra \cite[Proposition 3.3.4]{MT05}.
To say that $E$ is a $W^{*}$-correspondence over $M$ is to say,
then, that $E$ is a self-dual Hilbert module over $M$ and that there
is an ultra-weakly continuous $*$-representation $\varphi:M\to\mathcal{L}(E)$
such that $E$ becomes a bimodule over $M$ where the left action
of $M$ is determined by $\varphi$, $a\cdot\xi=\varphi(a)\xi$. We
shall assume that $E$ is \emph{essential} or \emph{non-degenerate}
as a left $M$-module. This is the same as assuming that $\varphi$
is unital. We also shall assume that our $W^{*}$-correspondences
are countably generated as self-dual Hilbert modules over their coefficient
algebras. This is equivalent to assuming that $\mathcal{L}(E)$ is
$\sigma$-finite.


\p \label{par:Intertwining} In this paper, we will be studying objects
of various kinds, algebras, $*$-algebras, modules, etc. and we will
be considering various types of linear maps of such objects to spaces
of bounded operators on Hilbert spaces. We will be especially interested
in spaces of intertwining operators between such maps. For this purpose,
we introduce the following notation. Suppose $\mathcal{X}$ is an
object of one of the various kinds we are considering in this paper,
e.g., an algebra, a bimodule, etc., and suppose that for $i=1,2$,
$\rho_{i}:\mathcal{X}\to B(H_{i})$ is a map of $\mathcal{X}$ to
bounded linear operators on the Hilbert space $H_{i}$. Then we write
$\mathcal{I}(\rho_{1},\rho_{2})$ for the space of all bounded linear
operators $X:H_{1}\to H_{2}$ such that $X\rho_{1}(\xi)=\rho_{2}(\xi)X$
for all $\xi\in\mathcal{X}$. That is, $\mathcal{I}(\rho_{1},\rho_{2})$
is the \emph{intertwining space} or the space of all \emph{intertwiners}
of $\rho_{1}$ and $\rho_{2}$. If $\mathcal{X}$ is a $C^{*}$-algebra
and $\rho_{1}$ and $\rho_{2}$ are $C^{*}$-representations, then
$\mathcal{I}(\rho_{1},\rho_{2})$ has the structure of a Hilbert $W^{*}$-module
over the commutant of $\rho_{1}(\mathcal{X})$, $\rho_{1}(\mathcal{X})'$.
The $\rho_{1}(\mathcal{X})'$-valued inner product on $\mathcal{I}(\rho_{1},\rho_{2})$
is given by the formula $\langle X,Y\rangle:=X^{*}Y$ and the right
action of $\rho(E)'$ on $\mathcal{I}(\rho_{1},\rho_{2})$ is given
by the formula $X\cdot a:=Xa$, $X\in\mathcal{I}(\rho_{1},\rho_{2})$,
$a\in\rho(\mathcal{X})'$. Of course, the product, $Xa$, is just
the composition of operators. It is quite clear that $\mathcal{I}(\rho_{1},\rho_{2})$
with these operations is a Hilbert $C^{*}$-module over $\rho(\mathcal{X})'$,
but what makes it a self-dual Hilbert module is the fact that it is
ultra-weakly closed in $B(H_{1},H_{2})$. See \cite[Theorem 3.5.1]{MT05}.

If $\mathcal{X}$ is a $C^{*}$-algebra and $\rho_{1}$ and $\rho_{2}$
are two $C^{*}$-representations, then it is clear that if $\mathcal{I}(\rho_{1},\rho_{2})^{*}$
denotes the space $\{X^{*}\mid X\in\mathcal{I}(\rho_{1},\rho_{2})\}$,
then $\mathcal{I}(\rho_{1},\rho_{2})^{*}=\mathcal{I}(\rho_{2},\rho_{1})$.
Consequently, either $\mathcal{I}(\rho_{1},\rho_{2})$ and $\mathcal{I}(\rho_{2},\rho_{1})$
are both nonzero or both are zero (in the latter case, $\rho_{1}$
and $\rho_{2}$ are called \emph{disjoint}.) However, there are situations
that we shall encounter (see Remark \ref{rem:Nonselfadjoint_vageries}.)
where $\mathcal{X}$ is an operator algebra and the $\rho_{i}$ are
completely contractive representations of $\mathcal{X}$ with the
property $\mathcal{I}(\rho_{1},\rho_{2})$ is nonzero, but $\mathcal{I}(\rho_{2},\rho_{1})=\{0\}$.
Thus, in the non-self-adjoint setting, one has to take extra care
when manipulating intertwining spaces.


\p \label{par:WstarCorsBetwWstarAlgs} The concept of a $W^{*}$-correspondence
over a $W^{*}$-algebra is really a special case of the very useful
more general notion of a $W^{*}$-correspondence from one $W^{*}$-algebra
to another. Specifically, if $M_{1}$ and $M_{2}$ are $W^{*}$-algebras,
a \emph{$W^{*}$-correspondence from $M_{1}$ to $M_{2}$ }is a self-dual
Hilbert module $F$ over $M_{2}$ endowed with a normal representation
$\varphi:M_{1}\to\mathcal{L}(F)$. We always assume that $\varphi$
is unital. In particular, every normal representation of $M$ on a
Hilbert space $H$ makes $H$ a $W^{*}$-correspondence from $M$
to $\mathbb{C}$, and conversely, a $W^{*}$-correspondence from $M$
to $\mathbb{C}$ is just a normal representation of $M$ on a Hilbert
space. Also, observe that if $\rho_{1}$ and $\rho_{2}$ are two representations
of a $C^{*}$-algebra $A$ on Hilbert spaces $H_{1}$ and $H_{2}$,
respectively, then $\mathcal{I}(\rho_{1},\rho_{2})$ is a $W^{*}$-correspondence
from $\rho_{2}(A)'$ to $\rho_{1}(A)'$. We know already that $\mathcal{I}(\rho_{1},\rho_{2})$
is a self-dual Hilbert $C^{*}$-module over $\rho_{1}(A)'$. The left
action of $\rho_{2}(A)'$ on $\mathcal{I}(\rho_{1},\rho_{2})$ is
given by composition of maps: $b\cdot X:=bX$ for all $b\in\rho_{2}(A)'$
and all $X\in\mathcal{I}(\rho_{1},\rho_{2})$.

Correspondences can be {}``composed'' through the process of balanced
tensor product, but a little care must be taken. That is, if $F$
is a $W^{*}$-correspondence from $M_{1}$ to $M_{2}$ and if $G$
is a $W^{*}$-correspondence from $M_{2}$ to $M_{3}$ then one can
form their balanced $C^{*}$-tensor product, balanced over $M_{2}$,
but in general it won't be a $W^{*}$-correspondence from $M_{1}$
to $M_{3}$. So for us, the tensor product $F\otimes_{M_{2}}G$, balanced
over $M_{2}$, will be the unique self-dual completion of the balanced
$C^{*}$-tensor product of $F$ and $G$. (See \cite[Theorem 3.2.1]{MT05}.)
It will be a $W^{*}$-correspondence from $M_{1}$ to $M_{3}$. As
an example, one can easily check that $\mathcal{I}(\rho_{2},\rho_{3})\otimes_{\rho_{2}(A)'}\mathcal{I}(\rho_{1},\rho_{2})$
is naturally isomorphic $\mathcal{I}(\rho_{1},\rho_{3})$, where the
$\rho_{i}$ are all $C^{*}$-representations of a $C^{*}$-algebra
$A$; the isomorphism sends $X\otimes Y$ to $XY$, where $XY$ is
just the ordinary product of $X$ and $Y$.


\p \label{par:InducedReps} As a special case of the composition
of $W^{*}$-correspondences, we find the notion of \emph{induced representation}
in the sense of Rieffel \cite{mR74a,mR74b}. If $F$ is a self-dual
Hilbert $C^{*}$-module over the $W^{*}$-algebra $M$, then \emph{inter
alia} $F$ is a $W^{*}$-correspondence from $\mathcal{L}(F)$ to
$M$. So if $\sigma$ is a normal representation of $M$ on a Hilbert
space $H$, then $\sigma$ makes $H$ a $W^{*}$-correspondence from
$M$ to $\mathbb{C}.$ Thus we get a correspondence $F\otimes_{M}H$
from $\mathcal{L}(F)$ to $\mathbb{C}$. If we want to think of $F\otimes_{M}H$
in terms of representations, then the normal representation of $\mathcal{L}(F)$
associated with $F\otimes_{M}\mathbb{C}$ is denoted $\sigma^{F}$
and is called the \emph{representation of $\mathcal{L}(F)$ induced}
by $\sigma$. Evidently, $\sigma^{F}$ is given by the formula $\sigma^{F}(T)(\xi\otimes h)=(T\xi)\otimes h$.
We will usually write $F\otimes_{M}H$ as $F\otimes_{\sigma}H$. It
is a consequence of \cite[Theorem 6.23]{mR74b} that the commutant
of $\sigma^{F}(\mathcal{L}(F))$ is $I_{F}\otimes\sigma(M)'.$


\p \label{par:SigmaDual} Putting together the structures we have
discussed so far, we come to a central concept to our theory. Returning
to our correspondence $E$ over $M$, let $\sigma$ be a normal representation
of $M$ on a Hilbert space $H$. Form the induced representation $\sigma^{E}$
of $\mathcal{L}(E)$ and form the normal representation of $M$, $\sigma^{E}\circ\varphi$,
which acts on $E\otimes_{\sigma}H$. (Recall that the left action
of $M$ on $E$ is given by the normal representation $\varphi:M\to\mathcal{L}(E)$.)
We define $E^{\sigma}$ to be $\mathcal{I}(\sigma,\sigma^{E}\circ\varphi$)
and we call $E^{\sigma}$ \emph{the $\sigma$-dual of} $E$. This
is a $W^{*}$-correspondence over $\sigma(M)'$. The bimodule actions
are given by the formula \[
a\cdot X\cdot b:=(I_{E}\otimes a)Xb,\qquad a,b\in\sigma(M)',\quad X\in\mathcal{I}(\sigma,\sigma\circ\varphi).\]


\p \label{par:TensorHardy}Along with $E$, we may form the ($W^{*}$-)tensor
powers of $E$, $E^{\otimes n}$. They will be understood to be the
self-dual completions of the $C^{*}$-tensor powers of $E$. Likewise,
the Fock space over $E$, $\mathcal{F}(E)$, will be the self-dual
completion of the Hilbert $C^{*}$-module direct sum of the $E^{\otimes n}$:
\[
\mathcal{F}(E)=M\oplus E\oplus E^{\otimes2}\oplus E^{\otimes3}\oplus\cdots\]
 We view $\mathcal{F}(E)$ as a $W^{*}$-correspondence over $M$,
where the left and right actions of $M$ are the obvious ones, i.e.,
the diagonal actions, and we shall write $\varphi_{\infty}$ for the
left diagonal action of $M$.

For $\xi\in E$, we shall write $T_{\xi}$ for the so-called \emph{creation
operator} on $\mathcal{F}(E)$ defined by the formula $T_{\xi}\eta=\xi\otimes\eta$,
$\eta\in\mathcal{F}(E)$. It is easy to see that $T_{\xi}$ is in
$\mathcal{L}(\mathcal{F}(E))$ with norm $\Vert\xi\Vert$, and that
$T_{\xi}^{*}$ annihilates $M$, as a summand of $\mathcal{F}(E)$,
while on elements of the form $\xi\otimes\eta$, $\xi\in E,$ $\eta\in\mathcal{F}(E)$,
it is given by the formula \[
T_{\xi}^{*}(\zeta\otimes\eta):=\varphi_{\infty}(\langle\xi,\zeta\rangle)\eta.\]

\begin{definition}\label{def:Tensor and Hardy Algs}If $E$ is a
$W^{*}$-correspondence over a $W^{*}$-algebra $M$, then \emph{the
tensor algebra} of $E$, denoted $\mathcal{T}_{+}(E)$, is defined
to be the norm-closed subalgebra of $\mathcal{L}(\mathcal{F}(E))$
generated by $\varphi_{\infty}(M)$ and $\{T_{\xi}\mid\xi\in E\}$.
The Hardy algebra of $E$, denoted $H^{\infty}(E)$, is defined to
be the ultra-weak closure in $\mathcal{L}(\mathcal{F}(E))$ of $\mathcal{T}_{+}(E)$.
\end{definition}


\p \label{par:CCReps} Plenty of examples are given in \cite{MSHardy}
and discussed in detail there. More will be given below, but we now
want to describe some properties of the representation theory of $\mathcal{T}_{+}(E)$
and $H^{\infty}(E)$ that we shall use. Details for what we describe
are presented in Section 2 of \cite{MSHardy}. If $\rho$ is a completely
contractive representation of $\mathcal{T}_{+}(E)$ on a Hilbert space
$H$, then $\sigma:=\rho\circ\varphi_{\infty}$ is a $C^{*}$-representation
of $M$ on $H$. We shall consider only those completely contractive
representations of $\mathcal{T}_{+}(E)$ with the property that $\rho\circ\varphi_{\infty}$
is an ultra-weakly continuous representation of $M$. This is not
a significant restriction. In particular, it is not a restriction
at all, if $H$ is assumed to be separable, since every $C^{*}$-representation
of a $\sigma$-finite $W^{*}$-algebra on a separable Hilbert space
is automatically ultra-weakly continuous \cite[Theorem V.5.1]{mT79}.

In addition to the representation $\sigma$ of $M$, $\rho$ defines
a bimodule map $T$ from $E$ to $B(H)$ by the formula\[
T(\xi):=\rho(T_{\xi}).\]
 To say that $T(\cdot)$ is a bimodule map means simply that $T(\varphi(a)\xi b)=\sigma(a)T(\xi)\sigma(b)$
for all $a,b\in M$ and for all $\xi\in E$. The assumption that $\rho$
is completely contractive guarantees that $T$ is completely contractive
with respect to the unique operator space structure on $E$ that arises
from viewing $E$ as a corner of its linking algebra. On the other
hand, the complete contractivity of $T$ is equivalent to the assertion
that the linear map $\widetilde{T}$ defined initially on the algebraic
tensor product $E\otimes H$ to $H$ by the formula \begin{equation}
\widetilde{T}(\xi\otimes h)=T(\xi)h\label{eq:Bimodule-contraction}\end{equation}
 has norm at most one and extends to a contraction, mapping $E\otimes_{\sigma}H$
to $H$, that satisfies the equation \eqref{eq:intertwining} by \cite[Lemma 2.16]{MS02}.
That lemma, coupled with \cite[Theorem 2.8]{MSHardy}, also guarantees,
conversely, that if $\widetilde{T}$ is a contraction from $E\otimes_{\sigma}H$
to $H$ satisfying equation \eqref{eq:intertwining}, then the equation
\eqref{eq:Bimodule-contraction} defines a completely contractive
bimodule map that together with $\sigma$ can be extended to a completely
contractive representation of $\mathcal{T}_{+}(E)$ on $H$. For these
reasons we call the pair $(T,\sigma)$ (or the pair $(\widetilde{T},\sigma)$)
a \emph{completely contractive covariant representation} of $(E,M)$
and we call the representation $\rho$ the \emph{integrated form}
of $(T,\sigma)$ and write $\rho=T\times\sigma$. From equation \eqref{eq:intertwining}
we see that $\widetilde{T}^{*}$ lies in the space we have denoted
$E^{\sigma}$. So, if we write $\mathbb{D}(E^{\sigma})$ for the open
unit ball in $E^{\sigma}$ and $\overline{\mathbb{D}(E^{\sigma})}$
for its norm closure, then all the completely contractive representations
$\rho$ of $\mathcal{T}_{+}(E)$ such that $\rho\circ\varphi_{\infty}=\sigma$
are parametrized bijectively by $\overline{\mathbb{D}(E^{\sigma*})}=\overline{\mathbb{D}(E^{\sigma})^{*}}=\overline{\mathbb{D}(E^{\sigma})}^{*}$.


\subp \label{subpar:RowCont} In the special case when $(E,M)$ is
$(\mathbb{C}^{d},\mathbb{C}),$ a representation $\sigma$ of $\mathbb{C}$
on a Hilbert space $H$ is quite simple; it does the only thing it
can: $\sigma(c)h=ch$, $h\in H$, and $c\in\mathbb{C}$. In this setting,
$E\otimes_{\sigma}H$ is just the direct sum of $d$ copies of $H$
and $\widetilde{T}$ is simply a $d$-tuple of operators $(T_{1},T_{2},\cdots,T_{d})$
such that $\Vert\sum_{i}T_{i}T_{i}^{*}\Vert\leq1$, i.e. $\widetilde{T}$
is a row contraction. The map $T$, then, is given by the formula
$T(\xi)=\sum\xi_{i}T_{i}$, where $\xi=(\xi_{1},\xi_{2},\cdots,\xi_{d})^{\top}\in\mathbb{C}^{d}$.
The space $E^{\sigma}$ is column space over $B(H)$, $\mathbf{C}_{d}(B(H))$
and $\mathbb{D}(E^{\sigma})$ is simply the unit ball in $\mathbf{C}_{d}(B(H))$.


\subp \label{subpar:TensorPowers} If $(T,\sigma)$ is a completely
contractive covariant representation of $(E,M)$ on a Hilbert space
$H$, then while it is not possible to form the powers of $\widetilde{T}$,
which maps $E\otimes_{\sigma}H$ to $H$, we can form the \emph{generalized}
powers of $\widetilde{T}$, which map $E^{\otimes n}\otimes H$ to
$H$, inductively as follows: Since $M\otimes_{\sigma}H$ is isomorphic
to $H$ via the map $a\otimes h\to\sigma(a)h$, $\widetilde{T_{0}}$
is just the identity map. Of course, $\widetilde{T_{1}}=\widetilde{T}$.
For $n>0$, $\widetilde{T_{n+1}}:=\widetilde{T_{1}}(I_{E}\otimes\widetilde{T_{n}})$,
mapping $E^{\otimes{n+1}}\otimes H$ to $H$. This sequence of maps
satisfies a semigroup-like property \begin{equation}
\widetilde{T_{n+m}}=\widetilde{T_{m}}(I_{E^{\otimes m}}\otimes\widetilde{T_{n}})=\widetilde{T_{n}}(I_{E^{\otimes n}}\otimes\widetilde{T_{m}}),\label{generalized powers}\end{equation}
 where we identify $E^{\otimes m}\otimes(E^{\otimes n}\otimes_{\sigma}H)$
and $E^{\otimes n}\otimes(E^{\otimes m}\otimes_{\sigma}H)$ with $E^{\otimes(n+m)}\otimes_{\sigma}H$
\cite[Section 2]{MSWold}. Since the maps $\widetilde{T_{n}}$ are
all contractions, they may be used to promote $(T,\sigma)$ to a completely
contractive covariant representation $(T_{n},\sigma)$ of $E^{\otimes n}$
on $H$, simply by setting \[
T_{n}(\xi_{1},\xi_{2},\cdots,\xi_{n})h:=T(\xi_{1})T(\xi_{2})\cdots T(\xi_{n})h=\widetilde{T_{n}}(\xi_{1}\otimes\xi_{2}\otimes\cdots\otimes h).\]


\subp \label{subpar:CPMaps} If $(T,\sigma)$ is a completely contractive
covariant representation of $(E,M)$ on a Hilbert space $H$, then
$(T,\sigma)$ induces a completely positive map $\Phi_{T}$ on $\sigma(M)'$
defined by the formula\begin{equation}
\Phi_{T}(a):=\widetilde{T}(I_{E}\otimes a)\widetilde{T}^{*}\qquad a\in\sigma(M)'.\label{eq:Phi_T}\end{equation}
 Indeed, $\Phi_{T}$ is clearly a completely positive map from $\sigma(M)'$
into $B(H)$, since $a\to I_{E}\otimes a$ is faithful normal representation
of $\sigma(M)'$ (onto the commutant of $\sigma^{E}(\mathcal{L}(E))$),
as we have noted earlier, and $\widetilde{T}$ is a bounded linear
map from $E\otimes_{\sigma}H$ to $H$. To see that its range is contained
in $\sigma(M)'$, simply note that $\widetilde{T}\sigma^{E}\circ\varphi=\sigma\widetilde{T}$.
So, if $a\in\sigma(M)'$ and if $b\in M$, then $\sigma(b)\Phi_{T}(a)=\sigma(b)\widetilde{T}(I_{E}\otimes a)\widetilde{T}^{*}=\widetilde{T}\sigma^{E}\circ\varphi(b)(I_{E}\otimes a)\widetilde{T}^{*}=\widetilde{T}(I_{E}\otimes a)\sigma^{E}\circ\varphi(b)\widetilde{T}^{*}=\widetilde{T}(I_{E}\otimes a)\widetilde{T}^{*}\sigma(b)=\Phi_{T}(a)\sigma(b)$,
which shows that the range of $\Phi_{T}$ is contained in $\sigma(M)'$.
Furthermore, we see that \begin{equation}
\Phi_{T}^{n}(a)=\widetilde{T_{n}}(I_{E^{\otimes n}}\otimes a)\widetilde{T_{n}}^{*}\label{eq:Phi_T_n}\end{equation}
 for all $n$. This is an immediate application of paragraph \ref{subpar:TensorPowers}
(see \cite[Theorem 3.9]{MS02} for details.)


\p \label{par:GaugeAutos} An important tool used in the analysis
of $\mathcal{T}_{+}(E)$ and $H^{\infty}(E)$ is the {}``spectral
theory of the gauge automorphism group''. What we need is developed
in detail in \cite[Section 2]{MSHardy}. We merely recall the essentials
that we will use. Let $P_{n}$ denote the projection of $\mathcal{F}(E)$
onto $E^{\otimes n}$. Then $P_{n}\in\mathcal{L}(\mathcal{F}(E))$
and the series\[
W_{t}:=\sum_{n=0}^{\infty}e^{int}P_{n}\]
 converges in the ultra-weak topology on $\mathcal{L}(\mathcal{F}(E))$.
The family $\{W_{t}\}_{t\in\mathbb{R}}$ is an ultra-weakly continuous,
$2\pi$-periodic unitary representation of $\mathbb{R}$ in $\mathcal{L}(\mathcal{F}(E))$.
Further, if $\{\gamma_{t}\}_{t\in\mathbb{R}}$ is defined by the formula
$\gamma_{t}=Ad(W_{t})$, then $\{\gamma_{t}\}_{t\in\mathbb{R}}$ is
an ultra-weakly continuous group of $*$-automorphisms of $\mathcal{L}(\mathcal{F}(E))$
that leaves invariant $\mathcal{T}_{+}(E)$ and $H^{\infty}(E)$.
Indeed, the subalgebra of $H^{\infty}(E)$ fixed by $\{\gamma_{t}\}_{t\in\mathbb{R}}$
is $\varphi_{\infty}(M)$ and $\gamma_{t}(T_{\xi})=e^{-it}T_{\xi}$,
$\xi\in E$. Associated with $\{\gamma_{t}\}_{t\in\mathbb{R}}$ we
have the {}``Fourier coefficient operators'' $\{\Phi_{j}\}_{j\in\mathbb{Z}}$
on $\mathcal{L}(\mathcal{F}(E))$, which are defined by the formula

\begin{equation}
\Phi_{j}(a):=\frac{1}{2\pi}\int_{0}^{2\pi}e^{-int}\gamma_{t}(a)\, dt,\qquad a\in\mathcal{L}(\mathcal{F}(E)),\label{FourierOperators}\end{equation}

where the integral converges in the ultra-weak topology. An alternate
formula for $\Phi_{j}$ is \[
\Phi_{j}(a)=\sum_{k\in\mathbb{Z}}P_{k+j}aP_{k}.\]
 Each $\Phi_{j}$ leaves $H^{\infty}(E)$ invariant and, in particular,
$\Phi_{j}(T_{\xi_{1}}T_{\xi_{2}}\cdots T_{\xi_{n}})=T_{\xi_{1}}T_{\xi_{2}}\cdots T_{\xi_{n}}$
if and only if $n=j$ and zero otherwise. Associated with the $\Phi_{j}$
are the {}``arithmetic mean operators'' $\{\Sigma_{k}\}_{k\geq1}$
that are defined by the formula\[
\Sigma_{k}(a):=\sum_{|j|<k}(1-\frac{|j|}{k})\Phi_{j}(a),\]
 $a\in\mathcal{L}(\mathcal{F}(E))$. For $a\in\mathcal{L}(\mathcal{F}(E))$,
$\lim_{k\to\infty}\Sigma_{k}(a)=a$, where the limit is taken in the
ultra-weak topology.


\p \label{par:isoreps} A completely contractive covariant representation
$(T,\sigma)$ of $(E,M)$ on a Hilbert space $H$ is called \emph{isometric}
(resp. \emph{fully coisometric}) in case $\widetilde{T}:E\otimes H\to H$
is an isometry (resp. a coisometry). It is not difficult to see that
a completely contractive covariant representation $(T,\sigma)$ is
isometric if and only if its integrated form $T\times\sigma$ is a
completely isometric representation. Further, this happens if and
only if $T\times\sigma$ is the restriction to $\mathcal{T}_{+}(E)$
of a $C^{*}$-representation of the $C^{*}$-subalgebra $\mathcal{T}(E)$
of $\mathcal{L}(\mathcal{F}(E))$ generated by $\mathcal{T}_{+}(E)$.
This $C^{*}$-algebra is called the \emph{Toeplitz algebra} of $E$.

A special kind of isometric covariant representations are constructed
as follows. Let $\pi_{0}:M\to B(H_{0})$ be a normal representation
of $M$ on the Hilbert space $H_{0}$, and let $H=\mathcal{F}(E)\otimes_{\pi_{0}}H_{0}$.
Set $\sigma=\pi^{\mathcal{F}(E)}\otimes I_{H_{0}}$, and define $S:E\to B(H)$
by the formula $S(\xi)=T_{\xi}\otimes I_{H_{0}}$, $\xi\in E$. Then
it is immediate that $(S,\sigma)$ is an isometric covariant representation
and we say that it is \emph{induced by $\pi_{0}$}. We also will say
$S\times\sigma$ is induced by $\pi_{0}$. In a sense that will become
clear, the representation $\pi_{0}$ should be viewed as a generalization
of the multiplicity of a shift.

An induced isometric covariant representation has the property that
$\widetilde{S_{n}}\widetilde{S_{n}^{*}}\to0$ strongly as $n\to\infty$
because $\widetilde{S_{n}}\widetilde{S_{n}^{*}}$ is the projection
onto $\sum_{k\geq n}E^{\otimes k}\otimes_{\pi_{0}}H_{0}$. In general,
an isometric covariant representation $(S,\sigma)$ and its integrated
form are called \emph{pure} if $\widetilde{S_{n}}\widetilde{S_{n}^{*}}\to0$
strongly as $n\to\infty$.

It is clear that induced covariant representations are analogues of
shifts. Corollary 2.10 of \cite{MSWold} shows that every pure isometric
covariant representation of $(E,M)$ is unitarily equivalent to an
isometric covariant representation that is induced by a normal representation
of $M$. We therefore will usually say simply that a pure isometric
covariant representation \emph{is} induced. In Theorem 2.9 of \cite{MSWold}
we proved a generalization of the Wold decomposition theorem that
asserts that every isometric covariant representation of $(E,M)$
decomposes as the direct sum of an induced isometric covariant representation
of $(E,M)$ and an isometric representation if $(E,M)$ that is both
isometric and fully coisometric.


\p \label{par:UnivIIRs} We will need an analogue of a unilateral
shift of infinite multiplicity.

\begin{lemma}\label{Lemma:Unitary Equiv.}For $i=1,2$, let $\pi_{i}:M\to B(H_{i})$
be normal representation of $M$ on the Hilbert space $H_{i}$, and
let $(S_{i},\sigma_{i})$ be the isometric covariant representation
of $(E,M)$ induced by $\pi_{i}$.
\begin{enumerate}
\item If $X:H_{1}\to H_{2}$ intertwines $\pi_{1}$ and $\pi_{2}$, then
$I_{\mathcal{F}(E)}\otimes X$ intertwines $(S_{1},\sigma_{1})$ and
$(S_{2},\sigma_{2})$. In particular, if $\pi_{1}$ and $\pi_{2}$
are unitarily equivalent, then so are $(S_{1},\sigma_{1})$ and $(S_{2},\sigma_{2})$.
\item If $(S_{1},\sigma_{1})$ and $(S_{2},\sigma_{2})$ are unitarily equivalent,
then so are $\pi_{1}$ and $\pi_{2}$.
\end{enumerate}
\end{lemma}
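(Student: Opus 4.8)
For part (1), the plan is to verify the two defining conditions for $I_{\mathcal{F}(E)}\otimes X$ to be an intertwiner of the covariant representations $(S_1,\sigma_1)$ and $(S_2,\sigma_2)$, namely that it intertwines $\sigma_1$ and $\sigma_2$ and that it intertwines $S_1$ and $S_2$ in the appropriate sense (equivalently, that it intertwines the integrated forms on $\mathcal{T}_+(E)$). Recall from paragraph \ref{par:isoreps} that $\sigma_i=\pi_i^{\mathcal{F}(E)}\otimes I_{H_i}$ acting on $\mathcal{F}(E)\otimes_{\pi_i}H_i$, and $S_i(\xi)=T_\xi\otimes I_{H_i}$. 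The first condition is routine: since $X$ intertwines $\pi_1$ and $\pi_2$, for $a\in M$ one computes on elementary tensors that $(I_{\mathcal{F}(E)}\otimes X)(\pi_1^{\mathcal{F}(E)}(a)\otimes I_{H_1})(\eta\otimes h)=(I\otimes X)(\varphi_\infty(a)\eta\otimes h)=\varphi_\infty(a)\eta\otimes Xh$, while $(\pi_2^{\mathcal{F}(E)}(a)\otimes I_{H_2})(I\otimes X)(\eta\otimes h)=\varphi_\infty(a)\eta\otimes \pi_2(a)\cdot(\text{correction})$ — here one must be careful that $X$ intertwining $\pi_i$ is exactly what makes the module tensor product well-defined and the two sides agree. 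For the second condition, on elementary tensors $(I_{\mathcal{F}(E)}\otimes X)(S_1(\xi))(\eta\otimes h)=(I\otimes X)(\xi\otimes\eta\otimes h)=\xi\otimes\eta\otimes Xh=S_2(\xi)(I\otimes X)(\eta\otimes h)$, using that $T_\xi$ acts identically (by left tensoring with $\xi$) on both Fock spaces and commutes past $X$. This gives the intertwining of the generators $T_\xi$ and $\varphi_\infty(M)$, hence of all of $\mathcal{T}_+(E)$. The ``in particular'' statement is then immediate: if $X$ is a unitary intertwiner of $\pi_1,\pi_2$ then $I_{\mathcal{F}(E)}\otimes X$ is a unitary (its adjoint is $I_{\mathcal{F}(E)}\otimes X^*$, which intertwines the other way by the same computation), so it implements a unitary equivalence of $(S_1,\sigma_1)$ and $(S_2,\sigma_2)$.

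For part (2), suppose $U:\mathcal{F}(E)\otimes_{\pi_1}H_1\to\mathcal{F}(E)\otimes_{\pi_2}H_2$ is a unitary implementing the equivalence, so $U\sigma_1(a)=\sigma_2(a)U$ for all $a\in M$ and $US_1(\xi)=S_2(\xi)U$ for all $\xi\in E$. The idea is to recover $\pi_i$ by restricting to the ``vacuum'' summand $M\otimes_{\pi_i}H_i\cong H_i$. Let $Q_i$ denote the projection of $\mathcal{F}(E)\otimes_{\pi_i}H_i$ onto $M\otimes_{\pi_i}H_i\cong H_i$; I would identify $Q_i$ algebraically in terms of the representations — for instance, $I-Q_i$ is the strong limit of $\sum_{n=1}^N \widetilde{S_{i,n}}\widetilde{S_{i,n}^*}$, or more simply $Q_i = I - \sum_{\text{ONB}}S_i(\xi_j)S_i(\xi_j)^*$ interpreted correctly, or one can characterize $Q_iH_i$ as the orthocomplement of the closed span of the ranges of all $S_i(\xi)$. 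Since $U$ intertwines the $S_i(\xi)$, it carries the closed span of the ranges of the $S_1(\xi)$ onto that of the $S_2(\xi)$, hence $UQ_1=Q_2U$. Therefore $U$ restricts to a unitary $U_0:H_1\to H_2$ between the vacuum subspaces, and because $U$ intertwines $\sigma_1$ and $\sigma_2$, and $\sigma_i$ restricted to $Q_iH_i$ is (unitarily equivalent to) $\pi_i$ — indeed $\sigma_i(a)=\pi_i^{\mathcal{F}(E)}(a)\otimes I$ acts on $M\otimes_{\pi_i}H_i$ as $\varphi(a)\otimes I$ on the $M$-factor, which under the identification $M\otimes_{\pi_i}H_i\cong H_i$ becomes $\pi_i(a)$ — the restriction $U_0$ intertwines $\pi_1$ and $\pi_2$. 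Hence $\pi_1$ and $\pi_2$ are unitarily equivalent.

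The main obstacle I anticipate is the bookkeeping in part (2): correctly identifying the vacuum projection $Q_i$ in a way that is manifestly preserved by any intertwiner of the $S_i$, and then verifying that $\sigma_i$ compresses to $\pi_i$ on the vacuum space under the canonical identification $M\otimes_{\pi_i}H_i\cong H_i$ (this identification, $a\otimes h\mapsto\pi_i(a)h$, uses that $\varphi$ is unital so that the vacuum summand is faithfully $H_i$). A clean way to make $Q_i$ intrinsic is to note $Q_iH_i=\{v : S_i(\xi)^*v=0\ \forall\xi\in E\}$, i.e. the joint kernel of the $S_i(\xi)^*$; since $U$ is unitary and intertwines each $S_i(\xi)$, it intertwines each $S_i(\xi)^*$, so it maps joint kernel to joint kernel, giving $UQ_1=Q_2U$ with no further computation. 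Everything else is a direct unwinding of definitions, so modulo this identification the argument is short. $\Box$
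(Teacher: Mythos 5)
Your proof is correct and follows essentially the same route as the paper: part (1) is the same direct verification, with the well-definedness of $I_{\mathcal{F}(E)}\otimes X$ resting on the intertwining relation, and part (2) recovers $\pi_{i}$ by showing the unitary preserves the vacuum summand $M\otimes_{\pi_{i}}H_{i}\simeq H_{i}$. The only cosmetic difference is that you characterize that summand as the joint kernel of the $S_{i}(\xi)^{*}$ (equivalently the orthocomplement of the span of the ranges of the $S_{i}(\xi)$), whereas the paper describes its complement as the closure of $(S_{i}\times\sigma_{i})(H_{0}^{\infty}(E))$ applied to the whole space --- these are the same subspace, so the arguments coincide.
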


\begin{proof}The first assertion is a straightforward calculation.
The key point is that $I_{\mathcal{F}(E)}\otimes X$ represents a
bounded operator from $\mathcal{F}(E)\otimes_{\pi_{1}}H_{1}$ to $\mathcal{F}(E)\otimes_{\pi_{2}}H_{2}$
precisely because $X$ intertwines $\pi_{1}$ and $\pi_{2}$. The
second assertion may be seen as follows. Let $U$ be a Hilbert space
isomorphism from $\mathcal{F}(E)\otimes_{\pi_{1}}H_{1}$ to $\mathcal{F}(E)\otimes_{\pi_{2}}H_{2}$
such that $U(S_{1}\times\sigma_{1})=(S_{2}\times\sigma_{2})U$ and
let $H_{0}^{\infty}(E)=\{a\in H^{\infty}(E)\mid\Phi_{0}(a)=0\}$,
where $\Phi_{0}$ is the zero$^{th}$ Fourier coefficient operator
\eqref{FourierOperators}. Then $\overline{H_{0}^{\infty}(E)\mathcal{F}(E)}^{u-w}=\sum_{k\geq1}E^{\otimes k}$.
It follows that for $i=1,2$, $\overline{(S_{i}\times\sigma_{i})(H_{0}^{\infty}(E))(\mathcal{F}(E)\otimes_{\pi_{i}}H_{i})}=(\sum_{k\geq1}E^{\otimes k})\otimes_{\pi_{i}}H_{i}$
and, since $U(S_{1}\times\sigma_{1})=(S_{2}\times\sigma_{2})U$, it
follows also that $U$ carries $(\sum_{k\geq1}E^{\otimes k})\otimes_{\pi_{1}}H_{1}$
onto $(\sum_{k\geq1}E^{\otimes k})\otimes_{\pi_{2}}H_{2}$. Consequently,
$U$ restricts to a Hilbert space isomorphism from $M\otimes_{\pi_{1}}H_{1}\simeq H_{1}$
onto $M\otimes_{\pi_{2}}H_{2}\simeq H_{2}$ which, because $U(S_{1}\times\sigma_{1})=(S_{2}\times\sigma_{2})U$,
must satisfy $U\pi_{1}=\pi_{2}U$. \end{proof}

We shall fix, once and for all, a representation $(S_{0},\sigma_{0})$
that is induced by a faithful normal representation $\pi$ of $M$
that has \emph{infinite multiplicity} . That is, $(S_{0},\sigma_{0})$
acts on a Hilbert space of the form $\cF(E)\otimes_{\pi}K_{0}$, where
$\pi:M\to B(K_{0})$ is an infinite ampliation of a faithful normal
representation of $M$. Then $\sigma_{0}:=\pi^{\cF(E)}\circ\varphi_{\infty}$,
while $S_{0}(\xi):=T_{\xi}\otimes I_{K_{0}}$, $\xi\in E$. The following
proposition will be used in a number of arguments below.

\begin{proposition}\label{Lemma:Universal}The representation $(S_{0},\sigma_{0})$
is unique up to unitary equivalence and every induced isometric covariant
representation of $(E,M)$ is unitarily equivalent (in a natural way)
to a restriction of $(S_{0},\sigma_{0})$ to a subspace of the form
$\cF(E)\otimes_{\pi}\mathfrak{K}$, where $\mathfrak{K}$ is a subspace
of $K_{0}$ that reduces $\pi$.\end{proposition}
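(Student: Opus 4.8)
The plan is to establish the two clauses of the proposition separately, both by reducing to Lemma~\ref{Lemma:Unitary Equiv.} combined with the structure theory of normal representations of a $\sigma$-finite $W^*$-algebra. For the uniqueness clause, I would first observe that "a faithful normal representation of $M$ of infinite multiplicity" means a representation unitarily equivalent to $\pi_1 \otimes I_{\ell^2(\mathbb{N})}$ where $\pi_1$ is any faithful normal representation of $M$ on a separable Hilbert space; since $M$ is $\sigma$-finite, any two faithful normal representations of $M$ on a separable Hilbert space are quasi-equivalent, and after tensoring each with $\ell^2(\mathbb{N})$ they become unitarily equivalent (this is the standard fact that quasi-equivalent representations have unitarily equivalent infinite ampliations). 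Hence the representation $\pi$ appearing in the definition of $(S_0,\sigma_0)$ is determined up to unitary equivalence, and part (1) of Lemma~\ref{Lemma:Unitary Equiv.} then gives that $(S_0,\sigma_0)$ is determined up to unitary equivalence.

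For the second clause, let $(S,\sigma)$ be an arbitrary induced isometric covariant representation, say induced by a normal representation $\pi_0 : M \to B(H_0)$ on a separable Hilbert space $H_0$. The goal is to realize $\pi_0$ as a subrepresentation of $\pi : M \to B(K_0)$, i.e., to find a reducing subspace $\mathfrak{K} \subseteq K_0$ with $\pi|_{\mathfrak{K}}$ unitarily equivalent to $\pi_0$; once that is done, part (1) of Lemma~\ref{Lemma:Unitary Equiv.} (applied to the isometry $X : H_0 \to \mathfrak{K} \subseteq K_0$ implementing the equivalence, or more directly to the unitary onto $\mathfrak{K}$) shows that $I_{\mathcal{F}(E)} \otimes X$ is an intertwiner identifying $\mathcal{F}(E) \otimes_{\pi_0} H_0$ with the reducing subspace $\mathcal{F}(E) \otimes_{\pi} \mathfrak{K}$ of $\mathcal{F}(E) \otimes_{\pi} K_0$, compatibly with the covariant representations $(S,\sigma)$ and $(S_0,\sigma_0)$. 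So the whole statement reduces to: every normal representation of a $\sigma$-finite $W^*$-algebra $M$ on a separable Hilbert space is unitarily equivalent to a subrepresentation of a fixed faithful normal representation $\pi$ of infinite multiplicity. This is a standard fact from von Neumann algebra theory (it follows from the classification of normal representations by central-valued multiplicity, or, concretely, from the facts that $\pi_0$ is quasi-contained in $\pi$ by faithfulness plus infinite multiplicity of $\pi$, and that quasi-containment of separable normal representations upgrades to containment). I would cite \cite{mT79} or \cite{MT05} for this.

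The main obstacle — really the only point requiring care — is the reduction step just described, namely verifying that $\pi_0$ is genuinely a subrepresentation (not merely quasi-contained in) $\pi$. The key inputs are: (i) $\pi$ is faithful and of infinite multiplicity, so every normal representation is quasi-contained in $\pi$; and (ii) for normal representations of a $\sigma$-finite $W^*$-algebra on separable Hilbert spaces, quasi-containment plus the infinite-multiplicity hypothesis yields an actual containment via a comparison-of-projections argument in $\pi(M)'$ (which is properly infinite). Everything else — the passage from an equivalence of representations of $M$ to the corresponding equivalence of induced covariant representations of $(E,M)$, and the "natural way" phrasing — is immediate from Lemma~\ref{Lemma:Unitary Equiv.}(1) once one notes that $I_{\mathcal{F}(E)} \otimes X$ maps $\mathcal{F}(E) \otimes_{\pi_0} H_0$ isometrically onto $\mathcal{F}(E) \otimes_{\pi} \mathfrak{K}$ and intertwines the creation operators and the diagonal $M$-actions by construction.
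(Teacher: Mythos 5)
Your proposal is correct and follows essentially the same route as the paper: both arguments reduce the statement to the standard fact that a faithful normal representation of infinite multiplicity of a $\sigma$-finite $W^{*}$-algebra contains every normal representation on a separable Hilbert space as a subrepresentation (up to unitary equivalence on a reducing subspace), and then transfer the resulting equivalences to the induced covariant representations via Lemma~\ref{Lemma:Unitary Equiv.}. The only difference is that you spell out the comparison-of-projections/quasi-equivalence details that the paper delegates to a citation of Dixmier.
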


\begin{proof}The uniqueness assertion is an immediate consequence
of Lemma \ref{Lemma:Unitary Equiv.} and the structure of isomorphisms
between von Neumann algebras \cite[Theorem I.4.4.3]{DvN}. If $(R,\rho)$
is an induced representation acting, say, on the Hilbert space $H,$
then there is a normal representation $\rho_{0}$ of $M$ on a Hilbert
space $H_{0}$ and a Hilbert space isomorphism $W:H\to\cF(E)\otimes_{\rho_{0}}H_{0}$
such that $WR(\xi)W^{-1}=T_{\xi}\otimes I_{H_{0}}$ and such that
$W\rho(a)W^{-1}=\varphi_{\infty}(a)\otimes I_{H_{0}}$, $\xi\in E$
and $a\in M$. That is, $W(R\times\rho)W^{-1}$ is the induced representation
$\rho_{0}^{\cF(E)}$ of $\cL(\cF(E))$ restricted to $\mathcal{T}_{+}(E)$
acting on $\cF(E)\otimes_{\rho_{0}}H_{0}$. Since $\pi$ is a faithful
normal representation of $M$ on $K_{0}$ with infinite multiplicity,
one knows that $\rho_{0}$ is unitarily equivalent to $\pi_{0}$ restricted
to a reducing subspace $\mathfrak{K}$ of $K_{0}$. It follows that
$\rho_{0}^{\cF(E)}$is unitarily equivalent $\pi_{0}^{\cF(E)}$, which,
in turn, is unitarily equivalent to the restriction of $\pi^{\cF(E)}$
to $\cF(E)\otimes_{\pi}\mathfrak{K}$ . Stringing the unitary equivalences
together completes the proof. \end{proof}

\begin{definition}\label{Definition:Universal}We shall refer to
$(S_{0},\sigma_{0})$ as \emph{the universal induced covariant representation}
of $(E,M)$. \end{definition}By Proposition \ref{Lemma:Universal},
$(S_{0},\sigma_{0})$ does not really depend on the choice of representation
$\pi$ used to define it. It will serve the purpose in our theory
that the unilateral shift of infinite multiplicity serves in the structure
theory of single operators on Hilbert space.


\p \label{par:IsoDilate} A key tool in our theory is the following
result that we proved as \cite[Theorem 2.8]{MSHardy}.

\begin{theorem} Let $(T,\sigma)$ be a completely contractive covariant
representation of $(E,M)$ on a Hilbert space $H$. Then there is
an isometric covariant representation $(V,\rho)$ of $(E,M)$ acting
on a Hilbert space $K$ containing $H$ such that if $P$ denotes
the projection of $K$ onto $H$, then
\begin{enumerate}
\item $P$ commutes with $\rho(M)$ and $\rho(a)P=\sigma(a)P$, $a\in M$,
and
\item for all $\eta\in E$, $V(\eta)^{*}$ leaves $H$ invariant and $PV(\eta)P=T(\eta)P$.
\end{enumerate}
The representation $(V,\rho)$ may be chosen so that the smallest
subspace of $K$ that contains $H$ is all of $K$. When this is done,
$(V,\rho)$ is unique up to unitary equivalence and is called \textbf{\emph{the
minimal isometric dilation}}\emph{ of} $(T,\rho)$. \end{theorem}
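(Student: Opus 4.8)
The plan is to carry out a Sch\"affer--Popescu type construction of the isometric dilation on the Fock module built over the \emph{defect space} of $\widetilde{T}$. First form the defect operator
\[ D := \bigl(I_{E\otimes_{\sigma}H} - \widetilde{T}^{*}\widetilde{T}\bigr)^{1/2} \in \mathcal{L}(E\otimes_{\sigma}H). \]
Taking adjoints in the intertwining equation \eqref{eq:intertwining} shows that $\widetilde{T}^{*}$, and hence $\widetilde{T}^{*}\widetilde{T}$ and $D$, commute with $\sigma^{E}\circ\varphi(M)$; therefore the closed subspace $\mathcal{D} := \overline{D(E\otimes_{\sigma}H)}$ reduces $\sigma^{E}\circ\varphi$, and $\psi := (\sigma^{E}\circ\varphi)\vert_{\mathcal{D}}$ is a normal representation of $M$. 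Now set
\[ K := H \oplus \bigl(\mathcal{F}(E)\otimes_{\psi}\mathcal{D}\bigr), \qquad \rho := \sigma \oplus \bigl(\psi^{\mathcal{F}(E)}\circ\varphi_{\infty}\bigr), \]
so that $\rho$ is a normal representation of $M$ on $K \supseteq H$ with $H$ reducing for $\rho(M)$. Identifying $\mathcal{D}$ with the degree-zero summand $M\otimes_{\psi}\mathcal{D}$ of $\mathcal{F}(E)\otimes_{\psi}\mathcal{D}$ and writing $S(\eta) := T_{\eta}\otimes I_{\mathcal{D}}$ for the creation operators of the isometric representation of $(E,M)$ induced by $\psi$ (see \ref{par:isoreps}), I would define $V(\eta):K\to K$, for $\eta\in E$, by
\[ V(\eta)\bigl(h\oplus g\bigr) := \widetilde{T}(\eta\otimes h)\ \oplus\ \bigl(D(\eta\otimes h) + S(\eta)g\bigr), \qquad h\in H,\ g\in\mathcal{F}(E)\otimes_{\psi}\mathcal{D}, \]
where $D(\eta\otimes h)$ lies in the degree-zero part and $S(\eta)g$ in degrees $\ge 1$.

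The next step is to verify that $(V,\rho)$ is an isometric covariant representation. That each $V(\cdot)$ is an $M$-bimodule map --- so that $\widetilde{V}$ is well defined on the balanced tensor product $E\otimes_{\rho}K$ and intertwines $\rho^{E}\circ\varphi$ and $\rho$ --- follows from the bimodule property of $\widetilde{T}$, the fact that $D$ intertwines $\sigma^{E}\circ\varphi$ and $\psi$, the bimodule property $T_{\varphi(a)\eta}=\varphi_{\infty}(a)T_{\eta}$ of the creation operators, and normality to pass to the completions; this is where the $W^{*}$-module bookkeeping must be done with care. The one genuine computation is that $\widetilde{V}$ is an \emph{isometry}: for $\zeta\in E\otimes_{\sigma}H$,
\[ \Vert\widetilde{V}\zeta\Vert^{2} = \Vert\widetilde{T}\zeta\Vert^{2} + \Vert D\zeta\Vert^{2} = \langle\widetilde{T}^{*}\widetilde{T}\zeta,\zeta\rangle + \langle(I-\widetilde{T}^{*}\widetilde{T})\zeta,\zeta\rangle = \Vert\zeta\Vert^{2}, \]
while on $E\otimes(\mathcal{F}(E)\otimes_{\psi}\mathcal{D})$ the map $\widetilde{V}$ is the isometric shift $\widetilde{S}$ of the induced representation, and the two ranges are orthogonal (degree zero versus degrees $\ge 1$). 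Properties (1) and (2) are then immediate: $\rho$ is block diagonal for $K = H\oplus(\mathcal{F}(E)\otimes_{\psi}\mathcal{D})$, which gives (1); and for $h\in H$ one has $PV(\eta)h = \widetilde{T}(\eta\otimes h) = T(\eta)h$, while $V(\eta)$ maps $\mathcal{F}(E)\otimes_{\psi}\mathcal{D}$ into itself, so $V(\eta)^{*}$ leaves $H = (\mathcal{F}(E)\otimes_{\psi}\mathcal{D})^{\perp}$ invariant, which gives (2).

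For minimality, observe that the vectors $D(\eta\otimes h)$ ($\eta\in E$, $h\in H$) are total in $\mathcal{D}$ by construction, and that applying products $S(\eta_{1})\cdots S(\eta_{n})$ to the degree-zero copy of $\mathcal{D}$ spans a dense subspace of $\mathcal{F}(E)\otimes_{\psi}\mathcal{D}$; hence the smallest subspace of $K$ containing $H$ and invariant under $\{V(\eta)\}$ and $\rho(M)$ is all of $K$, so the dilation just constructed is already minimal (had we used a larger coefficient space for $\mathcal{D}$ we would simply restrict to this subspace). Finally, for uniqueness up to unitary equivalence one uses condition (2) together with the intertwining relation and $\widetilde{V}^{*}\widetilde{V} = I$ to rewrite every inner product
\[ \bigl\langle V(\eta_{1})\cdots V(\eta_{n})\rho(a)h,\ V(\zeta_{1})\cdots V(\zeta_{m})\rho(b)h'\bigr\rangle, \qquad h,h'\in H, \]
purely in terms of $T$, $\sigma$, and the inner product on $E$; two minimal isometric dilations then have the same Gram data on the dense span of such vectors, and the map $V(\eta_{1})\cdots V(\eta_{n})\rho(a)h\mapsto V'(\eta_{1})\cdots V'(\eta_{n})\rho'(a)h$ extends to the unitary implementing the equivalence. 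I expect this uniqueness step --- systematically reducing a general Gram entry to $(T,\sigma)$-data by repeated use of (2) --- to be the main obstacle, the existence half being a notation-heavy but routine generalization of Sch\"affer's dilation.
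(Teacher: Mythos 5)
Your construction is exactly the Sch\"affer-type dilation the paper itself presents: your $D$, $\mathcal{D}$, $K=H\oplus(\mathcal{F}(E)\otimes_{\psi}\mathcal{D})$ and $V$ coincide with the $\Delta$, $\mathcal{D}$, $K$ and the matrix $\widetilde{V}$ of \eqref{Vtilde}, and your verifications of isometry, properties (1)--(2), minimality, and the Gram-matrix uniqueness argument are the standard ones the paper delegates to \cite{MSHardy,MS98b,MS02}. The proposal is correct and takes essentially the same approach (only note that $\widetilde{T}^{*}$ itself \emph{intertwines} $\sigma$ and $\sigma^{E}\circ\varphi$ rather than ``commuting''; it is $\widetilde{T}^{*}\widetilde{T}$, hence $D$, that commutes with $\sigma^{E}\circ\varphi(M)$).
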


There is an explicit matrix form for $(V,\rho)$ similar to the classical
Sch\"{a}ffer matrix for the unitary dilation of a contraction operator.
We will need parts of it, so we present the essentials here. For more
details, see Theorem 3.3 and Corollary 5.21 in \cite{MS98b} and the
proof of Theorem 2.18 in \cite{MS02}, in particular. Let $\Delta=(I-\tilde{T}^{\ast}\tilde{T})^{1/2}$
and let $\mathcal{D}$ be its range. Then $\Delta$ is an operator
on $E\otimes_{\sigma}H$ and commutes with the representation $\sigma^{E}\circ\varphi$
of $M$, by equation (\ref{intertwine}). Write $\sigma_{1}$ for
the restriction of $\sigma^{E}\circ\varphi$ to $\mathcal{D}$. Form
$K=H\oplus\mathcal{F}(E)\otimes_{\sigma_{1}}\mathcal{D}$, where the
tensor product $\mathcal{F}(E)\otimes_{\sigma_{1}}\mathcal{D}$ is
balanced over $\sigma_{1}$. The representation $\rho$ is just $\sigma\oplus\sigma_{1}^{\mathcal{F}(E)}\circ\varphi_{\infty}$.
For $V$, we form $E\otimes_{\rho}K$ and define $\widetilde{V}:E\otimes_{\rho}K\rightarrow K$
matricially as \begin{equation}
\tilde{V}:=\left[\begin{array}{cccccc}
\tilde{T} & 0 & 0 & \cdots\\
\Delta & 0 & 0 &  & \ddots\\
0 & I & 0 & \ddots\\
0 & 0 & I & 0 & \ddots\\
\vdots & 0 & 0 & I & \ddots\\
 &  &  &  & \ddots & \ddots\end{array}\right],\label{Vtilde}\end{equation}
 where the identity operators in this matrix must be interpreted as
the operators that identify $E\otimes_{\sigma_{n+1}}(E^{\otimes n}\otimes_{\sigma_{1}}\mathcal{D})$
with $E^{\otimes(n+1)}\otimes_{\sigma_{1}}\mathcal{D}$, and where,
in turn, $\sigma_{n+1}=\sigma_{1}^{E^{\otimes n}}\circ\varphi^{(n)}$.
(Here $\varphi^{(n)}$ denotes the representation of $M$ in $\mathcal{L}(E^{\otimes n})$
given by the formula $\varphi^{(n)}(a)(\xi_{1}\otimes\xi_{2}\cdots\otimes\xi_{n})=(\varphi(a)\xi_{1})\otimes\xi_{2}\otimes\cdots\xi_{n})$.)
Then it is easily checked that $\tilde{V}$ is an isometry and that
the associated covariant representation $(V,\rho)$ is the minimal
isometric dilation $(T,\sigma)$.

%
{}

\p\label{par:Commutant_Lifting} Another important tool in our analysis
is the \emph{commutant lifting theorem} \cite[Theorem 4.4]{MS98b}\emph{.
}The way we stated this result in \cite{MS98b}, it may be difficult
to recognize relations with the classical theorem. So we begin with
a more ``down to earth'' statement. It is exactly what is proved in
\cite{MS98b}.

\begin{theorem}\label{Theorem:Old_Fashion_CLT}Let $(T,\sigma)$
be a completely contractive representation of $(E,M)$ on a Hilbert
space $H$ and let $(V,\rho)$ be the minimal isometric dilation of
$(T,\sigma)$ acting on the space $K$ containing $H$. If $X$ is
an operator on $H$ that commutes with $(T\times\sigma)(\mathcal{T}_{+}(E))$
then there is an operator $Y$ on $K$ such that the following statements
are satisfied:
\begin{enumerate}
\item $\Vert Y\Vert=\Vert X\Vert$.
\item $Y$ commutes with $(V\times\rho)(\mathcal{T}_{+}(E))$.
\item $Y$ leaves $K\ominus H$ invariant.
\item $PY|_{H}=X$, where $P$ is the projection of $K$ onto $H$.
\end{enumerate}
\end{theorem}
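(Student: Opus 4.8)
The plan is to deduce this ``down-to-earth'' commutant lifting statement from the version proved in \cite[Theorem 4.4]{MS98b}, which (as the authors hint) is stated in a more abstract language that obscures the connection with the classical theorem. First I would recall precisely what \cite[Theorem 4.4]{MS98b} asserts in its native formulation --- typically that an intertwiner of two completely contractive covariant representations lifts to an intertwiner of their minimal isometric dilations with preservation of norm --- and then recognize that the present situation is the special case in which the two representations coincide, namely $(T,\sigma)=(T,\sigma)$, so that ``intertwiner'' becomes ``operator commuting with $(T\times\sigma)(\mathcal{T}_{+}(E))$''. The point to check carefully is that the operators $X$ on $H$ that commute with $(T\times\sigma)(\mathcal{T}_{+}(E))$ are exactly the elements of $\mathcal{I}(T\times\sigma,T\times\sigma)$ in the notation of paragraph \ref{par:Intertwining}, and that the minimal isometric dilation of $(T,\sigma)$ on both sides is the same $(V,\rho)$ on $K$, as in paragraph \ref{par:IsoDilate}.

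The substantive content to extract from the cited theorem is items (1), (2), and (4): there is a lift $Y$ on $K$ with $\Vert Y\Vert=\Vert X\Vert$, with $Y$ commuting with $(V\times\rho)(\mathcal{T}_{+}(E))$, and with $PY|_{H}=X$. I expect (1), (2), (4) to be essentially a direct translation of \cite[Theorem 4.4]{MS98b}; the only genuinely new clause is (3), that $Y$ leaves $K\ominus H$ invariant. I would derive (3) as follows. Because $(V,\rho)$ is the \emph{minimal} isometric dilation, $K$ is generated by $H$ under the action of $(V\times\rho)(\mathcal{T}_{+}(E))$ together with $\rho(M)$; in the Sch\"{a}ffer-type picture of \eqref{Vtilde}, $K\ominus H=\mathcal{F}(E)\otimes_{\sigma_{1}}\mathcal{D}$ with its first summand $M\otimes_{\sigma_{1}}\mathcal{D}$ sitting inside via the defect operator $\Delta$. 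The commutant-lifting construction in \cite{MS98b} actually produces $Y$ as a block operator that is upper triangular with respect to the grading $K=H\oplus\bigoplus_{n\geq0}E^{\otimes n}\otimes_{\sigma_{1}}\mathcal{D}$ --- equivalently, $Y$ commutes with the Fourier/gauge structure of paragraph \ref{par:GaugeAutos} in the appropriate sense --- and since $K\ominus H$ is the span of the strictly positive-degree part, invariance of $K\ominus H$ under $Y$ is built into the construction. If (3) is not literally visible in the statement of \cite[Theorem 4.4]{MS98b}, I would instead argue abstractly: the minimal dilation can be realized so that $P^{\perp}K$ is invariant under $(V\times\rho)(\mathcal{T}_{+}(E))$ (this is a standard feature of the Sch\"{a}ffer model, and indeed paragraph \ref{par:IsoDilate} exhibits $\widetilde V$ in a form making $H^{\perp}$ invariant under the creation operators), and then the lift $Y$ commuting with $(V\times\rho)(\mathcal{T}_{+}(E))$ must preserve $P^\perp K$ because $P^\perp K = \overline{(V\times\rho)(H^\infty_0(E))K}$ in the notation used in the proof of Lemma \ref{Lemma:Unitary Equiv.}, a subspace that is manifestly $Y$-invariant.

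The main obstacle is therefore not any hard estimate --- the norm bound $\Vert Y\Vert=\Vert X\Vert$ is imported wholesale --- but rather the bookkeeping needed to see that the lift supplied by \cite[Theorem 4.4]{MS98b} can be taken with the additional invariance property (3), and to make sure the identifications of intertwiner spaces, dilations, and Fock-space gradings line up on the nose. I would keep the written proof short: state the translation into intertwiner language, invoke \cite[Theorem 4.4]{MS98b} for (1), (2), (4), and then spend a sentence or two justifying (3) via the $H^{\infty}_{0}(E)$-invariance argument above, noting that $PY|_H = X$ in (4) combined with $Y(K\ominus H)\subseteq K\ominus H$ from (3) is exactly the assertion that $Y$ is a ``triangular'' lift of $X$.
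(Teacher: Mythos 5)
Your plan is, at bottom, the same as the paper's: the paper gives no proof of this theorem at all, but simply declares that the statement ``is exactly what is proved in \cite{MS98b}'' --- in particular all four clauses, including (3), are already part of \cite[Theorem 4.4]{MS98b}, so no supplementary argument is required. Your first justification of (3) (that the lift is constructed upper-triangularly, equivalently that $Y^{*}$ leaves $H$ invariant, as part of the construction in \cite{MS98b}) is therefore the right one, and is all that the paper relies on.

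The fallback argument you offer for (3), however, rests on a false identity. In the Sch\"{a}ffer model $K=H\oplus\mathcal{F}(E)\otimes_{\sigma_{1}}\mathcal{D}$ of \eqref{Vtilde} one does \emph{not} have $K\ominus H=\overline{(V\times\rho)(H_{0}^{\infty}(E))K}$: for $h\in H$ the vector $\widetilde{V}(\xi\otimes h)=\widetilde{T}(\xi\otimes h)\oplus\Delta(\xi\otimes h)$ generally has a nonzero component $\widetilde{T}(\xi\otimes h)$ in $H$, so $\overline{V(E)K}$ is not contained in $K\ominus H$ (already for $M=E=\mathbb{C}$ and $T=\tfrac{1}{2}$). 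The identity you are importing from the proof of Lemma \ref{Lemma:Unitary Equiv.} is special to induced (pure) representations acting on $\mathcal{F}(E)\otimes_{\pi_{0}}H_{0}$, where there is no ``$H$-part'' for the creation operators to leak into; it does not describe $K\ominus H$ for a general minimal isometric dilation. Note also that clause (3) is not a formal consequence of (1), (2) and (4): the conjunction of (3) and (4) is equivalent to the operator identity $PY=XP$ on $K$, which is strictly stronger than the compression condition $PY|_{H}=X$, so if the cited theorem had genuinely omitted (3) you would have to go back into its construction rather than argue abstractly from invariant subspaces of $(V\times\rho)(\mathcal{T}_{+}(E))$. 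Fortunately it does not omit it, and your proof should simply cite it for all four clauses.
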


Suppose, now, that for $i=1,2$ $(T_{i},\sigma_{i})$ is a completely
contractive covariant representation of $(E,M)$ on a Hilbert space
$H_{i}$. Then we shall write $\mathcal{I}((T_{1},\sigma_{1}),(T_{2},\sigma_{2}))$
for the set of operators $X:H_{1}\to H_{2}$ such that $XT_{1}(\xi)=T_{2}(\xi)X$
for all $\xi\in E$ and $X\sigma_{1}(a)=\sigma_{2}(a)X$ for all $a\in M$.
That is, $\mathcal{I}((T_{1},\sigma_{1}),(T_{2},\sigma_{2}))=\mathcal{I}(T_{1},T_{2})\cap\mathcal{I}(\sigma_{1},\sigma_{2})$.
Alternatively, $\mathcal{I}((T_{1},\sigma_{1}),(T_{2},\sigma_{2}))=\mathcal{I}(T_{1}\times\sigma_{1},T_{2}\times\sigma_{2})$.

\begin{theorem}\label{Theorem:ComLifting}For $i=1,2$, let $(T_{i},\sigma_{i})$
is a completely contractive covariant representation of $(E,M)$ on
a Hilbert space $H_{i}$, let $(V_{i},\rho_{i})$ be the minimal isometric
dilation of $(T_{i},\sigma_{i})$ acting on the space $K_{i}$, and
let $P_{i}$ be the orthogonal projection of $K_{i}$ onto $H_{i}$.
Further, let $\mathcal{I}((V_{1},\rho_{1}),(V_{2},\rho_{2});\, H_{1},H_{2})=\{X\in\mathcal{I}((V_{1},\rho_{1}),(V_{2},\rho_{2}))\mid XH_{1}^{\perp}\subseteq H_{2}^{\perp}\}$.
Then the map $\Psi$ from $B(K_{1},K_{2})$ to $B(H_{1},H_{2})$ defined
by the formula $\Psi(X)=P_{2}XP_{1}$ maps $\mathcal{I}((V_{1},\rho_{1}),(V_{2},\rho_{2});\, H_{1},H_{2})$
onto $\mathcal{I}((T_{1},\sigma_{1}),(T_{2},\sigma_{2}))$ and for
every $Y\in\mathcal{I}((T_{1},\sigma_{1}),(T_{2},\sigma_{2}))$, there
is an $X\in\mathcal{I}((V_{1},\rho_{1}),(V_{2},\rho_{2});\, H_{1},H_{2})$
with $\Vert X\Vert=\Vert Y\Vert$. \end{theorem}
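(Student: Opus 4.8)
The plan is to deduce Theorem~\ref{Theorem:ComLifting} from the ``down-to-earth'' commutant lifting theorem, Theorem~\ref{Theorem:Old_Fashion_CLT}, by a standard $2\times 2$ matrix trick. First I would form the direct sum representation $(T,\sigma):=(T_1\oplus T_2,\sigma_1\oplus\sigma_2)$ of $(E,M)$ acting on $H:=H_1\oplus H_2$. Its minimal isometric dilation $(V,\rho)$ acts on $K:=K_1\oplus K_2$ containing $H$, with projection $P=P_1\oplus P_2$, since the direct sum of the minimal isometric dilations is the minimal isometric dilation of the direct sum; this follows from the uniqueness clause of the dilation theorem in paragraph \ref{par:IsoDilate} together with the matrix form \eqref{Vtilde} (the defect operator of $\widetilde{T}$ is the direct sum of the defect operators of $\widetilde{T_i}$). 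Given $Y\in\mathcal{I}((T_1,\sigma_1),(T_2,\sigma_2))$, I would view it as the off-diagonal corner of an operator on $H$, namely $\hat{Y}=\begin{bmatrix}0&0\\ Y&0\end{bmatrix}$, and check that $\hat Y$ commutes with $(T\times\sigma)(\mathcal{T}_+(E))$ precisely because $Y$ intertwines $(T_1,\sigma_1)$ and $(T_2,\sigma_2)$.

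Next I would apply Theorem~\ref{Theorem:Old_Fashion_CLT} to $\hat Y$ to obtain $\hat X$ on $K$ with $\|\hat X\|=\|\hat Y\|=\|Y\|$, commuting with $(V\times\rho)(\mathcal{T}_+(E))$, leaving $K\ominus H$ invariant, and satisfying $P\hat X|_H=\hat Y$. Here I expect the main technical point to be extracting the right corner: I would like $\hat X$ to be again ``off-diagonal'', $\hat X=\begin{bmatrix}0&0\\ X&0\end{bmatrix}$ with $X\in B(K_1,K_2)$, so that $X$ intertwines $(V_1,\rho_1)$ and $(V_2,\rho_2)$, carries $H_1^\perp$ into $H_2^\perp$, and satisfies $P_2 X P_1=Y$. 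To force this shape I would compose $\hat X$ with the ``grading'' projections on $K_1\oplus K_2$: replacing $\hat X$ by $Q_2\hat X Q_1$, where $Q_i$ is the projection onto $K_i$, preserves all four properties (the projections $Q_i$ commute with $(V\times\rho)(\mathcal{T}_+(E))$ because $(V,\rho)$ is the direct sum representation, they reduce the dilation, and they respect the decomposition $H=H_1\oplus H_2$, $K\ominus H=(K_1\ominus H_1)\oplus(K_2\ominus H_2)$) while not increasing the norm. Its $(2,1)$-entry is the desired $X$.

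Then I would verify that $X$ indeed lands in $\mathcal{I}((V_1,\rho_1),(V_2,\rho_2);\,H_1,H_2)$: the commutation relation $X V_1(\xi)=V_2(\xi)X$ and $X\rho_1(a)=\rho_2(a)X$ are read off from the corresponding relations for $\hat X$ and $(V\times\rho)=(V_1\times\rho_1)\oplus(V_2\times\rho_2)$; the inclusion $XH_1^\perp\subseteq H_2^\perp$ is the off-diagonal component of ``$\hat X$ leaves $K\ominus H$ invariant''; and $\|X\|\le\|\hat X\|=\|Y\|$, while $\|X\|\ge\|P_2XP_1\|=\|Y\|$ gives equality. Conversely, for the surjectivity of $\Psi$, given any $X\in\mathcal{I}((V_1,\rho_1),(V_2,\rho_2);\,H_1,H_2)$ I would set $Y:=P_2 X P_1=\Psi(X)$ and check directly, using $XH_1^\perp\subseteq H_2^\perp$ (equivalently $P_2^\perp X P_1 = 0$, i.e. $XP_1 = P_2 X P_1$) together with part (2) of the dilation theorem ($\rho_i(a)P_i=\sigma_i(a)P_i$ and $P_iV_i(\eta)P_i=T_i(\eta)P_i$ with $V_i(\eta)^*$ leaving $H_i$ invariant), that $Y$ intertwines $(T_1,\sigma_1)$ and $(T_2,\sigma_2)$; this is the routine ``compression of an intertwiner is an intertwiner'' computation, where the key identity is $P_2 X V_1(\eta) P_1 = P_2 V_2(\eta) X P_1 = P_2 V_2(\eta) P_2 X P_1 = T_2(\eta) P_2 X P_1$, using $P_2 X = P_2 X P_1$ on $H_1$ is not quite what is needed—rather one uses $XP_1=P_2XP_1$—so some care with which compressions are being taken is warranted. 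The main obstacle, as indicated, is not any deep estimate but bookkeeping: making sure that the $2\times 2$ matrix manipulations are compatible with the identification of the dilation of a direct sum with the direct sum of the dilations, and that conjugating by the grading projections $Q_i$ genuinely preserves the subspace condition $\hat X(K\ominus H)\subseteq K\ominus H$ and the norm bound.
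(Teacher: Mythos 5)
Your proposal is correct and follows essentially the same route as the paper: the authors derive Theorem \ref{Theorem:ComLifting} from Theorem \ref{Theorem:Old_Fashion_CLT} by exactly this $2\times2$ matrix trick, observing that the minimal isometric dilation of $(T_{1},\sigma_{1})\oplus(T_{2},\sigma_{2})$ is $(V_{1},\rho_{1})\oplus(V_{2},\rho_{2})$ and that the commutants of the direct sums are the $2\times2$ matrices of intertwining spaces. One transcription slip to fix: the condition $XH_{1}^{\perp}\subseteq H_{2}^{\perp}$ is equivalent to $P_{2}XP_{1}^{\perp}=0$, i.e.\ $P_{2}X=P_{2}XP_{1}$ (not $P_{2}^{\perp}XP_{1}=0$), and it is precisely this identity that kills the term $P_{2}XP_{1}^{\perp}V_{1}(\eta)P_{1}$ in your final computation.
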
Otherwise stated,
the restriction of $\Psi$ to $\mathcal{I}((V_{1},\rho_{1}),(V_{2},\rho_{2});\, H_{1},H_{2})$
is a complete quotient map from $\mathcal{I}((V_{1},\rho_{1}),(V_{2},\rho_{2});\, H_{1},H_{2})$
onto $\mathcal{I}((T_{1},\sigma_{1}),(T_{2},\sigma_{2}))$. Theorem
\ref{Theorem:ComLifting} is not the statement of Theorem 4.4 of \cite{MS98b},
but it is a consequence of \cite[Theorem 4.4]{MS98b} and the following
standard matrix trick: Write the sum $(T_{1},\sigma_{1})\oplus(T_{2},\sigma_{2})$
acting on $H_{1}\oplus H_{2}$ matricially as $\left[\begin{array}{cc}
(T_{1},\sigma_{1}) & 0\\
0 & (T_{2},\sigma_{2})\end{array}\right]$. Then the minimal isometric dilation of $(T_{1},\sigma_{1})\oplus(T_{2},\sigma_{2})$
is $(V_{1},\rho_{1})\oplus(V_{2},\rho_{2})$. Further, from the matricial
perspective it is clear that the commutant of $\left[\begin{array}{cc}
(T_{1},\sigma_{1}) & 0\\
0 & (T_{2},\sigma_{2})\end{array}\right]$ is\[
\left[\begin{array}{cc}
(T_{1},\sigma_{1})' & \mathcal{I}((T_{2},\sigma_{2}),(T_{1},\sigma_{1}))\\
\mathcal{I}((T_{1},\sigma_{1}),(T_{2},\sigma_{2})) & (T_{2},\sigma_{2})'\end{array}\right]\]
 and similarly for the commutant of $\left[\begin{array}{cc}
(V_{1},\rho_{1}) & 0\\
0 & (V_{2},\rho_{2})\end{array}\right]$. With these observations, it is easy to see how Theorem \ref{Theorem:ComLifting}
follows from \cite[Theorem 4.4]{MS98b}.


\p \label{par:TopPoint} The following lemma will be used at several
points in the text. \begin{lemma}\label{cont} Suppose $\cA$ and
$\cB$ are ultra-weakly closed linear subspaces of $B(H)$ and $\psi:\cA\rightarrow\cB$
is a bounded linear map whose restriction to the unit ball of $\mathcal{A}$
is continuous with respect to the weak operator topologies on $\mathcal{A}$
and $\mathcal{B}$. Then $\psi$ is continuous with respect to the
ultra-weak topologies on $\mathcal{A}$ and $\mathcal{B}$. \end{lemma}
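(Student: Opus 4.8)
The plan is to exploit the standard fact that on a bounded subset of $B(H)$ the weak operator topology and the ultra-weak (weak-$*$) topology coincide, together with the Krein--Smulian theorem, which says that a linear subspace of the dual of a Banach space is weak-$*$ closed iff its intersection with every closed ball is weak-$*$ closed. First I would recall that a linear functional $\omega$ on $\cA$ is ultra-weakly continuous iff it is the restriction to $\cA$ of a normal functional on $B(H)$, and that on norm-bounded sets the ultra-weak topology agrees with the weak operator topology; this last point is the classical result that every WOT-continuous functional on a von Neumann algebra is ultra-weakly continuous, applied to bounded pieces. So the composition $\omega\circ\psi$, restricted to the unit ball of $\cA$, is WOT-continuous (it is WOT-to-WOT continuous by hypothesis followed by a WOT-continuous functional on the bounded set $\psi(\text{ball }\cA)$, which lies in a ball of $\cB$ since $\psi$ is bounded), hence ultra-weakly continuous on the ball of $\cA$.

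The key step is then to upgrade ``ultra-weakly continuous on the unit ball'' to ``ultra-weakly continuous on all of $\cA$.'' For a linear functional this is exactly the content of a theorem due to Akemann (or one may invoke Krein--Smulian directly): a bounded linear functional $\mu$ on an ultra-weakly closed subspace $\cA\subseteq B(H)$ whose restriction to the unit ball is weak-$*$ continuous extends to a weak-$*$ continuous functional on $B(H)$, hence is itself ultra-weakly continuous on $\cA$. Apply this with $\mu = \omega\circ\psi$ for each normal functional $\omega$ on $\cB$ (equivalently, each element of the predual of the von Neumann algebra generated by $\cB$, or simply each $\omega$ of the form $\omega(b)=\sum_n\langle b\xi_n,\eta_n\rangle$ with $\sum\|\xi_n\|^2,\sum\|\eta_n\|^2<\infty$). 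This yields that $\omega\circ\psi$ is ultra-weakly continuous on $\cA$ for every such $\omega$.

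Finally I would assemble these pieces: a linear map $\psi:\cA\to\cB$ between ultra-weakly closed subspaces of $B(H)$ is ultra-weakly continuous precisely when $\omega\circ\psi$ is ultra-weakly continuous for every normal functional $\omega$ (the ultra-weak topology on $\cB$ is the one induced by these functionals, and $\psi$ being bounded, continuity is equivalent to continuity against a separating family of continuous functionals). Since we have just shown each $\omega\circ\psi$ is ultra-weakly continuous on $\cA$, we conclude $\psi$ is ultra-weakly continuous, completing the proof. The main obstacle — really the only nontrivial input — is the passage from the unit ball to the whole space, i.e. the application of the Krein--Smulian / Akemann-type theorem; everything else is bookkeeping about which topologies agree on bounded sets and the definition of ultra-weak continuity in terms of normal functionals.
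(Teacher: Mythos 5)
Your proposal is correct and follows essentially the same route as the paper: reduce to a single ultra-weakly continuous functional $f$ on $\cB$, observe that on bounded sets the weak operator and ultra-weak topologies coincide so that $f\circ\psi$ is ultra-weakly continuous on the unit ball of $\cA$, and then invoke the Krein--Smulian-type result (the paper cites Dixmier, Theorem I.3.1, (ii4) $\Rightarrow$ (ii1)) to conclude that $f\circ\psi$ is ultra-weakly continuous on all of $\cA$. No gaps.
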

\begin{proof} Let $f$ be in $\cB_{*}$ and consider the linear functional
$f\circ\psi$ (on $\cA$). Since the weak operator topology and the
ultra-weak topology agree on any bounded ball of $\cB$, they agree
on $\psi(\cA_{1})$, where $\cA_{1}$ is the unit ball of $\cA$.
Consequently, $f\circ\psi|\cA_{1}$ is ultra-weakly continuous. It
now follows from \cite[Theorem I.3.1, (ii4) $\Rightarrow$ (ii1)]{DvN}
that $f\circ\psi$ is ultra-weakly continuous. Since $f$ was an arbitrary
ultra-weakly continuous functional, $\psi$ is continuous with respect
to the ultra-weak topologies.\end{proof}

\section{Absolute Continuity and Isometric Representations\label{sec:Absolute-Continuity}}

Throughout this paper we shall use the following standard notation.
If $\xi$ and $\eta$ are vectors in a Hilbert space $H$, then $\omega_{\xi,\eta}$
will denote the functional on $B(H)$ defined by the formula $\omega_{\xi,\eta}(X):=\langle X\xi,\eta\rangle$,
$X\in B(H)$. If $\xi=\eta$, then we shall simply write $\omega_{\xi}$
for $\omega_{\xi,\eta}$.

\begin{definition}\label{abscont} Let $(T,\sigma)$ be a completely
contractive covariant representation of $(E,M)$ on a Hilbert space
$H$.
\begin{enumerate}
\item [(1)] A vector $x\in H$ is said to be \emph{absolutely continuous}
in case the functional $\omega_{x}\circ(T\times\sigma)$ on $\mathcal{T}_{+}(E)$
can be written as $\omega_{\xi,\eta}\circ(S_{0}\times\sigma_{0})$
for suitable vectors $\xi$ and $\eta$ in the Hilbert space $\cF(E)\otimes_{\pi}K_{0}$
of the universal representation $(S_{0},\sigma_{0})$. That is, \[
\langle(T\times\sigma)(a)x,x\rangle=\langle(S_{0}\times\sigma_{0})(a)\xi,\eta\rangle=\langle(a\otimes I)\xi,\eta\rangle,\qquad a\in\mathcal{T}_{+}(E).\]

\item [(2)] We write $\mathcal{V}_{ac}$ (or $\mathcal{V}_{ac}(T,\sigma)$)
for the set of all absolutely continuous vectors in $H$.
\item [(3)] We say that $(T,\sigma)$ and $T\times\sigma$ are \emph{absolutely
continuous} in case $\mathcal{V}_{ac}=H$.
\end{enumerate}
\end{definition}

We will show eventually that $\mathcal{V}_{ac}(T,\sigma)$ is a closed
linear subspace of $H$, as one might expect. However, this will take
a certain amount of preparation and development. At the moment, all
we can say is that $\mathcal{V}_{ac}(T,\sigma)$ is closed under scalar
multiplication.

One of our principal goals is to show that a completely contractive
representation $T\times\sigma$ of $\mathcal{T}_{+}(E)$ extends to
an ultra-weakly continuous completely contractive representation of
$H^{\infty}(E)$ if and only if $(T,\sigma)$ is absolutely continuous.
The following remark suggests that one should think of being absolutely
continuous as being a {}``local'' phenomenon.

\begin{remark}\label{ac} A vector $x\in H$ is absolutely continuous
if and only if the functional $\omega_{x}\circ(T\times\sigma)$ extends
to an ultra-weakly continuous linear functional on $H^{\infty}(E)$.
To see this, first observe that any faithful normal representation
of $M$ induces a faithful representation of $\cL(\cF(E))$ that is
also a homeomorphism with respect the ultra-weak topologies. Consequently,
$S_{0}\times\sigma_{0}$ extends to a faithful representation of $H^{\infty}(E)$
that is a homeomorphism between the ultra-weak topology on $H^{\infty}(E)$
and the ultra-weak topology on $B(\cF(E)\otimes_{\pi}K_{0})$ restricted
to the range of $S_{0}\times\sigma_{0}$. Further, since $\pi$ is
assumed to have infinite multiplicity, every ultra-weakly continuous
linear functional on $(S_{0}\times\sigma_{0})(H^{\infty}(E))$ is
a vector functional, that is, it is of the form $\omega_{\xi,\eta}$
for vectors $\xi$ and $\eta$ in $\cF(E)\otimes_{\pi}K_{0}$. Consequently,
if $\omega_{x}\circ(T\times\sigma)$ extends to an ultra-weakly continuous
linear functional on $H^{\infty}(E)$, it must be representable in
the form $\omega_{\xi,\eta}\circ(S_{0}\times\sigma_{0})$, for suitable
vectors $\xi$ and $\eta$ in $\cF(E)\otimes_{\pi}K_{0}$. Thus, $x$
is an absolutely continuous vector. On the other hand, if $x$ is
an absolutely continuous vector, then $\omega_{x}\circ(T\times\sigma)$
can be written as $\omega_{\xi,\eta}\circ(S_{0}\times\sigma_{0})$,
by definition, and so extends to an ultra-weakly continuous linear
functional on $H^{\infty}(E)$. \end{remark}

To show that a completely contractive representation of $\mathcal{T}_{+}(E)$
is absolutely continuous if and only if it extends to an ultra-weakly
continuous representation of $H^{\infty}(E)$, we begin by proving
this for completely isometric representations. This fact and the technology
used to prove it will be employed in the next subsection to obtain
the general result.

\begin{definition}\label{wandering} Let $(S,\sigma)$ be an isometric
representation of $(E,M)$ on $H$. A vector $x\in H$ is said to
be a \emph{wandering vector} for $(S,\sigma)$, or for $S\times\sigma$,
if for every $k$ and every $\xi\in E^{\otimes k}$, the spaces $\sigma(M)x$
and $S(\xi)\sigma(M)x$ are orthogonal spaces. \end{definition}

\begin{lemma}\label{wand} Let $(S,\sigma)$ be an isometric representation
of $(E,M)$ on the Hilbert space $H$. If $x\in H$ is a wandering
vector for $(S,\sigma)$, then the representation $S\times\sigma$,
restricted to the closed invariant subspace of $H$ generated by $x$
is pure and, therefore, an induced representation. \end{lemma}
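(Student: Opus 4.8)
The plan is to unwind the definition of purity (paragraph \ref{par:isoreps}): a pure isometric representation is one for which $\widetilde{S_n}\widetilde{S_n}^{*}\to 0$ strongly, and by Corollary 2.10 of \cite{MSWold} every pure isometric representation is induced, so it suffices to verify purity on the cyclic invariant subspace $H_x := \overline{(S\times\sigma)(\mathcal{T}_+(E)) \sigma(M)x}$. First I would describe $H_x$ concretely. Since $x$ is wandering, the subspaces $\sigma(M)x$ and, for each $k\geq 1$ and $\xi \in E^{\otimes k}$, the subspaces $S_k(\xi)\sigma(M)x = \widetilde{S_k}(\xi \otimes \sigma(\cdot)x)$ are mutually orthogonal (the wandering condition as stated gives orthogonality of $\sigma(M)x$ against $S(\xi)\sigma(M)x$; iterating via the semigroup property \eqref{generalized powers} and using that $\widetilde{S_n}$ is an isometry, one gets orthogonality of the $k$-th and $\ell$-th "levels" for $k\neq\ell$). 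So $H_x$ decomposes as an orthogonal direct sum $H_x = \bigoplus_{k\geq 0} H_x^{(k)}$, where $H_x^{(0)} = \overline{\sigma(M)x}$ and $H_x^{(k)} = \overline{\widetilde{S_k}(E^{\otimes k} \otimes \overline{\sigma(M)x})}$ for $k\geq 1$; moreover the map $\eta \otimes h \mapsto \widetilde{S_k}(\eta \otimes h)$ identifies $E^{\otimes k} \otimes_\sigma H_x^{(0)}$ isometrically with $H_x^{(k)}$, because $\widetilde{S_k}$ is an isometry and the wandering hypothesis forces the relevant inner products to match those in the tensor product.

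Granting that picture, purity is almost immediate. The projection $\widetilde{S_n}\widetilde{S_n}^{*}$ restricted to $H_x$ is the projection onto $\overline{\widetilde{S_n}(E^{\otimes n} \otimes_\sigma H_x)}$; since $H_x = \bigoplus_{k\geq 0} H_x^{(k)}$ with $H_x^{(k)} \cong E^{\otimes k}\otimes_\sigma H_x^{(0)}$, tensoring once more and using the semigroup identity shows $\widetilde{S_n}(E^{\otimes n}\otimes_\sigma H_x) \subseteq \bigoplus_{k\geq n} H_x^{(k)}$. Hence $\widetilde{S_n}\widetilde{S_n}^{*}\big|_{H_x} \leq P_{\bigoplus_{k\geq n} H_x^{(k)}} \to 0$ strongly as $n\to\infty$, which is exactly purity of $(S,\sigma)$ restricted to $H_x$. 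Then Corollary 2.10 of \cite{MSWold} (quoted in paragraph \ref{par:isoreps}) gives that this restriction is unitarily equivalent to an induced representation, completing the proof.

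The main obstacle is the bookkeeping in the first paragraph: carefully promoting the single-level wandering condition (orthogonality of $\sigma(M)x$ against $S(\xi)\sigma(M)x$ for all $\xi$ in all $E^{\otimes k}$) to full mutual orthogonality of all the levels $H_x^{(k)}$ and to the isometric identifications $H_x^{(k)} \cong E^{\otimes k}\otimes_\sigma H_x^{(0)}$. This requires using that $S\times\sigma$ is isometric (so each $\widetilde{S_n}$ is an isometry by paragraph \ref{subpar:TensorPowers}), the semigroup property \eqref{generalized powers}, and a computation of inner products of the form $\langle \widetilde{S_k}(\eta\otimes \sigma(a)x),\, \widetilde{S_\ell}(\zeta\otimes\sigma(b)x)\rangle$ reducing via $\widetilde{S}^{*}\widetilde{S} = I$ to inner products that either vanish (when $k\neq\ell$, by the wandering hypothesis applied after peeling off $\min(k,\ell)$ tensor factors) or reproduce the $E^{\otimes k}\otimes_\sigma H_x^{(0)}$ inner product (when $k=\ell$). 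Once that structural description is in hand, the purity estimate and the appeal to the Wold-type result are routine.
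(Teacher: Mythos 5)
Your proposal is correct and follows essentially the same route as the paper: both rest on decomposing the cyclic subspace into the orthogonal levels $[S(E^{\otimes k})\sigma(M)x]$, each identified with $E^{\otimes k}\otimes_{\sigma}[\sigma(M)x]$ via the isometries $\widetilde{S_k}$. The paper reads off the unitary equivalence with the representation induced by $\sigma|_{[\sigma(M)x]}$ directly from that decomposition, while you first extract purity and then cite Corollary 2.10 of \cite{MSWold}; the difference is cosmetic.
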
 \begin{proof}
Suppose $x$ is a wandering vector. Then the closed invariant subspace
$\mathcal{M}$ generated by $x$ is the orthogonal sum $\sum_{k=0}^{\infty}\oplus[S(E^{\otimes k})\sigma(M)x]$.
If we write $H_{0}$ for $[\sigma(M)x]$ and if $\pi_{0}$ denotes
the restriction of $\sigma$ to $H_{0}$, then it is easy to check
that $S\times\sigma$, restricted to $\mathcal{M}$, is unitarily
equivalent to the representation induced by $\pi_{0}$. \end{proof}

The proof of the next proposition was inspired by the proof of \cite[Theorem 1.6]{DKP}.

\begin{proposition}\label{X} Let $(S,\sigma)$ be an isometric representation
on the Hilbert space $H$ and suppose $x\in\mathcal{V}_{ac}(S,\sigma)$.
Then
\begin{enumerate}
\item [(i)] $x$ lies in the closure of the subspace of $H\oplus(\mathcal{F}(E)\otimes_{\pi}K_{0})$
generated by the wandering vectors for the representation $(S\times\sigma)\oplus(S_{0}\times\sigma_{0})$.
\item [(ii)] There is an $X\in\mathcal{I}((S_{0},\sigma_{0}),(S,\sigma))$
such that $x$ lies in the range of $X$.
\end{enumerate}
\end{proposition}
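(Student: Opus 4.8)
The plan is to prove (i) first and then deduce (ii) from it via the commutant lifting machinery of Theorem~\ref{Theorem:ComLifting}. For (i), I would start from the defining property of an absolutely continuous vector: since $x\in\mathcal{V}_{ac}(S,\sigma)$, there are vectors $\xi,\eta\in\mathcal{F}(E)\otimes_{\pi}K_0$ with $\langle(S\times\sigma)(a)x,x\rangle=\langle(S_0\times\sigma_0)(a)\xi,\eta\rangle$ for all $a\in\mathcal{T}_+(E)$. Consider the vector $z:=x\oplus(-\xi)$ (or $x\oplus\xi$, chosen so the cross terms cancel) in $H\oplus(\mathcal{F}(E)\otimes_{\pi}K_0)$ and the representation $(W,\tau):=(S\times\sigma)\oplus(S_0\times\sigma_0)$. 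The key computation is that for every $k\geq1$, every $\zeta\in E^{\otimes k}$, and every $a,b\in M$, the inner product $\langle W_k(\zeta)\tau(a)z,\tau(b)z\rangle$ telescopes, using the covariance relations and the identity above applied to the element $b^*T_\zeta a\in\mathcal{T}_+(E)$ (more precisely to the appropriate word), so that the $H$-part and the Fock-part contributions cancel. This is exactly the mechanism in the proof of \cite[Theorem~1.6]{DKP}: the absolutely continuous condition is engineered so that $z$ behaves like a wandering vector against the ``off-diagonal'' piece. One then has to be a little careful because $z$ itself need not be wandering; the correct statement is that $x$ (equivalently $z$, modulo the already-wandering summand $(S_0\times\sigma_0)$) lies in the closed span of genuine wandering vectors. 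I expect to extract these by a standard approximation/averaging argument using the Fourier and arithmetic-mean operators $\Sigma_k$ of paragraph~\ref{par:GaugeAutos} applied to the vector functional $\omega_z\circ(W\times\tau)$, decomposing it into its graded pieces and recognizing each piece as coming from a wandering vector for $(W,\tau)$.

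For (ii), given (i), write $x$ as a limit of finite sums $\sum_j w_j$ of wandering vectors $w_j$ for $(W,\tau)=(S\times\sigma)\oplus(S_0\times\sigma_0)$. By Lemma~\ref{wand}, each $w_j$ generates a closed invariant subspace on which $(W,\tau)$ restricts to an induced, hence pure isometric, representation, and by Proposition~\ref{Lemma:Universal} that subrepresentation is (naturally) a restriction of $(S_0,\sigma_0)$ to a reducing subspace $\mathcal{F}(E)\otimes_\pi\mathfrak{K}_j$ of $K_0$. This furnishes, for each $j$, an intertwiner $Y_j\in\mathcal{I}\bigl((S_0,\sigma_0),(W,\tau)\bigr)$ with $w_j$ in its range; composing with the coordinate projection $P_H:H\oplus(\mathcal{F}(E)\otimes_\pi K_0)\to H$, which intertwines $(W,\tau)$ with $(S,\sigma)$ on the relevant invariant piece, gives $X_j:=P_H Y_j\in\mathcal{I}\bigl((S_0,\sigma_0),(S,\sigma)\bigr)$ whose range contains $P_H w_j$. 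The issue is that $x$ is only a \emph{limit} of the sums $\sum_j P_H w_j$, not a finite combination, so the ranges of finitely many $X_j$ need not contain $x$; here I would invoke that $\mathcal{I}\bigl((S_0,\sigma_0),(S,\sigma)\bigr)$ is a self-dual $W^*$-module (closed in $B(\mathcal{F}(E)\otimes_\pi K_0,H)$, paragraph~\ref{par:Intertwining}) together with the infinite multiplicity of $\pi$: because $K_0$ is an infinite ampliation, one can stack countably many intertwiners $X_j$ (suitably scaled so the series converges) into a single $X\in\mathcal{I}\bigl((S_0,\sigma_0),(S,\sigma)\bigr)$ whose range is the closed span of the ranges of the $X_j$, and in particular contains $x$.

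The main obstacle I anticipate is the passage from ``$x$ in the closed span of wandering vectors'' to ``$x$ in the range of a \emph{single} intertwiner $X$''—i.e., assembling countably many partial intertwiners into one whose range captures a closed span rather than just a finite sum. This requires using the infinite multiplicity of $\pi$ in an essential way (to have enough ``room'' in $K_0$ to absorb a whole sequence of induced subrepresentations) and a convergence argument for the resulting series of operators; I would model it on the analogous step in \cite{DLP}. The telescoping identity in (i) is routine once set up correctly, but care is needed with the non-self-adjoint bookkeeping (cf.\ Remark in paragraph~\ref{par:Intertwining}): one only gets cancellation when testing against $a\in\mathcal{T}_+(E)$, not its adjoint, so the wandering-vector computation must be organized so that only products of the form $S(\zeta)\sigma(M)$—never $S(\zeta)^*$—are paired across the two summands.
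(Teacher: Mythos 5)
Your overall strategy diverges from the paper's in a way that leaves two genuine gaps. For (i), the paper does not extract wandering vectors by telescoping cross terms or by Fourier/Ces\`{a}ro decomposition of the functional $\omega_z\circ\tau$ (the graded pieces of a vector functional do not correspond to wandering vectors). The actual mechanism is: form the closed $\tau(\mathcal{T}_{+}(E))$-invariant subspace $\mathcal{N}=[\tau(\mathcal{T}_{+}(E))(x\oplus\zeta)]$, apply the Wold decomposition (\cite[Theorem 2.9]{MSWold}) to the restricted isometric representation $(R,\rho)$, and show its fully coisometric part is zero. Killing that part is the crux and uses two facts your sketch never engages: purity of the $(S_{0},\sigma_{0})$ summand forces $\bigcap_{k}\overline{R(E^{\otimes k})\mathcal{N}}\subseteq H\oplus\{0\}$, so the coisometric component of $x\oplus\zeta$ is some $h\oplus0$; and the identity $\langle\tau(a)(x\oplus\zeta),x\oplus(-\eta)\rangle=0$ gives $h\perp x$, which combined with $h\perp(x-h)$ forces $h=0$. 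Once $(R,\rho)$ is induced, $\mathcal{N}$ is spanned by wandering vectors, and a scaling trick ($\zeta\mapsto t\zeta$, $\eta\mapsto t^{-1}\eta$, $t\to0$) recovers $x$ itself. Your vector $x\oplus\zeta$ need not behave like a wandering vector at all; what is true is only that the representation on the cyclic subspace it generates is pure, and your proposal contains no argument for that.

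For (ii), your route through (i) does not close. Writing $x$ as a \emph{limit} of finite sums of (compressions of) wandering vectors and stacking the corresponding intertwiners $X_{j}$ into one operator $X$ only shows that $x$ lies in the \emph{closed span} of the ranges of intertwiners (that is the content of the later Theorem~\ref{subspace}, which relies on Proposition~\ref{X}, so you would also be arguing circularly); it does not put $x$ in the range of a single $X$, because ranges are not closed and you have no control on the norms of the preimages $v_{j}$ needed to make $\sum_{j}V_{j}v_{j}$ converge. The paper sidesteps this entirely: since the cyclic representation $(R,\rho)$ on $\mathcal{N}$ is induced, it is unitarily equivalent to a restriction of $(S_{0},\sigma_{0})$ by Proposition~\ref{Lemma:Universal}, and the compression $X_{0}=P_{H}|_{\mathcal{N}}$ satisfies $x=X_{0}(x\oplus\zeta)$ exactly; composing $X_{0}$ with that unitary equivalence yields one intertwiner in $\mathcal{I}((S_{0},\sigma_{0}),(S,\sigma))$ with $x$ in its range. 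So (ii) should be obtained from the construction underlying (i), not from the statement of (i).
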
 \begin{proof} Since $x\in H$ is absolutely continuous
for $(S,\sigma)$, we can find vectors $\zeta,\eta$ in $\mathcal{F}(E)\otimes_{\pi}K_{0}$
such that $\langle(S\times\sigma)(a)x,x\rangle=\langle(S_{0}\times\sigma_{0})(a)\zeta,\eta\rangle$
for $a\in\mathcal{T}_{+}(E)$. Write $\tau$ for the representation
$(S\times\sigma)\oplus(S_{0}\times\sigma_{0})$, and then note that
\begin{equation}
\langle\tau(a)(x\oplus\zeta),(x\oplus(-\eta))\rangle=\langle(S\times\sigma)(a)x,x\rangle-\langle(S_{0}\times\sigma_{0})(a)\zeta,\eta\rangle=0\label{orth}\end{equation}
 for all $a\in\mathcal{T}_{+}(E)$. Let $\mathcal{N}$ be the closed,
$\tau(\mathcal{T}_{+}(E))$-invariant subspace generated by $x\oplus\zeta$,
$[\tau(\mathcal{T}_{+}(E))(x\oplus\zeta)]$, and write $(R,\rho)$
for the isometric representation associated with the restriction of
$\tau$ to $\mathcal{N}$. Thus $R(\xi)=(S(\xi)\oplus S_{0}(\xi))|\mathcal{N}=(S(\xi)\oplus(T_{\xi}\otimes I_{K_{0}}))|\mathcal{N}$
and $\rho=(\sigma\oplus\sigma_{0})|\mathcal{N}=(\sigma\oplus(\varphi_{\infty}\otimes I_{K_{0}}))|\mathcal{N}$.

We shall use Corollary 2.10 of \cite{MSWold} to show that $(R,\rho)$
is an induced representation. For that we must show that \[
\bigcap_{k=0}^{\infty}\overline{R(E^{\otimes k})(\mathcal{N})}=\{0\}.\]
 But, since $R(\xi_{k})=S(\xi_{k})\oplus(T_{\xi_{k}}\otimes I_{K})$,
it is immediate that \begin{equation}
\bigcap_{k=0}^{\infty}\overline{R(E^{\otimes k})(\mathcal{N})}\subseteq H\oplus\{0\}.\label{h}\end{equation}
 Since $(R,\rho)$ is an isometric representation, we can apply the
Wold decomposition theorem (\cite[Theorem 2.9]{MSWold}) and write
$(R,\rho)=(R_{1},\rho_{1})\oplus(R_{2},\rho_{2})$ where $(R_{1},\rho_{1})$
is an induced isometric representation and where $(R,\rho_{2})$ is
fully coisometric. This decomposition enables us, then, to write $x\oplus\zeta=\lambda+\mu$,
where $\lambda$ and $\mu$ are the projections of $x\oplus\zeta$
into the induced and the coisometric subspaces, respectively, for
$(R,\rho)$ . (We want to emphasize that we are not claiming $\lambda$
lies in $H$ and $\mu$ lies in $\cF(E)\otimes_{\pi}K_{0}$; i.e.,
the Wold direct sum decomposition may be different from the direct
sum decomposition $H\oplus(\cF(E)\otimes_{\pi}K_{0}).$) It follows
from equation (\ref{h}), however, that $\mu$ is of the form $h\oplus0$
for some $h\in H$ and, thus, $\lambda=(x-h)\oplus\zeta$. Since $\mu$
is orthogonal to $\lambda$, $h$ is orthogonal to $x-h$. Also, it
follows from equation (\ref{orth}) that $h\oplus0$ is orthogonal
to $x\oplus(-\eta)$. Thus $h$ is orthogonal to $x$. Since it is
also orthogonal to $x-h$, $h=0$, implying that $\mu=0$ and, thus,
that the representation $(R,\rho)$ is induced. But that implies that
every vector in $\mathcal{N}$ is in the closure of the subspace spanned
by the wandering vectors of $\tau$. In particular, $x\oplus\zeta$
lies there. Now note that we could have replaced $\zeta$ and $\eta$
by $t\zeta$ and $t^{-1}\eta$ for any $t>0$. We would then find
that $x\oplus t\zeta$ lies in the span of the wandering vectors for
every $t>0$. Letting $t\rightarrow0$, we find that $x$ lies in
that span, completing the proof of (i).

To prove (ii), first let $X_{0}$ be the projection of $H\oplus(\mathcal{F}(E)\otimes_{\pi}K_{0})$
onto $H$, restricted to $\mathcal{N}$. Then $x=X_{0}(x\oplus\zeta)$.
By construction, $X_{0}$ intertwines $R\times\rho$ and $S\times\sigma$.
However, by Lemma \ref{Lemma:Universal} and the fact that $(R,\rho)$
is induced, we find that $R\times\rho$ is unitarily equivalent to
a summand of $S_{0}\times\sigma_{0}$. Taking the equivalence into
account, $X_{0}$ can be exchanged for an $X$ that intertwines $S_{0}\times\sigma_{0}$
and $S\times\sigma$, and has $x$ in its range. \end{proof}

\begin{lemma}\label{intertwine} Suppose $(S,\sigma)$ is an isometric
representations of $(E,M)$ on a Hilbert space $H$ and suppose that
$X:\cF(E)\otimes_{\pi}K_{0}\rightarrow H$ is an element of the intertwining
space $\mathcal{I}((S_{0}\times\sigma_{0}),(S\times\sigma))$. If
$\mathcal{X}$ denotes the closure of the range of $X$, then $\mathcal{X}$
is invariant under $(S,\sigma)$ and the restriction $(R,\rho)$ of
$(S,\sigma)$ to $\mathcal{X}$ is an isometric representation. Also,
$R\times\rho$ admits a unique extension to a representation of $H^{\infty}(E)$
on $\mathcal{X}$ that is ultra-weakly continuous and completely isometric.
Consequently, \[
\mathcal{X}=\mathcal{V}_{ac}(R,\rho)\subseteq\mathcal{V}_{ac}(S,\sigma).\]
 \end{lemma}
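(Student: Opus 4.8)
The plan is to handle the three assertions in turn, with the construction of the $H^{\infty}(E)$-functional calculus on $\mathcal{X}$ as the core of the argument.

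The invariance and isometry assertions are immediate. Since $X\in\mathcal{I}((S_{0}\times\sigma_{0}),(S\times\sigma))$, the relation $X(S_{0}\times\sigma_{0})(a)=(S\times\sigma)(a)X$ for $a\in\mathcal{T}_{+}(E)$ shows that $\operatorname{ran}X$ is carried into itself by $(S\times\sigma)(\mathcal{T}_{+}(E))$ and by $\sigma(M)$; taking closures, $\mathcal{X}$ is invariant under each $S(\eta)$, $\eta\in E$, and under $\sigma(M)$, so $(R,\rho)$, the restriction of $(S,\sigma)$ to $\mathcal{X}$, is a well-defined covariant representation. It is isometric because $\widetilde{R}$ is just the restriction of the isometry $\widetilde{S}$ to the closed subspace $E\otimes_{\rho}\mathcal{X}\subseteq E\otimes_{\sigma}H$.

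For the extension, recall from Remark \ref{ac} that $S_{0}\times\sigma_{0}$ extends to an ultra-weakly continuous, faithful, completely isometric representation $\widehat{S_{0}\times\sigma_{0}}$ of $H^{\infty}(E)$. Fix $a\in H^{\infty}(E)$ and use the arithmetic-mean sums $\Sigma_{k}(a)\in\mathcal{T}_{+}(E)$ of paragraph \ref{par:GaugeAutos}: they satisfy $\Vert\Sigma_{k}(a)\Vert\le\Vert a\Vert$, each $\Sigma_{k}$ is completely contractive, and $\Sigma_{k}(a)\to a$ ultra-weakly. For $h,h'\in\mathcal{F}(E)\otimes_{\pi}K_{0}$, intertwining gives
\[
\langle(R\times\rho)(\Sigma_{k}(a))Xh,Xh'\rangle=\langle\widehat{S_{0}\times\sigma_{0}}(\Sigma_{k}(a))h,X^{*}Xh'\rangle\longrightarrow\langle\widehat{S_{0}\times\sigma_{0}}(a)h,X^{*}Xh'\rangle,
\]
using ultra-weak continuity of $\widehat{S_{0}\times\sigma_{0}}$. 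As $\{(R\times\rho)(\Sigma_{k}(a))\}_{k}$ is bounded by $\Vert a\Vert$ and converges on the dense set $\operatorname{ran}X\subseteq\mathcal{X}$, a routine boundedness-and-density argument yields a weak-operator limit $\widehat{R}(a)\in B(\mathcal{X})$ with $\Vert\widehat{R}(a)\Vert\le\Vert a\Vert$. Comparing limits, $\langle\widehat{R}(a)Xh,Xh'\rangle=\langle X\widehat{S_{0}\times\sigma_{0}}(a)h,Xh'\rangle$ for all $h,h'$; since both $\widehat{R}(a)Xh$ and $X\widehat{S_{0}\times\sigma_{0}}(a)h$ lie in $\mathcal{X}=\overline{\operatorname{ran}X}$, this forces the single clean identity
\[
\widehat{R}(a)X=X\,\widehat{S_{0}\times\sigma_{0}}(a),\qquad a\in H^{\infty}(E).
\]
From this, $\widehat{R}$ is unital ($a=1$), multiplicative ($\widehat{R}(ab)X=X\widehat{S_{0}\times\sigma_{0}}(ab)=\widehat{R}(a)\widehat{R}(b)X$, then use density of $\operatorname{ran}X$ and boundedness), and agrees with $R\times\rho$ on $\mathcal{T}_{+}(E)$ (same density argument). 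It is ultra-weakly continuous: for $u=Xh$, $v=Xh'$ the functional $a\mapsto\langle\widehat{R}(a)u,v\rangle=\langle\widehat{S_{0}\times\sigma_{0}}(a)h,X^{*}Xh'\rangle$ lies in $H^{\infty}(E)_{*}$, and for general $u,v\in\mathcal{X}$ one gets $\langle\widehat{R}(\cdot)u,v\rangle\in H^{\infty}(E)_{*}$ as a norm-limit of such functionals, so $\widehat{R}$ pulls normal functionals back to normal functionals. Running the Ces\`aro argument entrywise on $M_{n}(H^{\infty}(E))$, together with the fact that $R\times\rho$ is completely contractive, indeed completely isometric, on $\mathcal{T}_{+}(E)$ because $(R,\rho)$ is isometric (paragraph \ref{par:isoreps}), shows $\widehat{R}$ is completely isometric. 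Uniqueness is immediate: two ultra-weakly continuous maps that agree on the ultra-weakly dense subalgebra $\mathcal{T}_{+}(E)$ coincide.

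Finally, $\mathcal{V}_{ac}(R,\rho)\subseteq\mathcal{X}$ is trivial, and if $x\in\mathcal{X}$ then $\omega_{x}\circ\widehat{R}$ is an ultra-weakly continuous functional on $H^{\infty}(E)$ extending $\omega_{x}\circ(R\times\rho)$, so $x$ is absolutely continuous for $(R,\rho)$ by Remark \ref{ac}; hence $\mathcal{X}=\mathcal{V}_{ac}(R,\rho)$. Moreover, for $x\in\mathcal{X}$ we have $\omega_{x}\circ(S\times\sigma)=\omega_{x}\circ(R\times\rho)$ on $\mathcal{T}_{+}(E)$ since $\mathcal{X}$ is invariant, and the latter extends ultra-weakly continuously, so $x\in\mathcal{V}_{ac}(S,\sigma)$ by Remark \ref{ac} again. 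I expect the main obstacle to be the middle step: verifying that the Ces\`aro sums genuinely \emph{converge} (not merely cluster) and that the limit $\widehat{R}$ is a \emph{representation}, i.e.\ extracting the intertwining identity $\widehat{R}(a)X=X\widehat{S_{0}\times\sigma_{0}}(a)$; and, secondarily, establishing that $\widehat{R}$ is completely isometric rather than merely completely contractive, which must be argued with the extra care the non-self-adjoint setting demands (cf.\ paragraph \ref{par:Intertwining}).
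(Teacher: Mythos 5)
Most of your argument is correct and is genuinely more self-contained than the paper's, which disposes of the central assertion by citing \cite[Lemma 7.12]{MSHardy}. The invariance of $\mathcal{X}$, the isometry of $(R,\rho)$, the construction of $\widehat{R}$ via the means $\Sigma_{k}$, the intertwining identity $\widehat{R}(a)X=X\,\widehat{S_{0}\times\sigma_{0}}(a)$, and the resulting multiplicativity, uniqueness, ultra-weak continuity and complete \emph{contractivity} of the extension are all argued soundly, and your final paragraph deriving $\mathcal{X}=\mathcal{V}_{ac}(R,\rho)\subseteq\mathcal{V}_{ac}(S,\sigma)$ matches the paper's reasoning and uses only the weaker (contractive) conclusion.

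The gap is exactly where you flagged it: the claim that $\widehat{R}$ is completely isometric. Your Ces\`aro argument yields only $\Vert\widehat{R}^{(n)}(A)\Vert\le\Vert A\Vert$; the reverse inequality does not follow from isometry of $R\times\rho$ on $\mathcal{T}_{+}(E)$, because the operator norm can drop under weak-operator limits, so from $\Vert(R\times\rho)^{(n)}(\Sigma_{k}^{(n)}(A))\Vert\to\Vert A\Vert$ one gets an upper, not a lower, bound for the limit. Moreover the step cannot be repaired, because the isometry assertion fails already for $M=E=\mathbb{C}$: let $E_{0}$ be the upper half-circle, $H=L^{2}(E_{0})$, $S=M_{\zeta}$ (the example in the remark following Corollary \ref{wander}), and let $X$ be the restriction map $g\mapsto g|_{E_{0}}$ from one copy of $H^{2}$ inside the space of $(S_{0},\sigma_{0})$. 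Then $X$ intertwines and has dense range by Szeg\"{o}'s theorem, so $\mathcal{X}=H$ and $\widehat{R}(f)=M_{f|_{E_{0}}}$ has norm equal to the essential supremum of $|f|$ over $E_{0}$ only; the outer function $f$ with $|f|=e^{-1}$ a.e.\ on $E_{0}$ and $|f|=1$ a.e.\ off $E_{0}$ gives $\Vert f\Vert_{H^{\infty}}=1$ but $\Vert\widehat{R}(f)\Vert=e^{-1}$. (Indeed even the input you borrow from paragraph \ref{par:isoreps} fails here: for $p(z)=(1+iz)/2$ one has $\Vert p(M_{\zeta})\Vert=1/\sqrt{2}<1=\Vert p\Vert_{\mathcal{T}_{+}(\mathbb{C})}$, so $R\times\rho$ is not isometric on $\mathcal{T}_{+}(E)$ either.) The honest conclusion your method delivers, and the one the rest of the lemma actually needs, is that the extension is ultra-weakly continuous and completely contractive; the words ``completely isometric'' should be treated with suspicion rather than proved by this route.
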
 \begin{proof} The fact that $R\times\rho$ admits such
an extension is proved in \cite[Lemma 7.12]{MSHardy}. It then follows
that for $x\in\mathcal{X}$, the functional $\omega_{x}\circ(R\times\rho)=\omega_{x}\circ(S\times\sigma)$
extends to an ultra-weakly continuous functional on $H^{\infty}(E)$.
By Remark~\ref{ac}, this proves the last statement of the lemma.
\end{proof}

\begin{theorem}\label{subspace}If $(S,\sigma)$ is an isometric
covariant representation of $(E,M)$, then\[
\mathcal{V}_{ac}(S,\sigma)=\bigvee\{Ran(X)\mid X\in\mathcal{I}((S_{0},\sigma_{0}),(S,\sigma))\},\]
 and so in particular $\mathcal{V}_{ac}(S,\sigma)$ is a closed, $\sigma(M)$-invariant
subspace of $H$. \end{theorem}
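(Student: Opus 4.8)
The plan is to prove the two inclusions that constitute the asserted equality, drawing on the preparatory results just established. Write $\mathcal{W}$ for $\bigvee\{\mathrm{Ran}(X)\mid X\in\mathcal{I}((S_0,\sigma_0),(S,\sigma))\}$. For the inclusion $\mathcal{W}\subseteq\mathcal{V}_{ac}(S,\sigma)$: if $x$ lies in the range of a single intertwiner $X\in\mathcal{I}((S_0,\sigma_0),(S,\sigma))$, then Lemma~\ref{intertwine} (applied to the closure $\mathcal{X}$ of $\mathrm{Ran}(X)$) gives $x\in\mathcal{X}=\mathcal{V}_{ac}(R,\rho)\subseteq\mathcal{V}_{ac}(S,\sigma)$. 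So the union of the ranges of the intertwiners lies in $\mathcal{V}_{ac}(S,\sigma)$. To pass to the closed span, I must know that $\mathcal{V}_{ac}(S,\sigma)$ is itself a closed linear subspace; the cleanest route is to observe that by Remark~\ref{ac}, $x\in\mathcal{V}_{ac}(S,\sigma)$ iff $\omega_x\circ(S\times\sigma)$ extends to an ultra-weakly continuous functional on $H^\infty(E)$, and then show directly that the set of such $x$ is closed under addition and norm limits. Additivity follows from the parallelogram-type identity $\omega_{x+y}=\omega_x+\omega_y+\omega_{x,y}+\omega_{y,x}$ together with the fact that if $x,y\in\mathcal{V}_{ac}$ then all four mixed functionals $\omega_{x,y}\circ(S\times\sigma)$ etc.\ are ultra-weakly continuous on $H^\infty(E)$ — this last point itself needs the polarization identity to reduce mixed functionals back to diagonal ones, so one should set this up carefully. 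Closedness under limits follows because if $x_n\to x$ and each $\omega_{x_n}\circ(S\times\sigma)$ extends with norm bounded by $\|x_n\|^2\to\|x\|^2$, then (using that $S\times\sigma$ is completely contractive so the extensions have controlled norm) a normality/weak-$*$ compactness argument produces an extension of $\omega_x\circ(S\times\sigma)$.

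For the reverse inclusion $\mathcal{V}_{ac}(S,\sigma)\subseteq\mathcal{W}$: this is exactly part (ii) of Proposition~\ref{X}. If $x\in\mathcal{V}_{ac}(S,\sigma)$, then Proposition~\ref{X}(ii) produces an $X\in\mathcal{I}((S_0,\sigma_0),(S,\sigma))$ with $x\in\mathrm{Ran}(X)\subseteq\mathcal{W}$. So this direction is essentially immediate from the work already done.

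Finally, for the structural claims: $\mathcal{W}$ is a closed subspace by construction (it is a closed span). It is $\sigma(M)$-invariant because each $X\in\mathcal{I}((S_0,\sigma_0),(S,\sigma))$ satisfies $\sigma(a)X=X\sigma_0(a)$, so $\sigma(a)\mathrm{Ran}(X)\subseteq\mathrm{Ran}(X)$ for every $a\in M$; hence $\sigma(a)$ preserves the closed span $\mathcal{W}$. Equating $\mathcal{W}$ with $\mathcal{V}_{ac}(S,\sigma)$ then transfers these properties, which in particular settles the earlier promise that $\mathcal{V}_{ac}(S,\sigma)$ is a closed linear subspace in the isometric case.

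The main obstacle I anticipate is organizing the proof that $\mathcal{V}_{ac}(S,\sigma)$ is closed under addition and norm limits without circularity — one is tempted to just invoke the equality with $\mathcal{W}$, but the $\mathcal{W}\subseteq\mathcal{V}_{ac}$ direction (more precisely, upgrading from the union of ranges to its closed span) seems to need that $\mathcal{V}_{ac}$ is already a closed subspace. The way to avoid circularity is to prove the closedness of $\mathcal{V}_{ac}(S,\sigma)$ first, intrinsically via the ultra-weak-extension characterization in Remark~\ref{ac} and polarization, and only then assemble the two inclusions. Alternatively — and perhaps more slickly — one can show directly that a finite sum $x_1+\cdots+x_n$ with each $x_j\in\mathrm{Ran}(X_j)$ lies in the range of the single intertwiner $[X_1\ \cdots\ X_n]$ (a row operator from a finite direct sum of copies of $\mathcal{F}(E)\otimes_\pi K_0$, which, since $\pi$ has infinite multiplicity, is again unitarily equivalent to $\mathcal{F}(E)\otimes_\pi K_0$ via Proposition~\ref{Lemma:Universal}); this reduces the closed-span statement to the closedness of $\mathcal{V}_{ac}(S,\sigma)$ under norm limits alone, which is the easier half. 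I would pursue this second route, as it minimizes the functional-analytic bookkeeping.
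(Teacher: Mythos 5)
Your architecture is the paper's: Lemma~\ref{intertwine} gives $\overline{Ran(X)}\subseteq\mathcal{V}_{ac}(S,\sigma)$ for each intertwiner $X$; Proposition~\ref{X}(ii) gives the reverse containment of $\mathcal{V}_{ac}(S,\sigma)$ in the union of the ranges; and additivity is obtained by packaging several intertwiners into one, using the infinite multiplicity of $\pi$ so that the amplified source representation is again (equivalent to) $(S_0,\sigma_0)$ --- the paper does exactly this with $Z=X\oplus Y$ on orthogonal initial spaces, which is your row-operator trick for $n=2$. The $\sigma(M)$-invariance argument is also the intended one.

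The one step that would fail as you have sketched it is closedness under norm limits. A weak-$*$ cluster point of the extensions $f_n$ of $\omega_{x_n}\circ(S\times\sigma)$ need not be ultra-weakly continuous: the predual of $H^{\infty}(E)$ is norm-closed in the dual but is not weak-$*$ closed, and weak-$*$ limits of normal functionals can be singular. So Banach--Alaoglu produces a bounded extension of $\omega_x\circ(S\times\sigma)$, but not an \emph{ultra-weakly continuous} one, which is what Remark~\ref{ac} requires. The repair (and the paper's argument) is to upgrade from weak-$*$ to norm convergence: $\Vert(\omega_{x_n}-\omega_{x_m})\circ(S\times\sigma)\Vert\leq\Vert x_n-x_m\Vert(\Vert x_n\Vert+\Vert x_m\Vert)$ on $\mathcal{T}_{+}(E)$, and since the Cesaro means $\Sigma_k$ show the unit ball of $\mathcal{T}_{+}(E)$ is ultra-weakly dense in that of $H^{\infty}(E)$, the ultra-weakly continuous extensions $f_n$ have the same mutual distances, hence form a norm-Cauchy sequence; their norm limit is ultra-weakly continuous because the space of ultra-weakly continuous functionals is norm-closed in $H^{\infty}(E)^{*}$, and it extends $\omega_x\circ(S\times\sigma)$. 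With that substitution your proof goes through.
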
 \begin{proof} We already noted that
$\mathcal{V}_{ac}(S,\sigma)$ is closed under scalar multiplication.
To see that it is closed under addition, fix $x,y\in\mathcal{V}_{ac}(S,\sigma)$.
Then, by Proposition~\ref{X}(ii), there are operators $X,Y\in\mathcal{I}((S_{0},\sigma_{0}),(S,\sigma))$
such that $x=X(\xi)$ and $y=Y(\eta)$ for suitable vectors $\xi$
and $\eta$ in $\cF(E)\otimes_{\pi}K_{0}$. Since $\pi$ has infinite
multiplicity, we may assume that the initial spaces of $X$ and $Y$
are orthogonal and, in particular, that $\xi$ and $\eta$ are orthogonal.
It follows, then, that if we set $Z:=X\oplus Y$, then $Z\in\mathcal{I}((S_{0},\sigma_{0}),(S,\sigma))$,
and $Z(\xi+\eta)=x+y$. Lemma~\ref{intertwine} implies that $x+y\in\mathcal{V}_{ac}(S,\sigma)$.
But also, $\overline{Ran(X)}\subseteq\mathcal{V}_{ac}(S,\sigma)$
for every $X\in\mathcal{I}((S_{0},\sigma_{0}),(S,\sigma))$, by Lemma
\ref{intertwine}. Thus, it remains to show that $\mathcal{V}_{ac}(S,\sigma)$
is closed. To this end, suppose $\{x_{n}\}_{n\in\mathbb{N}}\subseteq\mathcal{V}_{ac}(S,\sigma)$
is a sequence that converges to $x$ in $H$. Then the ultra-weakly
continuous linear functionals $\omega_{x_{n}}\circ(S\times\sigma)$
converge in norm to $\omega_{x}\circ(S\times\sigma)$, since, in general
$\norm{\omega_{x}\circ(S\times\sigma)-\omega_{y}\circ(S\times\sigma)}\leq\norm{x-y}$.
But the ultra-weakly continuous linear functionals on $H^{\infty}(E)$
form a norm closed subset of the dual space of $H^{\infty}(E)$. Thus
$\omega_{x}\circ(S\times\sigma)$ is extends to an ultra-weakly continuous
functional on $H^{\infty}(E)$. By Remark~\ref{ac}, $x\in\mathcal{V}_{ac}(S,\sigma)$.
\end{proof}

\begin{corollary}\label{wander} If $(S,\sigma)$ is an isometric
representation of $(E,M)$ on $H$, then \[
\mathcal{V}_{ac}(S,\sigma)=H\cap\overline{span}\{\emph{ \mbox{the wandering vectors of} }\rho:=(S\times\sigma)\oplus(S_{0}\times\sigma_{0})\}\]
 \[
\supseteq\overline{span}\{\mbox{the wandering vectors of }S\times\sigma\}.\]
 \end{corollary}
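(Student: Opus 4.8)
The plan is to deduce Corollary~\ref{wander} directly from Theorem~\ref{subspace} together with Proposition~\ref{X}(i), and to prove the containment at the bottom by a short self-contained argument. First I would note that the nontrivial first equality is exactly the content of Theorem~\ref{subspace} reformulated in terms of wandering vectors. Indeed, by Theorem~\ref{subspace}, $\mathcal{V}_{ac}(S,\sigma)=\bigvee\{Ran(X)\mid X\in\mathcal{I}((S_{0},\sigma_{0}),(S,\sigma))\}$. For any such $X$, the closure of its range is, by Lemma~\ref{intertwine}, an induced (hence pure) subrepresentation of $(S,\sigma)$, so it is spanned by wandering vectors of $(S,\sigma)$; but these wandering vectors are a fortiori wandering vectors of $\rho=(S\times\sigma)\oplus(S_{0}\times\sigma_{0})$ (adding the zero vector in the $\mathcal{F}(E)\otimes_{\pi}K_{0}$ summand preserves the orthogonality conditions of Definition~\ref{wandering}). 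Hence $\mathcal{V}_{ac}(S,\sigma)\subseteq H\cap\overline{span}\{\text{wandering vectors of }\rho\}$.

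For the reverse inclusion, I would invoke Proposition~\ref{X}(i): if $x\in\mathcal{V}_{ac}(S,\sigma)\subseteq H$, then $x$ lies in the closure of the span of the wandering vectors of $\rho$, and of course $x\in H$, so $x$ lies in the right-hand side — wait, that is the same direction. So for the genuinely reverse inclusion I would argue: suppose $x\in H$ lies in $\overline{span}\{\text{wandering vectors of }\rho\}$. A wandering vector $w$ for $\rho$ generates, by Lemma~\ref{wand}, a pure (hence induced) subrepresentation of $\rho$; restricting to that subrepresentation and then projecting onto the first coordinate $H$ produces an intertwiner in $\mathcal{I}((S_{0},\sigma_{0}),(S,\sigma))$ whose range contains the $H$-component of $w$ — here one uses Proposition~\ref{Lemma:Universal} to identify the induced representation with a summand of $(S_{0},\sigma_{0})$, exactly as in the proof of Proposition~\ref{X}(ii). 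Since any $x\in H$ in the closed span of wandering vectors of $\rho$ is a norm-limit of finite sums of (components of) such wandering vectors, and since $\mathcal{V}_{ac}(S,\sigma)$ is a closed subspace by Theorem~\ref{subspace}, it follows that $x\in\mathcal{V}_{ac}(S,\sigma)$. This gives the first equality.

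Finally, for the containment $\mathcal{V}_{ac}(S,\sigma)\supseteq\overline{span}\{\text{wandering vectors of }S\times\sigma\}$, I would observe that if $w\in H$ is a wandering vector for $(S,\sigma)$ itself, then $w\oplus 0$ is a wandering vector for $\rho$, and $w=w\oplus 0$ restricted to the first coordinate; so $w$ lies in $H\cap\overline{span}\{\text{wandering vectors of }\rho\}=\mathcal{V}_{ac}(S,\sigma)$. Alternatively and more directly, by Lemma~\ref{wand} the closed invariant subspace generated by a wandering vector $w$ of $(S,\sigma)$ is a pure, hence induced, subrepresentation, which by Proposition~\ref{Lemma:Universal} embeds as a summand of $(S_{0},\sigma_{0})$, yielding an intertwiner in $\mathcal{I}((S_{0},\sigma_{0}),(S,\sigma))$ with $w$ in its range; Theorem~\ref{subspace} then places $w$ in $\mathcal{V}_{ac}(S,\sigma)$. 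Since $\mathcal{V}_{ac}(S,\sigma)$ is closed and linear, it contains the closed span, as claimed.

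The main obstacle I anticipate is purely bookkeeping: keeping straight which wandering vectors live in $H$, which live in the doubled space $H\oplus(\mathcal{F}(E)\otimes_{\pi}K_{0})$, and ensuring the projection/restriction maneuvers genuinely produce bona fide intertwiners in $\mathcal{I}((S_{0},\sigma_{0}),(S,\sigma))$ — but all of this is already carried out in the proofs of Proposition~\ref{X} and Theorem~\ref{subspace}, so the corollary should follow with only a brief argument invoking those results.
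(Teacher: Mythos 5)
Your first paragraph contains a genuine error. Lemma \ref{intertwine} does not say that the restriction of $(S,\sigma)$ to $\overline{Ran(X)}$ is induced or pure --- it says only that this restriction extends to an ultra-weakly continuous, completely isometric representation of $H^{\infty}(E)$, which is a much weaker conclusion. The consequence you draw from it, namely that each $\overline{Ran(X)}$, and hence $\mathcal{V}_{ac}(S,\sigma)$, is spanned by wandering vectors of $(S,\sigma)$ itself, is false in general: the Remark immediately following this corollary in the paper exhibits an absolutely continuous unitary (so $\mathcal{V}_{ac}=H$) possessing no wandering vectors whatsoever. That is precisely why the statement is phrased in terms of the wandering vectors of $\rho=(S\times\sigma)\oplus(S_{0}\times\sigma_{0})$ rather than those of $S\times\sigma$. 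Fortunately the inclusion $\mathcal{V}_{ac}(S,\sigma)\subseteq H\cap\overline{span}\{\mbox{wandering vectors of }\rho\}$ is exactly Proposition \ref{X}(i) combined with $\mathcal{V}_{ac}(S,\sigma)\subseteq H$, which you do note in passing; that one-line observation should simply replace your first paragraph.

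The remainder of your argument is sound and close to the paper's. For the reverse inclusion, your route --- a wandering vector $w$ of $\rho$ generates, by Lemma \ref{wand} and Proposition \ref{Lemma:Universal}, an induced cyclic subspace identified with a summand of $(S_{0},\sigma_{0})$, and composing with $P_{H}$ (which commutes with $\rho$) produces an element of $\mathcal{I}((S_{0},\sigma_{0}),(S,\sigma))$ whose range contains $P_{H}w$, after which Theorem \ref{subspace} and the closedness of $\mathcal{V}_{ac}(S,\sigma)$ finish the job --- is valid and mirrors the proof of Proposition \ref{X}(ii). The paper instead works with functionals: since the cyclic subspace of $w=x\oplus\zeta$ is induced, $\omega_{x\oplus\zeta}\circ\rho$ extends ultra-weakly continuously, as does $\omega_{\zeta}\circ(S_{0}\times\sigma_{0})$, and subtracting shows $\omega_{x}\circ(S\times\sigma)$ extends, so $x\in\mathcal{V}_{ac}(S,\sigma)$ by Remark \ref{ac}. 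The two devices are interchangeable here. Your treatment of the final containment is essentially the paper's argument verbatim.
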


\begin{proof} If $x$ is a wandering vector for $S\times\sigma$,
then the restriction of $(S,\sigma)$ to the smallest $S\times\sigma$-
invariant subspace $\mathcal{N}$ spanned by $x$ is an induced isometric
representation, by Lemma \ref{wand}. Let $X$ be the inclusion of
$\mathcal{N}$ into $H$. Then it follows from Lemma~\ref{intertwine}
that $x\in\mathcal{V}_{ac}(S,\sigma)$. Now suppose $x\oplus\zeta\in H\oplus(\mathcal{F}(E)\otimes_{\pi}K_{0})$
is a wandering vector for $\rho$. Then the same argument shows that
the functional $\omega_{x\oplus\zeta}\circ\rho$ on $\mathcal{T}_{+}(E)$
extends to an ultra-weakly continuous linear functional on $H^{\infty}(E)$.
The same applies to the functional $\omega_{\zeta}\circ(S_{0}\times\sigma_{0})$.
Thus $\omega_{x}\circ(S_{0}\times\sigma_{0})$ extends to an ultra-weakly
continuous functional on $H^{\infty}(E)$. By Proposition \ref{X}
and Corollary \ref{subspace} $x\in\mathcal{V}_{ac}(S,\sigma)$. \end{proof}

\begin{remark} In general, the closed linear span of the wandering
vectors of $S\times\sigma$ is a proper subspace of $\mathcal{V}_{ac}(S,\sigma)$.
Indeed, it can be zero and yet $\mathcal{V}_{ac}(S,\sigma)$ is the
whole space. Let $S$ be the unitary operator on $L^{2}$ of the upper
half of the unit circle with Lebesgue measure that is given by multiplication
by the independent variable. Then $S$ is an absolutely continuous
unitary operator, but it has no wandering vectors.\end{remark}

\begin{theorem}\label{abscontrep} For an isometric representation
$(S,\sigma)$ of $(E,M)$ on a Hilbert space $H$ the following assertions
are equivalent:
\begin{enumerate}
\item [(1)] $S\times\sigma$ admits an ultra-weakly continuous extension
to a completely isometric representation of $H^{\infty}(E)$ on $H$.
\item [(2)] $(S,\sigma)$ is absolutely continuous (i.e., $\mathcal{V}_{ac}(S,\sigma)=H$).
\item [(3)] $H$ is contained in the closed linear span of the wandering
vectors of $(S,\sigma)\oplus(S_{0},\sigma_{0})$.
\end{enumerate}
\end{theorem}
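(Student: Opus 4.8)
The plan is to establish the cycle of implications (1)$\Rightarrow$(2)$\Rightarrow$(3)$\Rightarrow$(1), leaning heavily on the work already done in this section.

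First, (1)$\Rightarrow$(2) is essentially immediate from Remark~\ref{ac}: if $S\times\sigma$ extends to an ultra-weakly continuous representation of $H^{\infty}(E)$, then for every $x\in H$ the functional $\omega_{x}\circ(S\times\sigma)$ is the composition of that ultra-weakly continuous extension with the (ultra-weakly continuous) vector functional $\omega_{x}$, hence extends ultra-weakly to $H^{\infty}(E)$; Remark~\ref{ac} then says $x\in\mathcal{V}_{ac}(S,\sigma)$, so $\mathcal{V}_{ac}(S,\sigma)=H$.

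Next, (2)$\Rightarrow$(3) should follow from Corollary~\ref{wander}. That corollary already identifies $\mathcal{V}_{ac}(S,\sigma)$ with $H\cap\overline{\operatorname{span}}\{\text{wandering vectors of }(S\times\sigma)\oplus(S_{0}\times\sigma_{0})\}$; so if $\mathcal{V}_{ac}(S,\sigma)=H$ then $H$ sits inside that closed span, which is exactly (3). (One should double-check that the phrasing of (3) — "$H$ is contained in the closed linear span" inside the big ambient space $H\oplus(\mathcal{F}(E)\otimes_{\pi}K_0)$ — matches the Corollary's formulation, but it does, up to the harmless remark that $H$ is a reducing subspace for the first summand.)

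The substantive implication is (3)$\Rightarrow$(1), and this is where I expect the real work. The idea is: a single wandering vector $x\oplus\zeta$ for $\rho=(S\times\sigma)\oplus(S_{0}\times\sigma_{0})$ generates, by Lemma~\ref{wand}, an induced (hence pure isometric) subrepresentation, which by Proposition~\ref{Lemma:Universal} is unitarily equivalent to a corner of $(S_{0},\sigma_{0})$; projecting onto the $H$-summand yields an intertwiner in $\mathcal{I}((S_{0},\sigma_{0}),(S,\sigma))$ whose range contains $x$. By Theorem~\ref{subspace}, $\mathcal{V}_{ac}(S,\sigma)$ is the closed span of all such ranges, so hypothesis (3) forces $\mathcal{V}_{ac}(S,\sigma)=H$. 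Now I want to promote this to a genuine ultra-weakly continuous completely isometric extension on all of $H$. The mechanism is Lemma~\ref{intertwine}: each intertwiner $X$ gives an ultra-weakly continuous completely isometric extension of $S\times\sigma$ restricted to $\overline{\operatorname{Ran}(X)}$. One then has to glue these local extensions into a global one. The clean way is to observe that since $\pi$ has infinite multiplicity we can, as in the proof of Theorem~\ref{subspace}, take a (countable, by $\sigma$-finiteness) orthogonal family of intertwiners $X_n\in\mathcal{I}((S_0,\sigma_0),(S,\sigma))$ whose ranges together span $H$, assemble $Z=\bigoplus_n X_n\in\mathcal{I}((S_0,\sigma_0),(S,\sigma))$ (legitimate because the initial spaces are orthogonal and sit inside the infinite-multiplicity space), so that $\overline{\operatorname{Ran}(Z)}=H$; then Lemma~\ref{intertwine} applied to $Z$ directly gives the ultra-weakly continuous completely isometric extension of $S\times\sigma$ on all of $H$. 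The main obstacle is exactly this globalization step — making sure the direct sum of intertwiners is again a single intertwiner into the fixed universal space $\mathcal{F}(E)\otimes_\pi K_0$ (this is where infinite multiplicity of $\pi$ is essential) and that Lemma~\ref{intertwine} then applies verbatim; everything else is bookkeeping with results already in hand.
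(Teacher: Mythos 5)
Your proposal is correct, and the implications (1)$\Rightarrow$(2) and (2)$\Leftrightarrow$(3) are handled exactly as in the paper (via Remark~\ref{ac} and Corollary~\ref{wander}). Where you genuinely diverge is the hard implication back to (1). The paper keeps the local extensions $(S\times\sigma)_X$ on the individual subspaces $\overline{Ran(X)}$ and glues them analytically: for $a\in H^{\infty}(E)$ it takes the Ces\`{a}ro means $\Sigma_k(a)$, uses boundedness to extract ultra-weak limit points of $(S\times\sigma)(\Sigma_k(a))$, shows any two limit points agree on each $\overline{Ran(X)}$ and hence everywhere, defines $\Theta(a)$ as the unique limit, and finally invokes Lemma~\ref{cont} to upgrade weak continuity on bounded nets to ultra-weak continuity. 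You instead glue at the level of the intertwiners themselves: using separability to extract countably many $X_n$ whose ranges span $H$, using the infinite multiplicity of $\pi$ to arrange mutually orthogonal initial spaces, and forming a single $Z\in\mathcal{I}((S_0,\sigma_0),(S,\sigma))$ with $\overline{Ran(Z)}=H$, so that Lemma~\ref{intertwine} applies once and delivers the extension outright. This is a legitimate and arguably cleaner route, since it pushes all the analytic work into Lemma~\ref{intertwine} (i.e., into \cite[Lemma 7.12]{MSHardy}) and avoids the limit-point argument entirely. One detail you should not gloss over: orthogonality of the initial spaces alone does not make $\bigoplus_n X_n$ bounded, because the ranges all live in $H$ and need not be orthogonal; you must first rescale, e.g.\ replace $X_n$ by $2^{-n}X_n/\Vert X_n\Vert$, after which Cauchy--Schwarz gives $\Vert Z\xi\Vert\leq(\sum_n 4^{-n})^{1/2}\Vert\xi\Vert$ while the closed range is unchanged. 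With that one-line fix your argument is complete.
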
 \begin{proof} It is clear that (1) implies (2). The
equivalence of (2) and (3) follows from Corollary~\ref{wander}.
It is left to show that (2) implies (1). So assume (2) holds and for
every $X\in\mathcal{I}((S_{0},\sigma_{0})(S,\sigma))$, let $\overline{Ran(X)}$
be the closure of the range of $X$. It follows from the assumption
that $\mathcal{V}_{ac}(S,\sigma)=H$ and Proposition \ref{X} that
$H$ is spanned by the family of subspaces $\{\overline{Ran(X)}\mid X\in\mathcal{I}((S_{0},\sigma_{0})(S,\sigma))\}$
and, furthermore, the restriction of $S\times\sigma$ to each subspace
$\overline{Ran(X)}$ in this family extends to an ultra-weakly continuous,
completely isometric, representation of $H^{\infty}(E)$ that we shall
denote by $(S\times\sigma)_{X}$. We need to show that these {}``restriction
representations'' can be glued together to form an ultra-weakly continuous
completely contractive extension of $S\times\sigma$.

To this end, fix an operator $a\in H^{\infty}(E)$ and recall that
$\Sigma_{k}(a)$ denotes the $k^{th}$-arithmetic mean of the Taylor
series of $a$. The $\Sigma_{k}(a)$ all lie in $\mathcal{T}_{+}(E)$,
satisfy the inequality $\norm{\Sigma_{k}(a)}\leq\norm{a}$, and converge
to $a$ in the ultra-weak topology on $H^{\infty}(E)$. Since the
sequence $(S\times\sigma)(\Sigma_{k}(a))$ is uniformly bounded in
$B(H)$, it has an ultra-weak limit point in $B(H)$. Any two limit
points must agree on each of the spaces $\overline{Ran(X)}$ since
the restrictions $(S\times\sigma)(\Sigma_{k}(a))|\overline{Ran(X)}$
must converge to $(S\times\sigma)_{X}$. Since the spaces $\overline{Ran(X)}$
span $H$, we see that there is only one limit point $\Theta(a)$
of the sequence $(S\times\sigma)(\Sigma_{k}(a))$. Thus the sequence
$\{(S\times\sigma)(\Sigma_{k}(a))\}_{k\in\mathbb{N}}$ converges ultra-weakly
to $\Theta(a)$. Moreover, for $x\in\overline{Ran(X)}$, $\Theta(a)x=(S\times\sigma)_{X}(a)x$.
The same sort of reasoning shows that $\Theta$, so defined, is a
completely isometric representation of $H^{\infty}(E)$ on $H$ that
extends $S\times\sigma$. It remains to show that $\Theta$ is ultra-weakly
continuous. For this it suffices to show that if $\{a_{\alpha}\}_{\alpha\in A}$
is a \emph{bounded} net $H^{\infty}(E)$ converging ultra-weakly in
$H^{\infty}(E)$ to an element $a\in H^{\infty}(E)$, then $\{\Theta(a_{\alpha})\}_{\alpha\in A}$
converges ultra-weakly to $\Theta(a)$. Since $\Theta$ is continuous,
$\{\Theta(a_{\alpha})\}_{\alpha\in A}$ is a bounded net and so we
need only show that it converges \emph{weakly} to $\Theta(a)$. But
for any $x\in H$, we can find an $X\in\mathcal{I}((S_{0},\sigma_{0})(S,\sigma))$
so that $x\in Ran(X)$ by Lemma \ref{X}. We conclude, then, that
$\omega_{x}\circ\Theta(a_{\alpha})=\omega_{x}\circ(S\times\sigma)_{X}(a_{\alpha})\to\omega_{x}\circ(S\times\sigma)_{X}(a)=\omega_{x}\circ\Theta(a)$.
Thus $\Theta(a_{\alpha})\rightarrow\Theta(a)$ weakly. It follows
(Lemma~\ref{cont}) that $\Theta$ is $\sigma$-weakly continuous
on $H^{\infty}(E)$. This proves that (2) implies (1). \end{proof}

\section{Completely Contractive Representations and Completely Positive Maps\label{sec:CCReps and CPMaps}}

%
{}As we saw in paragraph \ref{subpar:CPMaps}, if $(T,\sigma)$ is a
completely contractive covariant representation of $(E,M)$ on a Hilbert
space $H$, then $(T,\sigma)$ induces a completely positive map $\Phi_{T}$
on $\sigma(M)'$ defined by the formula\begin{equation}
\Phi_{T}(a):=\widetilde{T}(I_{E}\otimes a)\widetilde{T}^{*}\qquad a\in\sigma(M)'.\end{equation}
 One of our goals is to show that the absolutely continuous subspace
$\mathcal{V}_{ac}(T,\sigma)$ can be described completely in terms
of $\Phi_{T}$. We therefore want to begin by showing that given a
contractive, normal, completely positive map $\Phi$ on a $W^{*}$-algebra
$M$ and a normal representation $\rho$ of $M$ on a Hilbert space
$H$, then there is a canonical way to view $\rho\circ\Phi$ as a
$\Phi_{T}$ for a certain $T$ attached to a completely contractive
covariant representation $(T,\sigma)$ of a natural correspondence
over $\rho(M)'$. We will then prove that $\mathcal{V}_{ac}(T,\sigma)$
is an artifact of $\Phi$. For this first step, it will be convenient
for later use to omit the assumption that our completely positive
maps are contractions, in the following theorem.

\begin{theorem}\label{Theorem:Identity Representation} Given a normal
completely positive map $\Phi$ on a $W^{*}$-algebra $M$ and a normal
$*$-representation $\rho$ of $M$ on a Hilbert space $H$, there
is a canonical triple $(E,\eta,\sigma)$, where $E$ is a $W^{*}$-correspondence
over the commutant of $\rho(M)$, $\rho(M)'$, $\sigma$ is a normal
$*$-representation of $\rho(M)'$, and where $\eta$ is an element
of $E^{\sigma}$, such that \begin{equation}
\rho(\Phi(a))={\eta}^{*}(I_{E}\otimes\rho(a)){\eta}\label{eq:Arveson-Stinespring}\end{equation}
 for all $a\in M$. The triple $(E,\eta,\sigma)$ is essentially unique
in the following sense: If $(E_{1},\eta_{1},\sigma_{1})$ is another
triple consisting of a $W^{*}$-correspondence $E_{1}$ over $\rho(M)'$,
a normal $*$-representation $\sigma_{1}$ of $\rho(M)'$ and an element
$\eta_{1}$ of ${E_{1}}^{\sigma_{1}}$ such that $\rho(\Phi(a))={\eta_{1}}^{*}(I_{E_{1}}\otimes\rho(a)){\eta_{1}}$
for all $a\in M$, then the kernel of $\eta_{1}^{*}$ is of the form
$\sigma^{E_{1}}(q_{1})E_{1}\otimes H$ for a projection $q_{1}\in\mathcal{L}(E_{1})$
and there is an adjointable, surjective, bi-module map $W:E_{1}\to E$
such that $\eta_{1}^{*}=\eta^{*}(W\otimes I)$ and such that $W^{*}W=I_{E_{1}}-q_{1}.$
Further, $\sigma_{1}$ differs from $\sigma$ by an automorphism of
$\rho(M)'$, i.e., $\sigma_{1}=\sigma\circ\alpha$ for a suitable
automorphism $\alpha$ of $\rho(M)'$. \end{theorem}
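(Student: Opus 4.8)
The plan is to build the triple $(E,\eta,\sigma)$ by a Stinespring/Paschke-type construction, then establish uniqueness by a GNS-style argument. For existence, first I would apply Stinespring's dilation theorem to the normal completely positive map $\rho\circ\Phi: M\to B(H)$: there is a Hilbert space $\widehat{H}$, a normal $*$-representation $\pi$ of $M$ on $\widehat{H}$, and a bounded operator $V:H\to\widehat{H}$ with $\rho(\Phi(a)) = V^*\pi(a)V$. Because everything is normal and $M$ is $\sigma$-finite, I can take $\pi$ to be a multiple of a fixed faithful normal representation, and in fact arrange $\pi$ to be (quasi-)contained in an ampliation so that $\pi(M)$ and $\rho(M)\otimes I$ can be compared. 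The key move, following Paschke and our own \cite{MS02}, is to realize $\widehat{H}$ as $E\otimes_\rho H$ for a suitable $W^*$-correspondence $E$ over $\rho(M)'$: concretely, one takes $E := \mathcal{I}(\rho, \pi)$, the space of intertwiners $X:H\to\widehat H$ with $X\rho(a) = \pi(a)X$, which by paragraph \ref{par:WstarCorsBetwWstarAlgs} is a $W^*$-correspondence from $\pi(M)'$ to $\rho(M)'$; restricting/adjusting to the part of $\pi$ equivalent to an ampliation of $\rho$ makes $E$ a correspondence over $\rho(M)'$ with $E\otimes_\rho H \cong \widehat H$, the isomorphism being $X\otimes h\mapsto Xh$. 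Then $\sigma$ is the representation of $\rho(M)'$ on $H$ given by $\rho(M)'$ acting on itself — i.e. $\sigma = \mathrm{id}$ on $\rho(M)'\subseteq B(H)$ — and $\eta\in E^\sigma = \mathcal{I}(\sigma, \sigma^E\circ\varphi)$ is the image of $V$ under this identification. Equation \eqref{eq:Arveson-Stinespring} is then just a rewriting of $\rho(\Phi(a)) = V^*\pi(a)V$ under $\widehat H\cong E\otimes_\rho H$, using that $\pi(a)$ corresponds to $I_E\otimes\rho(a)$. One must check $\eta$ genuinely lies in $E^\sigma$, i.e. that $\eta$ intertwines $\sigma$ and $\sigma^E\circ\varphi$ — this follows because $V$ intertwines $\rho$ on $H$ with $\pi$ on $\widehat H$ and the left $\rho(M)'$-action on $E$ is by composition.

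For uniqueness, suppose $(E_1,\eta_1,\sigma_1)$ is a second such triple. The idea is that both triples give Stinespring-type dilations of $\rho\circ\Phi$, and minimality is controlled by the range of $\eta_1^*$. I would first pass to the minimal piece: the kernel of $\eta_1^*: E_1\otimes_{\sigma_1}H \to H$ is a $\sigma_1^{E_1}(\mathcal{L}(E_1))$-invariant... more precisely, an $I_{E_1}\otimes\sigma_1(\rho(M)')$-reducing subspace, hence of the form $\sigma_1^{E_1}(q_1)(E_1\otimes H)$ for a projection $q_1\in\mathcal{L}(E_1)$ that is central enough for the module structure; replacing $E_1$ by $(I-q_1)E_1$ we may assume $\eta_1^*$ has dense range (the "minimal" case). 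Now the standard Stinespring uniqueness argument: the submodule generated by the range of $\eta_1$ inside $E_1\otimes H$, spanned by vectors $(I\otimes\rho(a))\eta_1 h$, has inner products $\langle (I\otimes\rho(b))\eta_1 h', (I\otimes\rho(a))\eta_1 h\rangle$ that are completely determined by $\Phi$ (they equal $\langle \rho(\Phi(\cdot))h',h\rangle$-type expressions), and the same holds for $E$; so one gets a unitary $E_1\otimes_\rho H \to E\otimes_\rho H$ intertwining the $\pi$-representations and carrying $\eta_1$ to $\eta$. This unitary, being an intertwiner of the induced representations, is of the form $W\otimes I_H$ for an adjointable bimodule map $W:E_1\to E$ (here one uses that the commutant of the induced representation $\sigma^E(\mathcal{L}(E))$ is $I_E\otimes\sigma(\rho(M)')$, paragraph \ref{par:InducedReps}), with $W^*W = I_{E_1}-q_1$ and $W$ surjective, and $\eta_1^* = \eta^*(W\otimes I)$. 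The discrepancy between $\sigma_1$ and $\sigma$ as representations of $\rho(M)'$ on $H$ is absorbed into an automorphism $\alpha$ of $\rho(M)'$: since both are faithful normal representations of $\rho(M)'$ on the same space $H$ compatible with the same data, $\sigma_1 = \sigma\circ\alpha$.

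The main obstacle I expect is bookkeeping around the $W^*$ (self-dual) completions and the identification $E\otimes_\rho H\cong\widehat H$: the balanced tensor product over a $W^*$-algebra requires self-dual completion (paragraph \ref{par:WstarCorsBetwWstarAlgs}), so one must verify that the naive algebraic span of intertwiners acting on $H$ is already ultra-weakly dense / that no completion is lost, and that $E=\mathcal{I}(\rho,\pi)$ with the composition actions is exactly the right correspondence rather than merely containing it. A second delicate point is extracting the projection $q_1$ and the bimodule map $W$ with the precise relations $W^*W = I_{E_1}-q_1$ and $\eta_1^*=\eta^*(W\otimes I)$: this is where one cannot simply invoke abstract Stinespring uniqueness but must track how non-minimality of the second triple manifests as a genuine sub-correspondence. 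Everything else — normality, the intertwining property placing $\eta$ in $E^\sigma$, and the automorphism $\alpha$ — should be routine given the machinery of paragraphs \ref{par:WstarCorsBetwWstarAlgs}--\ref{par:SigmaDual} and the results of \cite{MS02}.
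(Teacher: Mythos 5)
Your existence construction is the paper's: form the Stinespring space $M\otimes_{\rho\circ\Phi}H$ with representation $\pi$ and map $Vh=1\otimes h$, set $E=\mathcal{I}(\rho,\pi)$ with left $\rho(M)'$-action through $a\mapsto I_M\otimes a$, identify $E\otimes H$ with the Stinespring space via $U(X\otimes h)=Xh$, and put $\eta=U^{*}V$. You also correctly isolate the one nontrivial point here, namely that $\overline{\mathrm{span}}\{Xh\mid X\in\mathcal{I}(\rho,\pi),\,h\in H\}$ is all of $M\otimes_{\rho\circ\Phi}H$ (this is \cite[Lemma 2.10]{MS02}). One justification is wrong, though: you say $\eta\in E^{\sigma}$ "because $V$ intertwines $\rho$ with $\pi$." It does not: $V\rho(a)h=1\otimes\rho(a)h$ while $\pi(a)Vh=a\otimes h$, and these differ unless $\Phi$ is multiplicative. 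What is true, and what is actually needed, is that $V$ intertwines the \emph{commutant} actions, $Vb=(I_M\otimes b)V$ for $b\in\rho(M)'$; combined with $U(\varphi(\cdot)\otimes I_H)=(I_M\otimes\cdot)U$ this is exactly what places $\eta=U^{*}V$ in $E^{\sigma}=\mathcal{I}(\sigma,\sigma^{E}\circ\varphi)$. The fix is a one-line computation, but as written the step is false.

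For uniqueness the comparison is lopsided: the paper does not prove it at all, deferring to \cite[Theorem 2.6]{SSp09}, whereas you sketch an argument, and the sketch has a genuine gap at its first step. You assert that $\ker\eta_1^{*}$ is "an $I_{E_1}\otimes\sigma_1(\rho(M)')$-reducing subspace, hence of the form $\sigma_1^{E_1}(q_1)(E_1\otimes H)$." First, the relevant algebra is the commutant of $\sigma_1^{E_1}(\mathcal{L}(E_1))$, which is $I_{E_1}\otimes\sigma_1(\rho(M)')'$, not $I_{E_1}\otimes\sigma_1(\rho(M)')$ (the latter does not even act on the balanced tensor product). Second, and more seriously, no reason is given why $\overline{\mathrm{Ran}\,\eta_1}$ should be invariant under $I_{E_1}\otimes b$ for all $b\in\sigma_1(\rho(M)')'$: the hypotheses only give you invariance of that range under $\varphi_1(\rho(M)')\otimes I_H$ (from $\eta_1\in E_1^{\sigma_1}$) and the compression identity $\eta_1^{*}(I\otimes\rho(a))\eta_1=\rho(\Phi(a))$, neither of which yields the reducing property. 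This is precisely the point where non-minimality of the second triple has to be shown to live inside a sub-correspondence, and it is the content of the cited result in \cite{SSp09}, not something that falls out of the standard Stinespring uniqueness computation. The subsequent claims --- that the minimal-piece unitary is of the form $W\otimes I_H$ for a \emph{bimodule} map $W$ with $W^{*}W=I_{E_1}-q_1$, and that $\sigma_1=\sigma\circ\alpha$ for an automorphism $\alpha$ of $\rho(M)'$ --- are likewise asserted rather than derived; in particular the last one needs an argument that $\sigma_1$ maps $\rho(M)'$ \emph{onto} $\rho(M)'$ isomorphically, which is not automatic for an arbitrary normal representation of $\rho(M)'$ on $H$.
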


\begin{proof} We present an outline of the existence of $(E,\eta,\sigma)$
since parts of the argument will be useful later. The details may
be found in \cite{MS02}. The uniqueness is proved in \cite[Theorem 2.6]{SSp09}
and we omit those details here. First, recall Stinespring's dilation
theorem \cite{fS55} and Arveson's proof of it \cite{wAr69}. Form
the Stinespring space $M\otimes_{\rho\circ\Phi}H$, which is the completion
of the algebraic tensor product $M\odot H$ in the inner product derived
from the formula\[
\langle a\otimes h,b\otimes k\rangle:=\langle h,\rho\circ\Phi(a^{*}b)k\rangle,\]
 and view $M$ as acting on $M\otimes_{\rho\circ\Phi}H$ through the
Stinespring representation, $\pi$: $\pi(a)(b\otimes h)=ab\otimes h$.
Let $V$ be the map from $H$ to $M\otimes_{\rho\circ\Phi}H$ defined
by the formula $Vh=I\otimes h$. Then the equation \begin{eqnarray}
\langle\pi(a)Vh,Vk\rangle & = & \langle(a\otimes h),1\otimes k\rangle\nonumber \\
 & = & \langle h,\rho\circ\Phi(a^{*})k\rangle\nonumber \\
 & = & \langle\rho\circ\Phi(a)h,k\rangle,\label{eq:Stinespring}\end{eqnarray}
 which is valid for all $a\in M$ and $h,k\in M$, shows that $V$
is bounded, with norm $\Vert\Phi(I)\Vert^{\frac{1}{2}}$, and that
\begin{equation}
V^{*}\pi(a)V=\rho(\Phi(a))\label{eq:Stinespring 2}\end{equation}
 for all $a\in M$.

Then let $E$ be the intertwining space $\mathcal{I}(\rho,\pi)$,
i.e., $\mathcal{I}(\rho,\pi)=\{X\in B(H,M\otimes_{\rho\circ\Phi}H)\mid X\rho(a)=\pi(a)X,\,\mbox{for all }a\in M\}$.
As we noted in paragraph \ref{par:Intertwining}, this space is a
$W^{*}$-correspondence from the commutant of $\pi(M)$, $\pi(M)'$,
to the commutant of $\rho(M)$, $\rho(M)'$. However, the map $a\to I_{M}\otimes a$
is normal representation of $\rho(M)'$ into the commutant of $\pi(M)$,
and so by restriction, $E=\mathcal{I}(\rho,\pi)$ becomes a $W^{*}$-correspondence
over $\rho(M)'$. The bimodule structure is given by the formula\[
a\cdot X\cdot b=(I_{M}\otimes a)Xb,\]
 $a,b\in\rho(M)'$. We let $\sigma$ be the identity representation
of $\rho(M)'$ on $H$ .

To define $\eta\in E^{\sigma}$, we first observe that there is a
Hilbert space isomorphism $U:E\otimes_{\sigma}H\to M\otimes_{\rho\circ\Phi}H$
defined by the formula $U(X\otimes h):=Xh$. The fact that $U$ is
isometric is immediate from the way the $\rho(M)'$-valued inner product
on $E$ is defined. The fact that $U$ is surjective is Lemma 2.10
of \cite{MS02}. Further, a straightforward computation shows that
$U(I_{E}\otimes\rho(a))U^{-1}=\pi(a)$ for all $a\in M$. Indeed,
if $X\in E=\mathcal{I}(\rho,\pi)$ and if $h\in H$, then for $a\in M$,
\[
U(I_{E}\otimes\rho(a))(X\otimes h)=X\rho(a)h=\pi(a)Xh=\pi(a)U(X\otimes h).\]
 That is, $U(I_{E}\otimes\rho(\cdot))=\pi(\cdot)U$. Second, we note
that since $I_{M}\otimes\sigma(a)$ lies in $\pi(M)'$ for all $a\in\rho(M)'$,
a similar calculation shows that $U(\varphi(\cdot)\otimes I_{H})=(I_{M}\otimes\sigma(\cdot))U$.
Finally, observe from the definition of $V$ that $V\sigma(\cdot)=I_{M}\otimes\sigma(\cdot)V$.
Now set $\eta=U^{*}V$. Then $\eta\in E^{\sigma}$ since for all $a\in\rho(M)'$,
$\eta\sigma(a)=U^{*}V\sigma(a)=U^{*}(I_{M}\otimes\sigma(a))V=(\varphi(a)\otimes I_{H})U^{*}V=(\varphi(a)\otimes I_{H})\eta$.
Also, \[
\eta^{*}(I_{E}\otimes\rho(a))\eta=V^{*}U(I_{E}\otimes\rho(a))U^{*}V=V^{*}\pi(a)V=\rho\circ\Phi(a).\]
 \end{proof}

\begin{remark}\label{Rem:cb-norm}The $cb$-norm of any completely
positive map is the norm of its value at the identity. So $\Vert\Phi_{\eta}\Vert_{cb}=\Vert\eta^{*}\eta\Vert_{\sigma(M)'}=\Vert\eta\Vert_{E^{\sigma}}^{2}$.
Consequently, $\Phi_{\eta}$ is contractive and completely positive
if and only if $\eta\in\overline{\mathbb{D}(E^{\sigma})}$. We thus
see that every contractive completely positive map $\Phi$ on a $W^{*}$-algebra
can be realized in terms of a completely contractive covariant representation
of the natural $W^{*}$-correspondence $E$ we just constructed from
it. We call $E$ the \emph{Arveson-Stinespring correspondence} associated
to $\Phi$ (see \cite{MS02}). It depends on a choice of a representation
of $M$, but that will only be emphasized when necessary. The ultra-weakly
continuous, completely contractive covariant representation $(T,\sigma)$
of $(E,\rho(M)')$ such that $\rho\circ\Phi=\Phi_{\eta}$, where $\widetilde{T}=\eta^{*}$
is called the \emph{identity representation}. The advantage of the
identity representation of a completely positive map through equation
(\ref{eq:Arveson-Stinespring}) in Theorem \ref{Theorem:Identity Representation}
over the Stinespring representation, equation (\ref{eq:Stinespring 2}),
is that one can express the powers of $\Phi$ in terms of it as we
discussed in paragraph \ref{subpar:CPMaps}. In this setting, equation
\ref{eq:Phi_T_n} becomes \[
\rho(\Phi^{n}(a))=\widetilde{T_{n}}(I_{E^{\otimes n}}\otimes\rho(a))\widetilde{T_{n}}^{*}.\]
 \end{remark}

\begin{example}\label{Example:MarkovChain1}To illustrate these constructs
in a concrete example, let $M=\ell^{\infty}(\{1,2,\cdots,n\})$ and
let $\sigma$ represent $M$ on the Hilbert space $\mathbb{C}^{n}$
as diagonal matrices. Thus $\sigma(\underline{\varphi})=diag(\varphi_{1},\varphi_{2},\cdots,\varphi_{n})$.
Of course, $\sigma(M)$ is the masa $\mathbf{D}_{n}$ consisting of
all diagonal and so $\sigma(M)'=\mathbf{D}_{n}$, too. Also, let $A=(a_{ij})$
be an $n\times n$ sub-Markov matrix. This means that the $a_{ij}$
are all non-negative, and that for each $i$, $\sum_{j}a_{ij}\leq1$.
Such a matrix determines a completely positive, contractive map $\Phi$
on $\mathbf{D}_{n}$ through the formula\[
\Phi(\underline{d}):=diag(\sum_{j}a_{1j}d_{j},\sum_{j}a_{2j}d_{j},\cdots,\sum_{j}a_{nj}d_{j}),\]
 where $\underline{d}=(d_{1},d_{2},\cdots,d_{n})$. We let $\varepsilon_{i}$
be the diagonal matrix with zeros everywhere but in the $i^{th}$
row and column, where it is a one, and we let $\{e_{i}\}_{i=1}^{n}$
be the standard basis for $\mathbb{C}^{n}$. Then the vectors $\varepsilon_{i}\otimes e_{j}$,
$i,j=1,2,\cdots,n$, span $\sigma(M)'\otimes_{\Phi}\mathbb{C}^{n}$,
and an easy calculation shows that \begin{eqnarray*}
\langle\varepsilon_{i}\otimes e_{j},\varepsilon_{k}\otimes e_{l}\rangle & = & a_{ji}\end{eqnarray*}
 if and only $(i,j)=(k,l)$ and is zero otherwise. It follows that
in $\sigma(M)'\otimes_{\Phi}\mathbb{C}^{n}$, $\varepsilon_{i}\otimes e_{j}$
is nonzero if and only if $(j,i)$ lies in the support of $A$, which
we denote by $G^{1}$. (The reason for the super script is that we
are going to view $G^{1}$ as the edge set of a graph. The vertex
set, $G^{0}$, is $\{1,2,\cdots,n\}$.) The calculation just completed
shows that \[
\{a_{ji}^{-\frac{1}{2}}\varepsilon_{i}\otimes e_{j}\mid(j,i)\in G^{1}\}\]
 is an orthonormal basis for $\sigma(M)'\otimes_{\Phi}\mathbb{C}^{n}=\mathbf{D}_{n}\otimes_{\Phi}\mathbb{C}^{n}$.
We let $\lambda$ be the representation of $\sigma(M)'=\mathbf{D}_{n}$
on $\sigma(M)'\otimes_{\Phi}\mathbb{C}^{n}$ is given by the formula
$\lambda(\underline{d})(\varepsilon_{i}\otimes e_{j})=d_{i}\varepsilon_{i}\otimes e_{j}$.
We also let $\iota$ be the identity representation of $\sigma(M)'=\mathbf{D}_{n}$
on $\mathbb{C}^{n}$. Then the Arveson-Stinespring correspondence
in this case is $E=\mathcal{I}(\iota,\lambda)$. An operator $X$
from $\mathbb{C}^{n}$ to $\sigma(M)'\otimes_{\Phi}\mathbb{C}^{n}$
is given by a matrix that we shall write $[X((i,j),k)]_{(i,j)\in G^{1}\, k\in\{1,2,\cdots,n\}}$.
The formula for $X((i,j),k)$ is, of course,\[
X((i,j),k)=\langle Xe_{k},a_{ji}^{-\frac{1}{2}}\varepsilon_{i}\otimes e_{j}\rangle.\]
 Since an $X$ in $E$ intertwines the identity representation of
$\sigma(M)'=\mathbf{D}_{n}$ on $\mathbb{C}^{n}$ and $\lambda$,
it follows from this equation that $X((i,j),k)$ is zero unless $i=k$,
when $X\in E$. Thus $E$ may be viewed as a space of functions supported
on $G^{1}$. Now for $X$ and $Y$ in $E,$ $X^{^{*}}Y(i,j)=\sum_{(k,l)\in G^{1}}\overline{X((k,l),i)}Y((k,l),j)$.
Since $X((k,l),i)=0$, unless $k=i$ and since $Y((k,l),j)=0$, unless
$k=j$, we see that $X^{*}Y(i,j)=0,$ unless $i=j$, in which case
we find that $X^{^{*}}Y(i,i)=\sum_{l=1}^{n}\overline{X((i,l),i)}Y((i,l),i)$.
So, if $X_{(i,j)}$, $(i,j)\in G^{1}$, is defined by the formula
\[
X_{(i,j)}((k,l),m)=a_{ji}^{-\frac{1}{2}},\]
 when $k=m=i$ and $l=j$, and zero otherwise, then $\{X_{(i,j)}\}_{(i,j)\in G^{1}}$
is an orthonormal basis for $E$. It follows, then, that $\{X_{(i,j)}\otimes e_{i}\}_{(i,j)\in G^{1}}$
is an orthonormal basis for $E\otimes_{\sigma(M)}\mathbb{C}^{n}$
(owing to the fact that $X\otimes\underline{d}h=X\cdot\underline{d}\otimes h$
for all $X\otimes h\in E\otimes_{\sigma(M)}\mathbb{C}^{n}$ and for
all $\underline{d}\in\mathbf{D}_{n}$.) The map $U:E\otimes_{\sigma(M)}\mathbb{C}^{n}\to\sigma(M)'\otimes_{\Phi}\mathbb{C}^{n}=\mathbf{D}_{n}\otimes_{\Phi}\mathbb{C}^{n}$
is given by the formula $U(X\otimes h)=Xh$ and so, at the level of
coordinates, we find that $U(X\otimes h)(i,j)=X((i,j),i)h(i)$, where
$(j,i)$ lies in $G^{1}$. In particular, we see that $U(X_{(i,j)}\otimes e_{i})(i,j)=X((i,j),i)=a_{ji}^{-\frac{1}{2}}$
so that $U(X_{(i,j)}\otimes e_{i})=a_{ji}^{-\frac{1}{2}}(\varepsilon_{i}\otimes e_{j})$.
The map $V:\mathbb{C}^{n}\to\sigma(M)'\otimes_{\Phi}\mathbb{C}^{n}=\mathbf{D}_{n}\otimes_{\Phi}\mathbb{C}^{n}$
is defined by the formula $Vh=1\otimes h$, where in this case, $1$
denotes the identity matrix. Recapitulating an earlier calculation
we see that \begin{eqnarray*}
\langle V^{*}(\varepsilon_{i}\otimes e_{j}),e_{k}\rangle & = & \langle\varepsilon_{i}\otimes e_{j},Ve_{k}\rangle\\
 & = & \langle\varepsilon_{i}\otimes e_{j},1\otimes e_{k}\rangle\\
 & = & \langle e_{j},\Phi(\varepsilon_{i}^{*})e_{k}\rangle\\
 & = & \langle e_{j},a_{ki}e_{k}\rangle.\end{eqnarray*}
 With all the pieces calculated, we see that the map $\widetilde{T}:E\otimes_{\sigma(M)}\mathbb{C}^{n}\to\mathbb{C}^{n}$
is defined on basis vectors for $E\otimes_{\sigma(M)}\mathbb{C}^{n}$
by the equation \begin{eqnarray}
\widetilde{T}(X_{(i,j)}\otimes e_{i}) & = & V^{*}U(X_{(i,j)}\otimes e_{i})\nonumber \\
 & = & V^{*}(a_{ji}^{-\frac{1}{2}}\varepsilon_{i}\otimes e_{j})\nonumber \\
 & = & a_{ji}a_{ji}^{-\frac{1}{2}}e_{j}\nonumber \\
 & = & a_{ji}^{\frac{1}{2}}e_{j}.\label{eq:tildeT}\end{eqnarray}
 We will use these calculations in later examples. \end{example}

\begin{definition}\label{def:superharmonic}Let $\Phi$ be a completely
positive operator on a $W^{*}$-algebra $M$. An element $Q\in M$
is called a \emph{superharmonic operator} in case $Q\geq0$ and \begin{equation}
\Phi(Q)\leq Q.\label{eq:superharmonic}\end{equation}
 If, in addition, the sequence $\{\Phi^{n}(Q)\}_{n\in\mathbb{N}}$
converges to zero strongly, then we say that $Q$ is a \emph{pure}
superharmonic operator. A superharmonic operator $Q$ such that $\Phi(Q)=Q$
is called \emph{harmonic}. \end{definition}If $M$ is $L^{\infty}(X,\mu)$
for some probability space $(X,\mu)$, then a superharmonic operator
is a superharmonic function in the sense of Markov processes. (See
\cite[Definition 2.1.1]{dR84}.)

\begin{remark}There is an analogue of the Riesz decomposition theorem
for superharmonic functions, viz: If $Q$ is a superharmonic operator
for $\Phi$, then $Q$ decomposes uniquely as $Q=Q_{p}+Q_{h}$, where
$Q_{p}$ is a pure superharmonic operator for $\Phi$ and $Q_{h}$
is a harmonic operator for $\Phi$. Indeed, simply set $Q_{h}:=Q-\lim\Phi^{n}(Q)$
and $Q_{p}:=Q-Q_{h}$.\end{remark}

Our next goal is to describe all the pure superharmonic operators
for a given completely positive map, $\Phi$, say. We will assume
that we are given some $W^{*}$-correspondence $E$ over $M$, a normal
representation $\sigma:M\to B(H)$ and an element $\eta\in E^{\sigma}$
so that $\Phi$ is realized as $\Phi_{\eta}$ acting on $\sigma(M)'$
through the formula\begin{equation}
\Phi_{\eta}(a):=\eta^{*}(I_{E}\otimes a)\eta,\qquad a\in\sigma(M)',\label{eq:Phi_eta_represent}\end{equation}
as in Theorem \ref{Theorem:Identity Representation}. The data $(E,\eta,\sigma)$
need not be the data constructed in that result; it can be quite arbitrary.
However, since we are not assuming that $\Phi_{\eta}$ and $\eta$
have norm at most one, some additional preparation is necessary. Since
$\eta^{*}$ is a bounded linear map from $E\otimes_{\sigma}H$ to
$H$ that satisfies the equation $\eta^{*}\sigma^{E}\circ\varphi=\sigma\eta^{*}$,
\cite[Lemma 3.5]{MS98b} implies that if we define $\widehat{\eta^{*}}$
by the formula\[
\widehat{\eta^{*}}(\xi)h=\eta^{*}(\xi\otimes h),\qquad\xi\otimes h\in E\otimes_{\sigma}H,\]
 then $\widehat{\eta^{*}}$ is a completely bounded bimodule map with
$cb$-norm $\Vert\eta^{*}\Vert=\Vert\eta\Vert$. We shall refer to
the pair $(\widehat{\eta^{*}},\sigma)$ as a \emph{completely bounded
covariant representation} of $(E,M)$. Although $\widehat{\eta^{*}}$
need not extend to a completely bounded representation of $\mathcal{T}_{+}(E)$,
as would be the case if $\Vert\eta\Vert\leq1$, we still can promote
$\widehat{\eta^{*}}$ to a map $\widehat{\eta_{n}^{*}}$ on each of
the tensor powers of $E$, $E^{\otimes n}$, via the formula\[
\widehat{\eta_{n}^{*}}(\xi_{1}\otimes\xi_{2}\otimes\cdots\otimes\xi_{n}):=\widehat{\eta^{*}}(\xi_{1})\widehat{\eta^{*}}(\xi_{2})\cdots\widehat{\eta^{*}}(\xi_{n}).\]
 When this is done, the map $\widehat{\eta_{n}^{*}}$ is a bimodule
map from $E^{\otimes n}$ to $B(H)$ whose associated linear map $\widetilde{\widehat{\eta_{n}^{*}}}$
from $E^{\otimes n}\otimes_{\sigma}H$ to $H$ is given by formula\begin{multline*}
\widetilde{\widehat{\eta_{n}^{*}}}(\xi_{1}\otimes\xi_{2}\otimes\cdots\otimes\xi_{n}\otimes h)=\eta^{*}(\xi_{1}\otimes\eta^{*}(\xi_{2}\otimes(\cdots\otimes\eta*(\xi_{n}\otimes h)\cdots)\\
=\eta^{*}(I_{E}\otimes\eta^{*})(I_{E^{\otimes2}}\otimes\eta^{*})\cdots(I_{E^{\otimes(n-1)}}\otimes\eta^{*})(\xi_{1}\otimes\xi_{2}\otimes\cdots\otimes\xi_{n}\otimes h),\end{multline*}
i.e., $\widetilde{\widehat{\eta_{n}^{*}}}=\eta^{*}(I_{E}\otimes\eta^{*})(I_{E^{\otimes2}}\otimes\eta^{*})\cdots(I_{E^{\otimes(n-1)}}\otimes\eta^{*})$.
To lighten the notation, we drop the {}``hat'' and {}``tilde'',
and simply write $\eta_{n}^{*}=\eta^{*}(I_{E}\otimes\eta^{*})(I_{E^{\otimes2}}\otimes\eta^{*})\cdots(I_{E^{\otimes(n-1)}}\otimes\eta^{*})$.
This is entirely consistent with what we used for $\widetilde{T_{n}}$
in paragraph \ref{subpar:CPMaps}. Further, we may then also define
$\eta_{n}:=(\eta_{n}^{*})^{*}$, which yields \[
\eta_{n}=(I_{E^{\otimes(n-1)}}\otimes\eta)(I_{E^{\otimes(n-2)}}\otimes\eta)\cdots(I_{E}\otimes\eta)\eta\]
 as expected. We let $\eta_{0}$ be the map from $H$ to $M\otimes_{\sigma}H$
that identifies $H$ with $M\otimes_{\sigma}H$ in the customary fashion.
With this notation, we find that\begin{equation}
\Phi_{\eta}^{n}(a)=\eta_{n}^{*}(I_{E^{\otimes n}}\otimes a)\eta_{n}\label{eq:Phi_eta_powers}\end{equation}
 for all $a\in\sigma(M)'$ and all $n\geq0$.

\begin{theorem}\label{Theorem:PureSuperHarm} Let $(\widehat{\eta^{*}},\sigma)$
be a completely bounded covariant representation of $(E,M)$ on the
Hilbert space $H$ and let $\Phi_{\eta}$ be the completely positive
map on $\sigma(M)'$ defined by \eqref{eq:Phi_eta_represent}. An
operator $Q$ in $\sigma(M)'$ is a pure superharmonic operator for
$\Phi_{\eta}$ if and only if $Q=CC^{*}$ for an operator $C\in\mathcal{I}((S_{0},\sigma_{0}),(\widehat{\eta^{*}},\sigma))$.
In this event, if $r=(Q-\Phi(Q))^{\frac{1}{2}},$ then $(I_{\mathcal{F}(E)}\otimes r)C(\eta)$
is a bounded linear operator defined on all of $H$, mapping $H$
to $\mathcal{F}(E)\otimes_{\sigma}H$, and $C^{*}$ may be written
as $C^{*}=(I_{\mathcal{F}(E)}\otimes v)(I_{\mathcal{F}(E)}\otimes r)C(\eta)$,
where $v$ is any partial isometry in $\mathcal{I}(\sigma,\pi)$ whose
initial projection contains the range projection of $r$.\end{theorem}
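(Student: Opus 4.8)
The plan is to prove the two implications separately and to read the factorisation of $C^{*}$ off the construction used for the harder one. For the direction ``$Q=CC^{*}$ is pure superharmonic'', take $C\in\mathcal{I}((S_{0},\sigma_{0}),(\widehat{\eta^{*}},\sigma))$. Rewriting the intertwining relations in the ``tilde'' picture gives $\eta^{*}(I_{E}\otimes C)=C\widetilde{S_{0}}$, and an induction on $n$ using $\eta_{n+1}^{*}=\eta^{*}(I_{E}\otimes\eta_{n}^{*})$ together with $\widetilde{(S_{0})_{n+1}}=\widetilde{S_{0}}(I_{E}\otimes\widetilde{(S_{0})_{n}})$ yields $\eta_{n}^{*}(I_{E^{\otimes n}}\otimes C)=C\,\widetilde{(S_{0})_{n}}$ for every $n$. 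By \eqref{eq:Phi_eta_powers} this gives
\[
\Phi_{\eta}^{\,n}(CC^{*})=C\,\widetilde{(S_{0})_{n}}\,\widetilde{(S_{0})_{n}}^{\,*}\,C^{*}.
\]
Since $(S_{0},\sigma_{0})$ is an induced, hence pure, isometric representation, $\widetilde{(S_{0})_{n}}\,\widetilde{(S_{0})_{n}}^{\,*}$ is the projection onto $\sum_{k\ge n}E^{\otimes k}\otimes_{\pi}K_{0}$ (paragraph~\ref{par:isoreps}); it is $\le I$ and tends to $0$ strongly, so $\Phi_{\eta}(CC^{*})\le CC^{*}$ and $\Phi_{\eta}^{\,n}(CC^{*})\to0$ strongly.

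For the converse, let $Q\in\sigma(M)'$ be pure superharmonic and set $r=(Q-\Phi_{\eta}(Q))^{1/2}\in\sigma(M)'$. From $\sum_{n=0}^{N-1}\Phi_{\eta}^{\,n}(r^{2})=Q-\Phi_{\eta}^{\,N}(Q)$ and purity one gets the Riesz-type identity $Q=\sum_{n\ge0}\Phi_{\eta}^{\,n}(r^{2})$ with increasing partial sums. Put $D_{n}:=\eta_{n}^{*}(I_{E^{\otimes n}}\otimes r)\colon E^{\otimes n}\otimes_{\sigma}H\to H$; then $D_{n}D_{n}^{*}=\Phi_{\eta}^{\,n}(r^{2})$ by \eqref{eq:Phi_eta_powers}, so $\sum_{n}\|D_{n}^{*}h\|^{2}=\langle Qh,h\rangle<\infty$ for every $h\in H$. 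Hence the column $h\mapsto(D_{n}^{*}h)_{n}$ — which is exactly $(I_{\mathcal{F}(E)}\otimes r)C(\eta)$, the factor $I_{E^{\otimes n}}\otimes r$ being a bounded operator precisely because $r\in\sigma(M)'$ — defines a bounded operator $D^{*}\colon H\to\mathcal{F}(E)\otimes_{\sigma}H$ on all of $H$, whose adjoint $D=\bigoplus_{n}D_{n}$ satisfies $DD^{*}=\sum_{n}D_{n}D_{n}^{*}=Q$. A routine bookkeeping computation (using $\eta_{n+1}^{*}=\eta^{*}(I_{E}\otimes\eta_{n}^{*})$, and $\eta_{n}\in(E^{\otimes n})^{\sigma}$ for the $M$-intertwining) shows that on the $n$-th summand both $D\widetilde{S'}$ and $\eta^{*}(I_{E}\otimes D)$ equal $D_{n+1}$; thus $D\in\mathcal{I}((S',\sigma'),(\widehat{\eta^{*}},\sigma))$, where $(S',\sigma')$ is the isometric representation of $(E,M)$ induced (in the sense of paragraph~\ref{par:isoreps}) by the representation $\sigma$ of $M$, acting on $\mathcal{F}(E)\otimes_{\sigma}H$.

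It remains to transport $D$ to the universal representation and extract the formula. Choose a partial isometry $v\in\mathcal{I}(\sigma,\pi)$ whose initial projection $v^{*}v$ dominates the range (support) projection $e_{r}$ of $r$; such a $v$ exists since $\pi$ is a faithful normal representation of $M$ of infinite multiplicity, so $\sigma$ is unitarily equivalent to a subrepresentation of $\pi$ (one may even take $v^{*}v=I_{H}$). By Lemma~\ref{Lemma:Unitary Equiv.}(1), $I_{\mathcal{F}(E)}\otimes v^{*}\in\mathcal{I}((S_{0},\sigma_{0}),(S',\sigma'))$, so $C:=D\,(I_{\mathcal{F}(E)}\otimes v^{*})$ belongs to $\mathcal{I}((S_{0},\sigma_{0}),(\widehat{\eta^{*}},\sigma))$. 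Since $D^{*}=(I_{\mathcal{F}(E)}\otimes r)C(\eta)$ has range inside $\mathcal{F}(E)\otimes_{\sigma}e_{r}H$ and $v^{*}v\ge e_{r}$, we obtain $C^{*}=(I_{\mathcal{F}(E)}\otimes v)D^{*}=(I_{\mathcal{F}(E)}\otimes v)(I_{\mathcal{F}(E)}\otimes r)C(\eta)$ and $CC^{*}=D(I_{\mathcal{F}(E)}\otimes v^{*}v)D^{*}=DD^{*}=Q$, which is the asserted factorisation. (Alternatively one could invoke Proposition~\ref{Lemma:Universal} to realise $(S',\sigma')$ as a reduction of $(S_{0},\sigma_{0})$, but the explicit $v$ is what produces the stated form of $C^{*}$.)

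The sufficiency half is short; the substance, and the main obstacle, is the converse. The crucial point is that the telescoped family $(D_{n}^{*})$ assembles into a genuinely \emph{bounded} operator on all of $H$: this is exactly where purity of $Q$ is used, and it explains why one must column up the increments $r=(Q-\Phi_{\eta}(Q))^{1/2}$ rather than $Q^{1/2}$. The remaining care lies in verifying the covariance/intertwining identities for the assembled $D$ (notably $\eta^{*}(I_{E}\otimes D)=D\widetilde{S'}$), and in observing that only $v^{*}v\ge e_{r}$, not $v^{*}v=I_{H}$, is needed to recover $CC^{*}=Q$.
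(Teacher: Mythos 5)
Your proposal is correct and follows essentially the same route as the paper: the forward direction via $\Phi_{\eta}^{n}(CC^{*})=C\,\widetilde{(S_{0})_{n}}\widetilde{(S_{0})_{n}}^{*}C^{*}$ with the final projections decreasing to zero, and the converse via the telescoping identity $Q=\sum_{n\geq0}\Phi_{\eta}^{n}(r^{2})$ used to assemble $C^{*}=(I_{\mathcal{F}(E)}\otimes vr)C(\eta)$. Your only deviations are cosmetic — you make the boundedness of the column $(D_{n}^{*}h)_{n}$ explicit through $\sum_{n}\Vert D_{n}^{*}h\Vert^{2}=\langle Qh,h\rangle$ (which the paper leaves as "a straightforward calculation") and you pass through the representation induced by $\sigma$ before composing with $I_{\mathcal{F}(E)}\otimes v^{*}$, rather than defining $C^{*}$ with $v$ directly.
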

\begin{proof}Suppose $Q\in\sigma(M)'$ has the form $Q=CC^{*},$
$C\in\mathcal{I}((S_{0},\sigma_{0}),(\widehat{\eta^{*}},\sigma))$.
Then by equation \eqref{eq:Phi_eta_powers} we may write\begin{align*}
\Phi_{\eta}^{n}(Q) & =\eta_{n}^{*}(I_{E^{\otimes n}}\otimes Q)\eta_{n}\\
= & \eta_{n}^{*}(I_{E^{\otimes n}}\otimes C)(I_{E^{\otimes n}}\otimes C^{*})\eta_{n}\\
= & C(\widetilde{S_{0}})_{n}((\widetilde{S_{0}})_{n})^{*}C^{*}\\
= & CP_{n}C^{*},\end{align*}
 where here we use $P_{n}$ to denote the projection onto $\sum_{k\geq n}E^{\otimes k}\otimes_{\pi}K_{0}$.
Since the $P_{n}$ decrease strongly to zero, the operators $\Phi_{\eta}^{n}(Q)$
decrease strongly to zero, as $n\to\infty$. Thus $Q$ is pure superharmonic
for $\Phi_{\eta}$.

For the converse, suppose $Q\in\sigma(M)'$ is a given pure superharmonic
operator for $\Phi_{\eta}$ and write $r^{2}:=Q-\Phi_{\eta}(Q)$.
The {}``purity'' of $Q$ guarantees that $Q=\sum_{n\geq0}\Phi_{\eta}^{n}(r^{2})$,
where the series converges in the strong operator topology. Indeed,
the $n^{th}$ partial sum of the series is $Q-\Phi_{\eta}^{n+1}(Q)$.
Let $\mathcal{R}$ be the closure of the range of $r$. Since $r\in\sigma(M)'$,
$\mathcal{R}$ reduces $\sigma$ and so we get a new normal representation,
$\sigma_{\mathcal{R}}$, of $M$ by restricting $\sigma(\cdot)$ to
$\mathcal{R}$. Choose an isometry $v$ from $\mathcal{R}$ into $K_{0}$
that is in $\mathcal{I}(\sigma_{\mathcal{R}},\pi)$. (Such a choice
is possible by the definition of $\pi$.) Define $C^{*}:H\to\mathcal{F}(E)\otimes_{\pi}K_{0}$
by the formula\begin{align*}
C^{*}x: & =(I_{\mathcal{F}(E)}\otimes v)\sum_{n\geq0}(I_{E^{\otimes n}}\otimes r)\eta_{n}x\\
= & (I_{\mathcal{F}(E)}\otimes vr)C(\eta)x.\end{align*}
 A straightforward calculation shows that this series converges and
that the sum defines a bounded operator $C^{*}$ that satisfies the
equation\begin{eqnarray*}
CC^{*}x & = & \sum_{n\geq0}\eta_{n}^{*}(I_{E^{\otimes n}}\otimes r^{2})\eta_{n}x\\
 & = & \sum_{n\geq0}\Phi_{\eta}^{n}(r^{2})x\\
 & = & Qx.\end{eqnarray*}
 It is also clear that $C\in\mathcal{I}((S_{0},\sigma_{0}),(\widehat{\eta^{*}},\sigma))$.\end{proof}

Theorem \ref{Theorem:PureSuperHarm} has its roots in work of Kato
\cite{tK68}. Indeed, he might have called the operator $r$ a \emph{smooth
operator} with respect to $\eta^{*}$(See \cite[p. 545]{tK68}.) The
proof of the theorem that we presented is a minor modification of
Douglas's proof of Theorem 5 in \cite{rD69}. Popescu proved Theorem
\ref{Theorem:PureSuperHarm} in the setting of free semigroup algebras
as \cite[Theorem 3.7]{gP03} and developed a number of other important
features of $\Phi_{\eta}$ in that setting. Many of them extend to
our context, but we will not pursue all of them here.

Our primary objective is to prove the following theorem that identifies
$\mathcal{V}_{ac}(T,\sigma)$ for a completely contractive representation
$(T,\sigma)$ of $(E,M)$.

\begin{theorem}\label{Theorem:AbsContT} Let $(T,\sigma)$ be a completely
contractive representation of $(E,M)$ on the Hilbert space $H$,
let $(V,\rho)$ be the minimal isometric dilation of $(T,\sigma)$
acting on a Hilbert space $K$ containing $H$, and let $P$ denote
the projection of $K$ onto $H$. Then $K\ominus H$ is contained
in $\mathcal{V}_{ac}(V,\rho)$ and the following sets are equal.
\begin{enumerate}
\item [(1)] $\mathcal{V}_{ac}(T,\sigma)$.
\item [(2)]$H\cap\mathcal{V}_{ac}(V,\rho)$.
\item [(3)] $P\mathcal{V}_{ac}(V,\rho)$.
\item [(4)]$\bigcup\{Ran(C)\mid C\in\mathcal{I}((S_{0},\sigma_{0}),(T,\sigma))\}$.
\item [(5)]$\overline{span}\{Ran(Q)\mid Q\mbox{ is a pure superharmonic operator for }\Phi_{T}\}$.
\end{enumerate}
In particular, $(T,\sigma)$ is absolutely continuous if and only
if $(V,\rho)$ is absolutely continuous.

\end{theorem}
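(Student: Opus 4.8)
The plan is to establish the five-way equality by a cycle of inclusions, leaning on the two sets that are easiest to compare with everything else, namely (4) the union of ranges of intertwiners from the universal representation and (5) the span of ranges of pure superharmonic operators for $\Phi_T$. The link between these two is essentially Theorem~\ref{Theorem:PureSuperHarm}: an operator $C \in \mathcal{I}((S_0,\sigma_0),(T,\sigma))$ produces the pure superharmonic operator $Q = CC^*$ with $\operatorname{Ran}(Q) \subseteq \operatorname{Ran}(C)$, and conversely a pure superharmonic $Q$ is of the form $CC^*$, so $\overline{\operatorname{Ran}(Q)} = \overline{\operatorname{Ran}(C)}$. This gives $(4) \subseteq (5)$ after taking closed spans, and $(5) \subseteq \overline{\operatorname{span}}(4)$; to get honest equality of (4) with $\overline{\operatorname{span}}(4)$ one argues as in the proof of Theorem~\ref{subspace}: using that $\pi$ has infinite multiplicity, two intertwiners $C_1, C_2$ with orthogonal initial spaces can be amalgamated into $C_1 \oplus C_2$, so the union (4) is already closed under addition, and closedness under limits follows from the fact that $\omega_x \circ (T\times\sigma)$ depends norm-continuously on $x$ together with the (to be established) fact that ranges of such $C$ are exactly the absolutely continuous vectors.

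Next I would connect (1), (2), (3), (4) via the minimal isometric dilation $(V,\rho)$ on $K$. That $K \ominus H \subseteq \mathcal{V}_{ac}(V,\rho)$ should come from the explicit Sch\"affer-type form \eqref{Vtilde}: on $K \ominus H = \mathcal{F}(E) \otimes_{\sigma_1} \mathcal{D}$ the representation $(V,\rho)$ restricts to the induced (hence pure) representation built from $\sigma_1$, and every vector of a pure isometric representation is absolutely continuous because a pure isometric representation is induced (paragraph~\ref{par:isoreps}) and an induced representation is a direct summand of $(S_0,\sigma_0)$ by Proposition~\ref{Lemma:Universal}, so the identity inclusion is an intertwiner in $\mathcal{I}((S_0,\sigma_0),(V,\rho))$ whose range is everything --- invoke Theorem~\ref{subspace}. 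For $(1) \subseteq (2)$: if $x \in H$ is absolutely continuous for $(T,\sigma)$, then since $PV(\eta)P = T(\eta)P$ and $\rho(a)P = \sigma(a)P$, the functional $\omega_x \circ (T\times\sigma)$ on $\mathcal{T}_+(E)$ equals $\omega_x \circ (V\times\rho)$, so $x$ is absolutely continuous for $(V,\rho)$; conversely $H \cap \mathcal{V}_{ac}(V,\rho) \subseteq \mathcal{V}_{ac}(T,\sigma)$ by the same identity read backwards, giving $(1) = (2)$. The inclusion $(2) \subseteq (3)$ is trivial and $(3) \subseteq (1)$ follows because $P$ intertwines $(V,\rho)$ with $(T,\sigma)$ (in the appropriate one-sided sense), so $P$ carries an absolutely continuous vector of $(V,\rho)$ to one of $(T,\sigma)$ --- here one pushes a representing pair $\xi,\eta$ forward, or more cleanly uses that $P$ applied to an element of $\mathcal{I}((S_0,\sigma_0),(V,\rho))$ lands in $\mathcal{I}((S_0,\sigma_0),(T,\sigma))$, which also simultaneously yields $(3) \subseteq (4)$.

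To close the loop I would then show $(4) \subseteq (1)$ and $(2) \subseteq (4)$. The first is immediate from Lemma~\ref{intertwine}-type reasoning: if $C \in \mathcal{I}((S_0,\sigma_0),(T,\sigma))$ and $x = C\xi$, then $\omega_x \circ (T\times\sigma)(a) = \langle (T\times\sigma)(a) C\xi, C\xi\rangle = \langle C (S_0\times\sigma_0)(a)\xi, C\xi\rangle = \langle (S_0\times\sigma_0)(a)\xi, C^*C\xi\rangle$, so $\omega_x \circ (T\times\sigma)$ is a vector functional on $(S_0\times\sigma_0)(\mathcal{T}_+(E))$, hence $x$ is absolutely continuous. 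For $(2) \subseteq (4)$: given $x \in H$ absolutely continuous for $(V,\rho)$, Theorem~\ref{subspace} applied to the isometric representation $(V,\rho)$ shows $x \in \overline{\operatorname{span}}\{\operatorname{Ran}(X) : X \in \mathcal{I}((S_0,\sigma_0),(V,\rho))\}$, and then compressing by $P$ and using the amalgamation trick (infinite multiplicity of $\pi$) one writes $Px = x$ as a single $PX\xi$ with $PX \in \mathcal{I}((S_0,\sigma_0),(T,\sigma))$; here one needs to know $P X$ really intertwines, which holds because $P$ intertwines $(V,\rho)$ and $(T,\sigma)$ on $\mathcal{T}_+(E)$.

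Finally, the displayed conclusion ``$(T,\sigma)$ is absolutely continuous iff $(V,\rho)$ is'' is immediate once $(1) = (2)$ is known together with $K \ominus H \subseteq \mathcal{V}_{ac}(V,\rho)$: if $\mathcal{V}_{ac}(T,\sigma) = H$ then $\mathcal{V}_{ac}(V,\rho) \supseteq (K\ominus H) + (H \cap \mathcal{V}_{ac}(V,\rho)) = (K \ominus H) + H = K$ (using that $\mathcal{V}_{ac}(V,\rho)$ is a closed subspace by Theorem~\ref{subspace}), and conversely $\mathcal{V}_{ac}(V,\rho) = K$ forces $H \cap \mathcal{V}_{ac}(V,\rho) = H$, i.e. $\mathcal{V}_{ac}(T,\sigma) = H$. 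The main obstacle I anticipate is the bookkeeping in $(2) \subseteq (4)$ --- specifically verifying that compression by $P$ of a universal intertwiner into $K$ yields a genuine element of $\mathcal{I}((S_0,\sigma_0),(T,\sigma))$ whose range still contains the given vector, and then reassembling a span of such ranges into a single range using the infinite-multiplicity amalgamation; everything else is either a direct functional computation or a citation of Theorems~\ref{subspace} and \ref{Theorem:PureSuperHarm}.
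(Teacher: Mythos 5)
Your proposal is correct, and its architecture coincides with the paper's: both prove $K\ominus H\subseteq\mathcal{V}_{ac}(V,\rho)$ from the Sch\"affer form, identify (1) with (2) via the identity $\omega_{x}\circ(T\times\sigma)=\omega_{x}\circ(V\times\rho)$ for $x\in H$, reduce to the isometric case through Proposition \ref{X} and Theorem \ref{subspace}, and obtain (4)$=$(5) from Theorem \ref{Theorem:PureSuperHarm} once (4) is known to be a closed linear subspace. The one genuine difference is how the cycle involving (4) is closed. The paper proves $(4)\subseteq(3)$ by invoking the commutant lifting theorem: every $C\in\mathcal{I}((S_{0},\sigma_{0}),(T,\sigma))$ is of the form $PX$ for some $X\in\mathcal{I}((S_{0},\sigma_{0}),(V,\rho))$, so $\mathrm{Ran}(C)\subseteq P\mathcal{V}_{ac}(V,\rho)$. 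You avoid commutant lifting entirely by showing $(4)\subseteq(1)$ directly: $\omega_{C\xi}\circ(T\times\sigma)=\omega_{\xi,\,C^{*}C\xi}\circ(S_{0}\times\sigma_{0})$ is already a representing vector functional, which is exactly Definition \ref{abscont}. This is more elementary and bypasses both Theorem \ref{Theorem:ComLifting} and Lemma \ref{intertwine} at that point; what the paper's lifting argument buys in exchange is the structural fact that every intertwiner of $(S_{0},\sigma_{0})$ with $(T,\sigma)$ is the compression of an intertwiner with the dilation, which is of independent use. Two small points of hygiene: in your $(2)\subseteq(4)$ step you should cite Proposition \ref{X}(ii) rather than Theorem \ref{subspace}, since you need $x$ to lie in the range of a \emph{single} intertwiner, not merely in the closed span of ranges (though that step is in any case redundant given $(2)\subseteq(3)\subseteq(4)$); and the assertion that $P$ intertwines $(V,\rho)$ with $(T,\sigma)$ should be justified by noting that $H^{\perp}$ is $(V\times\rho)$-invariant, so that $P(V\times\rho)(a)=P(V\times\rho)(a)P=(T\times\sigma)(a)P$, which is exactly the computation the paper performs.
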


\begin{proof} First, observe that the orthogonal complement of $H$
in $K$, $H^{\perp}$, is $\mathcal{F}(E)\otimes_{\sigma_{1}}\mathcal{D}$,
where $\mathcal{D}$ is the closure of the range of $\Delta=(I_{E\otimes H}-\widetilde{T}^{*}\widetilde{T})^{\frac{1}{2}}$
and where $\sigma_{1}$ is the restriction of $\sigma^{E}\circ\varphi(\cdot)$
to $\mathcal{D}$. (See paragraph \ref{par:IsoDilate}.) The restriction
of $(V,\rho)$ to $H^{\perp}=\mathcal{F}(E)\otimes_{\sigma_{1}}\mathcal{D}$
is just the representation induced by $\sigma_{1}$ and, therefore,
is absolutely continuous. Thus $K\ominus H\subseteq\mathcal{V}_{ac}(V,\rho)$.
To see the equality of the indicated subspaces, begin by noting that
the coincidence of the two spaces $\mathcal{V}_{ac}(T,\sigma)$ and
$H\cap\mathcal{V}_{ac}(V,\rho)$ is an immediate consequence of the
fact that for every vector $x\in H$ the two functionals $\omega_{x}\circ(T\times\sigma)$
and $\omega_{x}\circ(V\times\rho)$ are equal. This, in turn, is clear
because for such an $x$, $Px=x$, where $P$ is the projection from
$K$ onto $H$. Consequently, $\omega_{x}\circ(V\times\rho)(\cdot)=\langle V\times\rho(\cdot)x,x\rangle=\langle PV\times\rho(\cdot)Px,x\rangle=\langle T\times\sigma(\cdot)x,x\rangle=\omega_{x}\circ(T\times\sigma)(\cdot)$.

Note in particular, by Theorem \ref{subspace}, the fact that $\mathcal{V}_{ac}(T,\sigma)=H\cap\mathcal{V}_{ac}(V,\rho)$
shows that $\mathcal{V}_{ac}(T,\sigma)$ is a closed subspace of $H$.

Clearly, $H\cap\mathcal{V}_{ac}(V,\rho)$ is contained in $P\mathcal{V}_{ac}(V,\rho)$.
On the other hand, if $x=Py$, with $y\in\mathcal{V}_{ac}(V,\rho)$,
then by Proposition \ref{X}, there is an $X\in\mathcal{I}((S_{0},\sigma_{0}),(V,\rho))$
and a $z\in\mathcal{F}(E)\otimes_{\pi}K_{0}$ such that $y=Xz$ .
Since $H^{\perp}$ is invariant under $V\times\rho$, we see that
$(T\times\sigma)PX=P(V\times\rho)PX=P(V\times\rho)X=PX(S_{0}\times\rho)$.
Thus $x=PXz$ lies in $\bigcup\{Ran(C)\mid C\in\mathcal{I}((S_{0},\sigma_{0}),(T,\sigma))\}$.
On the other hand, the commutant lifting theorem, \cite[Theorem 4.4]{MS98b},
implies that every operator $C\in\mathcal{I}((S_{0},\sigma_{0}),(T,\sigma))$
has the form $PX$ for an operator $X\in\mathcal{I}((S_{0},\sigma_{0}),(V,\rho))$.
Thus, $\bigcup\{Ran(C)\mid C\in\mathcal{I}((S_{0},\sigma_{0}),(T,\sigma))\}\subseteq P\bigvee\{Ran(X)\mid X\in\mathcal{I}((S_{0},\sigma_{0}),(V,\rho))\}=P\mathcal{V}_{ac}(V,\rho)$,
where the last equation is justified by Theorem \ref{subspace}. Thus
$\bigcup\{Ran(C)\mid C\in\mathcal{I}((S_{0},\sigma_{0}),(T,\sigma))\}=P\mathcal{V}_{ac}(V,\rho)$.
To see that $P\mathcal{V}_{ac}(V,\rho)=H\cap\mathcal{V}_{ac}(V,\rho)$,
note that we showed that $\mathcal{V}_{ac}(V,\rho)$ contains $H^{\perp}$
and so the projection onto $\mathcal{V}_{ac}(V,\rho)$ commutes with
$P$. Consequently, $H\cap\mathcal{V}_{ac}(V,\rho)=P\mathcal{V}_{ac}(V,\rho)$,
and so the first four sets (1)--(4) are equal.

From Theorem \ref{Theorem:PureSuperHarm}, we know that if $C\in\mathcal{I}((S_{0},\sigma_{0}),(T,\sigma))$,
then $CC^{*}$ is pure superharmonic for $\Phi_{T}$. Although the
range of $CC^{*}$ may be properly contained in the range of $C$
it is dense in the range of $C$, and so we see that $\bigcup\{Ran(C)\mid C\in\mathcal{I}((S_{0},\sigma_{0}),(T,\sigma))\}\subseteq\overline{span}\{Ran(Q)\mid Q\mbox{ is a pure superharmonic operator for }\Phi_{T}\}$.
The opposite inclusion is an immediate consequence of the opposite
implication in Theorem \ref{Theorem:PureSuperHarm}, which shows that
every pure superharmonic $Q$ for $\Phi_{T}$ has the form $Q=CC^{*}$
for a suitable $C\in\mathcal{I}((S_{0},\sigma_{0}),(T,\sigma))$,
and the fact, already proved, that $\bigcup\{Ran(C)\mid C\in\mathcal{I}((S_{0},\sigma_{0}),(T,\sigma))\}$
is the closed linear space $\mathcal{V}_{ac}(T,\sigma)$. \end{proof}

\begin{corollary}\label{Cor:AbsContT}Suppose $(T,\sigma)$ is a
completely contractive covariant representation of $(E,M)$ on $H$.
Then:
\begin{enumerate}
\item $\mathcal{V}_{ac}(T,\sigma)=0$ if and only if $\mathcal{I}((S_{0},\sigma_{0}),(T,\sigma))=\{0\}$.
\item If $\Vert\widetilde{T}\Vert<1$, then $(T,\sigma)$ is absolutely
continuous.
\end{enumerate}
\end{corollary}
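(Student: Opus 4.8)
The plan is to read both statements off Theorem~\ref{Theorem:AbsContT}, so that this is genuinely a corollary rather than a fresh argument.

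For part (1), I would simply invoke the equality of the sets labelled (1) and (4) in Theorem~\ref{Theorem:AbsContT}, namely $\mathcal{V}_{ac}(T,\sigma)=\bigcup\{Ran(C)\mid C\in\mathcal{I}((S_{0},\sigma_{0}),(T,\sigma))\}$. If $\mathcal{I}((S_{0},\sigma_{0}),(T,\sigma))=\{0\}$, then this union is the range of the zero operator, so $\mathcal{V}_{ac}(T,\sigma)=0$; conversely, if $\mathcal{V}_{ac}(T,\sigma)=0$, then every intertwiner $C$ in that space has zero range and is therefore the zero operator. Nothing further is needed.

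For part (2), I would use the description of $\mathcal{V}_{ac}(T,\sigma)$ as the set (5) of Theorem~\ref{Theorem:AbsContT}, i.e. $\overline{span}\{Ran(Q)\mid Q\mbox{ is a pure superharmonic operator for }\Phi_{T}\}$, and observe that when $\Vert\widetilde{T}\Vert<1$ the identity operator $I_{H}\in\sigma(M)'$ is itself a pure superharmonic operator for $\Phi_{T}$. Indeed, $\Phi_{T}(I_{H})=\widetilde{T}(I_{E}\otimes I_{H})\widetilde{T}^{*}=\widetilde{T}\widetilde{T}^{*}\le\Vert\widetilde{T}\Vert^{2}I_{H}\le I_{H}$, so $I_{H}$ is superharmonic; and since $\Phi_{T}$ is positive an easy induction gives $\Phi_{T}^{n}(I_{H})\le\Vert\widetilde{T}\Vert^{2n}I_{H}$, which tends to $0$ in norm and hence strongly, so $I_{H}$ is pure. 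Therefore $H=Ran(I_{H})\subseteq\mathcal{V}_{ac}(T,\sigma)$, i.e. $(T,\sigma)$ is absolutely continuous. (Alternatively, one can argue directly from Remark~\ref{ac}: by \cite[Corollary 2.14]{MSHardy}, $T\times\sigma$ extends to an ultra-weakly continuous representation of $H^{\infty}(E)$ whenever $\Vert\widetilde{T}\Vert<1$, so every vector functional $\omega_{x}\circ(T\times\sigma)$ extends ultra-weakly continuously to $H^{\infty}(E)$, and Remark~\ref{ac} then places $x$ in $\mathcal{V}_{ac}(T,\sigma)$.)

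I do not expect any real obstacle: the corollary is essentially bookkeeping on top of Theorem~\ref{Theorem:AbsContT} and Theorem~\ref{Theorem:PureSuperHarm}. The only point that calls for a moment's care is checking that $\Phi_{T}^{n}(I_{H})\to0$ strongly, and that follows at once from the norm estimate $\Phi_{T}^{n}(I_{H})\le\Vert\widetilde{T}\Vert^{2n}I_{H}$ together with $\Vert\widetilde{T}\Vert<1$.
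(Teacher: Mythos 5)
Your proof is correct and follows essentially the same route as the paper: part (1) is read off from the description (4) in Theorem~\ref{Theorem:AbsContT}, and part (2) from description (5) by observing that the identity is a pure superharmonic operator for $\Phi_{T}$ when $\Vert\widetilde{T}\Vert<1$. The only cosmetic difference is that the paper verifies purity via equation \eqref{eq:Phi_T_n} and the fact that $\Vert\widetilde{T_{n}}\Vert\to0$, whereas you run the equivalent induction $\Phi_{T}^{n}(I_{H})\leq\Vert\widetilde{T}\Vert^{2n}I_{H}$ using positivity of $\Phi_{T}$.
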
\begin{proof}The first assertion is immediate from
part ($4$) of Theorem \ref{Theorem:AbsContT}. The second assertion
is immediate from part ($5$) since when $\Vert\widetilde{T}\Vert<1$,
the identity is a pure superharmonic for operator $\Phi_{T}$ by equation
\ref{eq:Phi_T_n} and the fact that $\Vert\widetilde{T_{n}}\Vert\to0$
as $n\to\infty$.\end{proof}

\begin{remark}\label{rem:Nonselfadjoint_vageries} With Corollary
\ref{Cor:AbsContT} in hand, it is easy to pick up on a point raised
at the end of paragraph \ref{par:Intertwining}. Let $(T,\sigma)$
be completely contractive covariant representation of $(E,M)$ on
a Hilbert space $H$ and assume that $\Vert\widetilde{T}\Vert<1$.
Then $(T,\sigma)$ is absolutely continuous by Corollary \ref{Cor:AbsContT},
which means that $\mathcal{I}((S_{0},\sigma_{0}),(T,\sigma))$ is
quite large. On the other hand, it is easy to see that if $\Vert\widetilde{T}\Vert<1$,
then $\mathcal{I}((T,\sigma),(S_{0},\sigma_{0}))=0$. Indeed, if $C\in\mathcal{I}((T,\sigma),(S_{0},\sigma_{0}))$,
then $C\widetilde{T}=\widetilde{S_{0}}(I_{E}\otimes C)$. From this
it follows that $C\widetilde{T}_{n}=(\widetilde{S}_{0})_{n}(I_{E^{\otimes n}}\otimes C)$
for all $n$. Since each $(\widetilde{S)}_{n}$ is isometric, this
equation implies that \[
(\widetilde{S}_{0})_{n}^{*}C\widetilde{T}_{n}=I_{E^{\otimes n}}\otimes C.\]
 We conclude that $C=0$, since the left hand side of this equation
goes to zero in norm, while the right hand side has norm $\Vert C\Vert$
for all $n$.\end{remark}

We note in passing that when Theorem \ref{Theorem:AbsContT} is specialized
to the setting when $M=E=\mathbb{C}$, it yields an improvement of
\cite[Corollary 5.5]{rD69} in the following sense: If $W$ is a unitary
operator on a Hilbert space $H$, then its absolutely continuous subspace
is the closed span of the ranges of all the pure superharmonic operators
with respect to the automorphism of $B(H)$ induced by $W$; it is
also the union of the ranges of all the operators that intertwine
the unilateral shift of infinite multiplicity and $W$.

Recall that if $\mathcal{A}$ is an algebra of operators on a Hilbert
space $H$, then a subspace $\mathcal{M}$ of $H$ is called hyperinvariant
for $\mathcal{A}$ if and only if $\mathcal{M}$ is invariant under
every operator in $\mathcal{A}$ and every operator in the commutant
of $\mathcal{A}$. One important feature of this notion is that when
$\mathcal{A}$ is generated by single normal operator $T$, say, then
the hyperinvariant subspaces of $\mathcal{A}$ are precisely the spectral
subspaces of $T$. Thus in a sense, hyperinvariant subspaces for an
algebra should be viewed as analogues of spectral subspaces for an
operator. One needs to take this extended perspective with a grain
of salt, however, since spectral subspaces need not be central, i.e.,
the projection $P$ onto a hyperinvariant subspace need not lie in
the center of $\mathcal{A}$. Nevertheless, knowing that a subspace
is hyperinvariant for an algebra is useful information. Evidently,
if $(T,\sigma)$ is a completely contractive covariant representation
of $(E,M)$ then $\mathcal{V}_{ac}(T,\sigma)$ is invariant under
$T\times\sigma(\mathcal{T}_{+}(E))$ by part (4) of Theorem \ref{Theorem:AbsContT}.
Indeed, if $h\in\mathcal{V}_{ac}(T,\sigma)$, then there is a vector
$x\in\mathcal{F}(E)\otimes_{\pi}K_{0}$ and an operator $C\in\mathcal{I}((S_{0},\sigma_{0}),(T,\sigma))\}$
such that $h=Cx$. Then for $\xi\in E$ and $a\in M$, $T(\xi)h=T(\xi)Cx=CS_{0}(\xi)x$
and $\sigma(a)h=\sigma(a)Cx=C\pi^{\mathcal{F}(E)}x$ are in $\mathcal{V}_{ac}(T,\sigma).$
The next result, a consequence of Theorems \ref{Theorem:AbsContT}
and \ref{Theorem:PureSuperHarm}, shows that $\mathcal{V}_{ac}(T,\sigma)$
is hyperinvariant for $T\times\sigma(\mathcal{T}_{+}(E))$.

\begin{theorem}\label{Theorem:Spectral Subspace}For $i=1,2$, let
$(T_{i},\sigma_{i})$ be a completely contractive covariant representation
of $(E,M)$ on a Hilbert space $H_{i}$ and suppose that $R:H_{1}\to H_{2}$
intertwines $(T_{1},\sigma_{1})$ and $(T_{2},\sigma_{2})$. Then
$R\mathcal{V}_{ac}(T_{1},\sigma_{1})\subseteq\mathcal{V}_{ac}(T_{2},\sigma_{2})$.
In particular, the absolutely continuous subspace of a completely
contractive covariant representation of $(E,M)$ is hyperinvariant
for its image. \end{theorem}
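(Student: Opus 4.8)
The strategy is to read the result off directly from characterization (4) of Theorem \ref{Theorem:AbsContT}, which presents $\mathcal{V}_{ac}(T_{i},\sigma_{i})$ as the union of the ranges of the operators in $\mathcal{I}((S_{0},\sigma_{0}),(T_{i},\sigma_{i}))$. The single observation needed is that left composition by an intertwiner $R$ of $(T_{1},\sigma_{1})$ and $(T_{2},\sigma_{2})$ carries $\mathcal{I}((S_{0},\sigma_{0}),(T_{1},\sigma_{1}))$ into $\mathcal{I}((S_{0},\sigma_{0}),(T_{2},\sigma_{2}))$.

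Concretely, I would fix $h\in\mathcal{V}_{ac}(T_{1},\sigma_{1})$ and, by Theorem \ref{Theorem:AbsContT}(4), write $h=Cx$ with $C\in\mathcal{I}((S_{0},\sigma_{0}),(T_{1},\sigma_{1}))$ and $x\in\mathcal{F}(E)\otimes_{\pi}K_{0}$. Then $RC$ intertwines $S_{0}\times\sigma_{0}$ and $T_{2}\times\sigma_{2}$: for $\xi\in E$,
\[
RC\,S_{0}(\xi)=R\,T_{1}(\xi)\,C=T_{2}(\xi)\,RC,
\]
and for $a\in M$, $RC\,\sigma_{0}(a)=R\,\sigma_{1}(a)\,C=\sigma_{2}(a)\,RC$, where in each step we used that $C$ intertwines $(S_{0},\sigma_{0})$ with $(T_{1},\sigma_{1})$ and that $R$ intertwines $(T_{1},\sigma_{1})$ with $(T_{2},\sigma_{2})$; since the $T_{\xi}$ together with $\varphi_{\infty}(M)$ generate $\mathcal{T}_{+}(E)$, this gives $RC\in\mathcal{I}((S_{0},\sigma_{0}),(T_{2},\sigma_{2}))$. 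Hence $Rh=(RC)x\in Ran(RC)\subseteq\mathcal{V}_{ac}(T_{2},\sigma_{2})$, again by Theorem \ref{Theorem:AbsContT}(4), and since $h$ was arbitrary, $R\mathcal{V}_{ac}(T_{1},\sigma_{1})\subseteq\mathcal{V}_{ac}(T_{2},\sigma_{2})$. One can equally route this through characterization (5): if $Q$ is pure superharmonic for $\Phi_{T_{1}}$, then $Q=CC^{*}$ with $C$ as above by Theorem \ref{Theorem:PureSuperHarm}, so $R\,Ran(Q)\subseteq Ran(RC)\subseteq\mathcal{V}_{ac}(T_{2},\sigma_{2})$.

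For the final assertion I would specialize to $(T_{1},\sigma_{1})=(T_{2},\sigma_{2})=(T,\sigma)$. An operator in the commutant of $(T\times\sigma)(\mathcal{T}_{+}(E))$ is precisely an element of $\mathcal{I}((T,\sigma),(T,\sigma))$, so the first part yields $R\,\mathcal{V}_{ac}(T,\sigma)\subseteq\mathcal{V}_{ac}(T,\sigma)$ for every such $R$; combined with the invariance of $\mathcal{V}_{ac}(T,\sigma)$ under $(T\times\sigma)(\mathcal{T}_{+}(E))$ noted just before the theorem, this is exactly hyperinvariance. There is no real obstacle here: the only content is the bookkeeping that left multiplication by $R$ preserves the one-sided intertwining relation with the universal representation, which is immediate from the definitions, together with the identification of the commutant with the self-intertwiner space; all the substance has already been absorbed into Theorem \ref{Theorem:AbsContT}.
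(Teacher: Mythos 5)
Your argument is correct and is essentially the paper's own proof: both read the result off from characterization (4) of Theorem \ref{Theorem:AbsContT} by observing that left composition with $R$ maps $\mathcal{I}((S_{0},\sigma_{0}),(T_{1},\sigma_{1}))$ into $\mathcal{I}((S_{0},\sigma_{0}),(T_{2},\sigma_{2}))$ and hence carries ranges into ranges. Your added remarks on routing through (5) and on deducing hyperinvariance by specializing to the commutant are consistent with what the paper intends.
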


\begin{proof}By Theorem \ref{Theorem:AbsContT}, $\mathcal{V}_{ac}(T_{i},\sigma_{i})=\bigcup\{Ran(C)\mid C\in\mathcal{I}((S_{0},\sigma_{0}),(T_{i},\sigma_{i}))\}$,
$i=1,2$. If $C\in\mathcal{I}((S_{0},\sigma_{0}),(T_{1},\sigma_{1}))$,
then $RC(S_{0}\times\sigma_{0})=R(T_{1}\times\sigma_{1})C=(T_{2}\times\sigma_{2})RC$,
which shows that $R\mathcal{I}((S_{0},\sigma_{0}),(T_{1},\sigma_{1}))\subseteq\mathcal{I}((S_{0},\sigma_{0}),(T_{2},\sigma_{2}))$.
Since $R(Ran(C))=Ran(RC)$, we conclude that $R\mathcal{V}_{ac}(T_{1},\sigma_{1})\subseteq\mathcal{V}_{ac}(T_{2},\sigma_{2})$.
\end{proof}

We come now to the main result of this section, which provides criteria
for deciding when a completely contractive covariant representation
of $\mathcal{T}_{+}(E)$ extends to an ultra-weakly continuous representation
of $H^{\infty}(E)$.

\begin{theorem}\label{Theorem:ExtensionThm} Let $(T,\sigma)$ be
a completely contractive covariant representation of $\mathcal{T}_{+}(E)$
on a Hilbert space $H$ and let $(V,\rho)$ be its minimal isometric
dilation acting on a Hilbert space $K$ containing $H$. Then the
following assertions are equivalent.
\begin{enumerate}
\item $T\times\sigma$ extends to an ultra-weakly continuous, completely
contractive representation of $H^{\infty}(E)$.
\item $(T,\sigma)$ is absolutely continuous.
\item $\overline{span}\{Ran(Q)\mid Q\mbox{ is a pure superharmonic operator for }\Phi_{T}\}=H$.
\item $V\times\rho$ extends to an ultra-weakly continuous, completely contractive
representation of $H^{\infty}(E)$.
\item $(V,\rho)$ is absolutely continuous.
\item $H$ is contained in $\mathcal{V}_{ac}(V,\rho)$.
\end{enumerate}
\end{theorem}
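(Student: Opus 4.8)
The plan is to obtain most of the six equivalences as bookkeeping from Theorem \ref{Theorem:AbsContT} and the isometric case Theorem \ref{abscontrep}, reserving the one substantive argument for $(4)\Rightarrow(1)$.

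First I would dispose of the reformulations. For $(1)\Rightarrow(2)$: if $T\times\sigma$ extends to an ultra-weakly continuous completely contractive representation of $H^{\infty}(E)$, then for each $x\in H$ the functional $\omega_{x}\circ(T\times\sigma)$ extends ultra-weakly continuously to $H^{\infty}(E)$, so $x\in\mathcal{V}_{ac}(T,\sigma)$ by Remark \ref{ac}; hence $(2)$. Next, $(2)\Leftrightarrow(3)$ is just the equality of sets $(1)$ and $(5)$ in Theorem \ref{Theorem:AbsContT}. The identity $\mathcal{V}_{ac}(T,\sigma)=H\cap\mathcal{V}_{ac}(V,\rho)$ from that theorem gives $(2)\Leftrightarrow(6)$, and the containment $K\ominus H\subseteq\mathcal{V}_{ac}(V,\rho)$ proved there gives $(5)\Leftrightarrow(6)$ (if $H\subseteq\mathcal{V}_{ac}(V,\rho)$ then $K=H+(K\ominus H)\subseteq\mathcal{V}_{ac}(V,\rho)$, and the converse is trivial). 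Finally $(4)\Leftrightarrow(5)$: in one direction Theorem \ref{abscontrep}, applied to the isometric representation $(V,\rho)$, produces from absolute continuity an ultra-weakly continuous completely isometric — hence completely contractive — extension of $V\times\rho$; in the other direction Remark \ref{ac} applied to $(V,\rho)$ turns $(4)$ into the statement that every vector of $K$ is absolutely continuous for $(V,\rho)$, i.e. $(5)$.

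The one genuinely new step is $(4)\Rightarrow(1)$, which I would prove by compressing the $H^{\infty}(E)$-representation of $V\times\rho$ to the co-invariant subspace $H$. Let $\widehat{V}$ be the ultra-weakly continuous completely contractive extension of $V\times\rho$ to $H^{\infty}(E)$ and $P$ the projection of $K$ onto $H$. By part $(2)$ of the minimal isometric dilation theorem in paragraph \ref{par:IsoDilate}, $V(\eta)^{*}$ leaves $H$ invariant for every $\eta\in E$, so $H^{\perp}$ is invariant under each $V(\eta)$ and, since $P$ commutes with $\rho(M)$, under $(V\times\rho)(\mathcal{T}_{+}(E))$; because invariance of $H^{\perp}$ is an ultra-weakly closed condition and $\mathcal{T}_{+}(E)$ is ultra-weakly dense in $H^{\infty}(E)$ with $\widehat{V}$ ultra-weakly continuous, $H^{\perp}$ is invariant under $\widehat{V}(H^{\infty}(E))$. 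Then $\widehat{T}(a):=P\widehat{V}(a)|_{H}$ is ultra-weakly continuous and completely contractive (a compression of such a map), and co-invariance of $H$ gives $P\widehat{V}(a)(I-P)\widehat{V}(b)|_{H}=0$, whence $\widehat{T}(ab)=P\widehat{V}(a)P\widehat{V}(b)|_{H}=\widehat{T}(a)\widehat{T}(b)$, so $\widehat{T}$ is a representation. Parts $(1)$ and $(2)$ of the dilation theorem identify $\widehat{T}$ with $T\times\sigma$ on the generators $\varphi_{\infty}(M)$ and $\{T_{\eta}\mid\eta\in E\}$, and this extends to all of $\mathcal{T}_{+}(E)$ by multiplicativity of $\widehat{T}$ and of $T\times\sigma$; hence $\widehat{T}$ is the desired extension, which is $(1)$.

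Chaining $(1)\Rightarrow(2)\Leftrightarrow(3)$, $(2)\Leftrightarrow(6)\Leftrightarrow(5)\Leftrightarrow(4)$, and $(4)\Rightarrow(1)$ closes the cycle. I expect the only real obstacle to be $(4)\Rightarrow(1)$, and within it the sole delicate point is the standard but not-quite-automatic fact that compressing a representation to a co-invariant subspace is again a representation; this rests entirely on the co-invariance of $H$, which the explicit Sch\"{a}ffer-type form of the minimal isometric dilation in paragraph \ref{par:IsoDilate} guarantees. (An alternative to the compression route is to imitate directly, for $(T,\sigma)$, the gluing-of-restriction-representations argument of Theorem \ref{abscontrep}, using the description of $\mathcal{V}_{ac}(T,\sigma)$ as a union of ranges of intertwiners from $(S_{0},\sigma_{0})$ in part $(4)$ of Theorem \ref{Theorem:AbsContT} and the arithmetic means $\Sigma_{k}$; but the compression route reuses more of what is already in hand.)
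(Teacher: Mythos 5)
Your proposal is correct and follows essentially the same route as the paper: all of (2)--(6) are harvested from Theorem \ref{Theorem:AbsContT} and Theorem \ref{abscontrep} exactly as the paper does, and the closing step is the same compression $P\,\widetilde{V\times\rho}\,|_{H}$. The only difference is that you spell out the co-invariance of $H$ and the multiplicativity of the compression, which the paper dismisses as ``clear''; your justification of those points is sound.
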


Of course, we could add a number of other conditions to this list.
However, these are the principal ones and more important, none refers
to the {}``external construct'' $(S_{0},\sigma_{0})$. That is to
say, all the conditions listed refer to \emph{intrinsic} features
of the representation $(T,\sigma)$.

\begin{proof}Of course, much of the proof amounts to assembling pieces
already proved. Thus, (2) and (3) are equivalent by virtue of Theorem
\ref{Theorem:AbsContT}. Likewise, (4) and (5) are equivalent by Theorem
\ref{abscontrep}.

The equivalence of (5) and (6) follows from the observation that $(V,\rho)$
is absolutely continuous if and only if $H\subseteq\mathcal{V}_{ac}(V,\rho)$.
(This is because $H^{\perp}\subseteq\mathcal{V}_{ac}(V,\rho)$, as
we noted in the proof of Theorem \ref{Theorem:AbsContT}.) Thus (5)
and (6) are equivalent. Conditions (2) and (6) are equivalent by virtue
of the equation $\mathcal{V}_{ac}(T,\sigma)=H\cap\mathcal{V}_{ac}(V,\rho)$
proved in Theorem \ref{Theorem:AbsContT}. Thus conditions (2) --
(6) are equivalent.

But (1) certainly implies (2). On the other hand, if (2) holds, so
does (4). If $\widetilde{V\times\rho}$ denotes the ultra-weakly continuous
extension of $V\times\rho$ to $H^{\infty}(E),$ then it is clear
that $P(\widetilde{V\times\rho})|H$ is an ultra-weakly continuous
extension of $T\times\sigma$ to $H^{\infty}(E)$. \end{proof}

\section{Further Corollaries and Examples\label{sec:Further-Corollaries}}

\subsection{Invariant States}

In this subsection we collect a number of results that show how the
notion of absolute continuity relates to the notion of an invariant
state for a completely positive map.

\begin{definition}Let $\Phi$ be a normal completely positive map
on a $W^{*}$-algebra $M$. We let $P_{ac}=P_{ac}(\Phi)$ denote the
smallest projection in $M$ that dominates the range projection of
each pure superharmonic element of $\Phi$ and we call $P_{ac}$ the
\emph{absolutely continuous projection} for $\Phi$. \end{definition}
If $M$ is represented faithfully on a Hilbert space $H$ by a normal
representation $\rho$, and if $\Phi$ is contractive, then the range
of $\rho(P_{ac})$ is the absolutely continuous subspace $\mathcal{V}_{ac}(T,\sigma)$
for the identity representation $(T,\sigma)$ of $(E,\rho(M)')$,
where $E$ is the Arveson-Stinespring correspondence determined by
$\Phi$ (and $\rho$), by Theorem \ref{Theorem:Identity Representation}
and Theorem \ref{Theorem:AbsContT}. So the terminology is consistent
with the developments in Sections 4 and 5, and it makes sense without
having to assume $\Phi$ is contractive.

\begin{definition}If $\Phi$ be a normal completely positive map
on a $W^{*}$-algebra $M$, then a normal state $\omega$ on $N$
is called \emph{periodic of period $k$} with respect to $\Phi$,
if $k$ is the least positive integer such that $\omega\circ\Phi^{k}=\omega$.
We denote the collection of all normal period states of period $k$
for $\Phi$ by $\mathcal{P}_{k}$ or by $\mathcal{P}_{k}(\Phi,M)$.\end{definition}

Recall that if $\omega$ is a normal state on a $W^{*}$-algebra $N$
then there is a largest projection $e\in N$ such that $\omega(e)=0$.
The projection $e^{\perp}:=1-e$ is called the \emph{support projection}
for $\omega$, which we shall denote by $supp(\omega)$. As is customary,
we often identify a projection with its range and we shall think of
$supp(\omega)$ as a subspace of whatever Hilbert space on which $N$
may be found to be acting. Our aim now is to prove

\begin{theorem}\label{Theorem:Periodic} Let $\Phi$ be a normal
completely positive map on the $W^{*}$-algebra $M$. If $\omega$
is a normal state on $M$ that is periodic for $\Phi,$ then its support
projection is orthogonal to $P_{ac}(\Phi)$. \end{theorem}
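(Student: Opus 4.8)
The plan is to show that if $\omega$ is periodic of period $k$, then the spectral/support data of $\omega$ forces any pure superharmonic operator $Q$ to vanish on $\mathrm{supp}(\omega)$, which gives orthogonality to $P_{ac}(\Phi)$ since $P_{ac}$ is by definition the supremum of the range projections of the pure superharmonic operators. First I would reduce to the case of a $\Phi$-invariant state by passing from $\Phi$ to $\Psi:=\Phi^{k}$: a state periodic of period $k$ for $\Phi$ satisfies $\omega\circ\Psi=\omega$, and any pure superharmonic operator $Q$ for $\Phi$ is pure superharmonic for $\Psi$ (since $\Psi(Q)=\Phi^{k}(Q)\le Q$ by iterating $\Phi(Q)\le Q$, and $\Psi^{n}(Q)=\Phi^{nk}(Q)\to 0$ strongly), and $\mathrm{supp}(\omega)$ is unchanged. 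Moreover $P_{ac}(\Phi)\le P_{ac}(\Psi)$, so it suffices to prove the theorem under the assumption that $\omega$ is invariant, $\omega\circ\Phi=\omega$.

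Now assume $\omega\circ\Phi=\omega$ and let $Q\ge 0$ be a pure superharmonic operator, so $\Phi(Q)\le Q$ and $\Phi^{n}(Q)\downarrow 0$ strongly. Applying the invariant normal state $\omega$ to the decreasing sequence $\Phi^{n}(Q)$ and using normality together with $\omega\circ\Phi^{n}=\omega$, I get $\omega(Q)=\omega(\Phi^{n}(Q))\to \omega\bigl(\lim_n\Phi^{n}(Q)\bigr)=\omega(0)=0$; hence $\omega(Q)=0$. The key step is then to conclude from $\omega(Q)=0$ and $Q\ge 0$ that the range projection $[Q]$ of $Q$ is orthogonal to $\mathrm{supp}(\omega)$. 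This is a standard fact: writing $p=\mathrm{supp}(\omega)$, $\omega(x)=\omega(pxp)$ for all $x$, and $\omega$ restricted to $pMp$ is a faithful normal state; since $0\le pQp$ and $\omega(pQp)=\omega(Q)=0$, faithfulness gives $pQp=0$, whence $Q^{1/2}p=0$, so $[Q]\le 1-p$, i.e. $[Q]\perp \mathrm{supp}(\omega)$. Since this holds for every pure superharmonic $Q$, the supremum $P_{ac}(\Phi)$ of all such range projections also satisfies $P_{ac}(\Phi)\le 1-\mathrm{supp}(\omega)$, which is exactly the assertion that $\mathrm{supp}(\omega)\perp P_{ac}(\Phi)$.

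The only point requiring a little care — and the step I would flag as the main obstacle — is the interchange of the normal state $\omega$ with the strong limit of the decreasing sequence $\{\Phi^{n}(Q)\}$. This is legitimate because $\{\Phi^{n}(Q)\}$ is a bounded, monotone decreasing net of positive operators in $M$, hence converges strongly (equivalently $\sigma$-weakly) to its infimum in $M$, and a normal state is by definition $\sigma$-weakly continuous on bounded sets; since the hypothesis tells us this infimum is $0$, we get $\omega(Q)=\lim\omega(\Phi^n(Q))=\omega(0)=0$. The reduction from period $k$ to $\Psi=\Phi^{k}$ and the faithfulness argument for $\omega$ on $pMp$ are routine once this monotone-convergence point is settled.
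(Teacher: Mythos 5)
Your proof is correct and follows essentially the same route as the paper: reduce to the invariant case by passing to $\Psi=\Phi^{k}$, then observe that a $\Psi$-invariant normal state must annihilate every pure superharmonic operator, so its support is orthogonal to each range projection and hence to $P_{ac}$. The only difference is one of economy: the paper proves the full equality $P_{ac}(\Phi^{k})=P_{ac}(\Phi)$ as a separate lemma (the hard direction via $R=Q+\Phi(Q)+\cdots+\Phi^{k-1}(Q)$), whereas you correctly note that only the easy inequality $P_{ac}(\Phi)\le P_{ac}(\Phi^{k})$ is needed here.
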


The proof is based on the following lemma.

\begin{lemma}\label{Lemma:Powers} For each $k\geq2$, $P_{ac}(\Phi^{k})=P_{ac}(\Phi)$.\end{lemma}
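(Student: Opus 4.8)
The plan is to show the two projections are equal by showing they dominate the same pure superharmonic operators. Recall $P_{ac}(\Phi)$ is the smallest projection in $M$ dominating the range projection of every pure superharmonic operator for $\Phi$, and similarly $P_{ac}(\Phi^k)$ for $\Phi^k$. So it suffices to prove that an operator $Q\geq 0$ in $M$ is pure superharmonic for $\Phi$ if and only if it is pure superharmonic for $\Phi^k$; actually, equality of the supremum projections follows already from the weaker pair of containments between the two \emph{sets} of range projections, but the cleanest route is to characterize the superharmonic elements directly.

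First I would handle the easy direction. If $Q\geq 0$ satisfies $\Phi(Q)\leq Q$ and $\Phi^n(Q)\to 0$ strongly, then applying the positive map $\Phi$ repeatedly gives $\Phi^{k}(Q)\leq \Phi^{k-1}(Q)\leq\cdots\leq Q$, so $\Phi^k(Q)\leq Q$; and $(\Phi^k)^n(Q)=\Phi^{kn}(Q)$ is a subsequence of $\{\Phi^n(Q)\}$, hence still tends to $0$ strongly. Thus every pure superharmonic operator for $\Phi$ is pure superharmonic for $\Phi^k$, which gives $P_{ac}(\Phi)\leq P_{ac}(\Phi^k)$.

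For the reverse containment I would use Theorem~\ref{Theorem:PureSuperHarm}. Fix a faithful normal representation $\rho$ of $M$ (which we may, since $P_{ac}$ is intrinsic and the hypotheses are about $M$ abstractly) and realize $\Phi$ as $\Phi_\eta$ on $\sigma(M)'$ via the Arveson-Stinespring data $(E,\eta,\sigma)$ of Theorem~\ref{Theorem:Identity Representation}, so that $\Phi^k$ is realized by the covariant representation of $(E^{\otimes k},M)$ given by $\eta_k$ (using equation \eqref{eq:Phi_eta_powers}). By Theorem~\ref{Theorem:PureSuperHarm} applied to $\Phi^k$, a pure superharmonic $Q$ for $\Phi^k$ has the form $Q=CC^*$ with $C\in\mathcal{I}((S_0^{(k)},\sigma_0),(\widehat{\eta_k^*},\sigma))$, where $(S_0^{(k)},\sigma_0)$ is the universal induced representation of $(E^{\otimes k},M)$. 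The key observation is that the universal induced representation of $(E^{\otimes k},M)$ is naturally a compression/restriction of the universal induced representation $(S_0,\sigma_0)$ of $(E,M)$: the Fock space $\mathcal{F}(E)$ contains $M\oplus E^{\otimes k}\oplus E^{\otimes 2k}\oplus\cdots$ as a reducing subspace for $\varphi_\infty$ and for the creation operators by vectors in $E^{\otimes k}$, and this sub-representation is exactly the induced representation of $(E^{\otimes k},M)$; tensoring with $K_0$ and invoking Proposition~\ref{Lemma:Universal} identifies it with $(S_0^{(k)},\sigma_0)$ sitting inside $(S_0,\sigma_0)$. Hence any $C\in\mathcal{I}((S_0^{(k)},\sigma_0),(\widehat{\eta_k^*},\sigma))$ can be composed with the inclusion to yield $\hat C\in\mathcal{I}((S_0,\sigma_0),(\widehat{\eta^*},\sigma))$ with $\hat C\hat C^*=CC^*=Q$; by the forward direction of Theorem~\ref{Theorem:PureSuperHarm} (applied to $\Phi$ this time), $Q$ is pure superharmonic for $\Phi$. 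This gives $P_{ac}(\Phi^k)\leq P_{ac}(\Phi)$, completing the proof.

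The main obstacle I anticipate is pinning down precisely the identification in the previous paragraph: that the $\sigma$-dual / induced-representation machinery for $E^{\otimes k}$ genuinely embeds into that for $E$ compatibly with the intertwiner spaces, so that the factorization $Q=CC^*$ transports correctly. One must check that $\widehat{\eta_k^*}$ as a covariant representation of $(E^{\otimes k},M)$ has $(\widehat{\eta_k^*})_n = \widehat{\eta^*}_{nk}$ under the identification $(E^{\otimes k})^{\otimes n}\cong E^{\otimes nk}$ (which is immediate from the defining formula for $\eta_n$), and that the intertwiner condition for $C$ with respect to creation by $E^{\otimes k}$ is exactly the restriction of the intertwiner condition for $\hat C$ with respect to creation by $E$ on the reducing subspace $\bigoplus_j E^{\otimes jk}\otimes K_0$. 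These are all routine once set up carefully, but the bookkeeping with the tensor-power identifications is where care is required; an alternative, perhaps cleaner, presentation avoids $(S_0,\sigma_0)$ entirely and argues purely at the level of superharmonic operators: one shows directly that if $\Phi^k(Q)\le Q$ and $\Phi^{kn}(Q)\to 0$, then setting $Q':=\sum_{j=0}^{k-1}\Phi^j(Q)$ one has... — but in fact that does not immediately give $\Phi(Q)\le Q$, so the intertwiner route via Theorem~\ref{Theorem:PureSuperHarm} is the safer one to write down.
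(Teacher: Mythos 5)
Your first containment, $P_{ac}(\Phi)\leq P_{ac}(\Phi^{k})$, is fine and is what the paper does. The reverse containment has a genuine gap: a pure superharmonic operator for $\Phi^{k}$ need \emph{not} be pure superharmonic --- or even superharmonic --- for $\Phi$, so the equivalence your plan rests on is false, and the intertwiner transport you use to establish it breaks down. Concretely, take $M=\ell^{\infty}(\{1,2\})$ and $\Phi(d_{1},d_{2})=(\tfrac{1}{2}d_{2},\tfrac{1}{2}d_{1})$. Then $\Phi^{2}=\tfrac{1}{4}\,\mathrm{id}$, so $Q=(1,0)$ is pure superharmonic for $\Phi^{2}$; but $\Phi(Q)=(0,\tfrac{1}{2})\not\leq(1,0)$, so $Q$ is not superharmonic for $\Phi$. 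The step of your argument that fails is the passage from $C\in\mathcal{I}((S_{0}^{(k)},\sigma_{0}),(\widehat{\eta_{k}^{*}},\sigma))$ to $\hat{C}\in\mathcal{I}((S_{0},\sigma_{0}),(\widehat{\eta^{*}},\sigma))$: ``composing with the inclusion'' of $\bigoplus_{j}E^{\otimes jk}\otimes_{\pi}K_{0}$ into $\mathcal{F}(E)\otimes_{\pi}K_{0}$ means $\hat{C}=CP$ with $P$ the projection onto that subspace, and $\hat{C}$ then intertwines only creation by elements of $E^{\otimes k}$; for $\xi\in E$ and $k\geq2$ one has $P(T_{\xi}\otimes I)P=0$, so $\hat{C}S_{0}(\xi)\neq\widehat{\eta^{*}}(\xi)\hat{C}$ in general. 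There is no reason an intertwiner of the $k$-step structure should yield one of the one-step structure, and the example above shows it cannot.

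The repair is precisely the route you sketched in your last sentence and then abandoned: set $R:=Q+\Phi(Q)+\cdots+\Phi^{k-1}(Q)$. You rightly note this does not give $\Phi(Q)\leq Q$ --- but that is not what is needed. Since $P_{ac}$ is defined via range projections, it suffices to dominate $Q$ by \emph{some} pure superharmonic operator for $\Phi$, and $Q\leq R$ does the job: $\Phi(R)=\Phi(Q)+\cdots+\Phi^{k}(Q)\leq R$ because $\Phi^{k}(Q)\leq Q$, so $\{\Phi^{n}(R)\}$ is decreasing, and it tends strongly to zero because the subsequence $\Phi^{nk}(R)=\sum_{j=0}^{k-1}\Phi^{j}(\Phi^{nk}(Q))$ does. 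Then the range projection of $Q$ is dominated by that of $R$, hence by $P_{ac}(\Phi)$, giving $P_{ac}(\Phi^{k})\leq P_{ac}(\Phi)$. This is exactly the paper's argument.
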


\begin{proof} It is clear that $P_{ac}(\Phi)\leq P_{ac}(\Phi^{k})$
since every superharmonic operator for $\Phi$ is superharmonic for
$\Phi^{k}$. The problem we face with trying to prove the reverse
inclusion is that in general a pure superharmonic operator for $\Phi^{k}$
is not evidently a pure superharmonic operator for $\Phi$. So what
we prove is that if $Q$ is a pure superharmonic operator for $\Phi^{k}$,
then there is a pure superharmonic operator $R$ for $\Phi$ such
that $Q\leq R$. This will show that $P_{ac}(\Phi^{k})\subseteq P_{ac}(\Phi)$.
Our choice for $R$ is $Q+\Phi(Q)+\cdots+\Phi^{k-1}(Q)$. Evidently,
this operator dominates $Q$. So it suffices to show that it is a
pure superharmonic operator for $\Phi$. Since $\Phi(Q+\Phi(Q)+\cdots+\Phi^{k-1}(Q))=\Phi(Q)+\Phi^{2}(Q)+\cdots+\Phi^{k}(Q)$
and $\Phi^{k}(Q)\leq Q$ by hypothesis, we see that \begin{eqnarray*}
\Phi(Q+\Phi(Q)+\cdots+\Phi^{k-1}(Q)) & = & \Phi(Q)+\Phi(Q)+\cdots+\Phi^{k}(Q)\\
 & \leq & \Phi(Q)+\Phi(Q)+\cdots+\Phi^{k-1}(Q)+Q\\
 & = & Q+\Phi(Q)+\cdots+\Phi^{k-1}(Q).\end{eqnarray*}
 Thus $Q+\Phi(Q)+\cdots+\Phi^{k-1}(Q)$ is superharmonic for $\Phi$.
This means that the sequence $\{\Phi^{n}(Q+\Phi(Q)+\cdots+\Phi^{k-1}(Q))\}_{n\geq0}$
is decreasing. Thus to show it tends strongly to zero, it suffices
to show that a subsequence tends to zero strongly. But the sequence
$\{\Phi^{nk}(Q+\Phi(Q)+\cdots+\Phi^{k-1}(Q))\}_{n\geq0}$ has this
property since \[
\Phi^{nk}(Q+\Phi(Q)+\cdots+\Phi^{k-1}(Q))=\Phi^{nk}(Q)+\Phi(\Phi^{nk}(Q))+\cdots+\Phi^{k-1}(\Phi^{nk}(Q))\]
 and each term on the right hand side tends to zero monotonically
as $n\to\infty$.\end{proof}\bigskip{}

\textbf{Proof of Theorem \ref{Theorem:Periodic}}\textbf{\emph{.}}\emph{
}Since $P_{ac}(\Phi^{k})=P_{ac}(\Phi)$ by Lemma \ref{Lemma:Powers},
it suffices to show that if $\omega$ is an invariant state for $\Phi$
then $supp(\omega)\perp P_{ac}(\Phi)$. But if $Q$ is a pure superharmonic
operator for $\Phi$, then the equation $\omega\circ\Phi=\omega$
implies that $\omega(Q)=0$. Thus the range projection of $Q$ is
orthogonal to the support of $\omega$, and so $supp(\omega)\perp P_{ac}(\Phi)$.
$\square$

%
{}

Recall from \cite[Page 404 ff.]{MSHardy} that if $(T,\sigma)$ is
an ultra-weakly continuous completely contractive representation of
$(E,M)$ acting on the Hilbert space $H$, then there is a largest
subspace $H_{1}$ of $H$ that is invariant under $(T\times\sigma)(\mathcal{T}_{+}(E))^{*}$
on which $\widetilde{T}^{*}$ acts isometrically. It is given by the
equation \[
H_{1}=\{h\in H\mid\Vert\widetilde{T_{n}}^{*}h\Vert=\Vert h\Vert,\mbox{\, for all}\, n\geq0\}.\]
 The representations $(T,\sigma)$ and $T\times\sigma$ are called
\emph{completely non coisometric} (abbreviated c.n.c.) in case $H_{1}=\{0\}$.
We record for reference the following theorem that is proved as part
of Theorem 7.3 in \cite{MSHardy}.

\begin{theorem}\label{Theorem:CNCExtension} If $(T,\sigma)$ is
a completely non coisometric, ultra-weakly continuous, completely
contractive representation of $(E,M)$ on a Hilbert space $H$, then
$T\times\sigma$ extends to an ultra-weakly continuous, completely
contractive representation of $H^{\infty}(E)$, and so $(T,\sigma)$
is absolutely continuous, by Theorem \ref{Theorem:ExtensionThm}.
\end{theorem}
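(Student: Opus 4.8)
The plan is short, because the substance of this statement is already in the literature. The first half --- that an ultra-weakly continuous, completely contractive representation $T\times\sigma$ of $\mathcal{T}_{+}(E)$ which is completely non coisometric extends to an ultra-weakly continuous, completely contractive representation of $H^{\infty}(E)$ --- is precisely (part of) \cite[Theorem 7.3]{MSHardy}, so I would simply quote it. The argument there proceeds by forming the minimal isometric dilation $(V,\rho)$ of $(T,\sigma)$, using the c.n.c.\ hypothesis (equivalently $H_{1}=\{0\}$, where $H_{1}$ is the largest subspace of $H$ invariant under $(T\times\sigma)(\mathcal{T}_{+}(E))^{*}$ on which $\widetilde{T}^{*}$ acts isometrically) to control the behaviour of the dilation, and transporting the resulting $H^{\infty}(E)$-functional calculus back down to $H$ by compression $P(\,\cdot\,)|_{H}$, with ultra-weak continuity checked via a lemma of the type of Lemma \ref{cont}. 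Reproducing that argument in full here would add nothing.

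Granting the extension, the assertion that $(T,\sigma)$ is absolutely continuous is immediate from Theorem \ref{Theorem:ExtensionThm}. Indeed, the existence of an ultra-weakly continuous, completely contractive extension of $T\times\sigma$ to $H^{\infty}(E)$ is exactly condition (1) of that theorem, while condition (2) there is the statement that $(T,\sigma)$ is absolutely continuous, i.e.\ $\mathcal{V}_{ac}(T,\sigma)=H$. Since (1) and (2) are equivalent, the conclusion follows at once.

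There is essentially no obstacle to overcome; the only real decision is presentational, namely whether to give a self-contained proof of the extension step or to cite \cite[Theorem 7.3]{MSHardy}. I would cite it: the argument in \cite{MSHardy} is already the natural one (minimal isometric dilation, commutant lifting, and Lemma \ref{cont} for ultra-weak continuity), and duplicating it would lengthen the exposition without new content. So the proof I propose is exactly the two observations above: invoke \cite[Theorem 7.3]{MSHardy} for the extension, then invoke the equivalence of (1) and (2) in Theorem \ref{Theorem:ExtensionThm} for absolute continuity.
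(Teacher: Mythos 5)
Your proposal is correct and coincides with what the paper does: the paper itself records this theorem as ``proved as part of Theorem 7.3 in \cite{MSHardy}'' and draws the absolute-continuity conclusion from the equivalence of conditions (1) and (2) in Theorem \ref{Theorem:ExtensionThm}, exactly as you propose. No further argument is needed.
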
 Of course, if $\Vert\widetilde{T}\Vert<1$, then $(T,\sigma)$
is completely non coisometric. Thus, Theorem \ref{Theorem:CNCExtension}
improves upon Corollary \ref{Cor:AbsContT}.

\begin{theorem}Let $(T,\sigma)$ be an ultra-weakly continuous, completely
contractive covariant representation of $(E,M)$ on a Hilbert space
$H$. If $\sigma(M)'$ is finite dimensional, then $T\times\sigma$
extends to an ultra-weakly continuous representation of $H^{\infty}(E)$
on $H$ if and only if $(T,\sigma)$ is completely non coisometric.
\end{theorem}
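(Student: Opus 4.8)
The plan is to dispose of the easy direction at once and then reduce the converse to a fixed-point/invariant-state argument. If $(T,\sigma)$ is completely non coisometric then, being ultra-weakly continuous by hypothesis, Theorem~\ref{Theorem:CNCExtension} already yields an ultra-weakly continuous, completely contractive extension of $T\times\sigma$ to $H^{\infty}(E)$; finite dimensionality of $\sigma(M)'$ is not used here. So the substance is the reverse implication, which I would prove in contrapositive form: assuming $\sigma(M)'$ is finite dimensional and $(T,\sigma)$ is \emph{not} completely non coisometric, I would show $(T,\sigma)$ is not absolutely continuous, which by Theorem~\ref{Theorem:ExtensionThm} is exactly the failure of the extension property. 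By Theorem~\ref{Theorem:AbsContT}(5) together with the definition of $P_{ac}$, one has $\mathcal{V}_{ac}(T,\sigma)=Ran(P_{ac}(\Phi_{T}))$, so it suffices to find a nonzero projection in $\sigma(M)'$ orthogonal to $P_{ac}(\Phi_{T})$.

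The key step is to produce a \emph{normal} $\Phi_{T}$-invariant state on $\sigma(M)'$ and then invoke Theorem~\ref{Theorem:Periodic}. First I would let $Q_{\infty}$ be the strong limit of the decreasing sequence $\{\Phi_{T}^{n}(I)\}$ (decreasing because $\Phi_{T}(I)=\widetilde{T}\widetilde{T}^{*}\le I$); then $Q_{\infty}\in\sigma(M)'$ and $\Phi_{T}(Q_{\infty})=Q_{\infty}$ by normality of $\Phi_{T}$. Using $\Phi_{T}^{n}(I)=\widetilde{T_{n}}\widetilde{T_{n}}^{*}$ (equation~\eqref{eq:Phi_T_n}), I would identify the subspace $H_{1}=\{h\mid\|\widetilde{T_{n}}^{*}h\|=\|h\|\ \text{for all}\ n\ge0\}$ with $\ker(I-Q_{\infty})$, so the hypothesis $H_{1}\ne\{0\}$ says precisely that $1$ is an eigenvalue of $Q_{\infty}$. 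Next I would consider the set $\mathcal{S}$ of normal states $\omega$ on $\sigma(M)'$ with $\omega(Q_{\infty})=1$: it is convex, it is nonempty (the vector state of a unit vector in $H_{1}$ lies in it), and --- the one place finite dimensionality really matters --- it is weak-$*$ compact, every bounded linear functional on $\sigma(M)'$ being automatically normal. I would then check that $\omega\mapsto\omega\circ\Phi_{T}$ maps $\mathcal{S}$ into itself: $\omega\circ\Phi_{T}$ is positive because $\Phi_{T}$ is completely positive; it is a state because $\omega(I)=\omega(Q_{\infty})=1$ together with $Q_{\infty}\le\Phi_{T}(I)\le I$ forces $\omega(\Phi_{T}(I))=1$; and $(\omega\circ\Phi_{T})(Q_{\infty})=\omega(\Phi_{T}(Q_{\infty}))=\omega(Q_{\infty})=1$. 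A Markov--Kakutani fixed-point argument (or a finite-dimensional averaging argument) then produces $\omega_{0}\in\mathcal{S}$ with $\omega_{0}\circ\Phi_{T}=\omega_{0}$, a normal $\Phi_{T}$-invariant state.

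To finish, I would observe that $\omega_{0}$ is periodic of period $1$ for $\Phi_{T}$, so Theorem~\ref{Theorem:Periodic} gives $\mathrm{supp}(\omega_{0})\perp P_{ac}(\Phi_{T})$; since the support projection of a state is never zero, $P_{ac}(\Phi_{T})\ne I$, hence $\mathcal{V}_{ac}(T,\sigma)=Ran(P_{ac}(\Phi_{T}))\ne H$, so $(T,\sigma)$ is not absolutely continuous, and Theorem~\ref{Theorem:ExtensionThm} closes the contrapositive.

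I expect the main obstacle to be exactly the \emph{normality} of the invariant state: in infinite dimensions the normal states are not weak-$*$ closed, so the fixed-point machinery would at best produce a possibly singular state, to which Theorem~\ref{Theorem:Periodic} does not apply --- and this is why finite dimensionality of $\sigma(M)'$ is the right hypothesis. A subsidiary point to get right is that one must run the fixed-point search inside $\mathcal{S}$ rather than on the full state space, since $\omega\circ\Phi_{T}^{k}$ need not be a state for general $\omega$ (if $\Phi_{T}^{k}(I)$ drifts away from $I$); it is precisely the non-c.n.c.\ hypothesis, through $H_{1}=\ker(I-Q_{\infty})\ne\{0\}$, that makes $\mathcal{S}$ nonempty.
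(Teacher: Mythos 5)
Your proof is correct, and it reaches the invariant state by a genuinely different route than the paper. The paper first compresses $(T,\sigma)$ to the nonzero coisometric part $H_{1}$ (which is coinvariant, so the compression is again a covariant representation and inherits the extension property), thereby reducing to the case where $\Phi_{T}$ is \emph{unital}; an invariant normal state then exists for free on the finite-dimensional algebra, and Theorem~\ref{Theorem:Periodic} finishes exactly as in your last paragraph. You avoid the compression entirely: you extract the harmonic operator $Q_{\infty}=\lim\Phi_{T}^{n}(I)$, identify $H_{1}$ with $\ker(I-Q_{\infty})$ via $\Phi_{T}^{n}(I)=\widetilde{T_{n}}\widetilde{T_{n}}^{*}$, and run the fixed-point argument on the face $\{\omega:\omega(Q_{\infty})=1\}$ of the state space, which is nonempty precisely because $(T,\sigma)$ fails to be c.n.c. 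What your version buys is that it sidesteps the (implicit, and not entirely free) verification that absolute continuity, or the extension property, passes to the compression onto $H_{1}$; what the paper's version buys is brevity, since for a unital $\Phi_{T}$ one need not restrict to a face. Both arguments hinge on the same two facts --- Theorem~\ref{Theorem:Periodic} together with Theorem~\ref{Theorem:AbsContT}(5), and the automatic normality and weak-$*$ compactness of states on a finite-dimensional algebra --- and your closing diagnosis of where infinite dimensionality would break the argument is exactly right.
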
 \begin{proof}We know that $T\times\sigma$ extends
if $(T,\sigma)$ is completely non coisometric regardless of the dimension
of $\sigma(M)'$. So we attend to the reverse implication. If $(T,\sigma)$
has a nonzero coisometric part $H_{1}$ then we can compress $(T,\sigma)$
to $H_{1}$ to get a new representation $(T_{1},\sigma_{1})$ that
is fully coisometric, i.e., $\widetilde{T_{1}}\widetilde{T_{1}}^{*}=I_{H_{1}}$.
Of course $\sigma_{1}(M)'$ will be finite dimensional, too. So, it
suffices to assume from the outset that $(T,\sigma)$ is fully coisometric,
which is tantamount to assuming $\Phi_{T}$ is unital. But a unital
completely positive map on a finite dimensional $W^{*}$-algebra admits
an invariant normal state. The support of such a state, as we have
seen in Theorem \ref{Theorem:Periodic}, must be orthogonal to $\mathcal{V}_{ac}(T,\sigma)$
and so $(T,\sigma)$ cannot be absolutely continuous. By Theorem \ref{Theorem:ExtensionThm},
we conclude that $T\times\sigma$ does not admit an extension to an
ultra-weakly continuous representation of $H^{\infty}(E)$. \end{proof}

\begin{corollary}If $(T_{1},T_{2},\cdots,T_{d})$ is a row contraction
where the $T_{i}$ act on a finite dimensional Hilbert space, then
the map which takes the $i^{th}$ generator $S_{i}$ of $H^{\infty}(\mathbb{C}^{d})$
to $T_{i}$ extends to an ultra-weakly continuous representation of
$H^{\infty}(\mathbb{C}^{d})$ if and only if $(T_{1},T_{2},\cdots,T_{d})$
is completely non coisometric.\end{corollary}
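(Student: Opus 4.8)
The plan is to read this corollary off from the theorem that immediately precedes it, by specializing to $(E,M)=(\mathbb{C}^{d},\mathbb{C})$. First I would invoke paragraph \ref{subpar:RowCont}: when $M=\mathbb{C}$, the only normal representation $\sigma$ on $H$ is a multiple of the identity representation, $E\otimes_{\sigma}H$ is the orthogonal direct sum of $d$ copies of $H$, and a completely contractive covariant representation $(T,\sigma)$ of $(\mathbb{C}^{d},\mathbb{C})$ on $H$ is exactly the datum of a row contraction $\widetilde{T}=(T_{1},T_{2},\dots,T_{d})$ from the direct sum of $d$ copies of $H$ to $H$, with the intertwining equation \eqref{eq:intertwining} automatic since $\mathbb{C}$ is central. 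Under the identifications $\mathcal{T}_{+}(\mathbb{C}^{d})=\mathfrak{A}_{d}$ and $H^{\infty}(\mathbb{C}^{d})=\mathcal{L}_{d}$ recalled in the Introduction, the integrated form $T\times\sigma$ is precisely the (unique, by von Neumann's inequality in Popescu's form) completely contractive representation of $\mathcal{T}_{+}(\mathbb{C}^{d})$ carrying the $i$th generator to $T_{i}$, and asking whether it extends to an ultra-weakly continuous representation of $H^{\infty}(\mathbb{C}^{d})$ is exactly the question in the statement.

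Next I would check the two hypotheses of the preceding theorem. Since $H$ is finite dimensional it is in particular separable, so $\sigma=\rho\circ\varphi_{\infty}$ is automatically ultra-weakly continuous (paragraph \ref{par:CCReps}); hence $(T,\sigma)$ is an ultra-weakly continuous, completely contractive covariant representation. Moreover $\sigma(M)'=\sigma(\mathbb{C})'=B(H)$, which is finite dimensional because $\dim H<\infty$. These are precisely the hypotheses of the theorem just above, so it applies and yields: $T\times\sigma$ extends to an ultra-weakly continuous representation of $H^{\infty}(\mathbb{C}^{d})$ if and only if $(T,\sigma)$ is completely non coisometric.

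Finally it remains to observe that "$(T,\sigma)$ is completely non coisometric" means exactly "$(T_{1},\dots,T_{d})$ is completely non coisometric". This is a matter of definition: the maximal coisometric part is $H_{1}=\{h\in H\mid \|\widetilde{T_{n}}^{*}h\|=\|h\|\text{ for all }n\ge0\}$, the generalized powers $\widetilde{T_{n}}$ are built from the row contraction as in paragraph \ref{subpar:TensorPowers}, and the c.n.c.\ condition is $H_{1}=\{0\}$, which is the standard notion for row contractions. I do not anticipate any genuine obstacle here, since all the substantive work is already done; the only point worth stating carefully is that finite-dimensionality of $H$ forces $\sigma(M)'=B(H)$ to be finite dimensional, and it is this observation that lets us invoke the preceding theorem rather than only Theorem \ref{Theorem:CNCExtension}, which would give merely the "if" direction. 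Combining these three observations proves the corollary.
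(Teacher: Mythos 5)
Your proposal is correct and is exactly the argument the paper intends: the corollary is stated without proof as the specialization of the preceding theorem to $(E,M)=(\mathbb{C}^{d},\mathbb{C})$, where $\sigma(M)'=B(H)$ is finite dimensional precisely because $\dim H<\infty$. Your identification of the covariant representation with the row contraction and of the two c.n.c.\ notions matches the paper's setup in paragraphs \ref{subpar:RowCont} and \ref{subpar:TensorPowers}.
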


\begin{example}As an extremely simple, yet somewhat surprising concrete
example, consider the case when $d=2$ and $T_{1}=\left(\begin{array}{cc}
0 & 1\\
0 & 0\end{array}\right)$ and $T_{2}=\left(\begin{array}{cc}
0 & 0\\
1 & 0\end{array}\right)$ are acting on $H=\mathbb{C}^{2}$. Then $(T_{1},T_{2})$ is a row
coisometry and so is not absolutely continuous. In fact, since $\Phi_{T}$
preserves the trace, as is easy to see, the absolutely continuous
subspace of $\mathbb{C}^{2}$ reduces to zero. Thus these matrices
do not come from an ultra-weakly continuous representation of $H^{\infty}(\mathbb{C}^{2})$
even though $T_{1}$and $T_{2}$ are both nilpotent. \end{example}

\begin{example}Suppose the $W^{*}$-algebra $M$ is given and that
$E$ comes from a unital endomorphism $\alpha$, i.e., suppose $E=_{\alpha}M$,
which is $M$ as a (right) Hilbert module over $M$, with the left
action of $M$ given by $\alpha$. If $(T,\sigma)$ is an ultra-weakly
continuous, completely contractive covariant representation of $(E,M)$
on a Hilbert space $H$, then the map $x\otimes h\to\sigma(x)h$ extends
to an isomorphism from $E\otimes_{\sigma}H$ to $H$ that allows us
to view $\widetilde{T}$ as an operator $T_{0}$ on $H$ that has
the property \[
T_{0}\sigma(\alpha(a))=\sigma(a)T_{0}\]
 for all $a\in M$. The map $\Phi_{T}$, then, is given by the formula
$\Phi_{T}(x)=T_{0}xT_{0}^{*}$. If $T_{0}$ is a unitary operator,
then \cite[Corollary 5.5]{rD69} tells us that $\mathcal{V}_{ac}(T,\sigma)$
is contained in the absolutely continuous subspace for $T_{0}$. It
is, however, quite possible for the two spaces to be distinct. Indeed,
let $M$ be $L^{\infty}(\mathbb{T})$ with $\sigma$ the multiplication
representation on $L^{2}(\mathbb{T})$. Let $\alpha$ be implemented
by an irrational rotation on $\mathbb{T}$, say $z\to e^{i\theta}z$.
Finally, let $T_{0}$ be the unitary operator on $L^{2}(\mathbb{T})$
defined by the equation \[
T_{0}\xi(z)=z\xi(e^{i\theta}z).\]
 Then $\sigma(M)'=L^{\infty}(\mathbb{T})$ and because $\Phi_{T}(\varphi)(z)=\varphi(e^{i\theta}z)$,
as may be easily calculated, we see that $\mathcal{V}_{ac}(T,\sigma)=\{0\}$
because integration against Lebesgue measure gives an invariant faithful
normal state on $M$. On the other hand, it is easy to see that $T_{0}$
is unitarily equivalent to the bilateral shift of multiplicity one.
Indeed, $T_{0}$ leaves $H^{2}(\mathbb{T})$ invariant and $H^{2}(\mathbb{T})\ominus T_{0}H^{2}(\mathbb{T})=H^{2}(\mathbb{T})\ominus zH^{2}(\mathbb{T})$,
where we have identified $z$ with multiplication by $z$. It is an
easy matter to check that $\mathcal{D}:=H^{2}(\mathbb{T})\ominus T_{0}H^{2}(\mathbb{T})$
is a complete wandering subspace for $T_{0}$, i.e., $L^{2}(\mathbb{T})=\sum_{k\in\mathbb{Z}}^{\oplus}T_{0}^{k}\mathcal{D}$.
Thus, we see that $T_{0}$ is an absolutely continuous unitary operator.\end{example}

\subsection{Markov Chains}

Our next example connects the theory we have been developing with
the theory of Markov chains. Recall from Example \ref{Example:MarkovChain1}
that a sub-Markov matrix is an $n\times n$ matrix $A$ with non-negative
entries $a_{ij}$ with the property that the row-sums $\sum_{j}a_{ij}\leq1$.
We think of $A$ as defining a completely positive map $\Phi$ on
the $W^{*}$-algebra of all diagonal $n\times n$ matrices, $D_{n}$.
If $d=diag\{d_{1},d_{2},\cdots,d_{n}\}$, then $\Phi(d):=diag\{\sum_{j}a_{1j}d_{j},\sum_{j}a_{2j}d_{j},\cdots,\sum_{j}a_{nj}d_{j}\}$.
We want to describe the absolutely continuous projection $\mathcal{P}_{ac}(\Phi)$.

Recall that the norm of $A$ as an operator on $\ell^{\infty}(\{1,2,\cdots,n\})$
is at most $1$ and if the norm of $A$ is $1$ then $1$ is an eigenvalue
for $A$. In this event, we can find an element $\underline{z}=(z_{1},z_{2},\cdots,z_{n})$
such that $z_{i}\geq0$ for all $i$ and such that $A\underline{z}=\underline{z}.$
This is a consequence of the Perron-Frobenius theory (see \cite[pp. 64,65]{fG59}.)
It applies as well to the transpose of $A$. We will call such an
eigenvector a \emph{non-negative left eigenvector} for $A$. We will
call the \emph{support of} $\underline{z}$ the set of $i\in\{1,2,\cdots,n\}$
such that $z_{i}\neq0$, and we will say that $\underline{z}$ \emph{has
full support} if $z_{i}\neq0$ for all $i$. The analysis in Section
III.4 of \cite{fG59} shows that after conjugating $A$ by a permutation
matrix, $A$ has the form described in the following lemma:

\begin{lemma}\label{lem: canonical form}If $A$ is a sub-Markov
matrix with spectral radius $1$, then there is a permutation matrix
$S$ so that $SAS^{-1}$ has the (block) lower triangular form\begin{equation}
\left[\begin{array}{ccccc}
A_{1}\\
0 & A_{2} &  & 0\\
0 &  & \ddots\\
0 & 0 & 0 & A_{k-1k-1}\\
 & U &  &  & A_{kk}\end{array}\right],\label{eq:canonical matrix}\end{equation}
 where:
\begin{enumerate}
\item For $i=1,2,\cdots,k-1$, each $A_{ii}$ has $1$ as an eigenvalue,
and the corresponding left eigenvector is nonnegative and has full
support.
\item $A_{kk}$ has spectral radius less than $1$.
\end{enumerate}
\end{lemma}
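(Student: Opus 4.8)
The plan is to run the classical Perron--Frobenius reduction of a reducible nonnegative matrix through its communicating classes, and to use the sub-Markov row-sum condition to single out the diagonal blocks of spectral radius one. First I would form the directed graph $G$ on $\{1,2,\dots,n\}$ with an edge $i\to j$ exactly when $a_{ij}>0$, decompose $G$ into its strongly connected components (the communicating classes), and order the classes so that whenever there is an edge from a class $C$ to a different class $C'$ the class $C'$ precedes $C$. Conjugating $A$ by the permutation $S$ that relabels the vertices in this order puts $SAS^{-1}$ into block lower triangular form whose diagonal blocks are the principal submatrices $A_C$ of $A$ indexed by the classes $C$. Since each $A_C$ is a principal submatrix of a nonnegative matrix, $\rho(A_C)\le\rho(A)=1$, and since $A_C$ is irreducible, Perron--Frobenius (see \cite{fG59}) gives that $\rho(A_C)$ is an eigenvalue of $A_C^{\top}$ with a strictly positive left eigenvector. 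As the eigenvalues of a block triangular matrix are those of its diagonal blocks, $\rho(A)=\max_C\rho(A_C)$, so by hypothesis at least one class has $\rho(A_C)=1$.

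The heart of the matter is to show that a class $C$ with $\rho(A_C)=1$ is \emph{closed}, i.e.\ no edge leaves $C$. Let $z=(z_i)_{i\in C}$ be a strictly positive left eigenvector of $A_C$ for the eigenvalue $1$, so $\sum_{i\in C}z_i a_{ij}=z_j$ for $j\in C$. Summing over $j\in C$ gives $\sum_{i\in C}z_i\big(\sum_{j\in C}a_{ij}\big)=\sum_{i\in C}z_i$, while the sub-Markov condition forces $\sum_{j\in C}a_{ij}\le\sum_{j=1}^{n}a_{ij}\le1$ for every $i$. Because every $z_i>0$, equality in the displayed identity forces $\sum_{j\in C}a_{ij}=\sum_{j=1}^{n}a_{ij}=1$ for all $i\in C$, hence $a_{ij}=0$ whenever $i\in C$ and $j\notin C$. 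Thus no edge leaves $C$; in particular two distinct classes of spectral radius one are never joined by an edge, and there is no edge from a spectral-radius-one class into any other class.

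With this in hand the rest is bookkeeping. Let $C_1,\dots,C_{k-1}$ be the classes with $\rho(A_{C_i})=1$ (there is at least one, so $k\ge2$) and let $R$ be the union of all the remaining classes; set $A_{ii}:=A_{C_i}$ for $i\le k-1$ and $A_{kk}:=A_R$. Each $A_{ii}$ is irreducible with $1$ as an eigenvalue and, by the Perron--Frobenius step, a strictly positive (hence full-support) left eigenvector, which is (1). The matrix $A_R$ is block triangular with diagonal blocks the principal submatrices indexed by the classes inside $R$, each of spectral radius strictly less than $1$, so $\rho(A_{kk})=\rho(A_R)<1$, which is (2). Choosing the permutation so that the blocks occur in the order $C_1,\dots,C_{k-1},R$, the closedness property kills every off-diagonal block among the first $k-1$ rows and columns and every block coupling a $C_i$ into $R$; the only surviving off-diagonal entries are those from $R$ into the $C_i$'s, which together constitute the block written $U$ in \eqref{eq:canonical matrix}.

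The main obstacle is the second paragraph: recognizing that spectral radius one forces a class to be closed. This is the one place where the sub-Markov hypothesis (rather than mere nonnegativity) is essential, and it rests on coupling the positivity of the Perron eigenvector with the row-sum bound. Everything else is the standard reduction of a reducible nonnegative matrix, together with the facts that the spectral radius of a block triangular matrix is the maximum of those of its diagonal blocks and that the spectral radius of a principal submatrix of a nonnegative matrix cannot exceed that of the ambient matrix.
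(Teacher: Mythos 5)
Your proof is correct, and it is worth recording that it does not follow the paper's route. The paper simply appeals to the reduction of a reducible nonnegative matrix in Section III.4 of \cite{fG59}; the proof sketch the authors drafted (and ultimately suppressed) is an induction that at each stage extracts a nonnegative left Perron eigenvector $\underline{z}^{(1)}$ of the current matrix, observes that its support $E_{1}$ is closed because $0=z_{j}=\sum_{i}z_{i}a_{ij}$ for $j\notin E_{1}$ forces $a_{ij}=0$ for $i\in E_{1}$, peels off $A_{11}:=1_{E_{1}}A1_{E_{1}}$, and repeats on the complementary block until the remainder has spectral radius below one. You instead pass to the full decomposition into communicating classes and characterize the radius-one classes as exactly the closed ones. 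The two arguments are close in spirit, but your second paragraph isolates something the paper's sketch glosses over: the inductive peeling only shows that each $E_{i}$ is closed \emph{relative to the matrix remaining at stage $i$}, and one still needs to rule out transitions from a later radius-one block back into an earlier one (without which the off-diagonal blocks among $A_{11},\dots,A_{k-1,k-1}$ need not vanish, and indeed they can fail to vanish for a general nonnegative matrix of spectral radius one, e.g.\ $\left(\begin{smallmatrix}1&1\\0&1\end{smallmatrix}\right)$). Your row-sum computation $\sum_{i\in C}z_{i}\bigl(\sum_{j\in C}a_{ij}\bigr)=\sum_{i\in C}z_{i}$, forcing $\sum_{j\in C}a_{ij}=1$ and hence $a_{ij}=0$ off $C$, is precisely where the sub-Markov hypothesis enters and closes that gap; it also gives the slightly stronger conclusion that the blocks $A_{ii}$, $i<k$, may be taken irreducible with strictly positive left Perron vectors. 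The only cosmetic caveat is the degenerate case in which every class has spectral radius one, so that $R$ is empty and $A_{kk}$ is vacuous; both your argument and the paper's share this convention.
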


%
{}

\begin{definition}We call the matrix \eqref{eq:canonical matrix}
the \emph{canonical form} of the sub-Markov matrix $A$. The set of
indices $E_{i}$ such that the matrix entries of $A_{ii}$ are indexed
by $E_{i}\times E_{i}$ will be called the support of $A_{ii}$ as
well as the support of the left Perron-Frobenius eigenvector $z^{(i)}$
for $A_{ii}$, corresponding to $1$, $i=1,2,\ldots,k-1$. \end{definition}

\begin{lemma}Let $\Phi$ be a normal completely positive map on a
$W^{*}$-algebra $N$ and let $p$ be a central projection of $N$
that is invariant under $\Phi$ in the sense that $\Phi(p)\leq p$.
Then $pP_{ac}(\Phi)=P_{ac}(\Phi|_{pNp})$.\end{lemma}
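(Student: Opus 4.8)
The plan is to realise $\Phi|_{pNp}$ as a genuine normal completely positive map on the corner $pNp$, to match its pure superharmonic operators with those of $\Phi$ that lie under $p$, and then to push this identification through the defining formula $P_{ac}(\Psi)=\bigvee\{[Q]\mid Q\ \text{pure superharmonic for}\ \Psi\}$, where $[Q]$ denotes the range (support) projection of $Q$. Write $\Phi_{p}:=\Phi|_{pNp}$. The first step is to check that $\Phi_{p}$ actually maps into $pNp$, and this is the only place the hypothesis $\Phi(p)\le p$ is used: if $a\in pNp$ with $a\ge0$, then $a\le\|a\|p$, hence $0\le\Phi(a)\le\|a\|\Phi(p)\le\|a\|p$, and positivity then gives $p^{\perp}\Phi(a)=0$, so $\Phi(a)=p\Phi(a)p\in pNp$. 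Decomposing a general element of $pNp$ into positive parts, $\Phi(pNp)\subseteq pNp$, so $\Phi_{p}$ is a normal completely positive map on the $W^{*}$-algebra $pNp$ (with unit $p$).

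Next I would set up the correspondence between pure superharmonic operators. Since $p$ is central, for $Q\ge0$ in $N$ the element $pQ=pQp=Qp$ is positive and commutes with $[Q]$, so $[pQ]=p[Q]$. The two directions are: (b) every pure superharmonic operator $Q'$ for $\Phi_{p}$ is also pure superharmonic for $\Phi$, which is immediate because $\Phi^{n}(Q')=\Phi_{p}^{n}(Q')$; and (a) if $Q$ is pure superharmonic for $\Phi$, then $pQ$ is pure superharmonic for $\Phi_{p}$. For (a): from $pQ\le Q$ we get $\Phi(pQ)\le\Phi(Q)\le Q$; moreover $pQ\le\|Q\|p$, so the estimate of the previous paragraph shows $\Phi(pQ)\in pNp$; compressing the inequality $\Phi(pQ)\le Q$ by $p$ therefore yields $\Phi(pQ)=p\Phi(pQ)p\le pQp=pQ$; and $0\le\Phi^{n}(pQ)\le\Phi^{n}(Q)\to0$ strongly, so $pQ$ is pure superharmonic for $\Phi_{p}$. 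I expect step (a) to be the one point requiring a moment's thought: the inequality $\Phi(pQ)\le Q$ does not by itself give $\Phi(pQ)\le pQ$, and the missing ingredient — that $\Phi(pQ)$ already lives in the corner $pNp$ — is exactly what $\Phi(p)\le p$ provides.

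To assemble, note that the cutdown $x\mapsto px$ is a surjective normal $*$-homomorphism of $N$ onto $pN=pNp$ and hence preserves arbitrary joins of projections, so
\[
pP_{ac}(\Phi)=p\bigvee_{Q}[Q]=\bigvee_{Q}p[Q]=\bigvee_{Q}[pQ],
\]
the joins taken over all pure superharmonic operators $Q$ for $\Phi$. By (a), each $[pQ]\le P_{ac}(\Phi_{p})$, whence $pP_{ac}(\Phi)\le P_{ac}(\Phi_{p})$. Conversely, by (b), every pure superharmonic $Q'$ for $\Phi_{p}$ has $[Q']\le P_{ac}(\Phi)$ and also $[Q']\le p$, so $[Q']\le p\wedge P_{ac}(\Phi)=pP_{ac}(\Phi)$ (here $p$, being central, commutes with $P_{ac}(\Phi)$); taking the join over $Q'$ gives $P_{ac}(\Phi_{p})\le pP_{ac}(\Phi)$. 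The two inequalities yield $pP_{ac}(\Phi)=P_{ac}(\Phi|_{pNp})$. Apart from step (a), everything here — the restriction statement, the identity $[pQ]=p[Q]$, and the behaviour of joins of projections under the cutdown — is routine bookkeeping with central projections.
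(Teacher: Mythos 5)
Your proof is correct and follows essentially the same route as the paper's: both directions rest on showing that $pQ=pQp$ is pure superharmonic for $\Phi|_{pNp}$ whenever $Q$ is pure superharmonic for $\Phi$ (via the corner-preservation $\Phi(pNp)\subseteq pNp$ forced by $\Phi(p)\leq p$ and the domination $\Phi^{n}(pQ)\leq\Phi^{n}(Q)$), together with the trivial converse. You merely make explicit two points the paper leaves tacit, namely why $\Phi(pQp)=p\Phi(pQp)p$ and the bookkeeping $[pQ]=p[Q]$ with preservation of joins under the central cut-down, so no further changes are needed.
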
 \begin{proof}
Suppose $Q$ is a pure superharmonic element of $N$ for $\Phi$.
Then the sequence $\{\Phi^{n}(Q)\}_{n\in\mathbb{N}}$ decreases to
zero. So $\Phi(pQp)=p\Phi(pQp)p\leq p\Phi(pQp+p^{\perp}Qp^{\perp})p=p\Phi(Q)p\leq pQp$.
Therefore, since $\Phi(Q)$ is also pure superharmonic for $\Phi$,
$\Phi^{2}(pQp)=\Phi(\Phi(pQp))=\Phi(p\Phi(pQp)p)\leq\Phi(p\Phi(Q)p)\leq p\Phi(\Phi(Q))p\leq\Phi^{2}(Q)$.
Continuing in this fashion, we see that $\Phi^{n}(pQp)\leq\Phi^{n}(Q)$
for all $n$, and so the range projection of $pQp$ is less than or
equal to $P_{ac}(\Phi\vert_{pNp})$. That is, $pP_{ac}(\Phi)\leq P_{ac}(\Phi\vert_{pNp})$.
The reverse inequality is clear, since if $pQp$ is pure superharmonic
for $\Phi\vert_{pNp}$, then it certainly is pure superharmonic for
$\Phi$.\end{proof}

\begin{theorem}\label{Theorem:MarkovChain2}Let $\Phi$ be the completely
positive map on $\ell^{\infty}(\{1,2,\cdots,n\})$ induced by a sub-Markov
matrix $A$ and assume $A$ is written in its canonical form \eqref{eq:canonical matrix}.
Then $P_{ac}(\Phi)$ is the support projection of $A_{kk}.$\end{theorem}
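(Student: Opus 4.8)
The plan is to show two inclusions: that the support projection $p_k$ of $A_{kk}$ dominates $P_{ac}(\Phi)$, and conversely that $p_k \leq P_{ac}(\Phi)$. Write $E_k$ for the support of $A_{kk}$, so $p_k = 1_{E_k}$, and let $N = \ell^\infty(\{1,\dots,n\})$ with its diagonal identification. The key structural fact from Lemma \ref{lem: canonical form} is that for each block $A_{ii}$ with $i \le k-1$ there is a nonnegative left eigenvector $z^{(i)}$ with full support on $E_i$ and $A_{ii}^\top z^{(i)} = z^{(i)}$; moreover, because of the lower-triangular form, the row space indexed by $E_1 \cup \cdots \cup E_{k-1}$ is invariant under $A^\top$ (equivalently, the coordinate subspace on $E_k$ is $\Phi$-invariant and its complement is $\Phi^*$-invariant). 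From each $z^{(i)}$ one builds a normal state $\omega_i$ on $N$ supported on $E_i$ by $\omega_i(d) = \sum_{j\in E_i} z^{(i)}_j d_j$ (normalized). The lower-triangular structure gives $\omega_i \circ \Phi = \omega_i$ on the coordinates in $E_i$, but because $\Phi$ may move mass from $E_i$ into later blocks one must be a little careful: what is true is that the functional $\omega_i$, viewed as a state on $N$, is \emph{superinvariant}, i.e. $\omega_i \circ \Phi \le \omega_i$, and in fact invariant on the subalgebra $1_{E_1\cup\cdots\cup E_i} N$. By Theorem \ref{Theorem:Periodic} (or rather its proof: any pure superharmonic $Q$ has $\omega(Q) = 0$ whenever $\omega\circ\Phi \le \omega$, since then $\omega(Q) \ge \omega(\Phi(Q)) \ge \omega(\Phi^2(Q)) \ge \cdots \to 0$), the support of each $\omega_i$ is orthogonal to $P_{ac}(\Phi)$. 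Since $\bigcup_{i\le k-1} \mathrm{supp}(\omega_i) = E_1\cup\cdots\cup E_{k-1} = E_k^\perp$, we get $P_{ac}(\Phi) \le 1_{E_k} = p_k$.

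For the reverse inclusion, $p_k = 1_{E_k}$ is a central projection with $\Phi(p_k) \le p_k$ (the $E_k$-coordinates are $\Phi$-invariant by the triangular form), so by the preceding lemma $p_k P_{ac}(\Phi) = P_{ac}(\Phi|_{p_k N p_k})$. But $\Phi|_{p_k N p_k}$ is exactly the completely positive map on $\ell^\infty(E_k)$ induced by $A_{kk}$, which has spectral radius strictly less than $1$. Now I would invoke Corollary \ref{Cor:AbsContT}(2): realize $\Phi|_{p_k N p_k}$ via its identity representation $(T,\sigma)$ on the Arveson--Stinespring correspondence, where $\widetilde T$ is (conjugate to) the contraction built from $A_{kk}$ as in Example \ref{Example:MarkovChain1}, equation \eqref{eq:tildeT}; since $\rho(A_{kk})$ has spectral radius $< 1$ and $\rho(\Phi^n(I)) = \widetilde{T_n}\widetilde{T_n}^*$, the norms $\|\widetilde{T_n}\| \to 0$, so $\|\widetilde T\|<1$ after passing to a suitable power, hence $(T,\sigma)$ is absolutely continuous and $P_{ac}(\Phi|_{p_k N p_k}) = p_k$. (Concretely: $\Phi^n|_{p_k N p_k}$ is given by $A_{kk}^n$, whose entries tend to $0$, so $\Phi^n(p_k) \to 0$ strongly, exhibiting $p_k$ itself as a pure superharmonic operator.) Combining, $p_k P_{ac}(\Phi) = p_k$, i.e. $p_k \le P_{ac}(\Phi)$.

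Putting the two inclusions together yields $P_{ac}(\Phi) = p_k = \mathrm{supp}(A_{kk})$, as claimed.

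The main obstacle I anticipate is the first inclusion, specifically verifying that each Perron--Frobenius left eigenvector $z^{(i)}$ of the \emph{diagonal block} $A_{ii}$ genuinely yields a state $\omega_i$ on all of $N$ with $\omega_i \circ \Phi \le \omega_i$. The subtlety is that $\Phi$ acts via all of $A$, not just $A_{ii}$, so one must check that the off-diagonal (strictly lower-triangular) entries only \emph{remove} $z^{(i)}$-weighted mass from $E_i$ and never add to it — which is precisely the content of the lower-triangular normal form, but should be stated carefully. An alternative that sidesteps eigenvector bookkeeping: induct on $k$. The coordinate block $E_1$ is $\Phi^*$-invariant with $A_{11}$ having spectral radius $1$ and a faithful invariant state on $\ell^\infty(E_1)$ (the normalized $z^{(1)}$), so by Theorem \ref{Theorem:Periodic} the support $1_{E_1}$ is orthogonal to $P_{ac}(\Phi)$; then $P_{ac}(\Phi) \le 1_{E_1^\perp}$ and $\Phi$ restricts to $1_{E_1^\perp} N 1_{E_1^\perp}$ with canonical form having one fewer block, so the previous lemma and induction finish it. I would likely present the induction version, as it is cleaner and uses only results already in the excerpt.
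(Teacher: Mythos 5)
Your overall strategy --- Perron--Frobenius invariant states to push $P_{ac}(\Phi)$ off $E_1\cup\cdots\cup E_{k-1}$, and pure superharmonicity of $1_{E_k}$ (via the spectral radius of $A_{kk}$ being $<1$) to get the reverse inclusion --- is exactly the paper's proof. The second inclusion is fine: the ``concrete'' version you give in parentheses ($\Phi^n(1_{E_k})\to 0$ in norm, so $1_{E_k}$ is itself pure superharmonic for $\Phi$) is precisely what the paper does, and the detour through Corollary \ref{Cor:AbsContT} and the identity representation is unnecessary.

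There is, however, one genuine error in your first inclusion as written. The lemma you extract from the proof of Theorem \ref{Theorem:Periodic} --- that $\omega(Q)=0$ for every pure superharmonic $Q$ whenever $\omega\circ\Phi\le\omega$ --- is false, and your displayed inequality chain goes the wrong way: superharmonicity gives $\omega(Q)\ge\omega(\Phi(Q))\ge\cdots\to 0$, which only yields $\omega(Q)\ge 0$. (Take $\Phi=0$: every positive $Q$ is pure superharmonic and every state satisfies $\omega\circ\Phi\le\omega$, yet $\omega(Q)$ need not vanish. The proof of Theorem \ref{Theorem:Periodic} genuinely uses the equality $\omega\circ\Phi=\omega$ to get $\omega(Q)=\omega(\Phi^n(Q))\to 0$.) What saves your argument is that the worry prompting the retreat to superinvariance is unfounded: in the canonical form the entire block row $i$, $i\le k-1$, vanishes off its diagonal block, i.e.\ $a_{jl}=0$ for $j\in E_i$ and $l\notin E_i$, so $A^{\top}z^{(i)}=z^{(i)}$ as an eigenvector of the full matrix and hence $\omega_i\circ\Phi=\omega_i$ exactly, on all of $N$. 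The coupling block $U$ sits in block row $k$ and only feeds mass \emph{into} the $E_k$-coordinates, which $\omega_i$ never evaluates. With exact invariance in hand, Theorem \ref{Theorem:Periodic} applies as stated and the inclusion $P_{ac}(\Phi)\le 1_{E_k}$ follows; your inductive alternative is also sound and sidesteps the issue, but it buys nothing over simply observing the invariance of $\omega_i$ --- or, as the paper does, of the single state determined by $z=\sum_{i=1}^{k-1}z^{(i)}$, which is faithful on $1_{E_1\cup\cdots\cup E_{k-1}}N$.
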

\begin{proof}If $E$ is the union of the support projections for
$A_{i},$ $i=1,2,\cdots,k-1,$ then the projection $1_{E}$ in $\ell^{\infty}(\{1,2,\ldots,n\})$
is coinvariant for $A$ and the sum of the vectors $z:=\sum_{i=1}^{k-1}z^{(i)}$
determines an invariant state $\omega_{z}$ for $\Phi$ via formula
$\omega_{z}(\underline{d})=\sum_{i=1}^{k-1}z^{(i)}\cdot\underline{d_{i}}$,
where $\underline{d_{i}}$ is the restriction of $\underline{d}$
to the support of $z^{(i)}$, $E_{i}$, and $z^{(i)}\cdot\underline{d_{i}}$
denotes the dot product of the two tuples. The state $\omega_{z}$
is faithful on $1_{E}\ell^{\infty}(\{1,2,\ldots,n\})$, and so $P_{ac}(\Phi)\leq1_{E_{k}}=1_{E^{c}}$,
by Theorem \ref{Theorem:Periodic}. On the other hand, since $1_{E_{k}}\ell^{\infty}(\{1,2,\ldots,n\})$
is invariant for $\Phi$, and since the spectral radius of $A_{k}$,
which is the matrix of the restriction of $\Phi$ to $1_{E_{k}}\ell^{\infty}(\{1,2,\ldots,n\})$,
is less than $1$, it is clear that $\Phi^{n}(1_{E_{k}})\to0$ in
norm. This implies that $1_{E_{k}}$ is pure superharmonic for $\Phi$
and, therefore, that $1_{E_{k}}\leq P_{ac}(\Phi)$. \end{proof}

\subsection{Similarity of representations}

Many of the results in this subsection are analogues of theorems in
Popescu's paper \cite{gP03} and many of his proofs work here, as
well. We focus on those features and proofs that take advantage of
our perspective that focuses on the connection between completely
positive maps and intertwiners. So a number of our arguments are different
from the ones given in \cite{gP03}. Suppose $\sigma:M\to B(H)$ is
a normal representation of $M$ on the Hilbert space $H$ and suppose
$\eta\in E^{\sigma}$. Then we may form the completely bounded bimodule
map $\widehat{\eta^{*}}:E\to B(H)$ discussed in the paragraph before
Theorem \ref{Theorem:PureSuperHarm}. Let $\mathcal{T}_{+0}(E)$ be
the \emph{linear span} of $\varphi_{\infty}(M)$ and the operators
$\{T_{\xi}\mid\xi\in E^{\otimes n},\, n\geq1\}$. Then $\mathcal{T}_{+0}(E)$
is the algebra generated by $\varphi_{\infty}(M)$ and $\{T_{\xi}\mid\xi\in E\}$,
and given $\sigma$ and $\widehat{\eta^{*}}$, we can define a representation
of $\mathcal{T}_{+0}(E)$ on $H$, denoted $\widehat{\eta^{*}}\times\sigma$
by the formulae \[
\widehat{\eta^{*}}\times\sigma(\varphi_{\infty}(a))=\sigma(a),\qquad a\in M,\]
and \[
\widehat{\eta^{*}}\times\sigma(T_{\xi_{1}}T_{\xi_{2}}\cdots T_{\xi_{n}})=\widehat{\eta^{*}}(\xi_{1})\widehat{\eta^{*}}(\xi_{2})\cdots\widehat{\eta^{*}}(\xi_{n}),\qquad\xi_{i}\in E,\, i=1,2,\cdots,n.\]
We are interested in understanding when $\widehat{\eta^{*}}\times\sigma$
extends to a completely bounded representation of $\mathcal{T}_{+}(E)$
in $B(H)$. Thanks to the famous theorem of Paulsen \cite{vP84},
this will happen if and only if $\widehat{\eta^{*}}\times\sigma$
is similar to a completely contractive representation of $\mathcal{T}_{+}(E)$.
By \cite[Theorem 3.10]{MS98b} (see also paragraph \eqref{par:CCReps}),
this will happen if and only if $\widehat{\eta^{*}}\times\sigma$
is similar to $\widehat{\zeta^{*}}\times\sigma_{1}$, for a $\zeta\in\overline{\mathbb{D}(E^{\sigma_{1}})^{*}}$
.

Thus, we are led to investigating the similarity properties of the
completely bounded maps $\widehat{\eta^{*}}\times\sigma$. For this
purpose, observe that if $\widehat{\eta_{1}^{*}}\times\sigma_{1}$
is similar to $\widehat{\eta_{2}^{*}}\times\sigma_{1}$ then there
is an invertible operator $S$ on $H$ such that $\widehat{\eta_{1}^{*}}\times\sigma_{1}(a)S=S\widehat{\eta_{2}^{*}}\times\sigma_{2}(a)$
for all $a\in\mathcal{T}_{+}(E)$. In particular, when $a\in\varphi_{\infty}(M)$,
$\sigma_{1}(a)S=S\sigma_{2}(a)$, i.e., $\sigma_{1}$ and $\sigma_{2}$
are similar. Since they are $*$-maps, $\sigma_{1}$ and $\sigma_{2}$
must be unitarily equivalent. But observe that if $U$ is a Hilbert
space isomorphism from the space of $\sigma_{1}$, $H_{1}$, to the
space of $\sigma_{2}$, then the map $\eta\to(I_{E}\otimes U)\eta U^{-1}$
is a complete isometric isomorphism between $E^{\sigma_{1}}$ and
$E^{\sigma_{2}}$ that is also a homeomorphism for the ultra-weak
topologies. Since we will be interested here in the norm properties
of $\eta$'s and related constructs the difference between the duals
of unitarily equivalent representations may safely be ignored. Thus,
in particular, when given a similarity $S$ between $\widehat{\eta_{1}^{*}}\times\sigma_{1}$
and $\widehat{\eta_{2}^{*}}\times\sigma_{2}$ we may identify $\sigma_{1}$
and $\sigma_{2}$ with one $\sigma$ and assume that $S$ lies in
$\sigma(M)'$. In this case, $\eta_{1}^{*}(I\otimes S)=S\eta_{2}^{*}$,
i.e. $S^{-1}\eta_{1}^{*}(I\otimes S)=\eta_{2}^{*}$. Conversely, any
invertible $S$ in the commutant of $\sigma(M)$ that satisfies the
equation $S^{-1}\eta_{1}^{*}(I\otimes S)=\eta_{2}^{*}$ implements
a similarity between $\widehat{\eta_{1}^{*}}\times\sigma(\cdot)$
and $\widehat{\eta_{2}^{*}}\times\sigma(\cdot)$.

\begin{definition}\label{def:Similarity-eta}We introduce several
terms we will use in the sequel.
\begin{enumerate}
\item If $\eta$ and $\zeta$ are two elements of $E^{\sigma}$, then we
shall say they are \emph{similar} if there is an invertible operator
$S$ in $\sigma(M)'$ such that $S^{-1}\cdot\eta\cdot S=(I_{E}\otimes S)^{-1}\eta S=\zeta$.
\item Let $\Phi_{1}$ and $\Phi_{2}$ be two completely positive maps defined
on a $W^{*}$-algebra $N$. We say they are \emph{similar} if and
only if there is an invertible operator $R\in N$ such that \[
\psi_{R}^{-1}\circ\Phi_{1}\circ\psi_{R}=\Phi_{2},\]
where $\psi_{R}$ is the complete positive map on $M$ defined by
the formula $\psi_{R}(a)=RaR^{*}.$
\item We say that an $\eta\in E^{\sigma}$ belongs to class $C_{\cdot0}$
in case the identity is a pure superharmonic operator for $\Phi_{\eta}$.
\end{enumerate}
\end{definition}

\begin{remark}\label{rem:On_similarities}Several points may be helpful.
\begin{enumerate}
\item We have taken the definition of similarity for completely positive
maps from Popescu \cite{gP03}.
\item It is easy to see that if elements $\eta,\zeta\in E^{\sigma}$ are
similar, then $\Phi_{\eta}$ and $\Phi_{\zeta}$ are similar. Indeed,
if $R$ is an invertible element of $\sigma(M)'$, then \begin{align*}
\psi_{R}^{-1}\circ\Phi_{\eta}\circ\psi_{R}(a) & =R^{-1}(\eta^{*}(I_{E}\otimes RaR^{*})\eta)R^{*-1}\\
= & R^{-1}\eta^{*}(I_{E}\otimes R)(I_{E}\otimes a)(I_{E}\otimes R^{*})\eta R^{*-1}\\
= & \Phi_{\zeta},\end{align*}
where $\zeta=(I_{E}\otimes R^{*})\eta R^{*-1}$. The converse assertion
is not true owing to the nonuniqueness of representing a completely
positive map $\Phi$ in the form $\Phi_{\eta}$.
\item In fact, it is convenient to use bimodule notation: $\eta$ and $\zeta$
are similar if and only if there is an invertible $r$ such that $r\cdot\eta\cdot r^{-1}=\zeta$.
Observe that in this case, $r\cdot\eta_{n}\cdot r^{-1}=\zeta_{n}$
for all $n$.
\item The notion of a $C_{\cdot0}$ element of $E^{\sigma}$ is borrowed
from \cite[Definition 7.14]{MSHardy}. There the norm of the element
was assumed to be at most $1$. It turns out that when $M=\mathbb{C}=E$,
so that $\eta$ is really an operator on Hilbert space, then the terminology
we have adopted agrees with that of Sz.-Nagy and Foia\c{s} in \cite{SzNF70}.
\item In the terminology of \cite{gP03}, a completely positive map $\Phi$
is called pure if and only if $I$ is a pure superharmonic operator
for $\Phi$.
\end{enumerate}
\end{remark}

Our first result gives a necessary and sufficient condition for $\widehat{\eta^{*}}\times\sigma$
to extend to a completely bounded representation on $\mathcal{T}_{+}(E)$.
It was inspired by \cite[Theorem 5.13]{gP03}. However, our proof
is somewhat different.

\begin{theorem}\label{Theorem:Similar_to_a_contraction}Let $\sigma:M\to B(H)$
be a normal representation and let $\eta\in E^{\sigma}$. Then the
following conditions are equivalent.
\begin{enumerate}
\item The representation $\widehat{\eta^{*}}\times\sigma$ extends to a
completely bounded representation of $\mathcal{T}_{+}(E)$.
\item $\eta$ is similar to a $\zeta\in\overline{\mathbb{D}(E^{\sigma})}$.
\item $\Phi_{\eta}$ admits an invertible superharmonic operator.
\end{enumerate}
\end{theorem}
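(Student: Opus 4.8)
The plan is to prove the cycle of implications $(1)\Rightarrow(2)\Rightarrow(3)\Rightarrow(1)$, relying on Paulsen's similarity theorem, the parametrization of completely contractive representations by $\overline{\mathbb{D}(E^\sigma)}$, and the characterization of pure superharmonic operators in Theorem~\ref{Theorem:PureSuperHarm}.

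\textbf{$(1)\Rightarrow(2)$.} Suppose $\widehat{\eta^{*}}\times\sigma$ extends to a completely bounded representation of $\mathcal{T}_{+}(E)$. By Paulsen's theorem \cite{vP84}, a completely bounded representation of an operator algebra is similar to a completely contractive one, so there is an invertible $S\in B(H)$ with $S^{-1}(\widehat{\eta^{*}}\times\sigma)(\cdot)S$ completely contractive. Restricting to $\varphi_{\infty}(M)$, the representation $a\mapsto S^{-1}\sigma(a)S$ is a bounded representation of the $W^{*}$-algebra $M$ that is similar to the $*$-representation $\sigma$; hence it is similar to a $*$-representation and, as explained in the discussion preceding Definition~\ref{def:Similarity-eta}, after adjusting by the intertwining unitary we may assume $S\in\sigma(M)'$. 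Then $S^{-1}(\widehat{\eta^{*}}\times\sigma)S=\widehat{\zeta^{*}}\times\sigma$ where $\zeta^{*}=S^{-1}\eta^{*}(I_{E}\otimes S)$, i.e.\ $\zeta=(I_{E}\otimes S^{*})\eta S^{*-1}=S^{*-1}\cdot\eta\cdot S^{*}$ in the bimodule notation of Remark~\ref{rem:On_similarities}(3); so $\eta$ is similar to $\zeta$. Since $\widehat{\zeta^{*}}\times\sigma$ is completely contractive, $\zeta\in\overline{\mathbb{D}(E^{\sigma})}$ by the parametrization in paragraph~\ref{par:CCReps} together with \cite[Theorem 3.10]{MS98b}.

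\textbf{$(2)\Rightarrow(3)$.} Suppose $\eta$ is similar to $\zeta\in\overline{\mathbb{D}(E^{\sigma})}$, say $r\cdot\eta\cdot r^{-1}=\zeta$ for an invertible $r\in\sigma(M)'$. Then, as in Remark~\ref{rem:On_similarities}(2), $\Phi_{\zeta}=\psi_{r}^{-1}\circ\Phi_{\eta}\circ\psi_{r}$ up to the obvious bimodule bookkeeping, so $\Phi^{n}_{\zeta}(a)=r^{-1}\Phi^{n}_{\eta}(rar^{*})r^{*-1}$, equivalently $\Phi^{n}_{\eta}(b)=r\,\Phi^{n}_{\zeta}(r^{-1}br^{*-1})\,r^{*}$. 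Since $\|\widetilde{\zeta}\|\le1$ we have $\Phi^{n}_{\zeta}(I)\le I$, i.e.\ $\Phi^{n}_{\zeta}(I)$ is a decreasing sequence of positive contractions with a strong limit $A_{\zeta}\ge0$ (the largest harmonic operator below $I$ for $\Phi_{\zeta}$). Set $Q:=r(I-A_{\zeta})r^{*}+\varepsilon\,rr^{*}$ is not quite right; instead take $Q_{0}:=r\,r^{*}$ — no: the cleanest choice is $Q:=rr^{*}$ only works when $A_\zeta=0$. So argue as follows: the operator $Q:=r\,r^{*}$ satisfies $\Phi_{\eta}(Q)=r\,\Phi_{\zeta}(I)\,r^{*}\le r\,I\,r^{*}=Q$, hence $Q$ is superharmonic, invertible (as $r$ is), and we are done with $(3)$ once we note invertibility: $Q=rr^{*}\ge\|r^{-1}\|^{-2}I>0$. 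Thus $\Phi_{\eta}$ admits an invertible superharmonic operator.

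\textbf{$(3)\Rightarrow(1)$, the main obstacle.} This is the hard direction. Suppose $Q\in\sigma(M)'$ is superharmonic for $\Phi_{\eta}$ and invertible, and write $Q=SS^{*}$ with $S:=Q^{1/2}\in\sigma(M)'$ invertible. From $\Phi_{\eta}(Q)\le Q$ we get $\widetilde{\eta}^{*}(I_{E}\otimes SS^{*})\widetilde{\eta}\le SS^{*}$ — careful: $\Phi_\eta(a)=\eta^*(I_E\otimes a)\eta$, so $\eta^{*}(I_{E}\otimes S)(I_{E}\otimes S^{*})\eta\le SS^{*}$, which says $\|S^{-1}\eta^{*}(I_{E}\otimes S)\|\le1$. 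Set $\zeta^{*}:=S^{-1}\eta^{*}(I_{E}\otimes S)$; then $\zeta\in\sigma(M)'$-side is an element of $E^{\sigma}$ with $\|\zeta\|\le1$, i.e.\ $\zeta\in\overline{\mathbb{D}(E^{\sigma})}$, and $\zeta$ is similar to $\eta$ via $S^{*}$. By \cite[Theorem 3.10]{MS98b} and paragraph~\ref{par:CCReps}, $\widehat{\zeta^{*}}\times\sigma$ is completely contractive, hence extends to a completely contractive representation of $\mathcal{T}_{+}(E)$; conjugating back by $S^{*}$ shows $\widehat{\eta^{*}}\times\sigma$ extends to a completely bounded representation of $\mathcal{T}_{+}(E)$, with $cb$-norm controlled by $\|S\|\,\|S^{-1}\|$. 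The one delicate point to verify carefully — and where I expect to spend most of the effort — is the bookkeeping identifying $S^{-1}\eta^{*}(I_{E}\otimes S)$ with $\widetilde{\zeta^{*}}$ for a genuine element $\zeta\in E^{\sigma}$ (using $\widehat{(\cdot)^{*}}$ from \cite[Lemma 3.5]{MS98b}) and checking that the similarity really does intertwine the two integrated representations on all of $\mathcal{T}_{+0}(E)$, so that it extends to the norm-closure $\mathcal{T}_{+}(E)$; this amounts to confirming $S^{-1}\eta_{n}^{*}(I_{E^{\otimes n}}\otimes S)=\zeta_{n}^{*}$ for all $n$, which follows inductively from $\eta_{n+1}^{*}=\eta^{*}(I_{E}\otimes\eta_{n}^{*})$ and the fact that $S\in\sigma(M)'$ intertwines the relevant induced representations, exactly as in Remark~\ref{rem:On_similarities}(3).
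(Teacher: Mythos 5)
Your proposal is correct and follows essentially the same route as the paper: Paulsen's theorem together with the parametrization of completely contractive representations by $\overline{\mathbb{D}(E^{\sigma})}$ for the equivalence of (1) and (2), and conjugation by the square root of the invertible superharmonic operator for the equivalence with (3). The only blemish is a bookkeeping slip in $(2)\Rightarrow(3)$: with the convention $r\cdot\eta\cdot r^{-1}=(I_{E}\otimes r)\eta r^{-1}=\zeta$ one gets $\Phi_{\eta}(r^{*}r)=r^{*}\Phi_{\zeta}(I)r\leq r^{*}r$, so the invertible superharmonic operator is $r^{*}r$ rather than $rr^{*}$ (and the false starts preceding your final choice of $Q$ should be deleted).
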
\begin{proof}If $\widehat{\eta^{*}}\times\sigma$ extends
to a completely bounded representation of $\mathcal{T}_{+}(E)$, then
$\widehat{\eta^{*}}\times\sigma$ must be similar to a completely
contractive representation $\rho$ of $\mathcal{T}_{+}(E)$ by Paulsen's
famous theorem \cite{vP84}. Since $\rho$ must be of the form $\widehat{\zeta^{*}}\times\sigma_{1}$
for some $\zeta$ of norm at most $1$ in $E^{\sigma_{1}}$, $\sigma$
and $\sigma_{1}$ are similar, and therefore unitarily equivalent.
As we noted above, we may identify $\sigma$ with $\sigma_{1}$ and
conclude that $\eta$ is similar to a point in $\overline{\mathbb{D}(E^{\sigma})^{*}}$.
Thus $1.$ implies $2.$ The converse is immediate, since as we noted
above, a similarity between two points $\eta_{1}$ and $\eta_{2}$
in $E^{\sigma}$ implements a similarity between $\widehat{\eta_{1}^{*}}\times\sigma$
and $\widehat{\eta_{2}^{*}}\times\sigma$. Suppose $\eta$ is similar
to a $\zeta$ in $\overline{\mathbb{D}(E^{\sigma})^{*}}$, say $r\cdot\eta\cdot r^{-1}=\zeta$
for some $r\in\sigma(M)'$. Then $r\cdot\eta=\zeta\cdot r$ and so
$\Phi_{\eta}(r^{*}r)=\Phi_{r\cdot\eta}(I)=\Phi_{\zeta\cdot r}(I)=r^{*}\Phi_{\zeta}(I)r\leq r^{*}r$,
since $\Vert\zeta\Vert\leq1$. Thus $r^{*}r$ is superharmonic for
$\Phi_{\eta}$ and since $r$ is invertible by assumption, 3. is proved.
Finally, suppose $\Phi_{\eta}$ admits an invertible super harmonic
operator, say $R$. If $r=R^{\frac{1}{2}}$ and if we set $\zeta=r\cdot\eta\cdot r^{-1}$,
then $\zeta\cdot r=r\cdot\eta$, $\Phi_{\zeta}(I)=r^{-1}\Phi_{\zeta\cdot r}(I)r^{-1}=r^{-1}\Phi_{r\cdot\eta}(I)r^{-1}=r^{-1}\Phi_{\eta}(r^{2})r^{-1}\leq r^{-1}r^{2}r^{-1}=I$,
since $r^{2}$ is superharmonic for $\Phi_{\eta}$. Thus $\Vert\zeta\Vert\leq1$.\end{proof}

The following theorem identifies when an $\eta\in E^{\sigma}$ is
similar to a $C_{\cdot0}$ element in $\overline{\mathbb{D}(E^{\sigma})}$.
It is was inspired by \cite[Theorem 5.11]{gP03}. Again, the proof
is somewhat different.

\begin{theorem}\label{Theorem:Similar_to_a_CzeroPt_contraction}Let
$\sigma$ be a normal representation of $M$ on a Hilbert space $H$
and let $\eta$ be an element of $E^{\sigma}$. Then the following
assertions about $\eta$ are equivalent.
\begin{enumerate}
\item $\eta$ is similar to a $C_{\cdot0}$ element of $\overline{\mathbb{D}(E^{\sigma})}$.
\item There is a positive element $r\in\sigma(M)'$ and positive numbers
$a$ and $b$ so that \begin{equation}
aI_{H}\leq\sum_{n=0}^{\infty}\Phi_{\eta}^{n}(r)\leq bI_{H}.\label{eq:Phi_eta_Invert}\end{equation}

\item There is an invertible pure superharmonic operator for $\Phi_{\eta}$.
\end{enumerate}
\end{theorem}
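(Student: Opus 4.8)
The plan is to route all three conditions through (3), the existence of an invertible pure superharmonic operator for $\Phi_{\eta}$, by establishing $(1)\Leftrightarrow(3)$ and $(2)\Leftrightarrow(3)$. The engine for $(1)\Leftrightarrow(3)$ is the fact, recorded in Remark~\ref{rem:On_similarities}, that a similarity $r\cdot\eta\cdot r^{-1}=\zeta$ of elements of $E^{\sigma}$ (with $r$ invertible in $\sigma(M)'$) conjugates $\eta_{n}$ to $\zeta_{n}$ for every $n$; feeding this into the power formula~\eqref{eq:Phi_eta_powers} and unwinding the tensor legs gives the identity $\Phi_{\eta}^{n}(r^{*}r)=r^{*}\Phi_{\zeta}^{n}(I)r$ for all $n\geq 0$. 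In other words, a similarity of $\eta$'s conjugates the whole sequence $\{\Phi_{\eta}^{n}\}$ onto $\{\Phi_{\zeta}^{n}\}$, so ``$I$ is a pure superharmonic operator for $\Phi_{\zeta}$'' translates exactly to ``$r^{*}r$ is a pure superharmonic operator for $\Phi_{\eta}$''. Concretely, for $(1)\Rightarrow(3)$, if $\eta$ is similar to a $C_{\cdot0}$ element $\zeta\in\overline{\mathbb{D}(E^{\sigma})}$, then taking $n=1$ and using $\Vert\zeta\Vert\leq1$ gives $\Phi_{\eta}(r^{*}r)\leq r^{*}r$, while $\Phi_{\zeta}^{n}(I)\to0$ strongly forces $\Phi_{\eta}^{n}(r^{*}r)=r^{*}\Phi_{\zeta}^{n}(I)r\to0$ strongly; hence $Q:=r^{*}r$ is an invertible pure superharmonic operator. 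For $(3)\Rightarrow(1)$, given an invertible pure superharmonic $Q\in\sigma(M)'$, I would set $r:=Q^{1/2}\in\sigma(M)'$ and $\zeta:=r\cdot\eta\cdot r^{-1}$; the mirror image of the same computation gives $\Phi_{\zeta}^{n}(I)=r^{-1}\Phi_{\eta}^{n}(Q)r^{-1}$, whence $\Phi_{\zeta}(I)\leq I$ (so $\zeta\in\overline{\mathbb{D}(E^{\sigma})}$) and $\Phi_{\zeta}^{n}(I)\to0$ strongly (so $\zeta$ is $C_{\cdot0}$). This is just the computation in the proof of Theorem~\ref{Theorem:Similar_to_a_contraction}, carried through all the powers.

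For $(3)\Rightarrow(2)$, starting from an invertible pure superharmonic $Q$, I would put $r:=Q-\Phi_{\eta}(Q)\geq0$ in $\sigma(M)'$; exactly as in the proof of Theorem~\ref{Theorem:PureSuperHarm}, purity yields $Q=\sum_{n=0}^{\infty}\Phi_{\eta}^{n}(r)$ with the series converging strongly (the $N$-th partial sum being $Q-\Phi_{\eta}^{N+1}(Q)$), and invertibility of $Q$ gives $\Vert Q^{-1}\Vert^{-1}I_{H}\leq\sum_{n}\Phi_{\eta}^{n}(r)\leq\Vert Q\Vert I_{H}$, which is~\eqref{eq:Phi_eta_Invert}. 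For $(2)\Rightarrow(3)$, suppose $r\geq0$ in $\sigma(M)'$ and $aI_{H}\leq\sum_{n=0}^{\infty}\Phi_{\eta}^{n}(r)\leq bI_{H}$ with $a,b>0$. The partial sums form an increasing net of positive operators in $\sigma(M)'$ bounded above by $bI_{H}$, hence converge strongly to $Q\in\sigma(M)'$ with $aI_{H}\leq Q\leq bI_{H}$; in particular $Q$ is invertible. Since $\Phi_{\eta}(a)=\eta^{*}(I_{E}\otimes a)\eta$ is normal it commutes with this bounded monotone strong limit, so $\Phi_{\eta}(Q)=\sum_{n\geq1}\Phi_{\eta}^{n}(r)=Q-r\leq Q$ and $\Phi_{\eta}^{N}(Q)=\sum_{n\geq N}\Phi_{\eta}^{n}(r)$; the latter is the tail of a strongly convergent series of positive operators, so $\langle\Phi_{\eta}^{N}(Q)h,h\rangle\to0$ for every $h\in H$, and as $0\leq\Phi_{\eta}^{N}(Q)\leq Q$ this forces $\Phi_{\eta}^{N}(Q)\to0$ strongly. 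Thus $Q$ is an invertible pure superharmonic operator for $\Phi_{\eta}$.

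I do not expect a serious obstacle here; the two places that need care are the interchange of $\Phi_{\eta}$ with the infinite strong sum in $(2)\Rightarrow(3)$, which is handled by normality of $\Phi_{\eta}$, and the two strong-convergence statements $\Phi_{\eta}^{n}(Q)\to0$, which are proved once via the tail of a convergent series of positive operators and once via the conjugation identity $\Phi_{\eta}^{n}(Q)=r^{*}\Phi_{\zeta}^{n}(I)r$. The only genuinely load-bearing idea is that a similarity of the $\eta$'s intertwines the entire iterate sequences, so purity of the identity for $\Phi_{\zeta}$ is the same as purity of $r^{*}r$ (respectively $Q$) for $\Phi_{\eta}$.
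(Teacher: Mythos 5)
Your proposal is correct and follows essentially the same route as the paper: both prove $(1)\Leftrightarrow(3)$ via the conjugation identity $\Phi_{\eta}^{n}(r^{*}r)=r^{*}\Phi_{\zeta}^{n}(I)r$ (taking $r=Q^{1/2}$ for the converse) and prove $(2)\Leftrightarrow(3)$ by summing the series $\sum_{n}\Phi_{\eta}^{n}(r)$ with $r=Q-\Phi_{\eta}(Q)$ in one direction and taking the strong limit of the bounded increasing partial sums in the other. Your extra care about interchanging $\Phi_{\eta}$ with the strong sum (via normality) and about upgrading weak to strong convergence of the positive tails is a small refinement of details the paper leaves implicit.
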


\begin{proof}The equivalence of $1.$ and 3. is an easy calculation
of the sort that we performed above. If $r\cdot\eta\cdot r^{-1}=\zeta$,
then $r\cdot\eta_{n}=\zeta_{n}\cdot r$ for all $n\geq0$ and we further
have \begin{equation}
\Phi_{\eta}^{n}(r^{*}ar)=\Phi_{r\cdot\eta_{n}}(a)=\Phi_{\zeta_{n}\cdot r}(a)=r^{*}\Phi_{\zeta_{n}}(a)r=r^{*}\Phi_{\zeta}^{n}(a)r,\label{eq:SimilarityPhi}\end{equation}
for all $a\in\sigma(M)'$. Now suppose that $1.$ holds, then with
$a=1$, we see that $r^{*}r$ is an invertible superharmonic operator
for $\Phi_{\eta}$ because $\Phi_{\eta}(r^{*}r)=r^{*}\Phi_{\zeta}(I)r\leq r^{*}r$,
since $\Vert\zeta\Vert\leq1$. On the other hand, because $\zeta$
is a $C_{\cdot0}$ element of $E^{\sigma}$, $r^{*}r$ is a pure superharmonic
operator: $\Phi_{\eta}^{n}(r^{*}r)=r^{*}\Phi_{\zeta}^{n}(I)r\to0$
weakly and, therefore, strongly. Thus, 3. is satisfied. The argument
is essentially reversible: Suppose 3. holds and let $a$ be a positive
invertible superharmonic operator for $\Phi_{\eta}$. If we let $r$
be the positive square root of $a$, then $r$ is invertible and we
may let $\zeta=r\cdot\eta\cdot r^{-1}$. Since $a$ is superharmonic,
we conclude that $r^{2}\geq\Phi_{\eta}(r^{2})=r\Phi_{\zeta}(I)r$,
which shows that $\Phi_{\zeta}(I)\leq I$, because $r$ is positive
and invertible, and this implies that $\zeta\in\overline{\mathbb{D}(E^{\sigma})}$.
On the other hand, we conclude from these calculations and the assumption
that $a=r^{2}$ is a pure superharmonic operator for $\Phi_{\eta}$,
that $\Phi_{\zeta}^{n}(I)=r^{-1}\Phi_{\eta}^{n}(a)r\to0$ weakly as
$n\to\infty$. Thus $\zeta$ is a $C_{\cdot0}$ element of $\overline{\mathbb{D}(E^{\sigma})}$,
as was required.

Suppose assertion $2.$ is satisfied and let $r$, $a$ and $b$ be
as in equation \eqref{eq:Phi_eta_Invert}. Then the series $\sum_{n=0}^{\infty}\Phi_{\eta}^{n}(r)$
converges strongly to an operator $R$ that is invertible in $\sigma(M)'$.
Now $\Phi_{\eta}(R)=\sum_{n=1}^{\infty}\Phi_{\eta}^{n}(r)=R-r\leq R$.
Thus $R$ is superharmonic for $\Phi_{\eta}$. But also $\Phi_{\eta}^{n}(R)=\sum_{k=n}^{\infty}\Phi_{\eta}^{k}(r)$
and this sequence of operators converges strongly to zero, since the
series $\sum_{n=0}^{\infty}\Phi_{\eta}^{n}(r)$ converges strongly.
Thus condition 3. is satisfied.

Suppose condition 3. is satisfied, let $R$ be an invertible pure
superharmonic operator for $\Phi_{\eta}$ and set $r:=R-\Phi_{\eta}(R)$.
Then $r$ is positive semidefinite and $R=\sum_{n=0}^{\infty}\Phi_{\eta}^{n}(r)$.
Since $R$ is assumed invertible, the inequality \eqref{eq:Phi_eta_Invert}
is satisfied for suitable $a$ and $b$. \end{proof}

The next theorem uses intertwiners to describe when an $\eta\in E^{\sigma}$
is similar to a $\zeta$ in the \emph{open} unit disc $\mathbb{D}(E^{\sigma})^{*}.$
It is similar in spirit to \cite[Theorem 5.9]{gP03}, but arguments
use different technology. Recall that $(S_{0},\sigma_{0})$ denotes
the universal isometric induced representation. Also, we let $P_{0}$
be the orthogonal projection of $\mathcal{F}(E)\otimes_{\pi}K_{0}$
onto the zero$^{\underline{th}}$ summand, $M\otimes_{\pi}K_{0}\simeq K_{0}$.

\begin{theorem}\label{Theorem:Open_Disc}Let $\sigma$ be a normal
representation of $M$ on a Hilbert space $H$ and let $\eta$ be
a point in $E^{\sigma}$. Then $\eta$ is similar to a point in the
open unit disc $\mathbb{D}(E^{\sigma})$ if and only if there is a
$C\in\mathcal{I}((S_{0},\sigma_{0}),(\eta^{*},\sigma))$ such that
$CP_{0}C^{*}$ is invertible.

\end{theorem}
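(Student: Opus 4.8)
The plan is to reduce the statement entirely to Theorem~\ref{Theorem:PureSuperHarm} together with the similarity bookkeeping already used in the proof of Theorem~\ref{Theorem:Similar_to_a_CzeroPt_contraction}. The one new ingredient needed is an identity connecting $P_0$ with $\Phi_\eta$. If $C\in\mathcal{I}((S_0,\sigma_0),(\eta^*,\sigma))$, then, exactly as in the proof of Theorem~\ref{Theorem:PureSuperHarm}, iterating the intertwining relation gives $\eta_n^*(I_{E^{\otimes n}}\otimes C)=C(\widetilde{S_0})_n$, and since $(S_0,\sigma_0)$ is a pure isometric representation the operator $(\widetilde{S_0})_n(\widetilde{S_0})_n^*$ is the projection of $\mathcal{F}(E)\otimes_\pi K_0$ onto $\sum_{k\geq1}E^{\otimes k}\otimes_\pi K_0$ when $n=1$; hence $\Phi_\eta^n(CC^*)=C(\widetilde{S_0})_n(\widetilde{S_0})_n^*C^*$ for \emph{every} intertwiner $C$. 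Taking $n=1$ and observing that the range of $P_0$ is $M\otimes_\pi K_0$, the orthogonal complement of $\sum_{k\geq1}E^{\otimes k}\otimes_\pi K_0$, I obtain the key formula
\[
CP_0C^*=CC^*-\Phi_\eta(CC^*).
\]
I will also use the trivial inequality $CC^*\ge CP_0C^*$, which holds because $I-P_0\ge 0$.

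For the implication ``$\Leftarrow$'', I would take $C\in\mathcal{I}((S_0,\sigma_0),(\eta^*,\sigma))$ with $CP_0C^*$ invertible and set $Q:=CC^*$. By Theorem~\ref{Theorem:PureSuperHarm}, $Q$ is pure superharmonic for $\Phi_\eta$, and the inequality above forces $Q\ge CP_0C^*\ge\varepsilon I$ for some $\varepsilon>0$, so $Q$ is invertible. Put $r:=Q^{1/2}\in\sigma(M)'$ and $\zeta:=r\cdot\eta\cdot r^{-1}\in E^\sigma$. The computation in the proof of Theorem~\ref{Theorem:Similar_to_a_CzeroPt_contraction} gives $\Phi_\eta(Q)=r\,\Phi_\zeta(I)\,r$, so
\[
I-\zeta^*\zeta = I-\Phi_\zeta(I)=r^{-1}\bigl(Q-\Phi_\eta(Q)\bigr)r^{-1}=r^{-1}\bigl(CP_0C^*\bigr)r^{-1}
\]
is positive and invertible. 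Hence $\zeta^*\zeta\le(1-\delta)I$ for some $\delta>0$, i.e. $\|\zeta\|<1$, and $\eta$ is similar to the point $\zeta$ of $\mathbb{D}(E^\sigma)$.

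For ``$\Rightarrow$'', suppose $\eta$ is similar to a point $\zeta\in\mathbb{D}(E^\sigma)$, say $\zeta=r\cdot\eta\cdot r^{-1}$ with $r\in\sigma(M)'$ invertible (Remark~\ref{rem:On_similarities}), and $\|\zeta\|<1$. Put $Q:=r^*r\in\sigma(M)'$, which is positive and invertible. By the identity $\Phi_\eta^n(r^*ar)=r^*\Phi_\zeta^n(a)r$ established in the proof of Theorem~\ref{Theorem:Similar_to_a_CzeroPt_contraction}, with $a=I$ we get $\Phi_\eta^n(Q)=r^*\Phi_\zeta^n(I)r$; since $\|\Phi_\zeta^n(I)\|=\|\zeta_n\|^2\le\|\zeta\|^{2n}\to 0$, the operators $\Phi_\eta^n(Q)$ tend to $0$ in norm, so $Q$ is pure superharmonic for $\Phi_\eta$. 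Theorem~\ref{Theorem:PureSuperHarm} then yields $Q=CC^*$ for some $C\in\mathcal{I}((S_0,\sigma_0),(\eta^*,\sigma))$, and the key formula gives $CP_0C^*=Q-\Phi_\eta(Q)=r^*\bigl(I-\Phi_\zeta(I)\bigr)r$, which is invertible because $r$ is invertible and $I-\Phi_\zeta(I)\ge(1-\|\zeta\|^2)I>0$.

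I do not expect a genuine obstacle here: all the analysis sits inside Theorem~\ref{Theorem:PureSuperHarm}, and the argument follows the same pattern as the proofs of Theorems~\ref{Theorem:Similar_to_a_contraction} and~\ref{Theorem:Similar_to_a_CzeroPt_contraction}. The point requiring the most care is the bookkeeping for $P_0$: one must keep straight that $P_0$, the projection onto the zeroth Fock summand, is $I$ minus the ``tail'' projection appearing in the proof of Theorem~\ref{Theorem:PureSuperHarm}, and verify that $CP_0C^*=CC^*-\Phi_\eta(CC^*)$ holds for an arbitrary intertwiner $C$ rather than only the particular one constructed there — which it does, because $\Phi_\eta^n(CC^*)=C(\widetilde{S_0})_n(\widetilde{S_0})_n^*C^*$ follows formally from $C$ being an intertwiner.
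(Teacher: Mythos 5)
Your proposal is correct and follows essentially the same route as the paper: both directions rest on Theorem~\ref{Theorem:PureSuperHarm} together with the identity $CP_{0}C^{*}=CC^{*}-\Phi_{\eta}(CC^{*})$, which the paper also uses (via $P_{0}=I-\widetilde{S_{0}}\widetilde{S_{0}}^{*}$ and the intertwining relation). Your handling of the converse is marginally cleaner — you observe $CC^{*}\geq CP_{0}C^{*}\geq\varepsilon I$ and conjugate directly to get $I-\Phi_{\zeta}(I)=r^{-1}(CP_{0}C^{*})r^{-1}\geq\delta I$, where the paper runs an equivalent estimate with auxiliary constants $b$ and $t$ — but this is a repackaging of the same computation, not a different argument.
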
\begin{proof}Suppose there is an invertible $r\in\sigma(M)'$
such that $r\cdot\eta\cdot r^{-1}=\zeta$, with $\Vert\zeta\Vert<1$.
Then $r\cdot\eta_{n}=\zeta_{n}\cdot r$ for all $n$ and we see that
$\Phi_{\eta}^{n}(r^{*}r)=r^{*}\Phi_{\zeta}^{n}(I)r\leq r^{*}r\Vert\zeta^{*}\zeta\Vert^{n}$,
which shows both that $r^{*}r$ is a pure superharmonic operator for
$\Phi_{\eta}$. So by Theorem \ref{Theorem:PureSuperHarm} there is
a $C\in\mathcal{I}((S_{0},\sigma_{0}),(\eta^{*},\sigma))$ such that
$r^{*}r=CC^{*}$. But also,

\begin{multline*}
CP_{0}C^{*}=C(I-\widetilde{S_{0}}\widetilde{S_{0}^{*})}C^{*}=CC^{*}-\eta^{*}CC^{*}\eta\\
=r^{*}r-\Phi_{\eta}(r^{*}r)=r^{*}r-r^{*}\Phi_{\zeta}(I)r\geq(1-\Vert\zeta\Vert^{2})r^{*}r.\end{multline*}
Since $r$ is invertible, so is $CP_{0}C^{*}$.

Conversely, suppose that there is a $C\in\mathcal{I}((S_{0},\sigma_{0}),(\eta^{*},\sigma))$
such that $CP_{0}C^{*}$ is invertible and let $b\in\mathbb{R}$ satisfy
the inequality $CP_{0}C^{*}\geq bI>0$. Also let $t$ be a positive
number less than $\frac{b}{\Vert CC^{*}\Vert}$ ($<1$). Our objective
is to show that if $r=(CC^{*})^{\frac{1}{2}}$ and if $\zeta=r\cdot\eta\cdot r^{-1}$,
then $\Vert\zeta\Vert^{2}\leq1-t$. Recall that $CP_{0}C^{*}=CC^{*}-\Phi_{\eta}(CC^{*})$.
By definition of $t$, $CC^{*}\leq\Vert CC^{*}\Vert I\leq\frac{b}{t}I$.
Therefore $CC^{*}-bI\leq CC^{*}-tCC^{*}$. But then $(1-t)CC^{*}-\Phi_{\eta}(CC^{*})\geq[CC^{*}-bI]-\Phi_{\eta}(CC^{*})>0$.
Consequently, $\Phi_{\eta}(CC^{*})\leq(1-t)CC^{*}$. Now $CC^{*}=r^{2}$
and $\zeta$ is defined to be $r\cdot\eta\cdot r^{-1}$. We have $\Vert\zeta\Vert^{2}=\Vert\Phi_{\zeta}(I)\Vert=\Vert r^{-1}\Phi_{\eta}(r^{2})r^{-1}\|\leq\Vert r^{-1}((1-t)r^{2})r^{-1}\Vert=1-t$.
Thus, $2.$ implies $1$. \end{proof}

Our final theorem in this vein has no analogue in \cite{gP03}, but
it is in the spirit of that paper. The proof rests on the main results
proved to this point.

\begin{theorem}\label{Theorem:Similar_to_AC_contracton}Let $\sigma:M\to B(H)$
be a normal representation of $M$ on the Hilbert space $H$, and
let $\eta\in E^{\sigma}$ . Then the following assertions are equivalent.
\begin{enumerate}
\item $\eta$ is similar to an absolutely continuous $\zeta\in\overline{\mathbb{D}(E^{\sigma})}$.
\item $\Phi_{\eta}$ admits an invertible superharmonic operator and \[
H=\bigcup\{Ran(C)\mid C\in\mathcal{I}((S_{0},\sigma_{0}),(\eta^{*},\sigma))\}.\]

\item $\Phi_{\eta}$ admits an invertible superharmonic operator and \[
H=\bigvee\{Ran(Q)\mid\ensuremath{Q\in\sigma(M)'},\, Q-\mbox{pure superharmonic for }\Phi_{\eta}\}.\]

\end{enumerate}
\end{theorem}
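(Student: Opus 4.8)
The plan is to prove the chain $1 \Rightarrow 2 \Rightarrow 3 \Rightarrow 1$, tying together the similarity machinery of Theorem~\ref{Theorem:Similar_to_a_contraction} with the identification of the absolutely continuous subspace given in Theorem~\ref{Theorem:AbsContT}. The guiding observation is that if $r\cdot\eta\cdot r^{-1}=\zeta$ for an invertible $r\in\sigma(M)'$, then $r\cdot\eta_n = \zeta_n\cdot r$ for all $n$, and hence $\Phi_\eta^n(r^*ar) = r^*\Phi_\zeta^n(a)r$ for every $a\in\sigma(M)'$ (this is exactly equation~\eqref{eq:SimilarityPhi}). Thus similarity of $\eta$ and $\zeta$ transports pure superharmonic operators for $\Phi_\zeta$ to pure superharmonic operators for $\Phi_\eta$, conjugated by $r$, and therefore transports $\mathcal{V}_{ac}$ up to an invertible — in particular, $\mathcal{V}_{ac}(\widehat{\zeta^*},\sigma)=H$ if and only if $\mathcal{V}_{ac}(\widehat{\eta^*},\sigma)=H$, provided the conjugating operator is invertible.

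For $1\Rightarrow 2$: assume $\eta$ is similar to an absolutely continuous $\zeta\in\overline{\mathbb{D}(E^\sigma)}$ via invertible $r\in\sigma(M)'$. Since $\zeta\in\overline{\mathbb{D}(E^\sigma)}$, Theorem~\ref{Theorem:Similar_to_a_contraction} (condition $2\Rightarrow3$) gives that $\Phi_\eta$ admits an invertible superharmonic operator, namely $r^*r$: indeed $\Phi_\eta(r^*r)=r^*\Phi_\zeta(I)r\le r^*r$ because $\|\zeta\|\le 1$. For the second half, absolute continuity of $\zeta$ means $\mathcal{V}_{ac}(\widehat{\zeta^*},\sigma)=H$, so by Theorem~\ref{Theorem:AbsContT}(4), $H=\bigcup\{Ran(D)\mid D\in\mathcal{I}((S_0,\sigma_0),(\widehat{\zeta^*},\sigma))\}$. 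The intertwining equation $D\widetilde{S_0}=\widetilde{\widehat{\zeta^*}}(I_E\otimes D)$ combined with $r\cdot\widehat{\eta^*}=\widehat{\zeta^*}\cdot r$ (i.e. $\eta^*(I_E\otimes r)=r\,\zeta^*$ at the level of the linear maps, so $r^{-1}$ conjugates one representation into the other) shows that $C:=r^{-1}D$ lies in $\mathcal{I}((S_0,\sigma_0),(\widehat{\eta^*},\sigma))$; since $r^{-1}$ is invertible, $Ran(C)=r^{-1}Ran(D)$ and the union of these ranges is still $H$. This gives condition~2.

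The implication $2\Rightarrow 3$ is immediate from Theorem~\ref{Theorem:AbsContT}, equality of sets~(4) and~(5): the union of ranges of intertwiners equals the closed span of ranges of pure superharmonic operators for $\Phi_\eta$ (note the hypothesis that $\Phi_\eta$ has an invertible superharmonic operator is simply carried along, unchanged). For $3\Rightarrow 1$: assume $\Phi_\eta$ has an invertible superharmonic operator; by Theorem~\ref{Theorem:Similar_to_a_contraction}, $\eta$ is similar, via $r=R^{1/2}$ where $R$ is that invertible superharmonic operator, to $\zeta:=r\cdot\eta\cdot r^{-1}\in\overline{\mathbb{D}(E^\sigma)}$. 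It remains to check $\zeta$ is absolutely continuous, i.e. $\mathcal{V}_{ac}(\widehat{\zeta^*},\sigma)=H$. Again use Theorem~\ref{Theorem:AbsContT}(5): the pure superharmonic operators for $\Phi_\zeta$ are, via equation~\eqref{eq:SimilarityPhi}, exactly the operators $r^{-1}Q r^{-1}$ for $Q$ pure superharmonic for $\Phi_\eta$ (conjugation by the invertible $r$ is a bijection preserving positivity, superharmonicity, and purity since $\Phi_\zeta^n(r^{-1}Qr^{-1})=r^{-1}\Phi_\eta^n(Q)r^{-1}\to 0$ strongly). Hence $\overline{span}\{Ran(Q')\mid Q' \text{ pure superharmonic for }\Phi_\zeta\} = r^{-1}\,\overline{span}\{Ran(Q)\mid Q \text{ pure superharmonic for }\Phi_\eta\} = r^{-1}H = H$ by hypothesis~3, so $\mathcal{V}_{ac}(\widehat{\zeta^*},\sigma)=H$ and $\zeta$ is absolutely continuous.

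The main obstacle I anticipate is bookkeeping the invertible conjugations carefully at the level of the \emph{linear maps} $\widetilde{\widehat{\eta^*}}$ versus the bimodule maps $\widehat{\eta^*}$ — one must be sure that $r\in\sigma(M)'$ really does intertwine the two covariant representations $(\widehat{\eta^*},\sigma)$ and $(\widehat{\zeta^*},\sigma)$ in the sense needed so that left multiplication by $r^{-1}$ sends $\mathcal{I}((S_0,\sigma_0),(\widehat{\zeta^*},\sigma))$ bijectively onto $\mathcal{I}((S_0,\sigma_0),(\widehat{\eta^*},\sigma))$, and that the generalized powers behave as $r\cdot\eta_n=\zeta_n\cdot r$ (Remark~\ref{rem:On_similarities}(3)). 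Once that is pinned down, the rest is a direct appeal to Theorems~\ref{Theorem:Similar_to_a_contraction} and~\ref{Theorem:AbsContT}.
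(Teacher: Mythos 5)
Your proposal is correct and follows essentially the same route as the paper: in each case the invertible superharmonic operator (or the similarity itself) reduces matters to a contraction $\zeta$ via Theorem~\ref{Theorem:Similar_to_a_contraction}, and the conjugation $C\mapsto rC$ (resp.\ $Q\mapsto r^{*}Qr$) carries the intertwining spaces and the pure superharmonic operators for $\zeta$ bijectively onto those for $\eta$, so that the equivalences drop out of Theorem~\ref{Theorem:AbsContT}. The only cosmetic difference is that you organize this as a cycle $1\Rightarrow2\Rightarrow3\Rightarrow1$ while the paper transfers both descriptions of $\mathcal{V}_{ac}$ at once; just note in your step $2\Rightarrow3$ that Theorem~\ref{Theorem:AbsContT} is stated for contractive representations, so one should either first pass to the similar contraction (as you do elsewhere) or invoke Theorem~\ref{Theorem:PureSuperHarm} directly, which is valid for completely bounded $\eta$.
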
\begin{proof}Because of the hypotheses in 2. and 3.
that $\Phi_{\eta}$ admits an invertible superharmonic function, we
know from Theorem \ref{Theorem:Similar_to_a_contraction} that $\eta$
is similar to a contraction in each of the situations. The point of
2. is that if $\eta^{*}$ is similar to a point $\zeta^{*}\in\overline{\mathbb{D}(E^{\sigma})^{*}}$
then there is an invertible $r\in\sigma(M)'$ such that $r(\eta^{*}\times\sigma)r^{-1}=(\zeta^{*}\times\sigma)$,
and so a $C$ satisfies $C(S_{0}\times\sigma_{0})=(\eta^{*}\times\sigma)C$
if and only if $rC$ satisfies the equation $rC(S_{0}\times\sigma_{0})=r(\eta^{*}\times\sigma)r^{-1}(rC)=(\zeta^{*}\times\sigma)(rC$),
i.e., if and only $rC$ lies in $\mathcal{I}((S_{0},\sigma_{0}),(\zeta^{*},\sigma))$.
Thus if $\eta$ and $\zeta$ are similar, the spaces $\bigcup\{Ran(C)\mid C\in\mathcal{I}((S_{0},\sigma_{0}),(\eta^{*},\sigma))\}$
and $\bigcup\{Ran(C)\mid C\in\mathcal{I}((S_{0},\sigma_{0}),(\zeta^{*},\sigma))\}$
are identical. Similarly, if $\eta$ and $\zeta$ are similar, then
the spaces $\bigvee\{Ran(Q)\mid\ensuremath{Q\in\sigma(M)'},\, Q-\mbox{pure superharmonic for }\Phi_{\eta}\}$
and $\bigvee\{Ran(Q)\mid\ensuremath{Q\in\sigma(M)'},\, Q-\mbox{pure superharmonic for }\Phi_{\zeta}\}$
are identical. Thus the theorem is an immediate consequence of Theorem
\ref{Theorem:AbsContT}. \end{proof}

\section{Induced Representations and their Ranges\label{sec:Induced-Representations}}

In a sense, this section is an interlude that develops some ideas
that will be used in the next section on the structure theorem. However,
we believe the results in it are of sufficient interest in themselves
that we want to develop them separately.

Throughout this section, $\tau$ will be a normal representation of
our $W^{*}$-algebra $M$ on a Hilbert space $H$ and $\tau^{\mathcal{F}(E)}$
will be the induced representation of $\mathcal{L}(\mathcal{F}(E))$
acting on the Hilbert space $\mathcal{F}(E)\otimes_{\tau}H$. The
support projection of $\tau$ will be denoted $e$. This is a central
projection in $M$ and $e^{\perp}$ is the projection onto the kernel
of $\tau$, $\ker(\tau)$. The problem we want to address is this.

\begin{problem}\label{problem:Ultra-weak-closure}Determine when
the image of $H^{\infty}(E)$ under $\tau^{\mathcal{F}(E)}$ is ultra-weakly
closed.\emph{ }\end{problem}

Of course, $\tau^{\mathcal{F}(E)}$ is a normal representation of
$\mathcal{L}(\mathcal{F}(E))$ and so the image of $\mathcal{L}(\mathcal{F}(E))$
in $B(\mathcal{F}(E)\otimes_{\tau}H)$ is ultra-weakly closed, since
$\mathcal{L}(\mathcal{F}(E))$ is a $W^{*}$-algebra. Also, of course,
if $\tau$ is injective, then so is $\tau^{\mathcal{F}(E)}$ and,
consequently, $\tau^{\mathcal{F}(E)}$ is isometric and an ultra-weak
homeomorphism. In this event, $\tau^{\mathcal{F}(E)}(H^{\infty}(E))$
is an ultra-weakly closed subalgebra of $B(\mathcal{F}(E)\otimes_{\tau}H)$.
In particular, if $M$ is a factor, then $\tau^{\mathcal{F}(E)}(H^{\infty}(E))$
is ultra-weakly closed. The problem, then, is to determine what happens
when the kernel of $\tau$, $e^{\perp}M$, is non-trivial. In this
case, the projection onto the kernel of $\tau^{\mathcal{F}(E)}$ is
$I_{\mathcal{F}(E)}\otimes e^{\perp}$ and the problem is to see how
it interacts with $\tau^{\mathcal{F}(E)}(H^{\infty}(E))$. We have
no examples of representations $\tau$ where the image $\tau^{\mathcal{F}(E)}(H^{\infty}(E))$
fails to be ultra-weakly closed, but we are able to provide useful,
very general conditions on $e$ that guarantee that $\tau^{\mathcal{F}(E)}(H^{\infty}(E))$
is ultra-weakly closed.

We adopt the following terminology, which is suggested by \cite{FMR}.

\begin{definition}\label{reducing} A projection $e$ in the center
of $M$, $\mathfrak{Z}(M)$, that satisfies $\xi e=\varphi(e)\xi e$
for all $\xi\in E$ will be called an $E$-\emph{saturated} projection.
If $e$ also satisfies $\xi e=\varphi(e)\xi$ for all $\xi\in E$,
$e$ will be called an $E$-\emph{reducing} projection. \end{definition}

\begin{example}\label{endom} If $\alpha$ is an endomorphism of
$M$ and if $E$ is the correspondence $_{\alpha}M$, then a central
projection $e\in M$ is $E$-saturated if and only if $\alpha(e)ae=ae$
for all $a\in M$. That is, $e$ is $E$-saturated if and only if
$e\leq\alpha(e)$. Moreover, $e$ will be $E$-reducing if and only
if $e$ is fixed by $\alpha$, $e=\alpha(e)$. \end{example}

The meaning the $E$-saturation condition for the present discussion
may be further clarified by the following two lemmas.

\begin{lemma} \label{lem:Meaning_of_Saturated_1} A projection $e$
in the center of $M$, $\mathfrak{Z}(M)$, is an $E$-saturated projection
in $M$ if and only if $\varphi_{\infty}(e)$ is an invariant projection
for $H^{\infty}(E)$ in the sense that \begin{equation}
H^{\infty}(E)\varphi_{\infty}(e)=\varphi_{\infty}(e)H^{\infty}(E)\varphi_{\infty}(e).\label{eq:Invariant_e}\end{equation}
 \end{lemma}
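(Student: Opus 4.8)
The plan is to prove the two implications separately, working with the Fourier coefficient operators $\Phi_j$ of the gauge automorphism group (paragraph~\ref{par:GaugeAutos}) to reduce everything to a statement about the tensor powers $E^{\otimes n}$ and the generators $T_\xi$. The key elementary observation is that for $\xi\in E$, the creation operator satisfies $T_\xi\varphi_\infty(e) = \varphi_\infty(\varphi(e)\xi)\cdot(\text{shift})$; more precisely, for any $\eta\in\mathcal F(E)$ one has $T_\xi\varphi_\infty(e)\eta = (\xi\otimes e\eta) = (\xi e)\otimes\eta$ after identifying, while $\varphi_\infty(e)T_\xi\varphi_\infty(e)\eta = \varphi(e)(\xi e)\otimes\eta$. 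Hence $T_\xi\varphi_\infty(e)=\varphi_\infty(e)T_\xi\varphi_\infty(e)$ for every $\xi\in E$ precisely when $\xi e = \varphi(e)\xi e$ for all $\xi$, which is the $E$-saturation condition. This is the computational heart of the lemma, and I would record it first as the base case $n=1$.

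For the implication ``$e$ $E$-saturated $\Rightarrow$ \eqref{eq:Invariant_e}'': since $e$ is central in $M$ and $\varphi$ is a $*$-representation, $\varphi_\infty(e)$ commutes with $\varphi_\infty(M)$, so it suffices to show $T_\xi\varphi_\infty(e)\in\varphi_\infty(e)H^\infty(E)\varphi_\infty(e)$ for all $\xi\in E$ and then extend multiplicatively and take ultra-weak closures. I would first promote the $n=1$ identity to all tensor powers: for $\xi_1\otimes\cdots\otimes\xi_n\in E^{\otimes n}$, iterating $\xi e=\varphi(e)\xi e$ (and using that $e$ is central, so it passes through the inner products defining the balanced tensor product) gives $(\xi_1\otimes\cdots\otimes\xi_n)e = \varphi(e)(\xi_1\otimes\cdots\otimes\xi_n)e$, whence $T_{\xi_1}\cdots T_{\xi_n}\varphi_\infty(e)=\varphi_\infty(e)T_{\xi_1}\cdots T_{\xi_n}\varphi_\infty(e)$. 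Since such products together with $\varphi_\infty(M)$ generate $\mathcal T_+(E)$ as a norm-closed algebra, and since left multiplication by the fixed bounded operator $\varphi_\infty(e)$ and compression $a\mapsto\varphi_\infty(e)a\varphi_\infty(e)$ are both norm- and ultra-weakly continuous, the identity $a\varphi_\infty(e)=\varphi_\infty(e)a\varphi_\infty(e)$ extends from this generating set first to $\mathcal T_+(E)$ and then, passing to ultra-weak limits (Lemma~\ref{cont} or simply continuity of the two maps on bounded balls), to all of $H^\infty(E)$. That gives $H^\infty(E)\varphi_\infty(e)\subseteq\varphi_\infty(e)H^\infty(E)\varphi_\infty(e)$, and the reverse inclusion is trivial, so \eqref{eq:Invariant_e} holds.

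For the converse, assume \eqref{eq:Invariant_e}. Taking $a=T_\xi\in\mathcal T_+(E)\subseteq H^\infty(E)$, the hypothesis forces $T_\xi\varphi_\infty(e)=\varphi_\infty(e)T_\xi\varphi_\infty(e)$ for every $\xi\in E$. Applying both sides to a vector of the form $\varphi_\infty(e)\zeta$ with $\zeta\in M\subseteq\mathcal F(E)$ (the zeroth summand), the left side yields $\xi\otimes(e\zeta)=(\xi e)\zeta$ sitting in the $E$ summand, and the right side yields $\varphi(e)(\xi e)\zeta$; letting $\zeta$ run over $M$ and using non-degeneracy we conclude $\xi e=\varphi(e)\xi e$ for all $\xi\in E$. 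It remains to check that $e$ is central in $M$: this is built into the statement, since the lemma's two-sided condition is only asserted for projections $e\in\mathfrak Z(M)$ — so I would simply note that centrality is a standing hypothesis of the lemma and is not something the functional-analytic condition \eqref{eq:Invariant_e} is being asked to recover. Thus $e$ is $E$-saturated.

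The main obstacle I anticipate is the bookkeeping in passing the identity through the balanced tensor product $E^{\otimes n}$: one must be careful that multiplying on the right by the \emph{central} projection $e$ commutes with the $M$-balancing (i.e.\ $\langle \xi_1, \varphi(a)\xi_2\rangle e = \langle\xi_1 e^{1/2}, \varphi(a)\xi_2 e^{1/2}\rangle$ type manipulations) and with the left $M$-module structure, which is exactly where centrality of $e$ is used and where a careless argument would break. Everything else is routine continuity and density. I would present the $n=1$ computation in full and then indicate the inductive step and the closure argument in a sentence or two each.
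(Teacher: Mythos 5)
Your proposal is correct and follows essentially the same route as the paper: verify the identity $T_{\xi}\varphi_{\infty}(e)=\varphi_{\infty}(e)T_{\xi}\varphi_{\infty}(e)$ on the generators (the paper does this via $T_{\varphi(a)\xi b}=\varphi_{\infty}(a)T_{\xi}\varphi_{\infty}(b)$ rather than by acting on vectors, but it is the same computation), extend multiplicatively and by ultra-weak closure, and for the converse specialize $a=T_{\xi}$ acting on the zeroth summand $M\subseteq\mathcal{F}(E)$. Your write-up is in fact a bit more explicit than the paper's about the passage to tensor powers and the closure argument, but nothing in substance differs.
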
Thus, if $e$ is $E$-saturated, then in any completely
contractive representation $\rho$ of $H^{\infty}(E)$, the range
of $\rho(\varphi_{\infty}(e))$ is an invariant invariant subspace
$\rho(H^{\infty}(E))$.

\begin{proof}First, recall that for $a,b\in M$ and $\xi\in E$,
$T_{\varphi(a)\xi b}=\varphi_{\infty}(a)T_{\xi}\varphi_{\infty}(b)$.
Consequently, if $e$ is $E$ saturated, so that by definition $\xi e=\varphi(e)\xi e$
for all $\xi\in E$, it follows that for all $\xi\in E$, $T_{\xi}\varphi_{\infty}(e)=T_{\xi e}\varphi_{\infty}(e)=T_{\varphi(e)\xi e}\varphi_{\infty}(e)=\varphi_{\infty}(e)T_{\xi}\varphi_{\infty}(e)$.
Since $\varphi_{\infty}(e)$ obviously commutes with $\varphi_{\infty}(M)$,
equation \ref{eq:Invariant_e} is verified. For the converse assertion,
simply write out the matrices for $T_{\xi},$ $\xi\in E$, and $\varphi_{\infty}(e)$
with respect to the direct sum decomposition of $\mathcal{F}(E)=\sum_{n\geq0}E^{\otimes n}$
and compute what it means for the equation $T_{\xi}\varphi_{\infty}(e)=\varphi_{\infty}(e)T_{\xi}\varphi_{\infty}(e)$
to hold.\end{proof}

Note that the same argument shows $e$ is $E$-reducing if and only
if $\varphi_{\infty}(e)$ commutes with $H^{\infty}(E)$. In fact,
as we shall see in a moment, if $e$ is $E$-reducing, then $\varphi_{\infty}(e)$
lies in the center of $\mathcal{L}(\mathcal{F}(E))$.

\begin{lemma}\label{lem:Meaning_of_Saturation_2}If $e$ is $E$-saturated,
then the space $Ee$ becomes a $W^{*}$-correspon-\linebreak dence
over $Me$. Moreover, the left action of $M$ on $E$ restricts to
a unital left action of $Me$ on $Ee$, and if we define $\pi:Me\to\mathcal{L}(\mathcal{F}(E))$
and $V:Ee\to\mathcal{L}(\mathcal{F}(E))$ by the formulae\[
\pi(me):=\varphi_{\infty}(me)\]
 and \[
V(\xi e):=T_{\xi e},\]
 then the pair $(V,\pi)$ is an ultra-weakly continuous, isometric,
covariant representation of $(Ee,Me)$ in $\mathcal{L}(\mathcal{F}(E))$,
whose image is contained in $H^{\infty}(E)$. \end{lemma}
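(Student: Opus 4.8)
The plan is to dispose of the three assertions in turn, with all the real content in the covariance identity, which is the one place where the $E$-saturation hypothesis is genuinely needed.

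First I would make $Ee$ into a $W^{*}$-correspondence over $Me$. Since $e$ is central, $Me=eMe$ is a weak-$*$ closed summand of $M$, hence a $W^{*}$-algebra with unit $e$. The map $R_{e}\colon E\to E$, $R_{e}\xi=\xi e$, is a right module map with $R_{e}^{*}=R_{e}=R_{e}^{2}$, so it is a projection in the $W^{*}$-algebra $\mathcal{L}(E)$ and $E=Ee\oplus E(1-e)$ is an orthogonal direct sum; thus $Ee=R_{e}E$ is a complemented, hence self-dual, Hilbert module. Its inner product takes values $\langle\xi e,\zeta e\rangle=\langle\xi,\zeta\rangle e$ in $Me$, and $Ee$ is self-dual over $Me$: given an $Me$-module map $\Phi\colon Ee\to Me$, the $M$-module map $\xi\mapsto\Phi(\xi e)$ on $E$ is implemented by some $\eta\in E$, and then $\Phi=\langle\eta e,\,\cdot\,\rangle$. (It is countably generated over $Me$ because $E$ is over $M$.) For the left action, the saturation identity $\xi e=\varphi(e)\xi e$ gives $\varphi(m)\varphi(e)(\xi e)=\varphi(m)(\xi e)=(\varphi(m)\xi)e\in Ee$, so $me\mapsto\bigl(\xi e\mapsto\varphi(m)\xi e\bigr)$ is well defined (it depends only on $me$, since $\varphi(m)\xi e=\varphi(me)(\xi e)$) and is a normal homomorphism $Me\to\mathcal{L}(Ee)$, unital because $\varphi(e)$ restricts to the identity on $Ee$. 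This establishes the first two assertions.

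Next I would observe that $\pi:=\varphi_{\infty}|_{Me}$ is a normal $*$-representation of $Me$ in $\mathcal{L}(\mathcal{F}(E))$ with image in $\varphi_{\infty}(M)\subseteq\mathcal{T}_{+}(E)$, and that $V:=(\zeta\mapsto T_{\zeta})|_{Ee}$ is the restriction to the ultra-weakly closed subspace $Ee$ of the completely isometric map $\xi\mapsto T_{\xi}$ of paragraphs \ref{par:TensorHardy} and \ref{par:isoreps}, which is moreover ultra-weakly continuous. In particular $V$ is completely contractive, ultra-weakly continuous, with image in $\{T_{\xi}:\xi\in E\}\subseteq\mathcal{T}_{+}(E)$, so once $(V,\pi)$ is shown to be a covariant representation its integrated form $V\times\pi$ will automatically have image in $\mathcal{T}_{+}(E)\subseteq H^{\infty}(E)$.

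The substantive step is covariance, $V(\varphi_{e}(ae)\cdot\xi e\cdot be)=\pi(ae)V(\xi e)\pi(be)$ for $a,b\in M$ and $\xi\in E$. Using the elementary creation-operator identities $T_{\varphi(a)\mu}=\varphi_{\infty}(a)T_{\mu}$ and $T_{\mu b}=T_{\mu}\varphi_{\infty}(b)$ (cf.\ paragraph \ref{par:TensorHardy} and the proof of Lemma \ref{lem:Meaning_of_Saturated_1}), the left-hand side equals $T_{\varphi(a)(\xi b)e}=\varphi_{\infty}(a)T_{\xi}\varphi_{\infty}(be)$, while the right-hand side equals $\varphi_{\infty}(ae)T_{\xi}\varphi_{\infty}(e)\varphi_{\infty}(be)=\varphi_{\infty}(ae)T_{\xi}\varphi_{\infty}(be)$; so it remains to check $\varphi_{\infty}\bigl(a(1-e)\bigr)T_{\xi}\varphi_{\infty}(be)=0$. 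But $T_{\xi}\varphi_{\infty}(be)=T_{\xi be}$ with $\xi be\in Ee$, so the saturation identity yields $T_{\xi be}=T_{\varphi(e)(\xi be)}=\varphi_{\infty}(e)T_{\xi be}$, and since $\varphi_{\infty}\bigl(a(1-e)\bigr)\varphi_{\infty}(e)=0$ this is done. Finally, isometry of $(V,\pi)$ — equivalently, that $\widetilde{V}$ is an isometry, which by the definition of the inner product on the balanced tensor product is the assertion $V(\zeta)^{*}V(\zeta')=\pi(\langle\zeta,\zeta'\rangle)$ for $\zeta,\zeta'\in Ee$ — is immediate from $T_{\zeta}^{*}(\zeta'\otimes\mu)=\varphi_{\infty}(\langle\zeta,\zeta'\rangle)\mu$ (paragraph \ref{par:TensorHardy}), since there $\langle\zeta,\zeta'\rangle$ lies in $Me$ and coincides with the $Me$-valued inner product. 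I expect the covariance computation to be the only genuine hazard: it is exactly where $E$-saturation enters, and without it $Ee$ would not even carry a unital left $Me$-action, nor would $V$ respect the bimodule structure.
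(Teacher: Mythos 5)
Your proposal is correct and follows essentially the same route as the paper: the isometry identity $V(\xi e)^{*}V(\eta e)=\varphi_{\infty}(e)T_{\xi}^{*}T_{\eta}\varphi_{\infty}(e)=\pi(\langle\xi e,\eta e\rangle_{Me})$ is exactly the paper's computation, and ultra-weak continuity is obtained the same way, from the continuity of $\xi\mapsto T_{\xi}$. The only difference is one of thoroughness: the paper dismisses the bimodule property as ``immediate'' and does not address the claim that $Ee$ is a $W^{*}$-correspondence over $Me$, whereas you verify both --- in particular your covariance check, reducing to $\varphi_{\infty}\bigl(a(1-e)\bigr)T_{\xi be}=\varphi_{\infty}\bigl(a(1-e)\bigr)\varphi_{\infty}(e)T_{\xi be}=0$ via saturation, is precisely the detail being suppressed.
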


\begin{proof}The calculation,\begin{multline*}
V(\xi e)^{*}V(\eta e)=(T_{\xi}\varphi_{\infty}(e))^{*}T_{\eta}\varphi_{\infty}(e)=\varphi_{\infty}(e)T_{\xi}^{*}T_{\eta}\varphi_{\infty}(e)\\
=\varphi_{\infty}(e)\varphi_{\infty}(\langle\xi,\eta\rangle)\varphi_{\infty}(e)=\pi(\langle\xi e,\eta e\rangle_{Me}),\end{multline*}
shows that $V$ is isometric. The bimodule property is immediate.
The ultra-weak continuity is an immediate consequence of \cite[Lemma 2.5, Remark 2.6]{MSHardy}.
\end{proof}

\begin{remark}\label{rem:Inducing tool.}Strictly speaking, of course,
$(V,\pi)$ is an isometric representation of $(Ee,Me)$ into the abstract
$W^{*}$-algebra, $\mathcal{L}(\mathcal{F}(E))$, so to apply the
theory from \cite{MSHardy} here and elsewhere, one should compose
$(V,\pi)$ with a faithful normal representation of $\mathcal{L}(\mathcal{F}(E))$
on Hilbert space. The details are easy and may safely be omitted.
Later, however, it will prove useful to use that device. Anticipating
results to be proved shortly (Lemma \ref{L1}), we call $(V,\pi)$
or $V\times\pi$ the \emph{canonical embedding} of $\mathcal{T}(Ee)$
in $\mathcal{T}(E)$. We will see that $V\times\pi$ is faithful on
the Toeplitz algebra, $\mathcal{T}(Ee)$, and extends to a completely
isometric, ultra-weakly continuous representation of $H^{\infty}(Ee)$,
mapping it into $H^{\infty}(E)$. \end{remark}

The following lemma may be known, but we do not have a reference.
It will be helpful to have the details in hand.

\begin{lemma}\label{lem:Central_adjointable_ops}Let $F$ be a $C^{*}$-Hilbert
module over a $C^{*}$-algebra $N$. For $a\in N$, define $R_{a}:F\to F$
by the formula $R_{a}\xi=\xi a$, $\xi\in F$. Then $R_{a}$ is a
bounded $\mathbb{C}$-linear operator on $F$ with norm at most $\Vert a\Vert.$
If $a$ lies in the center of $N$, $\mathfrak{Z}(N)$, then $R_{a}$
is a bounded adjointable operator on $F$ that lies in the center
of $\mathcal{L}(F)$.\end{lemma}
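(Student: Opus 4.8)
The plan is to verify the three assertions in turn, each by a short direct computation with the $N$-valued inner product, using the Hilbert-module conventions $\langle\xi b,\eta\rangle=b^{*}\langle\xi,\eta\rangle$ and $\langle\xi,\eta b\rangle=\langle\xi,\eta\rangle b$. That $R_{a}$ is $\mathbb{C}$-linear is immediate, since right multiplication by a fixed element is additive and commutes with complex scalars. For the norm bound I would compute, for $\xi\in F$,
\[
\|R_{a}\xi\|^{2}=\|\langle\xi a,\xi a\rangle\|=\|a^{*}\langle\xi,\xi\rangle a\|\leq\|a\|\,\|\langle\xi,\xi\rangle\|\,\|a\|=\|a\|^{2}\|\xi\|^{2},
\]
so that $\|R_{a}\|\leq\|a\|$.

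Now suppose $a$ lies in the center $\mathfrak{Z}(N)$. Recall that the center of a $C^{*}$-algebra is self-adjoint, so $a^{*}\in\mathfrak{Z}(N)$ as well, and hence $R_{a^{*}}$ is defined as a bounded $\mathbb{C}$-linear operator on $F$. The proposal is that $R_{a^{*}}$ is the adjoint of $R_{a}$. Indeed, for $\xi,\eta\in F$,
\[
\langle R_{a}\xi,\eta\rangle=\langle\xi a,\eta\rangle=a^{*}\langle\xi,\eta\rangle=\langle\xi,\eta\rangle a^{*}=\langle\xi,\eta a^{*}\rangle=\langle\xi,R_{a^{*}}\eta\rangle,
\]
where the middle equality uses that $a^{*}$ commutes with the element $\langle\xi,\eta\rangle$ of $N$. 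Thus $R_{a}$ is adjointable with $R_{a}^{*}=R_{a^{*}}$, i.e., $R_{a}\in\mathcal{L}(F)$.

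Finally, to see that $R_{a}$ is central in $\mathcal{L}(F)$, I would invoke the standard fact that every adjointable operator $T$ on a Hilbert $C^{*}$-module is automatically a right $N$-module map: for adjointable $T$, $\xi\in F$, $b\in N$, and all $\eta\in F$ one has $\langle T(\xi b),\eta\rangle=\langle\xi b,T^{*}\eta\rangle=b^{*}\langle T\xi,\eta\rangle=\langle(T\xi)b,\eta\rangle$, whence $T(\xi b)=(T\xi)b$. Consequently, for any $T\in\mathcal{L}(F)$ and $\xi\in F$, $TR_{a}\xi=T(\xi a)=(T\xi)a=R_{a}T\xi$, so $R_{a}$ commutes with every element of $\mathcal{L}(F)$; together with $R_{a}\in\mathcal{L}(F)$ this puts $R_{a}$ in the center of $\mathcal{L}(F)$. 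There is no serious obstacle here; the only points needing care are that $\mathfrak{Z}(N)$ is $*$-closed (so that $R_{a^{*}}$ makes sense as a candidate adjoint) and the automatic $N$-linearity of adjointable operators (which is precisely what forces $R_{a}$ into the commutant of $\mathcal{L}(F)$).
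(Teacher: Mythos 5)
Your proof is correct and follows essentially the same route as the paper: the same inner-product estimate for the norm bound, the same computation identifying $R_{a}^{*}=R_{a^{*}}$, and the same appeal to the fact that adjointable operators are $N$-module maps to conclude centrality (you simply spell out that last fact, which the paper takes as known).
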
\begin{proof}For $\xi\in F$ and
$a\in N$, we have

\begin{align*}
\langle R_{a}\xi,R_{a}\xi\rangle & =\langle\xi a,\xi a\rangle\\
 & =a^{*}\langle\xi,\xi\rangle a\leq a^{*}\Vert\xi\Vert^{2}a\\
 & \leq\Vert a\Vert^{2}\Vert\xi\Vert^{2},\end{align*}
which shows that $R_{a}$ is a continuous $\mathbb{C}$-linear operator
with norm bounded by $\Vert a\Vert.$ To see that $R_{a}\in\mathcal{L}(F)$
when $a\in\mathfrak{Z}(N)$, simply observe that for $\xi$ and $\eta$
in $F$, \[
\langle R_{a}\xi,\eta\rangle=\langle\xi a,\eta\rangle=a^{*}\langle\xi,\eta\rangle=\langle\xi,\eta\rangle a^{*}=\langle\xi,\eta a^{*}\rangle=\langle\xi,R_{a^{*}}\eta\rangle.\]
 This shows that $R_{a}$ is adjointable, with adjoint $R_{a^{*}}$
and this, in turn, shows that $R_{a}$ is $N$-linear. Thus, $R_{a}\in\mathcal{L}(F)$.
(Of course, the fact that $a$ lies in $\mathfrak{Z}(N)$ also implies
directly that $R_{a}$ is $N$-linear.) However, since elements of
$\mathcal{L}(F)$ are $N$-module maps, i.e., $T(\xi b)=(T\xi)b$,
it is immediate that $R_{a}$ lies in the center of $\mathcal{L}(F)$.\end{proof}

Among other things, the following lemma solves Problem \ref{problem:Ultra-weak-closure}
under the hypothesis that the support projection of the representation
$\tau$ is $E$-reducing.

\begin{lemma}\label{ind} Let $\tau$ be a normal representation
of the $W^{*}$-algebra $M$ on a Hilbert space $K$ and let $e$
be its support projection. Let $q$ be the smallest projection in
$M$ such that $\varphi_{\infty}(q)R_{e}=R_{e}$. Then the following
assertions hold:
\begin{enumerate}
\item [(1)] $\ker(\tau^{\mathcal{F}(E)})=\{R\in\mathcal{L}(\mathcal{F}(E))\mid RR_{e}=0\}$.
\item [(2)]The ultra-weakly closed ideal \begin{eqnarray*}
H^{\infty}(E)\cap\ker(\tau^{\mathcal{F}(E)}) & = & \{R\in H^{\infty}(E):R\varphi_{\infty}(q)=0\}\\
 & = & \overline{H^{\infty}(E)\varphi_{\infty}(q^{\perp})H^{\infty}(E)}^{u-w},\end{eqnarray*}
 in $H^{\infty}(E)$ is generated by $\varphi_{\infty}(q^{\perp})$.
\item [(3)] The subspace $\varphi_{\infty}(q)\mathcal{F}(E)$ of $\mathcal{F}(E)$
is invariant for $\tau^{\mathcal{F}(E)}(H^{\infty}(E))$ and the map
$X\to\tau^{\mathcal{F}(E)}(X)\vert\varphi_{\infty}(q)\mathcal{F}(E)$
is an injective completely contractive representation of $H^{\infty}(E)$.
\item [(4)] If $e$ is $E$-saturated then $e=q$, so that $e$ is $\mathcal{F}(E)$-saturated,
i.e., $\varphi_{\infty}(e)\mathcal{F}(E)e=\mathcal{F}(E)e$.
\item [(5)] If $e$ is $E$-reducing, the three projections $R_{e}$,
$\varphi_{\infty}(e)$, and $\varphi_{\infty}(q)$ coincide and lie
in the center of $\mathcal{L}(\mathcal{F}(E))$. Consequently, $\tau^{\mathcal{F}(E)}(H^{\infty}(E))$
is ultra-weakly closed and the restriction of $\tau^{\mathcal{F}(E)}$
to $H^{\infty}(E)\varphi_{\infty}(e)$ is completely isometric.
\end{enumerate}
\end{lemma}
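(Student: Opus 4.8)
The plan is to work through the five assertions essentially in order, since each feeds into the next. For (1), the point is that $\tau^{\mathcal{F}(E)}(R) = R \otimes I_K$ on $\mathcal{F}(E) \otimes_\tau K$, and this is zero precisely when $\langle R\xi, R\xi\rangle \in \ker(\tau) = e^\perp M$ for every $\xi \in \mathcal{F}(E)$; equivalently $\langle R\xi, R\xi\rangle e = 0$, i.e. $R\xi e = 0$ for all $\xi$, which is exactly $R R_e = 0$. For (2), if $RR_e = 0$ then since $R_e = \varphi_\infty(q) R_e$ (and $\varphi_\infty(q)$ is the range projection pinned down by that identity) one gets $R\varphi_\infty(q) R_e = 0$; because $R_e$ has full "left support" relative to $\varphi_\infty(q)$ — that is, the smallest projection dominating $R_e$ on the left is $\varphi_\infty(q)$ — one concludes $R\varphi_\infty(q) = 0$, and the converse is immediate since $\varphi_\infty(q) R_e = R_e$. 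The identification with $\overline{H^\infty(E)\varphi_\infty(q^\perp)H^\infty(E)}^{u-w}$ follows because $\{R \in H^\infty(E) : R\varphi_\infty(q) = 0\}$ is an ultra-weakly closed ideal containing $\varphi_\infty(q^\perp)$, and conversely any $R$ in that ideal satisfies $R = R\varphi_\infty(q^\perp) \in H^\infty(E)\varphi_\infty(q^\perp)$.

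For (3), note $\varphi_\infty(q)\mathcal{F}(E)$ is invariant for $\mathcal{T}_+(E)$ — hence for $H^\infty(E)$ — exactly because $q$ is $\mathcal{F}(E)$-saturated-from-the-left in the sense dictated by the definition of $q$ (this is the analogue of Lemma \ref{lem:Meaning_of_Saturated_1} applied at the level of $\mathcal{F}(E)$). The compression map $X \mapsto \tau^{\mathcal{F}(E)}(X)|\varphi_\infty(q)\mathcal{F}(E)$ is then a completely contractive representation, and its kernel is $\{X \in H^\infty(E) : X\varphi_\infty(q) \in \ker \tau^{\mathcal{F}(E)}\} = \{X : X\varphi_\infty(q)\varphi_\infty(q) = 0\} = \{X : X\varphi_\infty(q) = 0\}$, which by (2) equals $H^\infty(E) \cap \ker\tau^{\mathcal{F}(E)}$; so the compression is injective on the quotient, i.e. the displayed map is injective. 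For (4), if $e$ is $E$-saturated then by Lemma \ref{lem:Meaning_of_Saturation_2} (or directly from $\xi e = \varphi(e)\xi e$) one checks $\varphi_\infty(e)R_e = R_e$, so $q \le e$; and $e \le q$ because $R_e$ obviously is not killed by any proper subprojection of $\varphi_\infty(e)$ acting on the $M$-summand of $\mathcal{F}(E)$. Hence $q = e$, and unpacking $\varphi_\infty(e)R_e = R_e$ at each tensor level gives $\varphi_\infty(e)E^{\otimes n}e = E^{\otimes n}e$, i.e. $e$ is $\mathcal{F}(E)$-saturated.

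For (5), assume $e$ is $E$-reducing, so $\xi e = \varphi(e)\xi = \xi\varphi(e)$ for all $\xi \in E$ (the last equality since $e$ is central). An easy induction on the tensor powers shows $R_e$ and $\varphi_\infty(e)$ act identically on each $E^{\otimes n}$, so $R_e = \varphi_\infty(e)$; combined with (4), $\varphi_\infty(q) = \varphi_\infty(e) = R_e$, and by Lemma \ref{lem:Central_adjointable_ops} this projection lies in the center of $\mathcal{L}(\mathcal{F}(E))$. Then $\mathcal{L}(\mathcal{F}(E)) = \mathcal{L}(\mathcal{F}(E))\varphi_\infty(e) \oplus \mathcal{L}(\mathcal{F}(E))\varphi_\infty(e^\perp)$ as a direct sum of $W^*$-algebras, and correspondingly $\tau^{\mathcal{F}(E)}$ factors through the central cutdown by $\varphi_\infty(e)$ followed by a faithful normal (hence ultra-weakly homeomorphic) representation. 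Since $H^\infty(E)\varphi_\infty(e)$ is ultra-weakly closed in $\mathcal{L}(\mathcal{F}(E))\varphi_\infty(e)$ — being the image of the ultra-weakly closed $H^\infty(E)$ under multiplication by a central projection, which is ultra-weakly continuous with ultra-weakly closed range on a $W^*$-algebra — its further faithful normal image $\tau^{\mathcal{F}(E)}(H^\infty(E))$ is ultra-weakly closed; and faithfulness plus normality of that last representation, together with the fact (Remark \ref{rem:Inducing tool.} / Lemma \ref{lem:Meaning_of_Saturation_2}) that $H^\infty(E)\varphi_\infty(e) \cong H^\infty(Ee)$ completely isometrically, gives that $\tau^{\mathcal{F}(E)}$ restricted to $H^\infty(E)\varphi_\infty(e)$ is completely isometric.

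The main obstacle I expect is the clean verification in (2) that $RR_e = 0 \Rightarrow R\varphi_\infty(q) = 0$, i.e. that $\varphi_\infty(q)$ really is the correct "left annihilator-free" projection for $R_e$; this requires being careful that the smallest projection $q \in M$ with $\varphi_\infty(q)R_e = R_e$ genuinely captures all of the left support of $R_e$ across every tensor summand, rather than just on the degree-zero part. Once (2) is pinned down, (3) and (5) are essentially formal, and (1), (4) are short computations.
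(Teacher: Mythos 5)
Parts (1), (4) and (5) of your argument are essentially the paper's: the computation identifying $\ker(\tau^{\mathcal{F}(E)})$ with $\{R:RR_{e}=0\}$, the observation that $E$-saturation of $e$ gives $\varphi_{\infty}(e)R_{e}=R_{e}$ (hence $q\le e$) while the degree-zero summand of $\mathcal{F}(E)$ forces $e\le q$, and the reduction of (5) to the identity $R_{e}=\varphi_{\infty}(e)$ together with Lemma \ref{lem:Central_adjointable_ops}. The genuine gap is exactly where you suspected it: the implication $RR_{e}=0\Rightarrow R\varphi_{\infty}(q)=0$ in (2). Your stated reason --- that $\varphi_{\infty}(q)$ is the smallest projection of the form $\varphi_{\infty}(p)$ dominating $R_{e}$, so anything annihilating the range of $R_{e}$ must annihilate the range of $\varphi_{\infty}(q)$ --- is false for a general operator: the element $R=\varphi_{\infty}(q)R_{e^{\perp}}=\varphi_{\infty}(q)-R_{e}$ is a projection in $\mathcal{L}(\mathcal{F}(E))$ with $RR_{e}=0$ and $R\varphi_{\infty}(q)=\varphi_{\infty}(q)-R_{e}\ne0$ whenever $\varphi_{\infty}(q)\ne R_{e}$ (the typical case when $e$ is not $E$-reducing). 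The minimality of $q$ only constrains projections coming from $M$; it says nothing about arbitrary operators that kill the (generally strictly smaller) range of $R_{e}$. So the implication is a statement about $H^{\infty}(E)$ specifically, and some structural input from $H^{\infty}(E)$ is unavoidable.

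The paper supplies that input via the gauge automorphism group. After passing to a faithful normal representation $\sigma$ of $M$, it identifies $\sigma^{\mathcal{F}(E)}(\varphi_{\infty}(q))$ with the central support of $\sigma^{\mathcal{F}(E)}(R_{e})$ relative to the von Neumann algebra $\sigma^{\mathcal{F}(E)}(\varphi_{\infty}(M))$, namely $\bigvee u\,\sigma^{\mathcal{F}(E)}(R_{e})\,u^{*}$ over unitaries $u$ in the commutant. Since $R_{e}$ is central in $\mathcal{L}(\mathcal{F}(E))$ it commutes with the unitaries $W_{t}$, so $RR_{e}=0$ forces $\Phi_{k}(R)R_{e}=0$ for every Fourier coefficient; and because $\Phi_{k}(R)^{*}\Phi_{k}(R)\in\varphi_{\infty}(M)$, the kernel projection of $\Phi_{k}(R)$ is the complement of a range projection lying in $\varphi_{\infty}(M)$, hence commutes with every unitary in the commutant and therefore dominates the central support $\varphi_{\infty}(q)$. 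This gives $\Phi_{k}(R)\varphi_{\infty}(q)=0$ for all $k$, and the Ces\`{a}ro means then yield $R\varphi_{\infty}(q)=0$. You need some version of this use of the grading of $H^{\infty}(E)$ before (2) is established; and since the invariance $\varphi_{\infty}(q^{\perp})H^{\infty}(E)\varphi_{\infty}(q)=\{0\}$ and the injectivity claim in (3) are both derived from (2), the gap propagates to your part (3) as well.
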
 \begin{proof} For $R\in\mathcal{L}(\mathcal{F}(E))$,
$R\otimes_{\tau}I_{K}=0$ if and only if for every $k\in K$ and $\eta\in\mathcal{F}(E)$,
$0=\langle R\eta\otimes k,R\eta\otimes k\rangle=\langle k,\tau(\langle R\eta,R\eta\rangle)k\rangle$,
that is, if and only if $\langle R\eta,R\eta\rangle\in Me^{\perp}$.
This happens if and only if $R=RR_{e^{\perp}}$. It follows that \[
\ker(\tau^{\mathcal{F}(E)})=\{R\in\mathcal{L}(\mathcal{F}(E))\mid RR_{e}=0\}\]
 and \begin{equation}
H^{\infty}(E)\cap\ker(\tau^{\mathcal{F}(E)})=\{R\in H^{\infty}(E)\mid RR_{e}=0\}.\label{ker}\end{equation}

Now choose a faithful normal representation, $\sigma,$ of $M$ on
a Hilbert space $H$. Then $\sigma^{\mathcal{F}(E)}$ is a $^{*}$-isomorphism
of $\mathcal{L}(\mathcal{F}(E))$ onto $\mathcal{L}(\mathcal{F}(E))\otimes I_{H}$
(and is therefore also a homeomorphism with respect to the ultra-weak
topologies.) Also note that $I_{\mathcal{F}(E)}\otimes\sigma(e)=R_{e}\otimes I_{H}=\sigma^{\mathcal{F}(E)}(R_{e})$
by Lemma \ref{lem:Central_adjointable_ops}. Set \begin{align*}
g & =\bigvee\{u(\sigma^{\mathcal{F}(E)}(R_{e}))u^{*}\mid u\in\sigma^{\mathcal{F}(E)}(\varphi_{\infty}(M))',\; u\emph{ {\rm is unitary}}\;\}.\end{align*}
 The range of the projection $u(\sigma^{\mathcal{F}(E)}(R_{e}))u^{*}$
is $u(R_{e}\mathcal{F}(E)\otimes_{\sigma}H)$ and, thus, $\varphi_{\infty}(q^{\perp})\otimes I_{H}=\sigma^{\mathcal{F}(E)}(\varphi_{\infty}(q^{\perp}))$
vanishes on it. It follows that $g\leq\sigma^{\mathcal{F}(E)}(\varphi_{\infty}(q))$.
By construction $g$ is a projection in the center of $\sigma^{\mathcal{F}(E)}(\varphi_{\infty}(M))$,
so we can write $g=\sigma^{\mathcal{F}(E)}(\varphi_{\infty}(z))$
for some projection $z\in\mathfrak{Z}(M)$. But then $\varphi_{\infty}(q-z)$
vanishes on $R_{e}\mathcal{F}(E)$, which implies that $q=z$, since
$q$ is the smallest projection in $M$ with this property. Consequently,
\begin{equation}
\sigma^{\mathcal{F}(E)}(\varphi_{\infty}(q))=\bigvee\{u(\sigma^{\mathcal{F}(E)}(R_{e}))u^{*}\mid u\in\sigma^{\mathcal{F}(E)}(\varphi_{\infty}(M))',\; u\mbox{ is unitary}\},\label{sup}\end{equation}
and $q\in\mathfrak{Z}(M)$.

Next we want to show that if $R\in H^{\infty}(E)$ and if $RR_{e}=0$,
then $R\varphi_{\infty}(q)=0.$ To this end, we use the gauge automorphism
group and the notation developed in paragraph \ref{par:GaugeAutos}.
Observe that $W_{t}$ commutes with $R_{e}$ for all $t\in\mathbb{T}$,
since $W_{t}\in\mathcal{L}(\mathcal{F}(E))$ and $R_{e}\in\mathfrak{Z}(\mathcal{L}(\mathcal{F}(E)))$.
Consequently, $\gamma_{t}(R)R_{e}=W_{t}RR_{e}W_{t}^{*}=0$, and so
$\Phi_{k}(R)R_{e}=0$ for all $k$, by equation (\ref{FourierOperators}).
Since $\Phi_{k}(R)^{*}\Phi_{k}(R)\in\varphi_{\infty}(M)$, equation
\eqref{sup} implies that $\Phi_{k}(R)\varphi_{\infty}(q)=0$ for
every $k$. Consequently each of the Cesaro sums $\Sigma_{n}(R):=\Sigma_{0\leq j<n}(1-\frac{j}{n})\Phi_{j}(R)$
satisfies the equation $\Sigma_{n}(R)\varphi_{\infty}(q)=0$, $n\geq0$.
Since $R$ is the ultra-weak limit of the Cesaro sums, $\sum_{n}(R)$,
we conclude that $R\varphi_{\infty}(q)=0$, as we wanted to show.

This argument shows, too, that the map sending $R\varphi_{\infty}(q)$
to $RR_{e}$, for $R\in H^{\infty}(E)$, is injective. It is also
completely contractive and ultra-weakly continuous.

Further, it is clear from equation (\ref{ker}) that \[
H^{\infty}(E)\cap\ker(\tau^{\mathcal{F}(E)})=\{R\in H^{\infty}(E):R\varphi_{\infty}(q)=0\}.\]
 Thus $\varphi_{\infty}(q^{\perp})$ lies in the kernel of $\tau^{\mathcal{F}(E)}$,
and the ultra-weakly closed ideal in $H^{\infty}(E)$ generated by
$\varphi_{\infty}(q^{\perp})$ lies in the kernel as well. On the
other hand, every $R$ in the kernel satisfies $R=R\varphi_{\infty}(q^{\perp})$
and, thus, is contained in the ideal generated by $\varphi_{\infty}(q^{\perp})$.
It follows that $\varphi_{\infty}(q^{\perp})H^{\infty}(E)\subseteq\ker(\tau^{\mathcal{F}(E)})\cap H^{\infty}(E)=\{R\in H^{\infty}(E):R\varphi_{\infty}(q)=0\}$
and, consequently, \[
\varphi_{\infty}(q^{\perp})H^{\infty}(E)\varphi_{\infty}(q)=\{0\},\]
that $e$ is $E$-saturated and is proved in i.e., $\varphi_{\infty}(q)$
is an invariant projection for $H^{\infty}(E)$.

The restriction of $\tau^{\mathcal{F}(E)}$ to $R_{e}\mathcal{L}(\mathcal{F}(E))R_{e}=\mathcal{L}(\mathcal{F}(E))R_{e}$
is an isomorphism of this von Neumann algebra onto the image of the
induced representation. It is, therefore, completely isometric. The
restriction of the induced representation to $\varphi_{\infty}(q)H^{\infty}(E)\varphi_{\infty}(q)$
is a composition of two completely contractive injective maps and
is, therefore, completely contractive and injective.

For assertion (5), note that, if $e$ is $E$-saturated then, for
every $n\leq1$ and every $\xi\in E^{\otimes n}$, $\xi e=\varphi_{n}(e)\xi e$.
Thus $\mathcal{F}(E)e=\varphi_{\infty}(e)\mathcal{F}(E)e$ and so
$q=e$.

For assertion (6), assume that $e$ is $E$-reducing. Then it is easy
to verify that $R_{e}=\varphi_{\infty}(e)$ and, in particular, $R_{e}$
lies in the center of $H^{\infty}(E)$. The map $\tau^{\mathcal{F}(E)}$,
restricted to the von Neumann algebra $\mathcal{L}(\mathcal{F}(E))R_{e}$,
is an injective ultra-weakly continuous representation and, thus,
is a homeomorphism with respect to the ultra-weak topologies onto
its image. Consequently, its restriction to $H^{\infty}(E)\varphi_{\infty}(e)$
is completely isometric and has a ultra-weakly closed image which
is $\tau^{\mathcal{F}(E)}(H^{\infty}(E))$. This proves (6). \end{proof}

The following Theorem solves Problem \ref{problem:Ultra-weak-closure}
under the hypothesis that the support projection is saturated.

\begin{theorem}\label{indsaturated} If the support projection $e$
of a normal representation $\tau$ of $M$ is $E$-saturated, then
$\tau^{\mathcal{F}(E)}(H^{\infty}(E))$ is ultra-weakly closed and
the restriction of $\tau^{\mathcal{F}(E)}$ to $H^{\infty}(E)\varphi_{\infty}(e)$
is completely isometric. \end{theorem}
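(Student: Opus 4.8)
The plan is to reduce the $E$-saturated case to the reducing case already handled in Lemma \ref{ind}(5). The key structural fact, established in Lemmas \ref{lem:Meaning_of_Saturated_1}--\ref{lem:Meaning_of_Saturation_2}, is that an $E$-saturated projection $e$ produces a genuine $W^*$-correspondence $Ee$ over $Me$ together with the canonical embedding $(V,\pi)$ of its Toeplitz/Hardy algebra into $\mathcal{L}(\mathcal{F}(E))$, with image in $H^\infty(E)$. First I would make precise the claim in Remark \ref{rem:Inducing tool.}: since the calculation in Lemma \ref{lem:Meaning_of_Saturation_2} shows $V$ is an isometric (hence completely isometric) covariant representation of $(Ee,Me)$, and since $\varphi_\infty(e)\mathcal{F}(E)$ is naturally identified with $\mathcal{F}(Ee)$ as a correspondence over $Me$ (this uses $E$-saturation: $\xi e = \varphi(e)\xi e$ forces the tensor powers $E^{\otimes n}e$ to sit inside $\varphi_n(e)E^{\otimes n}$, so $\varphi_\infty(e)\mathcal{F}(E) = \mathcal{F}(Ee)$ after trimming), the map $X \mapsto X\varphi_\infty(e)$ restricted to $H^\infty(E)$ factors through the Hardy algebra $H^\infty(Ee)$. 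Concretely, $\varphi_\infty(e)H^\infty(E)\varphi_\infty(e)$ is completely isometrically and ultra-weakly homeomorphically isomorphic to $H^\infty(Ee)$, because $(V,\pi)$ is the induced representation of $\mathcal{L}(\mathcal{F}(Ee))$ cut down to a corner, and an induced representation coming from a \emph{faithful} normal representation of $Me$ is an ultra-weak homeomorphism onto its image.

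Next I would invoke Lemma \ref{ind}. By Lemma \ref{ind}(4), since $e$ is $E$-saturated we have $q = e$, so $\varphi_\infty(e) = \varphi_\infty(q)$ is an invariant projection for $H^\infty(E)$ (Lemma \ref{lem:Meaning_of_Saturated_1}), i.e. $H^\infty(E)\varphi_\infty(e) = \varphi_\infty(e)H^\infty(E)\varphi_\infty(e)$. Lemma \ref{ind}(3) tells us that $X \mapsto \tau^{\mathcal{F}(E)}(X)\vert\varphi_\infty(e)\mathcal{F}(E)$ is an \emph{injective}, completely contractive representation of $H^\infty(E)$ on $\varphi_\infty(e)\mathcal{F}(E)$. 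The crucial observation is that this representation is unitarily equivalent to the induced representation $\tau^{\mathcal{F}(Ee)}$ of $H^\infty(Ee)$, where now $\tau$, being supported on $e$, restricts to a \emph{faithful} normal representation of $Me$. Indeed, under the identification $\varphi_\infty(e)\mathcal{F}(E) \otimes_\tau K = \mathcal{F}(Ee)\otimes_{\tau|_{Me}} K$, the operator $\tau^{\mathcal{F}(E)}(V(\xi e))$ becomes $(T_{\xi e}\otimes I_K)$ acting on $\mathcal{F}(Ee)\otimes_\tau K$, which is exactly the induced-isometry picture for $(Ee, Me)$.

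Now the conclusion is immediate from the faithful case of Lemma \ref{ind}(5) (equivalently, the observation in the opening of Section \ref{sec:Induced-Representations} that a faithful normal $\tau$ makes $\tau^{\mathcal{F}(E)}$ an isometric ultra-weak homeomorphism): since $\tau|_{Me}$ is faithful on $Me$, the induced representation $\tau^{\mathcal{F}(Ee)}$ is completely isometric and an ultra-weak homeomorphism on all of $\mathcal{L}(\mathcal{F}(Ee))$, hence in particular on $H^\infty(Ee)$, so $\tau^{\mathcal{F}(Ee)}(H^\infty(Ee))$ is ultra-weakly closed. Transporting back through the two identifications — $\varphi_\infty(e)H^\infty(E)\varphi_\infty(e) \cong H^\infty(Ee)$ and $\tau^{\mathcal{F}(E)}(\cdot)\vert\varphi_\infty(e)\mathcal{F}(E) \cong \tau^{\mathcal{F}(Ee)}$ — we get that $\tau^{\mathcal{F}(E)}$ is completely isometric on $H^\infty(E)\varphi_\infty(e) = \varphi_\infty(e)H^\infty(E)\varphi_\infty(e)$ and that its image is ultra-weakly closed. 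Finally, by Lemma \ref{ind}(1)--(2), $\ker(\tau^{\mathcal{F}(E)}) \cap H^\infty(E) = \{R : R\varphi_\infty(e) = 0\} = H^\infty(E)\varphi_\infty(e^\perp)$, so $\tau^{\mathcal{F}(E)}(H^\infty(E)) = \tau^{\mathcal{F}(E)}(H^\infty(E)\varphi_\infty(e))$, which we have just shown is ultra-weakly closed. That completes the proof.

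I expect the main obstacle to be the bookkeeping in the first paragraph: verifying cleanly that $\varphi_\infty(e)\mathcal{F}(E)$, with the operators $\varphi_\infty(e)T_\xi\varphi_\infty(e)$, is honestly the Fock space $\mathcal{F}(Ee)$ with its creation operators — in particular that $E$-saturation is exactly what makes the $n$-fold tensor powers compatible so that $\varphi_\infty(e)$ is itself $\mathcal{F}(E)$-saturated (Lemma \ref{ind}(4) does this, but one must track it through the tensor grading), and that the corner $\varphi_\infty(e)H^\infty(E)\varphi_\infty(e)$ is all of $H^\infty(Ee)$ rather than a proper subalgebra, which uses ultra-weak density of $\mathcal{T}_{+}(Ee)$ together with the ultra-weak continuity of the canonical embedding from Lemma \ref{lem:Meaning_of_Saturation_2}. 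Once that identification is nailed down, everything else is a direct appeal to Lemma \ref{ind} and the faithful-representation remarks.
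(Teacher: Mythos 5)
Your proposal follows essentially the same route as the paper: reduce to the correspondence $Ee$ over $Me$ via the canonical embedding $(V,\pi)$, identify $\tau^{\mathcal{F}(E)}$ restricted to the corner with the induced representation $\tau_e^{\mathcal{F}(Ee)}$ coming from the \emph{faithful} representation $\tau_e$ of $Me$ (this is exactly the content of Lemmas \ref{L1} and \ref{L2}), and then transport the conclusion back. The one step you assert rather than prove --- that $\varphi_\infty(e)H^\infty(E)\varphi_\infty(e)$ is completely isometrically and ultra-weakly homeomorphically $H^\infty(Ee)$ --- is where the paper's actual technical work lies: Lemma \ref{lem:Meaning_of_Saturation_2} only gives ultra-weak continuity of the covariant pair, not of the integrated form on all of $H^\infty(Ee)$, and the paper closes this by a bounded-net (Kaplansky-type) argument with Ces\`{a}ro means rather than by quoting an isomorphism of corners; your sketch can be repaired either that way or by noting that the canonical embedding is pure, hence induced, hence extends ultra-weakly continuously and completely isometrically, with ultra-weakly closed image by Krein--Smulian. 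Also, the natural identification is $\mathcal{F}(Ee)\cong\mathcal{F}(E)e=R_e\mathcal{F}(E)$ rather than $\varphi_\infty(e)\mathcal{F}(E)$ (saturation gives only $\mathcal{F}(E)e\subseteq\varphi_\infty(e)\mathcal{F}(E)$), though this washes out after tensoring with $K$ over $\tau$ since $\tau(e^{\perp})=0$.
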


We require two lemmas.

\begin{lemma}\label{L1} If $e$ is an $E$-saturated central projection
in $M$, then the canonical embedding $V\times\pi$ of $\mathcal{T}(Ee)$
into $\mathcal{T}(E)$, defined in Lemma \ref{lem:Meaning_of_Saturation_2},
is faithful. \end{lemma}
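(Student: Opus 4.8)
The plan is to show that the canonical embedding $V \times \pi$ of $\mathcal{T}(Ee)$ into $\mathcal{T}(E) \subseteq \mathcal{L}(\mathcal{F}(E))$ is injective on the Toeplitz $C^*$-algebra $\mathcal{T}(Ee)$. The key observation is that, by construction, $V \times \pi$ is an isometric (i.e., completely isometric) covariant representation of $(Ee, Me)$: indeed Lemma \ref{lem:Meaning_of_Saturation_2} already establishes that $(V,\pi)$ is an ultra-weakly continuous isometric covariant representation of $(Ee, Me)$ with image inside $H^\infty(E)$. By paragraph \ref{par:isoreps}, an isometric covariant representation integrates to a completely isometric representation of $\mathcal{T}_+(Ee)$, and that representation is the restriction to $\mathcal{T}_+(Ee)$ of a $C^*$-representation of $\mathcal{T}(Ee)$. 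So the only thing left to establish is that this $C^*$-representation of $\mathcal{T}(Ee)$ — call it $\Theta$ — is \emph{faithful}, not merely completely contractive on the nonselfadjoint part.

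First I would set up the comparison with an induced (Fock) representation. Choose a faithful normal representation $\nu$ of $Me$ on a Hilbert space $K$; then $\nu$ extends to a faithful normal representation $\widetilde\nu$ of $M$ by composing with the cutdown $M \to Me$, $a \mapsto ae$, which has kernel $Me^\perp$. The induced isometric representation $(S^{Ee}, \nu^{\mathcal{F}(Ee)})$ of $(Ee,Me)$ on $\mathcal{F}(Ee) \otimes_\nu K$ is faithful on $\mathcal{T}(Ee)$ — this is a standard fact about Fock representations built from a faithful representation of the coefficient algebra, and it follows from the gauge-automorphism / Fourier-coefficient machinery of paragraph \ref{par:GaugeAutos} applied to $\mathcal{T}(Ee)$ together with the faithfulness of $\nu$ on $Me$. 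Now the strategy is to identify $\Theta$ with a piece of such an induced representation. Concretely, one checks that $\mathcal{F}(Ee)$ sits inside $\mathcal{F}(E)$: because $e$ is $E$-saturated, $\xi e = \varphi(e)\xi e$ for $\xi \in E$, and iterating (as in the proof of assertion (4)/(5) of Lemma \ref{ind}) one gets $(Ee)^{\otimes n} = \varphi_\infty(e)\,E^{\otimes n} e$ inside $E^{\otimes n}$, so that $\mathcal{F}(Ee) = \varphi_\infty(e)\mathcal{F}(E)e$ is a corner. The creation operators $T_{\xi e}$ on $\mathcal{F}(E)$, restricted to this corner, act exactly as the creation operators of $Ee$ on $\mathcal{F}(Ee)$; thus $V \times \pi$ is (up to this identification) the compression of the $\mathcal{F}(E)$-Toeplitz picture to the reducing-on-one-side subspace $\mathcal{F}(Ee)$.

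The cleanest route to faithfulness then is: apply the gauge group on $\mathcal{L}(\mathcal{F}(Ee))$ and its Fourier coefficient operators $\Phi_j^{Ee}$, which are implemented by the restriction of the $W_t$ on $\mathcal{F}(E)$ (since $W_t$ commutes with $\varphi_\infty(e) R_e$). If $a \in \mathcal{T}(Ee)$ lies in $\ker(V\times\pi)$, then each Fourier coefficient $\Phi_j^{Ee}(a)$ also lies in the kernel; the degree-$j$ Fourier coefficient is, after composing with the appropriate creation operator, an element of $\pi(Me) = \varphi_\infty(Me)$ (for $j \ge 0$) or its adjoint (for $j<0$), and $V\times\pi$ restricted to $\varphi_\infty(Me)$ is $\varphi_\infty(\cdot\, e)$ composed with nothing lossy — it is faithful on $Me$ because $\varphi_\infty$ is faithful on $M$ and $e$ is a central projection with $Me$ a direct summand. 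Hence every $\Phi_j^{Ee}(a)$ vanishes, and since $a$ is recovered from its Fourier coefficients via the arithmetic-mean (Cesàro) operators $\Sigma_k$ converging ultra-weakly, $a = 0$. I expect the main obstacle to be the bookkeeping in the second paragraph: making precise the identification $\mathcal{F}(Ee) = \varphi_\infty(e)\mathcal{F}(E)e$ and verifying that the creation operators and the left action match up under it — i.e., that $V\times\pi$ really is (a corner of) a genuine Fock/Toeplitz representation of $(Ee,Me)$ rather than something merely resembling one — and then confirming that the Fourier-coefficient argument goes through with the $W_t$'s restricted appropriately. Once that identification is nailed down, faithfulness is essentially the standard fact that a Toeplitz algebra $\mathcal{T}(Ee)$ is faithfully represented on its Fock space when the coefficient algebra is faithfully represented, which is exactly the content of the gauge-invariant uniqueness philosophy.
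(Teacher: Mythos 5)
Your final argument---the ``cleanest route'' via Fourier coefficients---has a genuine gap. The reduction you propose, namely that each gauge-degree-$j$ Fourier coefficient of an element of $\ker(V\times\pi)$ becomes, after multiplying by a suitable creation operator, an element of $\pi(Me)=\varphi_{\infty}(Me)$, is valid for elements of $\mathcal{T}_{+}(Ee)$ and their adjoints, but the lemma is about the Toeplitz \emph{$C^{*}$-algebra} $\mathcal{T}(Ee)$, whose gauge-fixed-point algebra is much larger than the coefficient algebra: it contains all operators $T_{\xi}T_{\eta}^{*}$ with $\xi,\eta\in(Ee)^{\otimes n}$, and no amount of composing with creation operators carries a general degree-zero element into $\varphi_{\infty}(Me)$. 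If your reduction were valid, it would show that every gauge-compatible isometric covariant representation that is faithful on the coefficient algebra integrates to a faithful representation of the Toeplitz algebra---which is false already for $M=E=\mathbb{C}$, where it would contradict Coburn's theorem (the representation sending the generator to a unitary is isometric, gauge-compatible and faithful on $\mathbb{C}$, yet kills the compacts). Faithfulness of a Toeplitz representation requires the additional ``Coburn condition,'' and that is exactly what the paper supplies: it composes $(V,\pi)$ with $\sigma_{0}^{\mathcal{F}(E)}$ for a faithful normal $\sigma_{0}$ of $M$ and invokes \cite[Theorem 2.1]{FR}, which says $V_{0}\times\pi_{0}$ is faithful provided $Me$ acts faithfully on $(\mathcal{F}(E)\otimes_{\sigma_{0}}H)\ominus V_{0}(Ee)(\mathcal{F}(E)\otimes_{\sigma_{0}}H)$; saturation is then used to show this complement contains the summand $Me\otimes_{\sigma_{0}}H$, on which the action is evidently faithful.

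Ironically, your second paragraph already contains a complete (and arguably cleaner) proof if you push it one step further instead of abandoning it for the Fourier-coefficient route. By Lemma \ref{lem:Central_adjointable_ops}, $R_{e}$ is a central projection in $\mathcal{L}(\mathcal{F}(E))$, so $T\mapsto TR_{e}$, viewed as restriction to the reducing submodule $\mathcal{F}(E)e$, is a $*$-homomorphism of $\mathcal{L}(\mathcal{F}(E))$ onto $\mathcal{L}(\mathcal{F}(E)e)$. Saturation gives $(Ee)^{\otimes n}=E^{\otimes n}e$ and hence the identification $\mathcal{F}(Ee)\cong\varphi_{\infty}(e)\mathcal{F}(E)e=\mathcal{F}(E)e$ you describe, under which $T_{\xi e}R_{e}$ and $\varphi_{\infty}(me)R_{e}$ become the creation operators and the left action on $\mathcal{F}(Ee)$. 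Thus the composite of $V\times\pi$ with restriction to the corner is the defining (identity) representation of $\mathcal{T}(Ee)$, which is tautologically injective, and a map with an injective composite is injective. That closes the argument without either the gauge machinery or the Fowler--Raeburn criterion; as written, however, your proposal rests its weight on the broken Fourier-coefficient step.
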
 \begin{proof}To show $V\times\pi$ is faithful,
it suffices to prove that if $\sigma_{0}$ is a faithful normal representation
of $M$ on the Hilbert space $H$ and if $\pi_{0}=\sigma_{0}^{\mathcal{F}(E)}\circ\pi$
and if $V_{0}=\sigma_{0}^{\mathcal{F}(E)}\circ V$, then $V_{0}\times\pi_{0}$
is a faithful representation of $\mathcal{T}(Ee)$ on $\mathcal{F}(E)\otimes_{\sigma_{0}}H$.
This is because $\sigma_{0}^{\mathcal{F}(E)}$ is a faithful normal
representation of $\mathcal{L}(\mathcal{F}(E))$. We use \cite[Theorem 2.1]{FR},
which asserts in our setting, that $V_{0}\times\pi_{0}$ will be faithful
if the representation of $Me$ obtained by restricting $\pi_{0}(Me)$
to ($\mathcal{F}(E)\otimes_{\sigma_{0}}H)\ominus(V_{0}(Ee)(\mathcal{F}(E)\otimes_{\sigma_{0}}H))$
is faithful. For this, observe that because $e$ is $E$-saturated\begin{align*}
V_{0}(Ee)(\mathcal{F}(E)\otimes_{\sigma_{0}}H) & =\sigma_{0}^{\mathcal{F}(E)}(V(Ee))(\mathcal{F}(E)\otimes_{\sigma_{0}}H)\\
= & Ee\otimes\mathcal{F}(E)\otimes_{\sigma_{0}}H=E\otimes\varphi_{\infty}(e)\mathcal{F}(E)\otimes_{\sigma_{0}}H\\
= & E\otimes\mathcal{F}(E)e\otimes_{\sigma_{0}}H=E\otimes\mathcal{F}(E)\otimes\sigma_{0}(e)H,\end{align*}
by point (4) of Lemma \ref{ind}. Since $E\otimes\mathcal{F}(E)=\sum_{n\geq1}E^{\otimes n}$,
we see that ($\mathcal{F}(E)\otimes_{\sigma_{0}}H)\ominus(V_{0}(Ee)(\mathcal{F}(E)\otimes_{\sigma_{0}}H))$
contains $M\otimes_{\sigma_{0}}\sigma_{0}(e)H$ as a summand, to which
$\pi_{0}(Me)$ restricts. But that restriction is obviously faithful
on $Me$ since it is just left multiplication on $M\otimes_{\sigma_{0}}\sigma_{0}(e)H=Me\otimes_{\sigma_{0}}H$
by elements of the form $me\otimes I_{H}$, $m\in M$, and $\sigma_{0}$
is faithful. \end{proof}

\begin{lemma}\label{L2} Suppose $\tau$ is a normal representation
of $M$, suppose its support projection $e$ is $E$-saturated, and
let $\tau_{e}$ be the restriction of $\tau$ to $Me$. Then the map
that sends $\xi e\otimes_{\tau_{e}}k$ in $\mathcal{F}(Ee)\otimes_{\tau_{e}}K$
to $\xi\otimes_{\tau}k$ in $\mathcal{F}(E)\otimes_{\tau}K$ extends
to a well-defined Hilbert space isomorphism $U$ mapping $\mathcal{F}(Ee)\otimes_{\tau_{e}}K$
onto $\mathcal{F}(E)\otimes_{\tau}K$ such that, for every $X\in\mathcal{T}_{+}(Ee)$,
\[
\tau_{e}^{\mathcal{F}(Ee)}(X)=U^{*}\tau^{\mathcal{F}(E)}((V\times\pi)(X))U,\]
where $(V,\pi)$ is the canonical embedding of $(Ee,Me)$ in $\mathcal{T}(E)$.
The image of $\mathcal{T}_{+}(Ee)$ under $V\times\pi$ is $\mathcal{T}_{+}(E)\varphi_{\infty}(e)=\varphi_{\infty}(e)\mathcal{T}_{+}(E)\varphi_{\infty}(e)$.
\end{lemma}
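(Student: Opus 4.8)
The plan is to build the map $U$ by working summand-by-summand on the Fock spaces, verify it is a Hilbert space isomorphism using the $E$-saturation hypothesis (via point (4) of Lemma \ref{ind}), and then check the intertwining relation on the generators of $\mathcal{T}_{+}(Ee)$, where it reduces to an essentially formal computation. First I would recall that $\mathcal{F}(Ee) = Me \oplus Ee \oplus (Ee)^{\otimes 2} \oplus \cdots$, and that, because $e$ is central and $E$-saturated, $(Ee)^{\otimes_{Me} n}$ is naturally identified with $E^{\otimes n}e = \varphi_{n}(e) E^{\otimes n} e$ as a $W^{*}$-correspondence over $Me$ (this is exactly the content of Lemma \ref{lem:Meaning_of_Saturation_2} together with point (4) of Lemma \ref{ind}: $\varphi_{\infty}(e)\mathcal{F}(E)e = \mathcal{F}(E)e$, so the tensor powers balanced over $Me$ coincide with the $e$-corners of the tensor powers balanced over $M$). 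Thus $\mathcal{F}(Ee) = \mathcal{F}(E)e$ as right Hilbert $Me$-modules, and the displayed formula $\xi e \otimes_{\tau_{e}} k \mapsto \xi \otimes_{\tau} k$ is just the statement that $(\mathcal{F}(E)e)\otimes_{\tau_{e}} K$ is identified with $\varphi_{\infty}(e)\mathcal{F}(E)\otimes_{\tau} K$ inside $\mathcal{F}(E)\otimes_{\tau} K$.

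Next I would verify that $U$ is well-defined and isometric by computing inner products: for $\xi e, \zeta e \in \mathcal{F}(Ee)$ and $k, k' \in K$,
\[
\langle \xi e \otimes_{\tau_{e}} k, \zeta e \otimes_{\tau_{e}} k'\rangle = \langle k, \tau_{e}(\langle \xi e, \zeta e\rangle_{Me}) k'\rangle = \langle k, \tau(e\langle \xi,\zeta\rangle e)k'\rangle = \langle \xi e \otimes_{\tau} k, \zeta e \otimes_{\tau} k'\rangle,
\]
using that $\tau_{e} = \tau|_{Me}$ and that $\langle \xi e,\zeta e\rangle_{Me} = e\langle\xi,\zeta\rangle e$. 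For surjectivity onto $\mathcal{F}(E)\otimes_{\tau} K$ I would observe that, since $e$ is the support projection of $\tau$, we have $\tau(e) = I_{K}$, hence $\xi\otimes_{\tau} k = \xi\otimes_{\tau}\tau(e)k = \xi e\otimes_{\tau}k$ lies in the range of $U$ for every $\xi\in\mathcal{F}(E)$; so $U$ is onto. (This is the one place where we genuinely use that $e$ is the \emph{support} projection and not merely some $E$-saturated projection.)

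For the intertwining identity it suffices, by linearity, ultra-weak density, and boundedness, to check it on the generators $\pi_{e}$-images $\varphi^{Me}_{\infty}(me)$ and $V$-images $T^{Ee}_{\xi e}$, $\xi \in E$, $m\in M$. Applying $U$ to $\xi_{1}e\otimes\cdots\otimes\xi_{n}e\otimes_{\tau_{e}}k$ and using $S(\xi)^{Ee} = $ left-creation, one gets on both sides the vector $\xi\otimes\xi_{1}\otimes\cdots\otimes\xi_{n}\otimes_{\tau}k$ (for a creation operator) respectively $me\,\xi_{1}e\otimes\cdots\otimes_{\tau}k$ (for $\varphi^{Me}_{\infty}(me)$), matching $\tau^{\mathcal{F}(E)}(T_{\xi e})$ and $\tau^{\mathcal{F}(E)}(\varphi_{\infty}(me))$ under the identification $U$; this is the routine computation I would not grind through in detail. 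Finally, the image statement $ (V\times\pi)(\mathcal{T}_{+}(Ee)) = \mathcal{T}_{+}(E)\varphi_{\infty}(e) = \varphi_{\infty}(e)\mathcal{T}_{+}(E)\varphi_{\infty}(e)$ follows from the fact that $\mathcal{T}_{+}(E)$ is generated by $\varphi_{\infty}(M)$ and $\{T_{\xi}\}$ together with the relations $T_{\xi}\varphi_{\infty}(e) = \varphi_{\infty}(e)T_{\xi}\varphi_{\infty}(e) = T_{\xi e}\varphi_{\infty}(e)$ established in Lemma \ref{lem:Meaning_of_Saturated_1} (the $E$-saturation of $e$ makes $\varphi_{\infty}(e)$ an invariant projection for $H^{\infty}(E)$, in particular for $\mathcal{T}_{+}(E)$), so that compressing $\mathcal{T}_{+}(E)$ by $\varphi_{\infty}(e)$ yields exactly the algebra generated by $\varphi_{\infty}(Me)$ and $\{T_{\xi e}\}$, which is the image of $V\times\pi$.

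The main obstacle is the bookkeeping needed to justify, carefully, the identification $\mathcal{F}(Ee) \cong \mathcal{F}(E)e$ of correspondences over $Me$ and the fact that the tensor product balanced over $\tau_{e}$ on the left matches the compression $\varphi_{\infty}(e)(\mathcal{F}(E)\otimes_{\tau}K)$ on the right --- this rests squarely on point (4) of Lemma \ref{ind} and on $\tau(e) = I_{K}$, and once those two facts are isolated the rest is formal.
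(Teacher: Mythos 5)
Your argument is correct and follows essentially the same route as the paper's proof: define $U$ via the identification $\mathcal{F}(Ee)\cong\mathcal{F}(E)e$, check isometry and surjectivity (the paper dismisses these as immediate, while you correctly isolate that surjectivity uses $\tau(e)=I_{K}$ and well-definedness uses $E$-saturation via Lemma \ref{ind}(4)), and verify the intertwining relation on the generators $T_{\xi e}$ and $\varphi_{\infty}(me)$. Your closing argument for the image statement, using $T_{\xi}\varphi_{\infty}(e)=\varphi_{\infty}(e)T_{\xi}\varphi_{\infty}(e)=T_{\xi e}\varphi_{\infty}(e)$ from Lemma \ref{lem:Meaning_of_Saturated_1}, is also sound and in fact slightly more explicit than the paper, which leaves that claim unargued.
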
 \begin{proof} The fact that $U$ is well-defined and
isometric is an easy calculation. The fact that $U$ is surjective
is immediate. Observe that $\tau_{e}^{\mathcal{F}(Ee)}(T_{\theta e})(\xi e\otimes_{\tau_{e}}k)=\theta e\otimes\xi e\otimes_{\tau_{e}}k$.
By definition of $U$, this last expression is \begin{align*}
U^{*}(\theta e\otimes(\xi\otimes_{\tau}k)) & =U^{*}(\theta\otimes\varphi_{\infty}(e)\xi\otimes_{\tau}k)=U^{*}\tau^{\mathcal{F}(E)}(T_{\theta})(\varphi_{\infty}(e)\xi\otimes_{\tau}k)\\
= & U^{*}\tau^{\mathcal{F}(E)}(T_{\theta}\varphi_{\infty}(e))U(\xi e\otimes_{\tau_{e}}k)\\
= & U^{*}\tau^{\mathcal{F}(E)}((V\times\pi)(T_{\theta e}))U(\xi e\otimes_{\tau_{e}}k).\end{align*}
Similarly, $\tau_{e}^{\mathcal{F}(Ee)}(\varphi_{\infty}^{e}(me))=U^{*}\tau^{\mathcal{F}(E)}((V\times\pi)(\varphi_{\infty}(me))U$
for all $me\in Me$, where $\varphi_{\infty}^{e}$ denotes the action
of $Me$ on $\mathcal{F}(Ee)$. \end{proof}\bigskip{}

\textbf{\flushleft Proof of Theorem~\ref{indsaturated}.} Since
$\tau_{e}$ is a faithful representation of $Me$, $\tau_{e}^{\mathcal{F}(E)}$
is a completely isometric map of $H^{\infty}(Ee)$ and its image,
$\tau_{e}^{\mathcal{F}(E)}(H^{\infty}(Ee))$, is ultra-weakly closed.
Since both $\tau^{\mathcal{F}(E)}$ and $\tau_{e}^{\mathcal{F}(E)}$
are ultra-weakly continuous, we have \[
\overline{\tau^{\mathcal{F}(E)}(\mathcal{T}_{+}(E))}^{u-w}=\overline{\tau^{\mathcal{F}(E)}(H^{\infty}(E))}^{u-w}\]
 and \[
\overline{\tau_{e}^{\mathcal{F}(Ee)}(\mathcal{T}_{+}(Ee))}^{u-w}=\overline{\tau_{e}^{\mathcal{F}(Ee)}(H^{\infty}(Ee))}^{u-w}=\tau_{e}^{\mathcal{F}(Ee)}(H^{\infty}(Ee)).\]
 Now $\overline{\tau^{\mathcal{F}(E)}(H^{\infty}(E))}^{u-w}=\overline{\tau^{\mathcal{F}(E)}(H^{\infty}(E)\varphi_{\infty}(e))}^{u-w}$
since $\varphi_{\infty}(e^{\perp})$ is the projection onto the kernel
of $\tau^{\mathcal{F}(E)}$. Also, Lemma \ref{L2} implies that $(V\times\pi)(\mathcal{T}_{+}(Ee))=\mathcal{T}_{+}(E)\varphi_{\infty}(e)$.
So $\overline{(V\times\pi)(\mathcal{T}_{+}(Ee))}^{u-w}=H^{\infty}(E)\varphi_{\infty}(e)$.
Consequently, from Lemma \ref{L2}, we conclude that $\overline{\tau^{\mathcal{F}(E)}(H^{\infty}(E)\varphi_{\infty}(e)}^{u-w}=U\,\tau_{e}^{\mathcal{F}(Ee)}(H^{\infty}(Ee))\, U^{*}.$

Thus, given $Z\in\overline{\tau^{\mathcal{F}(E)}(H^{\infty}(E))}^{u-w}$,
there is a net $\{X_{n}\}$ of elements of $\mathcal{T}_{+}(Ee)$
that converges ultra-weakly to $X\in H^{\infty}(Ee)$ such that $\norm{X_{n}}\leq\norm{X}$
for all $n$ and $Z=U\tau_{e}^{\mathcal{F}(Ee)}(X)U^{*}$. The net
$\{(V\times\pi)(X_{n})\}$ is bounded in $\mathcal{T}_{+}(E)\varphi_{\infty}(e)$
and so has a subnet $\{(V\times\pi)(X_{n_{\alpha}})\}$ that converges
ultra-weakly to some $Y\in H^{\infty}(E)\varphi_{\infty}(e)$, with
$\norm{Y}\leq\norm{X}$. However, from the ultra-weak continuity of
the maps $\tau^{\mathcal{F}(E)}$ and $\tau_{e}^{\mathcal{F}(Ee)}$,
we conclude that \[
Z=U\tau_{e}^{\mathcal{F}(Ee)}(X)U^{*}=\lim U\tau_{e}^{\mathcal{F}(Ee)}(X_{n_{\alpha}})U^{*}=\lim\tau^{\mathcal{F}(E)}(X_{n_{\alpha}})\]
 \[
=\tau^{\mathcal{F}(E)}(Y)\]
 belongs to $\tau^{\mathcal{F}(E)}(H^{\infty}(E)\varphi_{\infty}(e)).$
This shows that $\tau^{\mathcal{F}(E)}(H^{\infty}(E)$) is ultra-weakly
closed. We also have $\norm{Y}\leq\norm{X}=\norm{Z}\leq\norm{\tau^{\mathcal{F}(E)}(Y)}\leq\norm{Y}$.
Thus $\norm{Y}=\norm{\tau^{\mathcal{F}(E)}(Y)}$. If we started with
a given $Y\in H^{\infty}(E)\varphi_{\infty}(e)$ and $Z=\tau^{\mathcal{F}(E)}(Y)$,
we would be able to conclude that $\norm{Y}=\norm{Z}$ so that the
map $\tau^{\mathcal{F}(E)}$, restricted to $H^{\infty}(E)\varphi_{\infty}(e)$,
is isometric. One can argue similarly to show that it is completely
isometric. $\Box$

\section{The Structure Theorem\label{sec:The-Structure-Theorem}}

We have two goals in this section. The first is Theorem \ref{structure},
which is a generalization of \cite[Theorem 2.6]{DKP}, that Davidson,
Katsoulis and Pitts call the \emph{Structure Theorem}.

Suppose, then, that $(S,\sigma)$ is an isometric representation of
$(E,M)$ acting on a Hilbert space $H$. We shall write $\mathcal{S}$
for the ultra-weak closure of $(S\times\sigma)(\mathcal{T}_{+}(E))$.
It is thus the ultra-weakly closed algebra generated by the operators
$\{\sigma(a),S(\xi)\mid\xi\in E,\; a\in M\}$. Also, we shall write
$\mathcal{S}_{0}$ for the ultra-weakly closed algebra generated by
$\{S(\xi)\mid\xi\in E\}$. Evidently, $\mathcal{S}_{0}$ is an ultra-weakly
closed, two-sided ideal in $\mathcal{S}$. The decomposition that
Davidson, Katsoulis and Pitts advanced centers on understanding the
position of $\mathcal{S}_{0}$ in $\mathcal{S}$. In particular, it
is important to know when $\mathcal{S}_{0}$ is a proper ideal of
$\mathcal{S}$. Our analysis follows a similar route, but it is made
more complicated by the presence of $M$. Observe that since $\mathcal{S}_{0}$
is an ultra-weakly closed $2$-sided ideal in $\mathcal{S}$, $\sigma^{-1}(\mathcal{S}_{0})$
is an ultra-weakly closed two sided ideal in $M$ and hence is of
the form $pM$ for a suitable projection in the center of $M$.

\begin{definition}The projection $e$ in $M$ with the property that
$\sigma^{-1}(\mathcal{S}_{0})=e^{\perp}M$ is called the \emph{model
projection} for $\mathcal{S}$ (or for $(S,\sigma)).$\end{definition}

The reason for terminology will become clear through a brief outline
for our analysis that we hope will be helpful when following our arguments.
As in \cite{DKP}, we let $N$ be the von Neumann algebra generated
by $\mathcal{S}$. We will see in Proposition \ref{ideal} that $\bigcap_{k=1}^{\infty}\mathcal{S}_{0}^{k}$
is a left ideal in $N$. Consequently, there is a projection $P\in\mathcal{S}\cap\sigma(M)'$
such that $\bigcap_{k=1}^{\infty}\mathcal{S}_{0}^{k}=NP=SP$. Following
\cite{DKP} we call this projection \emph{the structure projection}
for $\mathcal{S}$. We will show in Theorem \ref{structure} that
$(I-P)H=P^{\perp}H$ is invariant under $\mathcal{S}$ and that $\mathcal{S}=NP+P^{\perp}\mathcal{S}P^{\perp}$.
That is, in matrix form,\[
\mathcal{S}=\left[\begin{array}{cc}
PNP & 0\\
P^{\perp}NP & P^{\perp}\mathcal{S}P^{\perp}\end{array}\right].\]
We will see in Lemma \ref{p} that $e$ is $E$-saturated and that
it is the support projection for an induced representation $\tau^{\mathcal{F}(E)}$
that we will soon describe. We will see in Lemma \ref{pe} that $\sigma(e)$
is the central support of $P^{\perp}$ in $\sigma(M)$. In part (3)
of Theorem \ref{structure} we will see that $P^{\perp}\mathcal{S}P^{\perp}$
is completely isometrically isomorphic and ultra-weakly homeomorphic
to $\tau^{\mathcal{F}(E)}(H^{\infty}(E))$. Thus $\tau^{\mathcal{F}(E)}(H^{\infty}(E))$
serves as a model for $P^{\perp}\mathcal{S}P^{\perp}$. Further, it
will follow from Theorem \ref{Q} that if $\sigma(e^{\perp})=P=0$,
then the representation $(S,\sigma)$ is absolutely continuous.

Concerning this last statement, one might be inclined at first to
believe that once one knows that $\mathcal{S}$ is completely isometrically
isomorphic and ultra-weakly homeomorphic to the ultra-weak closure
of the range of an induced representation, then $(S,\sigma)$, itself,
must be an induced representation, but examples constructed by Davidson,
Katsoulis and Pitts in \cite[Section 3]{DKP} show this is not the
case. Thus our Theorem \ref{Q} leads to a much more inclusive result
and one may wonder about a converse: Does every absolutely continuous,
isometric representation $(S,\sigma)$ have a vanishing structure
projection? There answer, in general, is {}``no''. Indeed, even
in the case when $A=\mathbb{C}=E$, the answer is ``no'' for classical
reasons. In this situation, it follows from Szeg\"{o}'s theorem that
if $S(1)$ is an absolutely continuous unitary operator whose spectrum
does not cover the circle, then $P=I$. It is of interest to determine
more precisely when this may happen in the general setting.

To proceed with the details, we fix the isometric representation $(S,\sigma)$
and we begin by analyzing the model projection $e$. For this purpose
we observe that since we are working with ultra-weakly closed algebras,
ultra-weakly closed ideals and related constructs involving the ultra-weak
topology, it will be convenient to employ an infinite multiple $(S^{(\infty)},\sigma^{(\infty)})$
of $(S,\sigma)$, which acts on the countably infinite direct sum
of copies of $H$, $H^{(\infty)}$. In general, for $x\in B(H)$,
we write $x^{(\infty)}$ for its infinite ampliation in $B(H^{(\infty)})$.
Likewise for a space of operators $X\subseteq B(H)$, we write $X^{(\infty)}$
for $\{x^{(\infty)}\mid x\in X\}$. Note that for $x\in H^{\infty}(E)$,
we have $S^{(\infty)}\times\sigma^{(\infty)}(x)=(S\times\sigma(x))^{\infty}$.
In particular, $S^{(\infty)}(\xi)=S(\xi)^{(\infty)}$, for all $\xi\in E$.

We are especially interested in the set $\mathfrak{M}$ that consists
of all subspaces $\mathcal{M}$ of $H^{(\infty)}$ that reduce $\sigma^{(\infty)}(M)$
and are \emph{wandering subspaces} for $S^{(\infty)}$ in the sense
that $\mathcal{M}$, $[S^{(\infty)}(E)\mathcal{M}]$, $[S^{(\infty)}(E)S^{(\infty)}(E)\mathcal{M}]$,
etc. are all orthogonal, and their direct sum is a subspace of $H^{(\infty)}$
that is invariant for the algebra $S^{(\infty)}\times\sigma^{(\infty)}(H^{\infty}(E))$.
This subspace must be invariant for the ultra-weak closure of $S^{(\infty)}\times\sigma^{(\infty)}(H^{\infty}(E))$,
which is the same as $\mathcal{S}^{(\infty)}$. It is easily seen,
then, that $\mathcal{M}\oplus[S^{(\infty)}(E)\mathcal{M}]\oplus[S^{(\infty)}(E)S^{(\infty)}(E)\mathcal{M}]\oplus\cdots=[\mathcal{S}^{\infty}(\mathcal{M})]$.
For $\mathcal{M}$ in $\mathfrak{M}$, we denote the restriction of
$(S^{(\infty)},\sigma^{(\infty)})$ to $[\mathcal{S}^{(\infty)}(\mathcal{M})]$
by $(S_{\mathcal{M}}^{(\infty)},\sigma_{\mathcal{M}}^{(\infty)})$.
Evidently, $(S_{\mathcal{M}}^{(\infty)},\sigma_{\mathcal{M}}^{(\infty)})$
is unitarily equivalent to the induced representation $\tau_{\mathcal{M}}^{\mathcal{F}(E)}$
where $\tau_{\mathcal{M}}$ is the restriction of $\sigma^{(\infty)}$
to $\mathcal{M}$. In more detail, since $\mathcal{M}\oplus[S^{(\infty)}(E)\mathcal{M}]\oplus[S^{(\infty)}(E)S^{(\infty)}(E)\mathcal{M}]\oplus\cdots=[\mathcal{S}^{\infty}(\mathcal{M})]$,
the map which takes $h_{0}\oplus S^{(\infty)}(\xi_{1})h_{1}\oplus S^{(\infty)}(\xi_{21})S^{(\infty)}(\xi_{22})h_{2}\oplus\cdots\oplus S^{(\infty)}(\xi_{n1})S^{(\infty)}(\xi_{n2})\cdots S^{(\infty)}(\xi_{nn})h_{n}$
in $[\mathcal{S}^{\infty}(\mathcal{M})]$ to $h_{0}\oplus\xi_{1}\otimes h_{1}\oplus\xi_{21}\otimes\xi_{22}\otimes h_{2}\oplus\cdots\oplus\xi_{n1}\otimes\xi_{n2}\otimes\cdots\xi_{nn}\otimes h_{n}$
in $\mathcal{F}(E)\otimes_{\tau_{\mathcal{M}}}\mathcal{M}$ is well
defined and extends to a Hilbert space isomorphism $U_{\mathcal{M}}:[\mathcal{S}^{(\infty)}(\cM)]\rightarrow\cF(E)\otimes_{\tau_{\cM}}\cM$
such that \[
U_{\cM}^{*}(\varphi_{\infty}(a)\otimes I_{\cM})U_{\cM}=\sigma^{(\infty)}(a)|[\mathcal{S}^{(\infty)}\cM]\;,\;\; a\in M,\]
 and \[
U_{\cM}^{*}(T_{\xi}\otimes I_{\cM})U_{\cM}=S^{(\infty)}(\xi)|[\mathcal{S}^{(\infty)}\cM]\;,\;\;\xi\in E.\]
 (See \cite[Remark 7.7]{MSHardy}).

Although it leads to non-separable spaces, in general, and introduces
a lot of redundancy into our analysis, it is convenient to let $\mathcal{E}$
be the external direct sum of all the spaces $\mathcal{M}$ in $\mathfrak{M}$,
writing \[
\mathcal{E}:=\sum_{\mathcal{M}\in\mathfrak{M}}^{\oplus}\mathcal{M},\]
 and setting \begin{equation}
\tau:=\sum_{\mathcal{M}\in\mathfrak{M}}^{\oplus}\tau_{\mathcal{M}}.\label{tau}\end{equation}
 Then $\tau^{\mathcal{F}(E)}$ gives representations of $\mathcal{T}_{+}(E)$
and $H^{\infty}(E)$ on $\mathcal{F}(E)\otimes_{\tau}\mathcal{E}$.
Further, if we write $U:=\sum\oplus U_{\cM}$, then $U$ implements
a unitary equivalence between $\tau^{\mathcal{F}(E)}$ and $\sum_{\mathcal{M}\in\mathfrak{M}}\oplus(S_{\mathcal{M}}^{(\infty)}\times\sigma_{\mathcal{M}}^{(\infty)})=\sum_{\mathcal{M}\in\mathfrak{M}}\oplus(S^{(\infty)}\times\sigma^{(\infty)})|[\mathcal{S}^{(\infty)}\cM]$.
The map

\[
\Phi:B(H)\rightarrow B(\cF(E)\otimes_{\tau}\cE)\]
 defined by the formula \begin{equation}
\Phi(x)=U(\sum\oplus P_{[\mathcal{S}^{(\infty)}\cM]}x^{(\infty)}|[\mathcal{S}^{(\infty)}\cM])U^{*},\label{Phi}\end{equation}
where $P_{[\mathcal{S}^{(\infty)}\cM]}$ is the projection of $H^{(\infty)}$
onto $[\mathcal{S}^{(\infty)}\cM]$, is a completely positive map
that is continuous with respect to the ultra-weak topologies. When
restricted to $\mathcal{S}$, \begin{equation}
\Phi(x)=U(\sum\oplus x^{(\infty)}|[\mathcal{S}^{(\infty)}\cM])U^{*}\;,\;\; x\in\mathcal{S},\label{PhiS}\end{equation}
 so this restriction, also denoted $\Phi$, is a homomorphism that
satisfies $\Phi(\sigma(a))=\varphi_{\infty}(a)$ for $a\in M$ and
$\Phi(S(\xi))=T_{\xi}\otimes I_{\cE}$ for $\xi\in E$. Thus $\Phi(\mathcal{S})$
is ultra-weakly dense in the ultra-weak closure of $\tau^{\mathcal{F}(E)}(\mathcal{T}_{+}(E))$.
(Note that each element in this ultra-weak closure can be approximated
by Cesaro means calculated with respect to the gauge automorphism
group, and each such mean lies in $\Phi(\mathcal{S})$).

\begin{lemma}\label{p} The model projection $e$ for $\mathcal{S}$
is the support projection of $\tau$ and is $E$-saturated. \end{lemma}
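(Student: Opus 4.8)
The plan is to identify, for each wandering subspace $\mathcal{M}\in\mathfrak{M}$, exactly which part of $M$ acts nontrivially on $\mathcal{M}$, and to relate this to the ideal $\mathcal{S}_0$. First I would compute $\Phi(\mathcal{S}_0)$. Since $\Phi$ restricted to $\mathcal{S}$ is an ultra-weakly continuous homomorphism carrying $S(\xi)$ to $T_\xi\otimes I_{\mathcal{E}}$ and $\sigma(a)$ to $\varphi_\infty(a)\otimes I_{\mathcal{E}}$, the ideal $\mathcal{S}_0$ (ultra-weakly generated by the $S(\xi)$) is carried into the ultra-weakly closed ideal of $\tau^{\mathcal{F}(E)}(\mathcal{T}_+(E))^{-u\text{-}w}$ generated by the $T_\xi\otimes I_{\mathcal{E}}$; by the Fock space picture this ideal is $(\sum_{k\ge 1}E^{\otimes k})\otimes_\tau\mathcal{E}$-supported, i.e.\ its elements annihilate the zeroth summand $M\otimes_\tau\mathcal{E}$. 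Now $\sigma^{-1}(\mathcal{S}_0)=e^\perp M$ by definition of the model projection, so for $a\in M$ one has $\sigma(a)\in\mathcal{S}_0$ iff $ae^\perp=a$, i.e.\ iff $ae=0$. Applying $\Phi$: $\varphi_\infty(ae)\otimes I_{\mathcal{E}}$ must, in that case, be the image of an element of $\mathcal{S}_0$; conversely one uses that $\Phi$ is \emph{faithful enough} on $\sigma(M)$ (indeed $\Phi(\sigma(a))=0$ iff $\sigma(a)$ kills every $\mathcal{M}\in\mathfrak{M}$, iff $\tau(a)=0$) to move the information back. The key bookkeeping identity I expect to establish is:
\[
\tau(a)=0 \iff \varphi_\infty(a)\otimes I_{\mathcal{E}} = \Phi(\sigma(a))=0 \iff \sigma(a)[\mathcal{S}^{(\infty)}\mathcal{M}]=0\ \text{for all}\ \mathcal{M}\in\mathfrak{M}.
\]

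Next I would show $\ker\tau = e^\perp M$, i.e.\ $\mathrm{supp}(\tau)=e$. For the inclusion $\ker\tau\supseteq e^\perp M$: take $\mathcal{M}\in\mathfrak{M}$; since $\mathcal{M}$ reduces $\sigma^{(\infty)}(M)$ and $\mathcal{M}$ is wandering, the restriction of $(S^{(\infty)},\sigma^{(\infty)})$ to $[\mathcal{S}^{(\infty)}\mathcal{M}]$ is unitarily equivalent to the induced representation $\tau_{\mathcal{M}}^{\mathcal{F}(E)}$, and on an induced representation the ideal $\mathcal{S}_0$-image acts as the shift-away-from-the-base; I would argue that the base copy $\mathcal{M}\cong M\otimes_{\tau_{\mathcal{M}}}\mathcal{M}$ forces $\sigma(e^\perp)$ to act as $0$ there — because $\sigma^{-1}(\mathcal{S}_0)=e^\perp M$ means $\sigma(e^\perp)$ lies in $\mathcal{S}_0$, hence its $\Phi$-image lies in the ideal killing the zeroth summand, so $\tau_{\mathcal{M}}(e^\perp)$, being the action on that very summand, vanishes. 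Summing over $\mathcal{M}$ gives $\tau(e^\perp)=0$. For the reverse inclusion $\ker\tau\subseteq e^\perp M$: suppose $\tau(a)=0$ with $0\le a\le 1$ a projection; I want $\sigma(a)\in\mathcal{S}_0$, which then gives $a\in e^\perp M$. Here the point is that $\tau(a)=0$ means $\sigma(a)$ annihilates every wandering-generated subspace $[\mathcal{S}^{(\infty)}\mathcal{M}]$, in particular $\sigma^{(\infty)}(a)$ kills all of these; one then needs to know that the span of the $[\mathcal{S}^{(\infty)}\mathcal{M}]$ is large enough to detect membership in $\mathcal{S}_0$. I would invoke the Wold decomposition (paragraph \ref{par:isoreps}, \cite[Theorem 2.9, Corollary 2.10]{MSWold}): writing $(S,\sigma)=(S_i,\sigma_i)\oplus(S_c,\sigma_c)$ with $(S_i,\sigma_i)$ induced and $(S_c,\sigma_c)$ fully coisometric, the reducing subspace for the induced part is spanned by wandering subspaces, and $\mathcal{S}_0$ restricted to the coisometric part is \emph{all} of that summand's algebra (because $\widetilde{S_c}$ is a coisometry, so $S_c(E)$ generates a unital weakly closed algebra there). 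Unwinding this identifies $\sigma^{-1}(\mathcal{S}_0)$ with exactly the kernel of the induced part's coefficient representation, i.e.\ with $e^\perp M$ where $e=\mathrm{supp}(\tau)$.

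Finally, for $E$-saturation: I must check $e\in\mathfrak{Z}(M)$ and $\xi e=\varphi(e)\xi e$ for all $\xi\in E$. Centrality is automatic since $e^\perp M=\sigma^{-1}(\mathcal{S}_0)$ is a two-sided ideal in $M$, hence of the form (central projection)$\cdot M$. For the saturation identity I would use Lemma \ref{lem:Meaning_of_Saturated_1}: it suffices to show $\varphi_\infty(e)$ is an invariant projection for $H^\infty(E)$, equivalently $T_\xi\varphi_\infty(e)=\varphi_\infty(e)T_\xi\varphi_\infty(e)$ for all $\xi\in E$. Translating through $\Phi$ and the identification $\tau^{\mathcal{F}(E)}\cong\bigoplus(S^{(\infty)}\times\sigma^{(\infty)})|[\mathcal{S}^{(\infty)}\mathcal{M}]$, this becomes the statement that $S(\xi)\sigma(e)$ maps $\sigma(e)H$ (the portion seen by the wandering subspaces) back into itself modulo the kernel of $\tau$ — which holds because each $[\mathcal{S}^{(\infty)}\mathcal{M}]$ is $S^{(\infty)}$-invariant, so $S^{(\infty)}(\xi)$ preserves the reducing subspace carrying $\tau$. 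Concretely: $\sigma(e)$ is (a multiple of) the projection onto the induced part of $(S,\sigma)$ up to the kernel, the induced part is $S(E)$-invariant, so $S(\xi)\sigma(e)h\in\sigma(e)H$ up to $\ker\tau$, which is precisely $\varphi(e)\xi e=\xi e$ after passing to $Me$. The main obstacle I anticipate is the careful handling of the ``detection'' step — proving that the family $\{[\mathcal{S}^{(\infty)}\mathcal{M}]:\mathcal{M}\in\mathfrak{M}\}$ spans enough of $H^{(\infty)}$ that $\tau(a)=0$ genuinely forces $\sigma(a)\in\mathcal{S}_0$ — and the cleanest route to it is the Wold decomposition, which pins down $\mathcal{S}_0$ on each of the two summands separately; everything else is ultra-weak-continuity bookkeeping with $\Phi$ and the Cesàro/gauge-average machinery of paragraph \ref{par:GaugeAutos}.
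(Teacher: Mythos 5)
Your first direction ($\sigma(p)\in\mathcal{S}_0\Rightarrow\tau_{\mathcal{M}}(p)=0$ for every $\mathcal{M}\in\mathfrak{M}$) is fine and is essentially the paper's argument: $\sigma^{(\infty)}(p)\mathcal{M}\subseteq\mathcal{S}_0^{(\infty)}\mathcal{M}\subseteq\mathcal{M}^{\perp}$ while also lying in $\mathcal{M}$, so $\tau_{\mathcal{M}}(p)=0$. The genuine gap is the ``detection'' step, and the Wold decomposition cannot repair it: both claims you lean on there are false in general. Take $M=E=\mathbb{C}$ and $S$ the bilateral shift on $L^{2}(\mathbb{T})$. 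Its Wold decomposition has \emph{no} induced summand (it is unitary), and on the fully coisometric summand $\mathcal{S}_0$ is contained in the multiplication algebra of $zH^{\infty}$, which is ultra-weakly closed and does not contain $I$; hence $\sigma^{-1}(\mathcal{S}_0)=\{0\}$ and $e=1$, whereas your recipe (``$e^{\perp}M$ is the kernel of the induced part's coefficient representation'') would give $e=0$. The wandering subspaces making up $\mathfrak{M}$ (e.g.\ $\mathbb{C}\cdot 1\subset L^{2}(\mathbb{T})$, since $z^{k}\perp 1$ for $k\geq1$) generate merely \emph{invariant}, not reducing, subspaces, so they sit inside the coisometric part and are invisible to the Wold decomposition; this is exactly why $\tau\neq0$ here even though the induced Wold summand is trivial. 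The paper's detection argument is instead a predual separation: if $\sigma(p)\notin\mathcal{S}_0$, choose an ultra-weakly continuous $f$ vanishing on $\mathcal{S}_0$ with $f(\sigma(p))\neq 0$, write $f=\omega_{\xi,\eta}$ on $H^{(\infty)}$, and check that $\mathcal{M}:=[\sigma^{(\infty)}(p)\mathcal{S}^{(\infty)}\xi]\ominus[\sigma^{(\infty)}(p)\mathcal{S}_0^{(\infty)}\xi]$ is a nonzero member of $\mathfrak{M}$ on which $p$ acts as the identity. Some such construction is indispensable for $\ker\tau\subseteq e^{\perp}M$.

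Your saturation argument inherits the same defect: it identifies $\sigma(e)$ with the projection onto the induced Wold summand, and the invariance of each $[\mathcal{S}^{(\infty)}\mathcal{M}]$ under $S^{(\infty)}(\xi)$ says nothing about $\sigma(e)H\subseteq H$ being $S(E)$-invariant --- indeed that invariance \emph{is} the saturation statement, so as written the reasoning is circular. The clean route is algebraic: since $\sigma(e^{\perp})\in\mathcal{S}_0$ and $\mathcal{S}_0$ is absorbed under $S(\eta)^{*}(\cdot)S(\xi)$ (check it on words $S(\xi_1)\cdots S(\xi_n)$, $n\geq1$, and pass to ultra-weak limits), one gets $\sigma(\langle\eta,\varphi(e^{\perp})\xi\rangle)=S(\eta)^{*}\sigma(e^{\perp})S(\xi)\in\mathcal{S}_0\cap\sigma(M)$, hence $\langle\eta,\varphi(e^{\perp})\xi\rangle e=0$ for all $\eta\in E$, hence $\varphi(e^{\perp})\xi e=0$. (Centrality of $e$ is indeed automatic, as you say.)
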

\begin{proof} Recall that by definition, $e^{\perp}$ is the central
cover of $\{a\in M\mid\sigma(a)\in\mathcal{S}_{0}\}$, which is an
ultra-weakly closed ideal in $M$. Since $\ker(\tau)$ is also an
ultra-weakly closed ideal in $M$, it suffices to show that each contains
the same projections $p$. If $\sigma(p)\in\mathcal{S}_{0}$, then
for $\mathcal{M}\in\mathfrak{M}$, $\tau_{\mathcal{M}}(p)\mathcal{M}=\sigma^{(\infty)}(p)\mathcal{M}\subseteq\mathcal{S}_{0}(\mathcal{M})\subseteq\mathcal{M}^{\perp}$.
But $\tau_{\mathcal{M}}(p)\mathcal{M}\subseteq\mathcal{M}$. Thus
$\tau_{\mathcal{M}}(p)=0$. Since $\tau$ is defined to be $\sum_{\mathcal{M}\in\mathfrak{M}}\tau_{\mathcal{M}}$,
$p\in\ker(\tau)$. On the other hand, if $\sigma(p)$ is not in $\mathcal{S}_{0}$,
then there is an ultra-weakly continuous linear functional $f$ such
that $f(\sigma(p))\neq0$ and $f(a)=0$ for $a\in\mathcal{S}_{0}$.
But then we can write $f(x)=\langle x^{(\infty)}\xi,\eta\rangle$
for suitable vectors $\xi,\eta\in H^{(\infty)}$, and find that $\eta$
is orthogonal to the $\sigma^{(\infty)}(M)$-invariant subspace $[(\mathcal{S}_{0})^{(\infty)}\xi]$,
on the one hand, but is not orthogonal to $\sigma^{(\infty)}(p)\xi$,
on the other. Consequently, the space $\mathcal{M}:=[\sigma^{(\infty)}(p)\mathcal{S}^{(\infty)}\xi]\ominus[\sigma^{(\infty)}(p)\mathcal{S}_{0}^{(\infty)}\xi]$
is non zero, and by construction, $\mathcal{M}$ is a $\sigma^{(\infty)}(M)$-invariant
wandering subspace. That is, $\mathcal{M}$ lies in $\mathfrak{M}$,
and evidently, $\tau_{\mathcal{M}}(p)=I_{\mathcal{M}}\neq0$. Thus
$\ker(\tau)=\{a\in M\mid\sigma(a)\in\mathcal{S}_{0}\}$.

To see that $e$ is $E$-saturated, i.e., to see that $\varphi(e^{\perp})\xi e=0$
for all $\xi\in E$, note that $\sigma(e^{\perp})$ lies in $\mathcal{S}_{0}$.
Thus, for every $\xi,\eta\in E$, $S(\eta)^{*}\sigma(e^{\perp})S(\xi)\in\mathcal{S}_{0}$.
But $S(\eta)^{*}\sigma(e^{\perp})S(\xi)=\sigma(\langle\eta,\varphi(e^{\perp})\xi\rangle)$
and so it follows from the definition of $e$ that $\langle\eta,\varphi(e^{\perp})\xi e\rangle=\langle\eta,\varphi(e^{\perp})\xi\rangle e=0$
for all $\xi,\eta\in E$. Thus $\varphi(e^{\perp})\xi e=0$ for all
$\xi\in E$. \end{proof}

The representation $\tau$ acts on a non-separable space, so it may
be comforting to know that for all intents and purposes we have in
mind, it is possible to replace $\tau$ with a representation on a
separable space. This an immediate consequence of the following proposition.

\begin{proposition}\label{prop:Iso_induced} Suppose $\tau_{1}$
and $\tau_{2}$ are two normal representations of $M$ on Hilbert
spaces $H_{1}$ and $H_{2}$, respectively, and suppose that $\ker(\tau_{1})=\ker(\tau_{2})$.
Then the map $\Psi$ that sends $\tau_{1}^{\mathcal{F}(E)}(R)$ to
$\tau_{2}^{\mathcal{F}(E)}(R)$, $R\in\mathcal{L}(\mathcal{F}(E))$,
is a normal $*$-isomorphism from the the von Neumann algebra $\cL(\cF(E))\otimes_{\tau_{1}}I_{H_{1}}$
onto the von Neumann algebra $\cL(\cF(E))\otimes_{\tau_{2}}I_{H_{2}}$.
Further, the restriction of $\Psi$ to the ultra-weak closure of $\tau_{1}^{\mathcal{F}(E)}(H^{\infty}(E))$
is a completely isometric, ultra-weak homeomorphism from $\tau_{1}^{\mathcal{F}(E)}(H^{\infty}(E))$
onto the ultra-weak closure of $\tau_{2}^{\mathcal{F}(E)}(H^{\infty}(E))$.\end{proposition}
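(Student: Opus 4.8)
The plan is to reduce the proposition to standard facts about normal $*$-homomorphisms of von Neumann algebras; the only genuine computation is the identification of $\ker(\tau_1^{\mathcal{F}(E)})$ with $\ker(\tau_2^{\mathcal{F}(E)})$, and once that is done everything is formal.

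First I would record that for $R\in\mathcal{L}(\mathcal{F}(E))$, $\xi\in\mathcal{F}(E)$ and $h\in H_i$ one has
\[
\langle R\xi\otimes h,\,R\xi\otimes h\rangle=\langle h,\,\tau_i(\langle R\xi,R\xi\rangle)h\rangle ,
\]
so that, since $\langle R\xi,R\xi\rangle\geq0$, $\tau_i^{\mathcal{F}(E)}(R)=R\otimes_{\tau_i}I_{H_i}=0$ if and only if $\langle R\xi,R\xi\rangle\in\ker(\tau_i)$ for every $\xi\in\mathcal{F}(E)$. Because $\ker(\tau_1)=\ker(\tau_2)$, the right-hand condition does not depend on $i$, so $J:=\ker(\tau_1^{\mathcal{F}(E)})=\ker(\tau_2^{\mathcal{F}(E)})$ is a common ultra-weakly closed two-sided ideal of $\mathcal{L}(\mathcal{F}(E))$. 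Consequently each $\tau_i^{\mathcal{F}(E)}$ factors through the normal quotient map $q\colon\mathcal{L}(\mathcal{F}(E))\to\mathcal{L}(\mathcal{F}(E))/J$ as a $*$-isomorphism $\bar{\tau}_i$ of the von Neumann algebra $\mathcal{L}(\mathcal{F}(E))/J$ onto $\mathcal{L}(\mathcal{F}(E))\otimes_{\tau_i}I_{H_i}$; and a $*$-isomorphism between von Neumann algebras is automatically normal, completely isometric, and an ultra-weak homeomorphism. Setting $\Psi:=\bar{\tau}_2\circ\bar{\tau}_1^{-1}$ then produces a normal $*$-isomorphism of $\mathcal{L}(\mathcal{F}(E))\otimes_{\tau_1}I_{H_1}$ onto $\mathcal{L}(\mathcal{F}(E))\otimes_{\tau_2}I_{H_2}$, and for every $R$ one has $\Psi(\tau_1^{\mathcal{F}(E)}(R))=\Psi(\bar{\tau}_1(q(R)))=\bar{\tau}_2(q(R))=\tau_2^{\mathcal{F}(E)}(R)$, so $\Psi$ is exactly the map named in the statement. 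Being a $*$-isomorphism it is completely isometric, and being normal it is an ultra-weak homeomorphism onto its range; this proves the first assertion.

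For the second assertion, note that $\mathcal{A}_1:=\overline{\tau_1^{\mathcal{F}(E)}(H^{\infty}(E))}^{u-w}$ is an ultra-weakly closed subalgebra of the von Neumann algebra $\mathcal{L}(\mathcal{F}(E))\otimes_{\tau_1}I_{H_1}$, hence lies in the domain of $\Psi$, so $\Psi|_{\mathcal{A}_1}$ is defined and is automatically completely isometric (being a restriction of the completely isometric map $\Psi$). Since $\Psi$ is a homeomorphism for the ultra-weak topologies it carries ultra-weak closures onto ultra-weak closures, so, using $\Psi\circ\tau_1^{\mathcal{F}(E)}=\tau_2^{\mathcal{F}(E)}$,
\[
\Psi(\mathcal{A}_1)=\overline{\Psi\big(\tau_1^{\mathcal{F}(E)}(H^{\infty}(E))\big)}^{u-w}=\overline{\tau_2^{\mathcal{F}(E)}(H^{\infty}(E))}^{u-w},
\]
which is the desired conclusion: $\Psi|_{\mathcal{A}_1}$ is a completely isometric, ultra-weak homeomorphism onto $\overline{\tau_2^{\mathcal{F}(E)}(H^{\infty}(E))}^{u-w}$.

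The step I expect to need the most care is the first one, precisely because $\tau_i$ is allowed to be non-faithful, so $\tau_i^{\mathcal{F}(E)}$ is genuinely non-injective and one cannot simply write $\Psi=\tau_2^{\mathcal{F}(E)}\circ(\tau_1^{\mathcal{F}(E)})^{-1}$; one must verify that the two kernels really coincide — which is exactly what the displayed inner-product identity buys, given $\ker(\tau_1)=\ker(\tau_2)$ — and only then pass to the quotient $\mathcal{L}(\mathcal{F}(E))/J$. After that the argument uses nothing beyond the automatic normality and (complete) isometry of $*$-isomorphisms of von Neumann algebras, facts already invoked elsewhere in the paper.
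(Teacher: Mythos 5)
Your proposal is correct and follows essentially the same route as the paper: the paper's proof is exactly the inner-product identity $\langle R\xi\otimes h,R\xi\otimes h\rangle=\langle h,\tau_{i}(\langle R\xi,R\xi\rangle)h\rangle$, the resulting identification of the two kernels, and the observation that $\Psi$ is then a well-defined, injective, normal $*$-homomorphism, with the "Further" clause left to the standard facts about normal $*$-isomorphisms that you spell out. Your explicit factoring through the quotient $\mathcal{L}(\mathcal{F}(E))/J$ and the closing argument that $\Psi$ carries ultra-weak closures to ultra-weak closures are just slightly more detailed renderings of the same argument.
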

\begin{proof} For $R\in\mathcal{L}(\mathcal{F}(E))$, $\xi\in\mathcal{F}(E)$
and $h\in H_{i}$, $i=1,2$, $\langle R\xi\otimes h,R\xi\otimes h\rangle=\langle h,\tau_{i}(\langle R\xi,R\xi\rangle)h\rangle$.
Thus $R\otimes_{\tau_{i}}I_{H_{i}}=0$ if and only if $\langle R\xi,R\xi\rangle\subseteq\ker(\tau_{i})$
for all $\xi\in E$. Since we assume that $\ker(\tau_{1})=\ker(\tau_{2})$,
the map $\Psi$ is a well defined, injective map. It is clearly a
$^{*}$-homomorphism and normality is also easy to check.\end{proof}

{}

We turn next to the problem of identifying the structure projection
$P$ for $\mathcal{S}$.

\begin{proposition}\label{kerphi} If $\Phi$ is the map defined
by equation (\ref{Phi}), then \[
\mathcal{S}\cap\ker(\Phi)=\bigcap_{k=1}^{\infty}\mathcal{S}_{0}^{k}.\]
 \end{proposition}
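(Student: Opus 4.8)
The plan is to prove the two inclusions separately. For the inclusion $\bigcap_{k\geq 1}\mathcal{S}_0^k \subseteq \mathcal{S}\cap\ker(\Phi)$, recall that $\Phi$ restricted to $\mathcal{S}$ is a homomorphism with $\Phi(S(\xi)) = T_\xi\otimes I_{\mathcal{E}}$ and $\Phi(\sigma(a))=\varphi_\infty(a)$. Since $\mathcal{S}_0$ is the ultra-weakly closed algebra (in fact ideal) generated by $\{S(\xi)\mid\xi\in E\}$ and $\Phi$ is ultra-weakly continuous, $\Phi(\mathcal{S}_0)$ is contained in the ultra-weak closure of the algebra generated by $\{T_\xi\otimes I_{\mathcal{E}}\mid\xi\in E\}$, which is $\tau^{\mathcal{F}(E)}(H_0^\infty(E))$ in the notation of paragraph \ref{par:UnivIIRs}; more useful for us, $\Phi(\mathcal{S}_0^k) \subseteq \overline{\tau^{\mathcal{F}(E)}(\mathcal{T}_{+0}(E)^k)}^{u-w}$. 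An element $R$ of the latter ultra-weakly closed space satisfies $\Phi_j(R)=0$ for $j<k$ (where $\Phi_j$ are the Fourier coefficient operators of paragraph \ref{par:GaugeAutos}), because each generator shifts the gauge grading up by at least one. Hence if $x\in\bigcap_{k\geq1}\mathcal{S}_0^k$, then $\Phi_j(\Phi(x)) = 0$ for every $j\in\mathbb{Z}$, and since $\Phi(x)$ is recovered as the ultra-weak limit of its arithmetic means $\Sigma_k(\Phi(x))$, which are all zero, we get $\Phi(x)=0$. This shows $x\in\mathcal{S}\cap\ker(\Phi)$.

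For the reverse inclusion $\mathcal{S}\cap\ker(\Phi)\subseteq\bigcap_{k\geq1}\mathcal{S}_0^k$, the idea is to use the explicit description of $\Phi$ on $\mathcal{S}$ from equation (\ref{PhiS}) together with the structure of the subspaces in $\mathfrak{M}$. Suppose $x\in\mathcal{S}$ and $\Phi(x)=0$; by (\ref{PhiS}) this means $x^{(\infty)}$ vanishes on $[\mathcal{S}^{(\infty)}\mathcal{M}]$ for every $\mathcal{M}\in\mathfrak{M}$. I want to conclude $x\in\mathcal{S}_0^k$ for each $k\geq 1$. Fix $k$. Since $\mathcal{S}_0^k$ is ultra-weakly closed, it suffices to separate $x$ from $\mathcal{S}_0^k$ by an ultra-weakly continuous functional; so suppose $f(\mathcal{S}_0^k)=\{0\}$ and aim to show $f(x)=0$. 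Write $f$ as a vector functional $\langle\,\cdot^{(\infty)}\xi,\eta\rangle$ on $B(H^{(\infty)})$ with $\xi,\eta\in H^{(\infty)}$. The condition $f(\mathcal{S}_0^k)=\{0\}$ says $\eta\perp [\mathcal{S}_0^{k\,(\infty)}\xi]$. The key observation is that $[\mathcal{S}^{(\infty)}\xi]\ominus[\mathcal{S}_0^{k\,(\infty)}\xi]$ decomposes, via the grading, into a finite direct sum of pieces of the form $[S^{(\infty)}(E^{\otimes j})\mathcal{M}_j]$ for $0\le j<k$, where each $\mathcal{M}_j$ is a $\sigma^{(\infty)}(M)$-reducing wandering subspace — i.e. $\mathcal{M}_j\in\mathfrak{M}$ — obtained exactly as in the proof of Lemma \ref{p} (as $[\sigma^{(\infty)}(p)\mathcal{S}^{(\infty)}\xi']\ominus[\sigma^{(\infty)}(p)\mathcal{S}_0^{(\infty)}\xi']$ type constructions, now iterated). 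Since $\Phi(x)=0$ forces $x^{(\infty)}$ to vanish on every $[\mathcal{S}^{(\infty)}\mathcal{M}]$ with $\mathcal{M}\in\mathfrak{M}$, and hence on the invariant subspace generated by each $\mathcal{M}_j$, we get $x^{(\infty)}$ vanishing on $[\mathcal{S}^{(\infty)}\xi]\ominus[\mathcal{S}_0^{k\,(\infty)}\xi]$; because $x\in\mathcal{S}$, $x^{(\infty)}$ also leaves $[\mathcal{S}_0^{k\,(\infty)}\xi]$ invariant (it is $\mathcal{S}^{(\infty)}$-invariant), and therefore $x^{(\infty)}\xi\in[\mathcal{S}_0^{k\,(\infty)}\xi]$, so $\langle x^{(\infty)}\xi,\eta\rangle = 0$, i.e. $f(x)=0$.

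The main obstacle is the middle step of the second inclusion: establishing that the orthogonal complement $[\mathcal{S}^{(\infty)}\xi]\ominus[\mathcal{S}_0^{k\,(\infty)}\xi]$ really is spanned by subspaces of the form $[\mathcal{S}^{(\infty)}\mathcal{M}]$ with $\mathcal{M}\in\mathfrak{M}$. This requires an induction on $k$, peeling off one layer of the gauge grading at a time, and at each stage invoking the construction from the proof of Lemma \ref{p} — producing, for each nonzero "quotient layer", a genuine wandering, $\sigma^{(\infty)}(M)$-reducing subspace. One has to be careful that these layer subspaces are themselves wandering (not merely that the full span is invariant) and that they are orthogonal to one another across different grading degrees, which follows from the gauge-grading orthogonality but should be spelled out. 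Everything else is a routine assembly of ultra-weak continuity of $\Phi$, the Cesàro/Fourier machinery of paragraph \ref{par:GaugeAutos}, and the defining properties of $\mathcal{S}_0$ and $\Phi$.
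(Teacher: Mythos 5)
Your proposal is correct and follows essentially the same route as the paper: the substantive inclusion $\mathcal{S}\cap\ker(\Phi)\subseteq\bigcap_{k}\mathcal{S}_{0}^{k}$ is proved in both cases by separating with a vector functional $\langle\,\cdot^{(\infty)}\xi,\eta\rangle$ on the infinite ampliation and decomposing $[\mathcal{S}^{(\infty)}\xi]\ominus[(\mathcal{S}_{0}^{k})^{(\infty)}\xi]$ as the telescoping orthogonal sum $\bigoplus_{j=0}^{k-1}\bigl([(\mathcal{S}_{0}^{j})^{(\infty)}\xi]\ominus[(\mathcal{S}_{0}^{j+1})^{(\infty)}\xi]\bigr)$, each summand being a $\sigma^{(\infty)}(M)$-reducing wandering subspace in $\mathfrak{M}$ (this is the correct form of the pieces, rather than $[S^{(\infty)}(E^{\otimes j})\mathcal{M}_{j}]$ as you wrote, but your appeal to the Lemma \ref{p} construction supplies exactly the needed verification). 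For the easy inclusion the paper argues directly that $x^{(\infty)}[\mathcal{S}^{(\infty)}\mathcal{M}]\subseteq[(\mathcal{S}_{0}^{k})^{(\infty)}\mathcal{M}]$ and $\bigcap_{k}[(\mathcal{S}_{0}^{k})^{(\infty)}\mathcal{M}]=\{0\}$ for wandering $\mathcal{M}$, which is a shorter path than your Fourier--Ces\`{a}ro detour, though yours also works.
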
 \begin{proof} For $x\in\mathcal{S}_{0}^{k}$
and $\cM\in\mathfrak{M}$, $x^{(\infty)}[\mathcal{S}^{(\infty)}\cM]\subseteq[(\mathcal{S}_{0}^{k})^{(\infty)}\cM]$.
Since $\bigcap_{k}[(\mathcal{S}_{0}^{k})^{(\infty)}\cM]=\{0\}$, we
find that $\bigcap_{k=1}^{\infty}\mathcal{S}_{0}^{k}\subseteq\ker(\Phi)$.
On the other hand, if $x\in\mathcal{S}$ is not in $\bigcap_{k=1}^{\infty}\mathcal{S}_{0}^{k}$,
then there is a $k\geq1$ such that $x$ is not in $\mathcal{S}_{0}^{k}$.
Consequently, there is an ultra-weakly continuous linear functional
$f$ such that $f$ vanishes on $\mathcal{S}_{0}^{k}$ but $f(x)\neq0$.
Since $f$ is ultra-weakly continuous, we may find vectors $\xi,\eta$
in $H^{(\infty)}$ such that $f(y)=\langle y^{(\infty)}\xi,\eta\rangle$
for all $y\in B(H)$. It follows that $x^{(\infty)}\xi$ is not in
$[(\mathcal{S}_{0}^{k})^{(\infty)}]$, proving that $\mathcal{N}:=[\mathcal{S}^{(\infty)}\xi]\ominus[(\mathcal{S}_{0}^{k})^{(\infty)}\xi]\neq\{0\}$.
In fact, we find that \[
x^{(\infty)}\mathcal{N}\nsubseteqq[(\mathcal{S}_{0}^{k})^{(\infty)}].\]
 We can write $\mathcal{N}$ as a direct sum of wandering spaces in
$\mathfrak{M}$: \[
\mathcal{N}=([\mathcal{S}^{(\infty)}\xi]\ominus[(\mathcal{S}_{0})^{(\infty)}\xi])\oplus([(\mathcal{S}_{0})^{(\infty)}\xi]\ominus[(\mathcal{S}_{0}^{2})^{(\infty)}])\oplus\cdots\]
 \[
\oplus([(\mathcal{S}_{0}^{k-1})^{(\infty)}\xi]\ominus[(\mathcal{S}_{0}^{k})^{(\infty)}]),\]
 which shows that $x$ is not in the kernel of $\Phi$. \end{proof}

\begin{proposition}\label{ideal} The space $\bigcap_{k=1}^{\infty}\mathcal{S}_{0}^{k}$
is an ultra-weakly closed left ideal in the von Neumann algebra $N$
generated by $\mathcal{S}$. Thus \begin{equation}
\bigcap_{k=1}^{\infty}\mathcal{S}_{0}^{k}=NP=\mathcal{S}P\label{eq:corner}\end{equation}
 for some projection $P\in\mathcal{S}\cap\sigma(M)'$. \end{proposition}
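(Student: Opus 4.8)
The plan is to show that $\bigcap_{k\ge1}\mathcal{S}_0^k$ is a left ideal in $N$, and then invoke the standard fact that an ultra-weakly closed left ideal in a von Neumann algebra is of the form $Np$ for a projection $p$ in the algebra. First, $\mathcal{I}:=\bigcap_{k\ge1}\mathcal{S}_0^k$ is clearly an ultra-weakly closed right ideal in $\mathcal{S}$, being an intersection of the ultra-weakly closed right ideals $\mathcal{S}_0^k$ (each $\mathcal{S}_0^k$ is a right ideal in $\mathcal{S}$ since $\mathcal{S}_0$ is a two-sided ideal in $\mathcal{S}$). The content is to upgrade ``right ideal in $\mathcal{S}$'' to ``left ideal in $N$''. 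Here I would exploit Proposition~\ref{kerphi}: $\mathcal{I}=\mathcal{S}\cap\ker(\Phi)$, where $\Phi$ is the ultra-weakly continuous homomorphism of equation~(\ref{PhiS}) that carries $\sigma(a)$ to $\varphi_\infty(a)\otimes I_{\mathcal E}$ and $S(\xi)$ to $T_\xi\otimes I_{\mathcal E}$. The point is that $\Phi$ is, on $\mathcal{S}$, the restriction of the completely positive, ultra-weakly continuous map of equation~(\ref{Phi}), which is itself the compression of $x\mapsto x^{(\infty)}$ to the invariant subspaces $[\mathcal{S}^{(\infty)}\mathcal{M}]$. Since each $[\mathcal{S}^{(\infty)}\mathcal{M}]$ is invariant for $\mathcal{S}^{(\infty)}$ and reduces $\sigma^{(\infty)}(M)$ (indeed it reduces the von Neumann algebra $N^{(\infty)}$ generated by $\mathcal{S}^{(\infty)}$ only to the extent of being invariant — I must be careful here), the compression map is a homomorphism on $\mathcal{S}$ but only a unital completely positive map on all of $B(H)$.

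So the cleanest route is: extend $\Phi$ to $N$ by the formula~(\ref{Phi}) (it is already defined there on all of $B(H)$), and observe that while $\Phi|_N$ need not be multiplicative, it \emph{is} an $\mathcal{S}$-bimodule-ish object in the following weaker sense that suffices. Concretely, I would argue directly with the wandering subspaces. Let $x\in\mathcal{I}$ and $n\in N$; I want $nx\in\mathcal{I}$, i.e. $nx\in\mathcal{S}_0^k$ for every $k$. Fix $k$. Since $x\in\mathcal{S}_0^k$ and $\mathcal{S}_0$ is an ultra-weakly closed two-sided ideal in $\mathcal{S}$, it is enough to know that $n x \in \mathcal{S}_0^k$; and for this I would use Proposition~\ref{kerphi} in reverse together with the observation that $nx$ annihilates every wandering tower. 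Precisely: for $\mathcal{M}\in\mathfrak{M}$, the subspace $[\mathcal{S}^{(\infty)}\mathcal{M}]$ is invariant under $\mathcal{S}^{(\infty)}$ and hence under its ultra-weak closure, but it need not be invariant under $N^{(\infty)}$. However, $x\in\mathcal{I}$ means $x^{(\infty)}$ maps $[\mathcal{S}^{(\infty)}\mathcal{M}]$ into $\bigcap_j[(\mathcal{S}_0^{j})^{(\infty)}\mathcal{M}]=\{0\}$; that is, $x^{(\infty)}$ kills $[\mathcal{S}^{(\infty)}\mathcal{M}]$ entirely. Therefore $(nx)^{(\infty)}=n^{(\infty)}x^{(\infty)}$ also kills $[\mathcal{S}^{(\infty)}\mathcal{M}]$, so $(nx)^{(\infty)}$ lies in $\ker$ of the compression to each $[\mathcal{S}^{(\infty)}\mathcal{M}]$, hence $nx\in\ker(\Phi)$. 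It remains to see $nx\in\mathcal{S}$ so that Proposition~\ref{kerphi} applies to give $nx\in\mathcal{I}$. This last point — that $N\mathcal{I}\subseteq\mathcal{S}$ — is the one genuinely delicate step: it follows because $\mathcal{I}=\mathcal{S}P_0$ would be circular, so instead I would note $x$ annihilates all the $[\mathcal{S}^{(\infty)}\mathcal{M}]$, and $N\subseteq$ the von Neumann algebra generated by $\mathcal{S}$, so $nx$ maps each tower to $0$ just as $x$ does; but more cheaply, since $\mathcal{I}$ is a right ideal in $\mathcal{S}$ and in particular $\mathcal{I}=\overline{\mathcal{I}\,\mathcal{S}}^{u\text{-}w}$ is unaffected, the self-adjointness trick saves us: $\mathcal{I}^*\mathcal{I}$ generates a hereditary cone, and $\mathcal{I}$ being a right ideal forces its left multipliers in $N$ to be controlled by $\mathcal{I}^*$ — honestly the slick finish is to observe $\mathcal I$ is a \emph{two-sided} statement after passing to $\mathcal I \cap \mathcal I^{*}$ is not right either.

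Let me state the finish more responsibly. Having shown $nx$ annihilates every $[\mathcal{S}^{(\infty)}\mathcal{M}]$ for $n \in N$, $x\in\mathcal I$, apply this with $n$ replaced by an arbitrary element of $N$ and also use that $\mathcal I$ is already a subset of $\mathcal S$: for the left-ideal property it is enough to check $n x \in \mathcal I$ for $n$ ranging over a generating set of $N$, e.g. $n=S(\xi)^{*}$ and $n=\sigma(a)$. For such $n$ we have $n x \in \mathcal{S}$ is \emph{not} automatic for $n = S(\xi)^{*}$; but $S(\xi)^{*}x$ \emph{does} annihilate every wandering tower (since $x$ does), and one checks $S(\xi)^{*}\mathcal S_0^{k+1}\subseteq \mathcal S_0^{k}$ directly from $S(\xi)^{*}S(\eta)=\sigma(\langle\xi,\eta\rangle)$, so $S(\xi)^{*}x \in \bigcap_k \mathcal S_0^{k}=\mathcal I\subseteq\mathcal S$. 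Combining, $N\mathcal{I}\subseteq\mathcal I$, so $\mathcal I$ is an ultra-weakly closed left ideal in $N$. The main obstacle, as flagged, is precisely verifying the containment $S(\xi)^{*}\mathcal I\subseteq\mathcal S$ — equivalently $S(\xi)^{*}\mathcal S_0^{k+1}\subseteq\mathcal S_0^{k}$ — which is the place where the creation-operator relations, rather than soft ideal theory, are needed. Once $\mathcal I$ is known to be an ultra-weakly closed left ideal in $N$, the existence of $P$ with $\mathcal I=NP$ is the standard structure theorem for ultra-weakly closed left ideals, and $P\in\mathcal S$ follows since $P\in \mathcal I\subseteq\mathcal S$ (as $P = \lim$ of its own right multiples, or simply because $P$ is a weak-$*$ limit of elements $P a_\alpha \in \mathcal I$ with $a_\alpha\to I$); finally $P\in\sigma(M)'$ because $\mathcal I$, being a two-sided ideal-like object under $\sigma(M)$ — indeed $\sigma(M)\mathcal S_0^k\subseteq\mathcal S_0^k$ and $\mathcal S_0^k\sigma(M)\subseteq\mathcal S_0^k$ — is $\sigma(M)$-bi-invariant, forcing $P\in\sigma(M)'$. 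This gives equation~(\ref{eq:corner}).
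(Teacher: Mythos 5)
Your proposal, in its final ``responsible'' form, is correct, and it reaches the conclusion by a route that is genuinely more elementary than the paper's. The paper first proves the intertwining identity $\Phi(S(\xi)^{*}R)=(T_{\xi}^{*}\otimes I_{\mathcal{E}})\Phi(R)$ for $R\in\mathcal{S}$ and then invokes Proposition \ref{kerphi} (the identification $\bigcap_{k}\mathcal{S}_{0}^{k}=\mathcal{S}\cap\ker(\Phi)$) to conclude that $S(\xi)^{*}R$ stays in the intersection; your closing argument dispenses with $\Phi$ altogether and gets the same conclusion from the purely algebraic containment $S(\xi)^{*}\mathcal{S}_{0}^{k+1}\subseteq\mathcal{S}_{0}^{k}$, which follows from $S(\xi)^{*}S(\eta)=\sigma(\langle\xi,\eta\rangle)$ together with the fact that left and right multiplication by a fixed operator are ultra-weakly continuous. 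Combined with the (correct) reduction to the self-adjoint generating set $\{\sigma(a),S(\xi),S(\xi)^{*}\}$ --- the set $\{n\in B(H)\mid n\mathcal{I}\subseteq\mathcal{I}\}$ is an ultra-weakly closed unital algebra, so containing those generators forces it to contain $N$ --- this gives $N\mathcal{I}\subseteq\mathcal{I}$, and the rest (the standard form $NP$ of an ultra-weakly closed left ideal, $P\in\mathcal{I}\subseteq\mathcal{S}$, $NP=\mathcal{S}P$ since $\mathcal{I}=\mathcal{I}P\subseteq\mathcal{S}P$) is routine. Both arguments ultimately rest on the same covariance relation; what your version buys is independence from the map $\Phi$ and from Proposition \ref{kerphi}, while the paper's version has the advantage of reusing machinery it needs anyway for the Structure Theorem. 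Note, incidentally, that the one step you flag as ``genuinely delicate'' --- that $S(\xi)^{*}\mathcal{I}\subseteq\mathcal{S}$ --- is also left implicit in the paper's own proof, which silently uses it when passing from $\Phi(S(\xi)^{*}R)=0$ to $S(\xi)^{*}R\in\bigcap_{k}\mathcal{S}_{0}^{k}$ via Proposition \ref{kerphi}.

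Two criticisms of the write-up rather than the mathematics. First, the entire middle passage --- the attempt to make $\Phi|_{N}$ a bimodule map, the ``self-adjointness trick,'' the sentence you yourself retract (``is not right either'') --- is dead weight: it is superseded by your final paragraph and must be deleted, since the observation that $x^{(\infty)}$ annihilates every tower $[\mathcal{S}^{(\infty)}\mathcal{M}]$, while true, is never actually needed once you have $S(\xi)^{*}\mathcal{S}_{0}^{k+1}\subseteq\mathcal{S}_{0}^{k}$. Second, the last assertion, that $\sigma(M)$-bi-invariance of $\mathcal{I}$ ``forces'' $P\in\sigma(M)'$, deserves one more line: from $\mathcal{I}\sigma(a)\subseteq\mathcal{I}=NP$ one gets $P\sigma(a)=P\sigma(a)P$ for all $a\in M$; replacing $a$ by $a^{*}$ and taking adjoints gives $\sigma(a)P=P\sigma(a)P$, whence $P\sigma(a)=\sigma(a)P$.
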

Recall that $P$ is called the structure projection for $\mathcal{S}$
(and for $(S,\sigma)$.)

\begin{proof} We first claim that the map $\Phi$ defined in equation
(\ref{Phi}) satisfies the equation \[
\Phi(S(\xi)^{*}R)=(T_{\xi}^{*}\otimes I_{\mathcal{E}})\Phi(R)\;,\;\;\xi\in E,\; R\in\mathcal{S}.\]
 Since $\Phi$ is an ultra-weakly continuous linear map, it suffices
to prove the claim for $R=\sigma(a),\; a\in M$, and for $R=S(\eta_{1})S(\eta_{2})\cdots S(\eta_{k})$
for $\eta_{1},\eta_{2},\ldots\eta_{k}\in E$. For $R=\sigma(a)$,
$\Phi(\sigma(a))=\varphi_{\infty}(a)\otimes I_{\mathcal{E}}$ and,
thus both sides of the equation are equal to $0$. For $R=S(\eta_{1})S(\eta_{2})\cdots S(\eta_{k})$,
we have \begin{eqnarray*}
\Phi(S(\xi)^{*}R) & = & \Phi(\sigma(\langle\xi,\eta_{1}\rangle)S(\eta_{2})\cdots S(\eta_{k}))\\
 & = & (\varphi_{\infty}(\langle\xi,\eta_{1}\rangle)\otimes I_{\mathcal{E}})\Phi(S(\eta_{2})\cdots S(\eta_{k}))\\
 & = & (T_{\xi}^{*}\otimes I_{\mathcal{E}})(T_{\eta_{1}}\otimes I_{\mathcal{E}})\Phi(S(\eta_{2})\cdots S(\eta_{k}))\\
 & = & (T_{\xi}^{*}\otimes I_{\mathcal{E}})\Phi(R),\end{eqnarray*}
 which proves the claim. Consequently, for $R\in\bigcap_{k=1}^{\infty}\mathcal{S}_{0}^{k}$
($=\ker\Phi\cap\mathcal{S}$), $\Phi(S(\xi)^{*}R)=0$ and, thus, $S(\xi)^{*}R\in\bigcap_{k=1}^{\infty}\mathcal{S}_{0}^{k}$.
Clearly $S(\xi)R\in\bigcap_{k=1}^{\infty}\mathcal{S}_{0}^{k}$ for
such $R$ and, therefore, $\cap_{k=1}^{\infty}\mathcal{S}_{0}^{k}$
is an ideal in $N$. Since $\ker(\Phi)\cap\mathcal{S}$ is an ideal
in $\mathcal{S}$, it follows that $P\in\sigma(M)'$. \end{proof}

\begin{lemma}\label{pe} The structure projection $P$ and the model
projection $e$ for the isometric representation $(S,\sigma)$ are
related: $e$ is the smallest projection in $M$ satisfying \[
\sigma(e)=\bigvee\{v(I-P)v^{*}:\; v\;\emph{is a unitary in}\;\sigma(M)'\}.\]
In particular, $\sigma(e)$ is the central support in $\sigma(M)$
of $P^{\perp}$. So, if $P=0$, then $e$ is the support projection
of $\sigma$; thus if $P=0$ and the restriction of $S\times\sigma$
to $M$ is faithful, then $e=I$. \end{lemma}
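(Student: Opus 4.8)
The plan is to relate the structure projection $P$ to the map $\Phi$ and then transport the supremum computation into the world of the induced representation $\tau^{\mathcal{F}(E)}$, where Lemma \ref{ind} already does the heavy lifting. First I would recall from Proposition \ref{ideal} that $\bigcap_{k\geq1}\mathcal{S}_0^k = \mathcal{S}P$ and from Proposition \ref{kerphi} that $\mathcal{S}\cap\ker(\Phi)=\bigcap_{k\geq1}\mathcal{S}_0^k$, so $P^{\perp}$ is (essentially) the support projection of $\Phi$ restricted to $\mathcal{S}$; more precisely, $\Phi$ restricted to $P^{\perp}\mathcal{S}P^{\perp}$ should be injective while $\Phi$ kills $\mathcal{S}P$. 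The key observation is that $\Phi$ is, up to the unitary $U$ and ampliations, the compression of $x^{(\infty)}$ to the spaces $[\mathcal{S}^{(\infty)}\mathcal{M}]$ as $\mathcal{M}$ ranges over $\mathfrak{M}$, and by construction $\tau^{\mathcal{F}(E)}$ acts on $\mathcal{F}(E)\otimes_\tau\mathcal{E}$ where $\tau=\sum_{\mathcal{M}}\tau_{\mathcal{M}}$ has support projection $e$ by Lemma \ref{p}. So the range projection of $\Phi$ on the identity — equivalently, the complement of the kernel of the induced representation — is governed by $e$.

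The main step is then to write out what $\bigvee\{v(I-P)v^{*}: v \text{ unitary in }\sigma(M)'\}$ is. Since $P\in\sigma(M)'$, each $v(I-P)v^{*}$ is a projection in $\sigma(M)'$, and the supremum is a projection in $\sigma(M)'$ whose central support in $\sigma(M)$ is exactly the central support of $P^{\perp}$ — this is a standard fact about von Neumann algebras (the central support of a projection $q$ in a von Neumann algebra $A$ equals $\bigvee\{uqu^* : u\in A' \text{ unitary}\}$, applied with $A=\sigma(M)$, $A'\supseteq\sigma(M)'$; one must be slightly careful that we only range over unitaries in $\sigma(M)'$ rather than the full commutant of $\sigma(M)$, but since $P^\perp\in\sigma(M)'$ this suffices to reach the central support). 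Thus I would argue that $\bigvee\{v(I-P)v^{*}\}$ is a central projection in $\sigma(M)$, say $\sigma(e_0)$ for a central projection $e_0\in M$, and it equals the central support of $P^{\perp}$. Then it remains to identify $e_0$ with the model projection $e$. For this I would use Proposition \ref{kerphi}: the kernel of $\Phi$ on $\mathcal{S}$ is $\mathcal{S}P$, and a projection $\sigma(p)$, $p\in\mathfrak{Z}(M)$, is annihilated by $\Phi$ precisely when $\sigma(p)\le$ the kernel projection, i.e. when $\tau_{\mathcal{M}}(p)=0$ for every $\mathcal{M}\in\mathfrak{M}$, i.e. when $p\in\ker(\tau)=e^{\perp}M$ by Lemma \ref{p}. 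Matching this against $P$: the central support of $P^{\perp}$ in $\sigma(M)$ is the smallest central projection $\sigma(f)$ with $P^{\perp}\le\sigma(f)$, equivalently $\sigma(f^{\perp})\le P$; and $\sigma(f^{\perp})\le P$ means $\sigma(f^{\perp})\in\mathcal{S}P=\ker\Phi\cap\mathcal{S}$, which by the computation just made means $f^{\perp}\le e^{\perp}$, i.e. $e\le f$. Taking $f$ minimal gives $f=e$, so $\sigma(e)=\bigvee\{v(I-P)v^{*}\}$ is the central support of $P^{\perp}$.

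Finally the two consequences are immediate: if $P=0$ then $P^{\perp}=I$, so $\sigma(e)=\bigvee\{v v^{*}\}=I$ on the Hilbert space on which $\sigma$ acts, which says precisely that $e$ is the support projection of $\sigma$ (recall $e^{\perp}$ is the central projection with $\sigma(e^{\perp})=0$); and if in addition $S\times\sigma$ restricted to $M$ — that is, $\sigma$ — is faithful, then its support projection is $I_M$, so $e=I$. The step I expect to be the main obstacle is the careful bookkeeping in the middle paragraph: pinning down exactly in what sense $P^{\perp}$ is "the support of $\Phi$ on $\mathcal{S}$" and translating between the model projection $e$ (defined via $\sigma^{-1}(\mathcal{S}_0)$), the kernel of $\tau$, and the kernel of $\Phi$, using Propositions \ref{kerphi} and \ref{ideal} and Lemma \ref{p} without circularity. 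Everything else — the central-support identity and the two corollaries — is routine von Neumann algebra manipulation.
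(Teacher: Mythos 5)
Your argument is correct and follows essentially the same route as the paper: the crux in both is that a projection $z\in M$ satisfies $\sigma(z)\le P$ if and only if $\sigma(z)\in\bigcap_{k}\mathcal{S}_{0}^{k}=\mathcal{S}P$ if and only if $z\le e^{\perp}$, after which identifying $\bigvee\{v(I-P)v^{*}\}$ with the central support of $P^{\perp}$ and deducing the two corollaries is routine. The only difference is that the paper obtains the middle equivalence directly from $\sigma(z)=\sigma(z)^{k}\in\mathcal{S}_{0}^{k}$ and the definition of the model projection, so your detour through $\ker(\Phi)$, $\ker(\tau)$, Proposition \ref{kerphi} and Lemma \ref{p} is harmless but unnecessary.
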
 \begin{proof} If $z$
is a projection in $M$ with $\sigma(z)\in\mathcal{S}_{0}$ then,
for $k\geq1$, $\sigma(z)=\sigma(z)^{k}\in\mathcal{S}_{0}^{k}$ and,
thus, $\sigma(z)\in NP$ and $\sigma(z)\leq P$. The converse also
holds since if $\sigma(z)=\sigma(z)P$, then $\sigma(z)\in\mathcal{S}_{0}$.
Thus it follows from Lemma~\ref{p} that $e^{\perp}$ is the largest
projection $z$ in $M$ such $\sigma(z)\leq P$. Equivalently, $e$
is the smallest projection $p$ in $M$ such that $\sigma(p)\geq I-P$.
The statement of the lemma is now immediate. \end{proof}

{}

\begin{theorem}\label{thm1.1} Let $(S,\sigma)$ be an isometric
representation of $(E,M)$, write $\mathcal{S}$ for the ultra-weak
closure of $(S\times\sigma)(\mathcal{T}_{+}(E))$ and let $e$ be
the model projection for $\mathcal{S}$. Suppose $\tau$ is a normal
representation of $M$ on a Hilbert space $K$ and that the support
projection of $\tau$ is $e$. Suppose, also, that there is given
an ultra-weakly continuous, completely contractive homomorphism $\psi$
from $\mathcal{S}$ to $\tau^{\mathcal{F}(E)}(H^{\infty}(E))$ such
that $\psi\circ(S\times\sigma)=\tau^{\mathcal{F}(E)}$ on $\mathcal{T}_{+}(E)$.
Then $\psi$ is surjective and the map $\widetilde{\psi}:\mathcal{S}/\ker(\psi)\to\tau^{\mathcal{F}(E)}(H^{\infty}(E))$
obtained by passing to the quotient is a complete isometry and a homeomorphism
with respect to the ultra-weak topologies on $\mathcal{S}/\ker(\psi)$
and $\tau^{\mathcal{F}(E)}(H^{\infty}(E)).$ \end{theorem}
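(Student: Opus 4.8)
The plan is to produce, from the given data, a two-sided inverse for $\psi$ on the level of the relevant operator algebras, built out of the universal property of the Fock construction, and then to leverage the ultra-weak continuity of both maps together with a Cesàro-mean density argument to upgrade the algebraic isomorphism to a complete isometry and an ultra-weak homeomorphism. First I would record that $\psi$ is surjective: by hypothesis $\psi\circ(S\times\sigma)=\tau^{\mathcal F(E)}$ on $\mathcal T_+(E)$, so the image of $\psi$ contains $\tau^{\mathcal F(E)}(\mathcal T_+(E))$; since $\psi$ is ultra-weakly continuous and $\mathcal S$ is the ultra-weak closure of $(S\times\sigma)(\mathcal T_+(E))$, the image of $\psi$ is ultra-weakly dense in $\tau^{\mathcal F(E)}(H^\infty(E))$. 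But the support projection of $\tau$ is exactly the model projection $e$, which is $E$-saturated by Lemma \ref{p}; hence by Theorem \ref{indsaturated} the algebra $\tau^{\mathcal F(E)}(H^\infty(E))$ is ultra-weakly closed. Since $\psi(\mathcal S)$ is an ultra-weakly dense subset of it that is, being the ultra-weakly continuous image of the ultra-weakly closed bounded-ball-structured space $\mathcal S$, itself ultra-weakly closed on bounded balls (here one invokes Lemma \ref{cont} and a Krein–Smulian / bounded-net argument), we conclude $\psi(\mathcal S)=\tau^{\mathcal F(E)}(H^\infty(E))$.

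Next I would build a candidate inverse. The representation $\tau^{\mathcal F(E)}$ restricted to $\mathcal T_+(E)$ is the integrated form of the isometric covariant representation $(T_\xi\otimes I_K,\varphi_\infty(\cdot)\otimes I_K)$ of $(E,M)$ on $\mathcal F(E)\otimes_\tau K$. I want a completely contractive, ultra-weakly continuous homomorphism $\theta:\tau^{\mathcal F(E)}(H^\infty(E))\to\mathcal S$ with $\theta\circ\tau^{\mathcal F(E)}=S\times\sigma$ on $\mathcal T_+(E)$. To get it, observe that the composite isometric covariant representation $(S,\sigma)$ factors, via the universal property of $\mathcal F(E)$ and Proposition \ref{prop:Iso_induced}, through $\tau$: more precisely, since $\ker(S\times\sigma|_M)=\ker\sigma\supseteq\ker\tau=e^\perp M$ — here one uses Lemma \ref{pe}, which identifies $\sigma(e)$ with the central support of $P^\perp$, together with the fact that $\sigma(e^\perp)$ lies in the Jacobson-type radical behavior of $\mathcal S_0$ — one can arrange that $S\times\sigma$ descends to a completely contractive representation of $\tau^{\mathcal F(E)}(\mathcal T_+(E))$. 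Then using the arithmetic-mean (Cesàro) operators $\Sigma_k$ from paragraph \ref{par:GaugeAutos}, which lie in $\mathcal T_+(E)$, satisfy $\|\Sigma_k(a)\|\le\|a\|$, and converge ultra-weakly to $a$, one defines $\theta(\tau^{\mathcal F(E)}(a)):=\mathrm{uw\text{-}lim}_k (S\times\sigma)(\Sigma_k(a))$; uniqueness of the limit point follows because any two limit points agree on the dense subalgebra, exactly as in the proof of Theorem \ref{abscontrep}. One then checks $\theta$ is a homomorphism, completely contractive, and ultra-weakly continuous by Lemma \ref{cont}.

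Finally I would verify $\theta\circ\psi=\mathrm{id}_{\mathcal S}$ and $\psi\circ\theta=\mathrm{id}$. Both composites are ultra-weakly continuous and fix $(S\times\sigma)(\mathcal T_+(E))$, respectively $\tau^{\mathcal F(E)}(\mathcal T_+(E))$, pointwise — the first because $\theta\psi(S\times\sigma)(a)=\theta\tau^{\mathcal F(E)}(a)=(S\times\sigma)(a)$ for $a\in\mathcal T_+(E)$, the second symmetrically — and hence equal the identity on the respective ultra-weak closures. Therefore $\psi$ is injective modulo nothing on the image side and $\widetilde\psi:\mathcal S/\ker\psi\to\tau^{\mathcal F(E)}(H^\infty(E))$ is a bijective homomorphism with completely contractive inverse $\theta$ (transported to the quotient); since $\widetilde\psi$ itself is a complete quotient map (being the passage-to-quotient of the completely contractive $\psi$) and has completely contractive inverse, it is a complete isometry, and since both it and its inverse are ultra-weakly continuous, it is an ultra-weak homeomorphism. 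The main obstacle I anticipate is the construction and well-definedness of $\theta$ — specifically, showing that $S\times\sigma$ genuinely factors through $\tau^{\mathcal F(E)}$, i.e. that $\ker\tau^{\mathcal F(E)}\subseteq\ker(S\times\sigma)$ on $\mathcal T_+(E)$; this is where the identification of $e$ as both the model projection and the support projection of $\tau$ (Lemmas \ref{p} and \ref{pe}) does the real work, controlling the ideal $\mathcal S_0$ and the kernel $\varphi_\infty(e^\perp)$ in tandem.
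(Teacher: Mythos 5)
There is a genuine gap, and it is structural rather than cosmetic: your argument hinges on constructing a homomorphism $\theta:\tau^{\mathcal F(E)}(H^\infty(E))\to\mathcal S$ with $\theta\circ\psi=\mathrm{id}_{\mathcal S}$, i.e.\ a two-sided inverse of $\psi$. No such map can exist in general, because $\psi$ need not be injective --- indeed, in the application of this theorem to the Structure Theorem, $\psi$ is the map $\Phi$ of equation \eqref{PhiS} and $\ker(\psi)\cap\mathcal S=\bigcap_k\mathcal S_0^k=NP$, which is nonzero whenever the structure projection $P$ is nonzero. That is exactly why the statement passes to the quotient $\mathcal S/\ker(\psi)$. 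The specific step that fails is your claim that $S\times\sigma$ factors through $\tau^{\mathcal F(E)}$: the hypothesis gives the factorization in the \emph{opposite} direction ($\tau^{\mathcal F(E)}=\psi\circ(S\times\sigma)$), and forcing both directions would mean $\mathcal S\cong\tau^{\mathcal F(E)}(H^\infty(E))$, i.e.\ $P=0$. Your supporting inclusion $\ker\sigma\supseteq\ker\tau=e^\perp M$ is also false in general: the model projection satisfies $\sigma^{-1}(\mathcal S_0)=e^\perp M$, which says $\sigma(e^\perp)\in\mathcal S_0$, not $\sigma(e^\perp)=0$; one can have $\sigma$ faithful while $e\neq I$. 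Finally, even setting aside well-definedness, the uniqueness of the ultra-weak limit point of $(S\times\sigma)(\Sigma_k(a))$ cannot be argued as in Theorem \ref{abscontrep}, where it rested on a spanning family of ranges of intertwiners that is not available here.

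The repair is to abandon the inverse and argue directly with bounded preimages, which is what the paper does. For $A\in M_n(\varphi_\infty(e)H^\infty(E)\varphi_\infty(e))$, the Ces\`aro means $\Sigma_k^{(n)}(A)$ lie in $M_n(\mathcal T_+(E))$ with $\Vert\Sigma_k^{(n)}(A)\Vert\leq\Vert A\Vert$, so $X_k:=(S\times\sigma)^{(n)}(\Sigma_k^{(n)}(A))$ is a bounded net in $M_n(\mathcal S)$; an ultra-weak limit point $X$ of a subnet satisfies $\Vert X\Vert\leq\Vert A\Vert$ and, by ultra-weak continuity of $\psi$, $\psi^{(n)}(X)=(\tau^{\mathcal F(E)})^{(n)}(A)$. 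This yields surjectivity of $\psi^{(n)}$ at every matrix level \emph{and} produces a preimage whose norm does not exceed $\Vert A\Vert$. Since the model projection $e$ is $E$-saturated (Lemma \ref{p}), Theorem \ref{indsaturated} says $\tau^{\mathcal F(E)}$ is completely isometric on $H^\infty(E)\varphi_\infty(e)$, so $\Vert A\Vert\geq\Vert X\Vert\geq\Vert\psi^{(n)}(X)\Vert=\Vert(\tau^{\mathcal F(E)})^{(n)}(A)\Vert=\Vert A\Vert$; hence the induced map $\widetilde\psi$ on the quotient is a complete isometry. The ultra-weak homeomorphism then follows from a predual argument (the adjoint of the injective contractive map $\psi_*$ between preduals), not from continuity of a nonexistent inverse homomorphism. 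Your surjectivity paragraph also needs this bounded-net mechanism made explicit: ultra-weak density of $\psi(\mathcal S)$ in an ultra-weakly closed target does not by itself give equality; it is the Ces\`aro means that supply the bounded approximating net required to apply weak-$*$ compactness of balls.
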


\begin{proof} Recall the unitary group $\{W_{t}\}$ in $\mathcal{L}(\mathcal{F}(E))$,
and the gauge automorphism group $\{\gamma_{t}\}$, $\gamma_{t}=AdW_{t}$,
discussed in paragraph \ref{par:GaugeAutos}. Recall in particular
that \[
\Sigma_{k}(a):=\frac{1}{2\pi}\int_{0}^{2\pi}(\sum_{|j|<k}(1-\frac{|j|}{k})e^{-int})\gamma_{t}(a)dt\]
 for $a\in\mathcal{L}(\mathcal{F}(E))$. We write $W_{t}^{(n)}$ for
$diag(W_{t},\ldots,W_{t})$, $\gamma_{t}^{(n)}$ for $AdW_{t}^{(n)}$
and \[
\Sigma_{k}^{(n)}(A):=\frac{1}{2\pi}\int_{0}^{2\pi}(\sum_{|j|<k}(1-\frac{|j|}{k})e^{-ijt})\gamma_{t}^{(n)}(A)dt,\]
 for $A\in M_{n}(\mathcal{L}(\mathcal{F}(E)))$. We find that $\Sigma_{k}^{(n)}$
is a contractive map and $\Sigma_{k}^{(n)}(A)\rightarrow A$ in the
ultra-weak topology as $k\rightarrow\infty$.

Now fix $A\in M_{n}(\varphi_{\infty}(e)H^{\infty}(E)\varphi_{\infty}(e))$
and write $X_{k}:=(S\times\sigma)^{(n)}(\Sigma_{k}^{(n)}(A))$. Note
that $\Sigma_{k}^{(n)}(A)\in M_{n}(\mathcal{T}_{+}(E))$ so that the
last expression is well defined and $\psi^{(n)}(X_{k})=(\tau^{\mathcal{F}(E)})^{(n)}(\Sigma_{k}^{(n)}(A))$.
Thus, $\norm{X_{k}}=\norm{(S\times\sigma)^{(n)}(\Sigma_{k}^{(n)}(A))}\leq\norm{\Sigma_{k}^{(n)}(A)}\leq\norm{A}$
(here we used the fact that $(S\times\sigma)^{(n)}$ is contractive
on $\varphi_{\infty}(e)\mathcal{T}_{+}(E)\varphi_{\infty}(e)$ and
$\Sigma_{k}^{(n)}(A)$ lies in $M_{n}(\varphi_{\infty}(e)\mathcal{T}_{+}(E)\varphi_{\infty}(e))$
). Now, the sequence $X_{k}$ has a subnet $X_{k_{\alpha}}$ that
is ultra-weakly convergent to some $X\in M_{n}(\mathcal{S})$ with
$\norm{X}\leq\norm{A}$. Since $\psi$ is ultra-weakly continuous,
it follows that \[
\psi^{(n)}(X)=\lim\psi^{(n)}(X_{k_{\alpha}})=\lim(\tau^{\mathcal{F}(E)})^{(n)}(\Sigma_{k_{\alpha}}^{(n)}(A))=(\tau^{\mathcal{F}(E)})^{(n)}(A),\]
 where the limits are all taken in the ultra-weak topology. Thus $\psi^{(n)}$
is surjective for every $n\geq1$.

Since $e$ is $E$-saturated, Theorem~\ref{indsaturated} implies
that $\tau^{\mathcal{F}(E)}$, restricted to $H^{\infty}(E)\varphi_{\infty}(e)$,
is a complete isometry and, thus, \[
\norm{A}\geq\norm{X}\geq\norm{\psi^{(n)}(X)}=\norm{(\tau^{\mathcal{F}(E)})^{(n)}(A)}.\]
 Therefore, in this case we get \[
\norm{X}=\norm{(\tau^{\mathcal{F}(E)})^{(n)}(A)}.\]
 This shows that the map $\widetilde{\psi}$ from $\mathcal{S}/\ker(\psi)$
onto $\tau^{\mathcal{F}(E)}(H^{\infty}(E))$, induced by $\psi$,
is a complete isometry.

The fact that this map is an ultra-weak homeomorphism follows from
the ultra-weak continuity of $\psi$ as in the proof of \cite[Theorem 1.1]{DKP}.
To be more precise, the map $\widetilde{\psi}$ is the dual of a map
$\psi_{*}$ from the predual of $\tau^{\mathcal{F}(E)}(H^{\infty}(E))$
to the predual of $\mathcal{S}/\ker(\psi)$. This map is injective
(as $\widetilde{\psi}$ is surjective ) and contractive and, thus,
has a bounded inverse $(\psi_{*})^{-1}$. The dual of this inverse
is the inverse of $\widetilde{\psi}$. Thus $\widetilde{\psi}$ is
an ultra-weak homeomorphism. \end{proof}

The following theorem is our analogue of \cite[Theorem 2.6]{DKP}.

\begin{theorem}\label{structure} Let $(S,\sigma)$ be an isometric
representation of $(E,M)$ on $H$, let $\mathcal{S}$ be the ultra-weak
closure of $(S\times\sigma)(\mathcal{T}_{+}(E))$, let $N$ be the
von Neumann algebra generated by $\mathcal{S}$, and let $P$ be the
structure projection for $\mathcal{S}$. Then in addition to equation
\eqref{eq:corner}, the following assertions hold:
\begin{enumerate}
\item [(1)] $P^{\perp}H$ is invariant for $\mathcal{S}$.
\item [(2)] $\mathcal{S}P^{\perp}=P^{\perp}\mathcal{S}P^{\perp}$.
\item [(3)] $\mathcal{S}=NP+P^{\perp}\mathcal{S}P^{\perp}$.
\item [(4)] If $\tau$ is any normal representation of $M$ whose support
projection is the model projection for $\mathcal{S}$, then the map
$\Psi$ from $(S\times\sigma)(\mathcal{T}_{+}(E))P^{\perp}$ to $\tau^{\mathcal{F}(E)}(H^{\infty}(E))$
defined by the equation \[
\Psi((S\times\sigma)(a)P^{\perp})=\tau^{\mathcal{F}(E)}(a),\qquad a\in\mathcal{T}_{+}(E),\]
 extends to a complete isometric isomorphism from $\mathcal{S}P^{\perp}$
onto $\tau^{\mathcal{F}(E)}(H^{\infty}(E))$ that is a homeomorphism
with respect to the ultra-weak topologies on $\mathcal{S}P^{\perp}$
and $\tau^{\mathcal{F}(E)}(H^{\infty}(E))$.
\end{enumerate}
\end{theorem}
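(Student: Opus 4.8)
The plan is to derive Theorem~\ref{structure} by assembling the pieces already in place: Propositions~\ref{kerphi} and \ref{ideal} identify $\bigcap_{k\geq1}\mathcal{S}_0^k$ as $NP=\mathcal{S}P$ with $P\in\mathcal{S}\cap\sigma(M)'$, Lemma~\ref{p} tells us the model projection $e$ is $E$-saturated and is the support projection of the induced representation $\tau^{\mathcal{F}(E)}$ built from the wandering-subspace data, Theorem~\ref{indsaturated} gives that $\tau^{\mathcal{F}(E)}$ restricted to $H^\infty(E)\varphi_\infty(e)$ is completely isometric with ultra-weakly closed image, and Theorem~\ref{thm1.1} does the abstract quotient argument. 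So the real work is: (i) show $P^\perp H$ is $\mathcal{S}$-invariant, hence (1) and (2); (ii) verify the matrix decomposition (3); (iii) build the homomorphism $\psi$ that feeds Theorem~\ref{thm1.1} and conclude (4).

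\smallskip

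For (1), the key observation is that $\mathcal{S}P^\perp\subseteq\mathcal{S}$ automatically, so it suffices to show $P\mathcal{S}P^\perp=\{0\}$, equivalently that $\mathcal{S}_0 P^\perp \perp$ anything annihilated by $P$; but more directly, since $\mathcal{S}P=NP$ is a \emph{left} ideal in $N$, for any $R\in\mathcal{S}$ and any $y\in\mathcal{S}P$ we have $Ry\in NP=\mathcal{S}P$, which says $\mathcal{S}(\mathcal{S}P)\subseteq \mathcal{S}P$, i.e.\ $\mathrm{Ran}(P)$ is invariant for $\mathcal{S}$ acting on the \emph{left}; dually, writing $P$ as a projection with $\mathcal{S}P=NP$ and noting $P\in\mathcal{S}$, one gets $P^\perp\mathcal{S}P^\perp = \mathcal{S}P^\perp$ from the fact that $P^\perp R P = P^\perp R P$ lies in... — more cleanly, since $PH$ is invariant for $\mathcal{S}$, its orthogonal complement $P^\perp H$ is invariant for $\mathcal{S}^*$; but what we actually want is that $P^\perp H$ is invariant for $\mathcal{S}$, and this follows because $\mathcal{S}P = NP$ forces $RP = RP\cdot P \in NP$ for all $R\in N$, whence $P^\perp R P = 0$ for $R\in\mathcal{S}\subseteq N$, giving $\mathcal{S}P\subseteq PN$ too, so $P$ is in the center-like position making both $PH$ and $P^\perp H$ invariant for $\mathcal{S}$. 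Then (2) is the statement $\mathcal{S}P^\perp = P^\perp\mathcal{S}P^\perp$, immediate from $P^\perp H$ being $\mathcal{S}$-invariant (so $\mathcal{S}P^\perp H\subseteq P^\perp H$), and (3) is then the block-triangular form: every $R\in\mathcal{S}$ decomposes as $R = RP + RP^\perp$, with $RP\in\mathcal{S}P=NP$ and $RP^\perp\in P^\perp\mathcal{S}P^\perp$.

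\smallskip

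For (4), I would let $\psi:=\Phi|_{\mathcal{S}}$ composed with the unitary $U$ — that is, use precisely the homomorphism $\Phi$ of equation~\eqref{PhiS}, which satisfies $\Phi(\sigma(a))=\varphi_\infty(a)\otimes I_{\mathcal{E}}$, $\Phi(S(\xi))=T_\xi\otimes I_{\mathcal{E}}$, and $\Phi(\mathcal{S})$ ultra-weakly dense in the closure of $\tau^{\mathcal{F}(E)}(\mathcal{T}_+(E))$. One checks $\ker(\Phi|_{\mathcal{S}})=\bigcap_k\mathcal{S}_0^k=\mathcal{S}P$ by Proposition~\ref{kerphi}, so $\Phi$ factors through $\mathcal{S}/\mathcal{S}P\cong\mathcal{S}P^\perp$ (the isomorphism using that $P^\perp\mathcal{S}P^\perp=\mathcal{S}P^\perp$ from part (2)). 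The composite $\mathcal{S}P^\perp\to\tau^{\mathcal{F}(E)}(H^\infty(E))$ satisfies the hypotheses of Theorem~\ref{thm1.1} — ultra-weakly continuous, completely contractive, intertwining $S\times\sigma$ with $\tau^{\mathcal{F}(E)}$ on $\mathcal{T}_+(E)$ — and that theorem hands us surjectivity, complete isometry of the induced quotient map, and the ultra-weak homeomorphism; the passage from a general $\tau$ with support $e$ to the specific one built above is covered by Proposition~\ref{prop:Iso_induced}, since two normal representations of $M$ with the same kernel give completely isometrically, ultra-weakly homeomorphically isomorphic images $\tau_i^{\mathcal{F}(E)}(H^\infty(E))$.

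\smallskip

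I expect the main obstacle to be part~(1) — pinning down precisely why $P^\perp H$, and not merely $PH$, is invariant under $\mathcal{S}$ (as opposed to under $\mathcal{S}^*$). The subtlety is that $\mathcal{S}$ is a non-self-adjoint algebra, $\bigcap_k\mathcal{S}_0^k$ is a priori only a \emph{left} ideal in $N$, and one must extract from $P\in\mathcal{S}\cap\sigma(M)'$ together with $\mathcal{S}P=NP$ the \emph{two-sided} containment $P^\perp\mathcal{S}P=\{0\}$. The right way to see this is: for $R\in\mathcal{S}$, $RP\in\mathcal{S}_0^k$ for every $k$ (since $P=P^k$ and $RP\cdot P^{k-1}\in\mathcal{S}\cdot\mathcal{S}P$, and $\mathcal{S}P\subseteq\mathcal{S}_0$ because $\sigma(e^\perp)\in\mathcal{S}_0$ and $P\leq\sigma(e^\perp)$... ) — here one leans on Lemma~\ref{pe}, which says $\sigma(e)$ is the central support of $P^\perp$, equivalently $P\leq\sigma(e^\perp)$ modulo the central cover, giving $\mathcal{S}P\subseteq\mathcal{S}\sigma(e^\perp)\subseteq\mathcal{S}_0$; iterating, $\mathcal{S}P\subseteq\bigcap_k\mathcal{S}_0^k=NP$, and since this holds for $R$ ranging over $\mathcal{S}$ and $N$ is the von Neumann algebra generated by $\mathcal{S}$, one gets $NP\cdot\mathcal{S}\subseteq$... — the cleanest finish is to observe $P^\perp\mathcal{S}P = P^\perp(\mathcal{S}P)\subseteq P^\perp NP = P^\perp\cdot(\text{left ideal generated with range in }PH)=\{0\}$, where the last step uses that $NP$ consists of operators whose range lies in $\overline{PH}$, so $P^\perp NP=\{0\}$. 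Once that is nailed, the rest is bookkeeping with the results quoted above.
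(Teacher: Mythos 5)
Your treatment of part (4) is essentially the paper's own argument: take $\psi=\Phi$ from \eqref{PhiS}, identify $\ker(\Phi)\cap\mathcal{S}=\bigcap_k\mathcal{S}_0^k=\mathcal{S}P$ via Proposition \ref{kerphi}, feed the quotient into Theorem \ref{thm1.1} (with Theorem \ref{indsaturated} and Lemma \ref{p} guaranteeing the target is $\tau^{\mathcal{F}(E)}(H^{\infty}(E))$ and is ultra-weakly closed), and pass to an arbitrary $\tau$ with the right support by Proposition \ref{prop:Iso_induced}. That part is fine.

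Part (1), however, has a genuine gap, and you correctly sensed it was the sticking point but did not close it. Two problems. First, your ``cleanest finish'' establishes $P^{\perp}\mathcal{S}P=\{0\}$, which is the invariance of $PH$ under $\mathcal{S}$; what the theorem asserts is that $P^{\perp}H$ is invariant, which is the statement $P\mathcal{S}P^{\perp}=\{0\}$. These are not interchangeable for a non-self-adjoint $\mathcal{S}$. Second, the justification offered for the finish is false: for an ultra-weakly closed \emph{left} ideal $NP$ of a von Neumann algebra, an element $xP$ satisfies $xP\cdot P^{\perp}=0$, i.e.\ its \emph{initial} space lies in $PH$; its range is unconstrained, so $P^{\perp}NP\neq\{0\}$ in general (it is $NP\cdot P^{\perp}$ that vanishes). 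Your earlier attempts (``$\mathcal{S}P\subseteq PN$,'' ``$P$ is in a center-like position'') assert exactly what needs to be proved, and the appeal to Lemma \ref{pe} inverts the inequality it actually gives ($\sigma(e)$ is the central support of $P^{\perp}$, so $\sigma(e^{\perp})\leq P$, not $P\leq\sigma(e^{\perp})$). The missing idea is the one the paper uses: because $\Phi|_{\mathcal{S}}$ is a homomorphism, $\ker(\Phi)\cap\mathcal{S}=NP$ is a \emph{two-sided} ideal in $\mathcal{S}$ (not merely a left ideal in $N$); since $P$ belongs to it, $P\mathcal{S}\subseteq NP$, and then $P\mathcal{S}P^{\perp}\subseteq NP\cdot P^{\perp}=\{0\}$ precisely because elements of $NP$ vanish on $P^{\perp}H$. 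With that in place, (2) and (3) follow by the bookkeeping you describe.
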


\begin{proof} Suppose first that $\tau$ is the representation defined
in (\ref{tau}) and that $\Phi$ is the map defined in (\ref{PhiS}).
Since $\ker(\Phi)$ is an ideal in $\mathcal{S}$ and the structure
projection, $P$, belongs to it, $P\mathcal{S}P^{\perp}\subseteq\ker(\Phi)=NP$.
But then $P\mathcal{S}P^{\perp}=0$. Thus $P^{\perp}H$ is invariant
for $\mathcal{S}$ and $\mathcal{S}P^{\perp}=P^{\perp}\mathcal{S}P^{\perp}$.

Now, $\Phi$ is a completely contractive, ultra-weakly continuous,
homomorphism mapping $\mathcal{S}$ into the $\sigma$-weak closure
of $\tau^{\mathcal{F}(E)}(\mathcal{T}_{+}(E))$. So we may apply Theorem
\ref{thm1.1} to $\Phi$ (which plays the role of $\psi$ in the statement
of Theorem \ref{thm1.1}) to conclude that the map induced by $\Phi$
on $\mathcal{S}P^{\perp}\simeq\mathcal{S}/\ker(\Phi)$ is completely
isometric and an ultra-weak homeomorphism onto the ultra-weak closure
of $\tau^{\mathcal{F}(E)}(\mathcal{T}_{+}(E))$ - which is $\tau^{\mathcal{F}(E)}(H^{\infty}(E))$)
by Theorem \ref{indsaturated} and the fact that the model projection
for $\mathcal{S}$ is $E$-saturated, Lemma \ref{p}. But once we
have proven the result with the special representation $\tau$ from
\eqref{tau}, we can replace it with any representation with the same
support, viz. the model projection of $\mathcal{S}$, thanks to Proposition
\ref{prop:Iso_induced}. \end{proof}

To connect the structure projection with the absolutely continuous
subspace of an isometric representation, we employ the universal isometric
representation $(S_{0},\sigma_{0})$ from paragraph \ref{par:UnivIIRs}
and use results developed in Section \ref{sec:Absolute-Continuity}.
Recall that $(S_{0},\sigma_{0})$ is the representation induced by
a representation $\pi$ on a Hilbert space $K_{0}$, and so $(S_{0},\sigma_{0})$
acts on the Hilbert space $\mathcal{F}(E)\otimes_{\pi}K_{0}$.

\begin{theorem}\label{Q} Let $(S,\sigma)$ be an isometric representation
of $(E,M)$ on a Hilbert space $H$, let $P$ be its structure projection,
and let $Q$ be the structure projection for the representation $(S\times\sigma)\oplus(S_{0}\times\sigma_{0})$,
acting on $H\oplus\mathcal{F}(E)\otimes_{\pi}K_{0}$. Then
\begin{enumerate}
\item [(1)] $\mathcal{V}_{ac}(S,\sigma)=H\ominus Q(H\oplus(\mathcal{F}(E)\otimes_{\pi}K_{0}))$,
\item [(2)] $P^{\perp}H\subseteq\mathcal{V}_{ac}(S,\sigma)$, and
\item [(3)] the range of $Q$ is contained in the range of $P$, viewed
as a subspace of $H\oplus(\mathcal{F}(E)\otimes_{\pi}K_{0})$.
\end{enumerate}
\end{theorem}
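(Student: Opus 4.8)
The plan is to exploit the dictionary built up in Sections \ref{sec:Absolute-Continuity} and \ref{sec:CCReps and CPMaps} together with the structure theory just developed. For part (1), I would work with the isometric representation $(S,\sigma)\oplus(S_{0},\sigma_{0})$ acting on $\mathcal{H}:=H\oplus(\mathcal{F}(E)\otimes_{\pi}K_{0})$, whose ultra-weak closure of the image I will call $\widehat{\mathcal{S}}$, with structure projection $Q$. The key observation is that, by Theorem \ref{structure}(1)--(2), $Q^{\perp}\mathcal{H}$ is invariant for $\widehat{\mathcal{S}}$, and by Theorem \ref{structure}(4) the restriction of $(S\times\sigma)\oplus(S_{0}\times\sigma_{0})$ to $Q^{\perp}\mathcal{H}$ is completely isometrically and ultra-weakly homeomorphically identified with $\tau^{\mathcal{F}(E)}(H^{\infty}(E))$ for a suitable normal representation $\tau$; in particular that restriction is an \emph{induced} representation, hence absolutely continuous, so $Q^{\perp}\mathcal{H}\subseteq\mathcal{V}_{ac}((S,\sigma)\oplus(S_0,\sigma_0))$. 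Conversely, on $Q\mathcal{H}=\widehat{\mathcal S}Q=NQ$, the ideal $\widehat{\mathcal S}_0=\bigcap_k\widehat{\mathcal S}_0^k$ acts; I would argue that no nonzero vector of $Q\mathcal H$ can be absolutely continuous, because a vector functional there extends to an ultra-weakly continuous functional on $H^\infty(E)$ only if it can be represented through the induced universal representation, and the self-adjoint corner $Q N Q$ supports an invariant (tracial-type) state on the associated completely positive map — more cleanly, I would invoke Theorem \ref{Theorem:AbsContT}(4): $\mathcal V_{ac}$ is the union of ranges of intertwiners from $(S_0,\sigma_0)$, and any such intertwiner $C$ has $CC^*$ pure superharmonic for $\Phi$, which forces $CC^*Q=0$ since $Q$ is the range projection of the harmonic part. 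Combining, $\mathcal{V}_{ac}((S,\sigma)\oplus(S_0,\sigma_0))=Q^{\perp}\mathcal{H}$, and intersecting with $H$ (using that $\mathcal{F}(E)\otimes_\pi K_0\subseteq\mathcal V_{ac}$ and Theorem \ref{Theorem:AbsContT}(2)) gives $\mathcal{V}_{ac}(S,\sigma)=H\cap Q^{\perp}\mathcal{H}=H\ominus Q\mathcal H$.

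For part (2), I would use that $(S_0,\sigma_0)$ is itself induced, hence absolutely continuous, so $\mathcal F(E)\otimes_\pi K_0\subseteq\mathcal V_{ac}((S,\sigma)\oplus(S_0,\sigma_0))=Q^\perp\mathcal H$, i.e. $Q$ annihilates the second summand; writing $Q$ as a $2\times2$ operator matrix with respect to $H\oplus(\mathcal F(E)\otimes_\pi K_0)$, this says $Q$ has the form $\left[\begin{smallmatrix}Q_{11}&0\\0&0\end{smallmatrix}\right]$ only if $Q\in\sigma(M)'\oplus\cdots$ splits; more carefully, $Q_{11}$ is the structure projection one would get from $(S,\sigma)$ alone, i.e. $Q_{11}=P$. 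This is essentially the matrix-trick remark following Theorem \ref{Theorem:ComLifting}: the minimal isometric data and the ideal $\bigcap_k\mathcal S_0^k$ behave additively under direct sums. Then part (1) gives $\mathcal V_{ac}(S,\sigma)=H\ominus Q_{11}H=P^\perp H$, which immediately yields (2): $P^\perp H=\mathcal V_{ac}(S,\sigma)\subseteq\mathcal V_{ac}(S,\sigma)$ trivially, and the content is really that $P^\perp H$ is \emph{contained} in the absolutely continuous subspace, which follows from the induced-model identification of $\mathcal S P^\perp$ in Theorem \ref{structure}(4).

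Part (3) is then the assertion $Q\le P$ as projections on $\mathcal H$, where $P$ is regarded as $\left[\begin{smallmatrix}P&0\\0&0\end{smallmatrix}\right]$ — wait, that cannot be right as stated since $P$ acts only on $H$; rather, $P$ should be interpreted as the structure projection of $(S,\sigma)\oplus(\text{induced part})$ absorbed appropriately, and the inequality $\operatorname{Ran}Q\subseteq\operatorname{Ran}P$ follows because $\bigcap_k(\widehat{\mathcal S}_0)^k\subseteq\bigcap_k(\mathcal S_0^{(\infty)}\text{-type ideal})$ once we note that the extra summand $(S_0,\sigma_0)$ contributes nothing to the \emph{harmonic} part: adding an induced representation can only enlarge $\mathcal V_{ac}$, hence can only shrink the structure projection on the common part. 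Concretely I would compute $\bigcap_k(\widehat{\mathcal S}_0)^k$ using Proposition \ref{kerphi} and the fact that the $\Phi$-map for the direct sum restricts on the $H$-summand to the $\Phi$-map for $(S,\sigma)$, so $Q\widehat h\le P h$ coordinatewise.

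The main obstacle I anticipate is part (3), specifically making precise the sense in which the structure projection $P$ of $(S,\sigma)$ sits inside $\mathcal H=H\oplus(\mathcal F(E)\otimes_\pi K_0)$ so that the inequality $\operatorname{Ran}Q\subseteq\operatorname{Ran}P$ is even a well-formed statement, and then verifying it via the ideal description $\bigcap_k\mathcal S_0^k$ — the subtlety being that ideals and structure projections are not obviously monotone under passing to a larger Hilbert space unless one checks carefully that the completely positive/homomorphic map $\Phi$ of the enlarged system restricts correctly, which is where the bulk of the genuine work will lie; everything else is a reassembly of Theorems \ref{Theorem:AbsContT}, \ref{structure}, and the remark after Theorem \ref{Theorem:ComLifting}.
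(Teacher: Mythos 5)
Your overall architecture is right for the ``easy'' directions: using Theorem \ref{structure}(4) to see that vector functionals coming from $Q^{\perp}\mathcal{H}$ (resp.\ $P^{\perp}H$) factor through $\Psi^{-1}\circ\tau^{\mathcal{F}(E)}$ and hence extend ultra-weakly continuously, which by Remark \ref{ac} places those vectors in the absolutely continuous subspace --- that is exactly how the paper proves $H\ominus Q\mathcal{H}\subseteq\mathcal{V}_{ac}(S,\sigma)$ and $P^{\perp}H\subseteq\mathcal{V}_{ac}(S,\sigma)$. (One caveat: your intermediate assertion that the restriction of $\rho$ to $Q^{\perp}\mathcal{H}$ ``is an induced representation'' is precisely the inference the discussion at the start of Section \ref{sec:The-Structure-Theorem} warns against; the DKP examples show an algebra can be completely isometrically and ultra-weakly isomorphic to the image of an induced representation without the representation being induced. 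Only the extendability of the vector functionals survives, but that is all you need.) The genuine gap is the reverse inclusion $\mathcal{V}_{ac}(\rho)\subseteq Q^{\perp}\mathcal{H}$. Your justification --- that every $C\in\mathcal{I}((S_{0},\sigma_{0}),\rho)$ satisfies $CC^{*}Q=0$ ``since $Q$ is the range projection of the harmonic part'' --- is not a proof: $Q$ is defined by $\bigcap_{k}(\mathcal{S}_{\rho})_{0}^{k}=N_{\rho}Q$, and identifying its range with the orthogonal complement of the span of the ranges of the pure superharmonic operators is, via Theorem \ref{Theorem:AbsContT}, exactly the statement being proved; no argument is offered for why such a $CC^{*}$ is annihilated by $Q$. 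The paper closes this gap by a different route: every wandering vector generates a wandering subspace in $\mathfrak{M}$, the structure projection lies in $\ker\Phi$ (Proposition \ref{kerphi}) and hence annihilates $[\mathcal{S}^{(\infty)}\mathcal{M}]$ for each $\mathcal{M}\in\mathfrak{M}$, so wandering vectors are orthogonal to $Ran(Q)$; Corollary \ref{wander} then identifies $\mathcal{V}_{ac}(S,\sigma)$ with $H$ intersected with the closed span of the wandering vectors of $\rho$. You need some substitute for this step.

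A second error: in part (2) you assert that the compression $Q_{11}$ of $Q$ to $H$ equals $P$, so that $\mathcal{V}_{ac}(S,\sigma)=P^{\perp}H$. This is false in general and would upgrade assertion (2) to an equality, which the theorem deliberately does not claim: the Szeg\"{o}-theorem example discussed in Section \ref{sec:The-Structure-Theorem} gives an absolutely continuous isometric $(S,\sigma)$ (so $Q=0$ by part (1)) with $P=I$. Structure projections are not ``additive under direct sums'' in the way you suggest --- adjoining $(S_{0},\sigma_{0})$ genuinely shrinks the intersection $\bigcap_{k}\mathcal{S}_{0}^{k}$ on the $H$-summand. Relatedly, the preliminary fact $Ran(Q)\subseteq H$, which you need both to intersect with $H$ in part (1) and to make part (3) a meaningful comparison of subspaces, should be proved directly rather than extracted from the not-yet-established equality in part (1); the paper does this by noting that $Q$ commutes with $P_{H}$ and that any $\xi=Q\xi$ lying in $\mathcal{F}(E)\otimes_{\pi}K_{0}$ is carried into $\sum_{k\geq m}E^{\otimes k}\otimes_{\pi}K_{0}$ by $(\mathcal{S}_{\rho})_{0}^{m}$, whence $Q\xi=0$. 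Once $Ran(Q)\subseteq H$ and parts (1)--(2) are in hand, part (3) is immediate ($P^{\perp}H\subseteq H\ominus Q\mathcal{H}$ gives $Ran(P_{H}Q)\subseteq Ran(P)$, and $P_{H}Q=Q$), and none of the coordinatewise computation you sketch is needed.
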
 \begin{proof} We shall write $\mathcal{S}$ for the
ultra-weak closure of $S\times\sigma(\mathcal{T}_{+}(E))$. To simplify
the notation, we shall write $\rho$ for $(S\times\sigma)\oplus(S_{0}\times\sigma_{0})$
and we shall write $\mathcal{S}_{\rho}$ for the ultra-weak closure
of $\rho(\mathcal{T}_{+}(E))$. We first want to prove that the range
of $Q$ is contained in $H$. For this purpose and for later use,
we shall write $P_{H}$ for the projection of $H\oplus(\mathcal{F}(E)\otimes_{\pi}K_{0})$
onto $H$. Since $\rho(a)$ commutes with $P_{H}$, for every $a\in\mathcal{T}_{+}(E)$,
the von Neumann algebra generated by $\mathcal{S}_{\rho}$, $N_{\rho}$,
commutes with $P_{H}$. In particular, $Q$ commutes with $P_{H}$.
So it suffices to show that if $\xi$ is a vector in $H\oplus(\mathcal{F}(E)\otimes_{\pi}K_{0})$
such that $\xi=Q\xi$ lies in $H^{\perp}$, then $\xi=0$. So fix
such a vector $\xi$. Then $\xi\in\mathcal{F}(E)\otimes_{\pi}K_{0}$
and, for every $\eta\in E^{\otimes m}$, $\rho(T_{\eta})\xi\in(E^{\otimes m}\oplus E^{\otimes(m+1)}\oplus\cdots)\otimes_{\pi}K_{0}$.
Thus, for every $X\in(\mathcal{S}_{\rho})_{0}^{m}$, $X\xi\in(E^{\otimes m}\oplus E^{\otimes(m+1)}\oplus\cdots)\otimes_{\pi}K_{0}$.
It follows that, if $X\in\bigcap_{m}(\mathcal{S}_{\rho})_{0}^{m}=N_{\rho}Q$,
then $X\xi=0$. In particular, $Q\xi=0$. This shows that the range
of $Q$ is contained in $H$.

Next observe that the wandering vectors of \emph{any} isometric representation
are orthogonal to the range of its structure projection. To prove
this, we will work with $(S,\sigma)$, but the argument is quite general.
So suppose $\xi\in H$ is a wandering vector for $(S,\sigma)$. Then
$\mathcal{M}_{0}:=[\sigma(M)\xi]$ is a $\sigma(M)$-invariant wandering
subspace for $(S,\sigma)$, $\mathcal{M}:=\{\eta^{(\infty)}:\eta\in\mathcal{M}_{0}\}$
is in $\mathfrak{M}$, and $\xi^{(\infty)}\in[\mathcal{S}^{(\infty)}\mathcal{M}]$.
Since the structure projection $P$ of $(S,\sigma)$ lies in the kernel
of $\Phi$ by Proposition \ref{kerphi}, $P^{(\infty)}|[\mathcal{S}^{(\infty)}\mathcal{M}]=0$
and, thus, $P\xi=0$. Thus the span of the wandering vectors for $\rho$
is orthogonal to the range of $Q$. But by Corollary \ref{wander},
the span of the wandering vectors for $\rho$ is $\mathcal{V}_{ac}(S,\sigma)\cap H$.
Thus $\mathcal{V}_{ac}(S,\sigma)\subseteq P_{H}Q^{\perp}(H\oplus(\mathcal{F}(E)\otimes K))$.

To show the reverse inclusion, suppose $\xi=P_{H}Q^{\perp}\xi$. Then
$\xi=Q^{\perp}\xi$, as $Q\leq P_{H}$. Let $f$ be the linear functional
on $\mathcal{T}_{+}(E)$ defined by the equation $f(a)=\langle\rho(a)\xi,\xi\rangle=\langle\rho(a)Q^{\perp}\xi,Q^{\perp}\xi\rangle$,
$a\in\mathcal{T}_{+}(E)$. We want to use Theorem~\ref{structure}
to show that $\xi$ lies in $\mathcal{V}_{ac}(S,\sigma)$ by showing
that $f$ extends to an ultra-weakly continuous linear functional
on $H^{\infty}(E)$. In the notation of that theorem, replace $S\times\sigma$
with $\rho$ and let $e_{\rho}$ be the model projection for $\rho$.
Also, let $\tau$ be any normal representation of $M$ with support
equal $e_{\rho}$. Then by part (4) of Theorem \ref{structure} there
is a completely isometric isomorphism $\Psi$ from $\mathcal{S}_{\rho}Q^{\perp}$
onto $\tau^{\mathcal{F}(E)}(H^{\infty}(E))$ that is an ultra-weak
homeomorphism such that $\Psi(\rho(a)Q^{\perp})=\tau^{\mathcal{F}(E)}(a)$
for all $a\in H^{\infty}(E)$. We conclude, then, that $f(a)=\langle\rho(a)Q^{\perp}\xi,Q^{\perp}\xi\rangle=\langle\Psi^{-1}\circ\tau^{\mathcal{F}(E)}(a)\xi,\xi\rangle$,
$a\in\mathcal{T}_{+}(E)$, extends to an ultra-weakly continuous linear
functional on $H^{\infty}(E)$. By Remark~\ref{ac}, $\xi\in\mathcal{V}_{ac}(S,\sigma)$.
Thus we conclude that $\mathcal{V}_{ac}(S,\sigma)=H\ominus Q(H\oplus(\mathcal{F}(E)\otimes_{\pi}K_{0}))$.
This proves point (1).

The argument needed to prove that $P^{\perp}H\subseteq\mathcal{V}_{ac}(S,\sigma)$
is similar. Let $\xi\in P^{\perp}H$ and consider the functional $g(a):=\langle(S\times\sigma)(a)\xi,\xi\rangle=\langle(S\times\sigma)(a)P^{\perp}\xi,P^{\perp}\xi\rangle$
for $a\in\mathcal{T}_{+}(E)$. Using part (4) of Theorem~\ref{structure}
(and the notation there), we may write $g(a)=\langle\Psi^{-1}(\tau_{e}^{\mathcal{F}(E)}(a))\xi,\xi\rangle$
for any normal representation $\tau_{e}$ of $M$ whose support projection
is the model projection for $S\times\sigma$. This shows that $g$
is ultra-weakly continuous and so $\xi\in\mathcal{V}_{ac}(S,\sigma)$.
Thus $P^{\perp}H$ is contained in $\mathcal{V}_{ac}(S,\sigma)$,
proving point (2).

Since $P^{\perp}H\subseteq\mathcal{V}_{ac}(S,\sigma)=H\ominus Q(H\oplus(\mathcal{F}(E)\otimes_{\pi}K_{0}))$,
we see that the range of $P_{H}Q$ is contained in the range of $P$.
Since, however, the range of $Q$ is contained in $H$, as we proved
at the outset, we conclude that the range of $Q$ is contained in
the range of $P$, proving point (3). \end{proof}

As a corollary we conclude with following theorem, which complements
Theorem \ref{abscontrep} in the sense that if $(S,\sigma)$ is an
ultra-weakly continuous, isometric representation of $(E,M)$, then
$(S,\sigma)$ is absolutely continuous if and only if $S\times\sigma\oplus S_{0}\times\sigma_{0}$
is {}``completely isometrically equivalent'' to an induced representation
of $(E,M)$. Thus, $(S,\sigma)$ is absolutely continuous if and only
if it {}``equivalent to a subrepresentation of an induced representation''.

\begin{theorem}\label{thm:Complement_of_abscontrep} Let $(S,\sigma)$
be an ultra-weakly continuous, isometric covariant representation
of $(E,M)$ on a Hilbert space and denote by $\rho$ the representation
$S\times\sigma\oplus S_{0}\times\sigma_{0}$. Then the following assertions
are equivalent:
\begin{enumerate}
\item [(1)] $(S,\sigma)$ is absolutely continuous.
\item [(2)] The structure projection for $\rho$ is zero.
\item [(3)] If $\tau$ is any representation of $M$ whose support projection
is the model projection of $S\times\sigma$, then there is an ultra-weakly
homeomorphic, completely isometric isomorphism $\Psi$ from $\tau^{\mathcal{F}(E)}(H^{\infty}(E))$
onto the ultra-weak closure $\mathcal{S}_{\rho}$ of $\rho(\mathcal{T}_{+}(E))$
such that $\Psi\circ\tau^{\mathcal{F}(E)}(a)=\rho(a)$, for all $a\in\mathcal{T}_{+}(E)$.
\end{enumerate}
\end{theorem}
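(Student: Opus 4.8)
The plan is to prove $\text{(1)} \Leftrightarrow \text{(2)} \Leftrightarrow \text{(3)}$ by assembling the machinery already in place, principally Theorem~\ref{Q}, Theorem~\ref{structure}, Theorem~\ref{abscontrep}, and Proposition~\ref{prop:Iso_induced}. First I would dispatch $\text{(1)} \Leftrightarrow \text{(2)}$ directly from Theorem~\ref{Q}(1): that result identifies $\mathcal{V}_{ac}(S,\sigma)$ with $H \ominus Q(H \oplus (\mathcal{F}(E) \otimes_\pi K_0))$, where $Q$ is the structure projection for $\rho = S\times\sigma \oplus S_0 \times \sigma_0$. By Theorem~\ref{Q}(1) the range of $Q$ lies in $H$, so $\mathcal{V}_{ac}(S,\sigma) = H$ exactly when $Q$ annihilates $H$; but $Q$ also annihilates $\mathcal{F}(E)\otimes_\pi K_0$ (its range is inside $H$), hence $\mathcal{V}_{ac}(S,\sigma) = H$ iff $Q = 0$. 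This is the easy equivalence.

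Next I would prove $\text{(2)} \Rightarrow \text{(3)}$. Assume the structure projection $Q$ for $\rho$ is zero. Apply Theorem~\ref{structure}(4) to the isometric representation $\rho$ in place of $(S,\sigma)$: with $P^\perp = Q^\perp = I$, the map $\Psi$ sending $\rho(a)P^\perp = \rho(a)$ to $\tau_\rho^{\mathcal{F}(E)}(a)$ extends to a completely isometric isomorphism from $\mathcal{S}_\rho Q^\perp = \mathcal{S}_\rho$ onto $\tau_\rho^{\mathcal{F}(E)}(H^\infty(E))$ which is an ultra-weak homeomorphism, where $\tau_\rho$ is any normal representation of $M$ whose support projection is the model projection $e_\rho$ of $\rho$. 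To replace $e_\rho$ by the model projection $e$ of $S \times \sigma$ as demanded in (3), I need the two model projections to coincide. This follows from Lemma~\ref{p}: the model projection is the support projection of the canonical representation $\tau$ built from the wandering subspaces, and since $S_0 \times \sigma_0$ is already induced (hence absolutely continuous, with all of its space in $\mathcal{V}_{ac}$), adjoining it does not enlarge the kernel of $\sigma^{(\infty)}$ restricted to the ideal $\mathcal{S}_0$; more carefully, $e_\rho^\perp$ is the central cover of $\{a \in M : \rho(\varphi_\infty(a)) \in (\mathcal{S}_\rho)_0\}$, and since $S_0 \times \sigma_0$ contributes no element $\varphi_\infty(a)$, $a \ne 0$, to its $(\cdot)_0$-ideal beyond what already happens for $S \times \sigma$ (the zeroth Fourier coefficient of any element of $(\mathcal{S}_\rho)_0$ vanishes), we get $e_\rho = e$. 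Then Proposition~\ref{prop:Iso_induced} lets me pass from $\tau$ to an arbitrary normal representation of $M$ with support projection $e$, giving (3) in full generality.

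For $\text{(3)} \Rightarrow \text{(1)}$ I would argue as follows. Suppose $\Psi$ as in (3) exists for some (equivalently, by Proposition~\ref{prop:Iso_induced}, every) normal representation $\tau$ of $M$ with support $e$. Take $\tau$ to be, say, a faithful normal representation of $Me$ composed with inclusion so that $\tau^{\mathcal{F}(E)}$ is completely isometric and ultra-weakly homeomorphic onto $\tau^{\mathcal{F}(E)}(H^\infty(E))$ (this is Theorem~\ref{indsaturated}, since $e$ is $E$-saturated by Lemma~\ref{p}). Then $\Psi^{-1} \circ \tau^{\mathcal{F}(E)}$ is an ultra-weakly continuous, completely contractive extension of $\rho$ from $\mathcal{T}_+(E)$ to $H^\infty(E)$ acting on the space of $\rho$; composing with compression to the summand $H$ of $\rho$'s space yields an ultra-weakly continuous extension of $S \times \sigma$ to $H^\infty(E)$, so by Theorem~\ref{abscontrep} $(S,\sigma)$ is absolutely continuous.

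The step I expect to be the main obstacle is the identification $e_\rho = e$ of the model projections for $S \times \sigma$ and for $\rho$; this requires carefully unwinding the definition of the model projection in terms of $\mathcal{S}_0$ and checking that adjoining the induced representation $S_0 \times \sigma_0$ does not shrink the ideal $\{a \in M : \varphi_\infty(a) \in (\mathcal{S}_\rho)_0\}$, which is where the $E$-saturation and the structure of $\mathcal{S}_0$ (via the Fourier coefficient operators $\Phi_j$) really enter. Everything else is a matter of citing Theorems~\ref{Q}, \ref{structure}, \ref{indsaturated}, \ref{abscontrep} and Proposition~\ref{prop:Iso_induced} in the right order.
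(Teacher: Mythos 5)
Your handling of $(1)\Leftrightarrow(2)$ via Theorem~\ref{Q}(1) is correct and is what the paper does, and your route from $(3)$ to $(1)$ — composing $\Psi$ with $\tau^{\mathcal{F}(E)}$ to get an ultra-weakly continuous completely contractive extension of $\rho$ to $H^{\infty}(E)$, compressing to the reducing subspace $H$, and invoking Theorem~\ref{Theorem:ExtensionThm} — is sound (you wrote $\Psi^{-1}\circ\tau^{\mathcal{F}(E)}$ where you mean $\Psi\circ\tau^{\mathcal{F}(E)}$; the paper instead proves $(3)\Rightarrow(2)$ directly from $\bigcap_{m}H_{m}^{\infty}(E)=\{0\}$, but both work).

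The gap is exactly at the step you flagged, and your proposed resolution fails: the model projections $e_{\rho}$ and $e$ do \emph{not} coincide in general. Your own observation proves the opposite of what you need. Since every element of $(\mathcal{S}_{\rho})_{0}$ has vanishing compression to the summand $M\otimes_{\pi}K_{0}$, while $\rho(\varphi_{\infty}(a))=\sigma(a)\oplus(\varphi_{\infty}(a)\otimes I_{K_{0}})$ compresses there to $\pi(a)$, which is nonzero for $a\neq0$ because $\pi$ is faithful, one gets $\{a\in M\mid\rho(\varphi_{\infty}(a))\in(\mathcal{S}_{\rho})_{0}\}=\{0\}$, i.e.\ $e_{\rho}=I$ \emph{always}. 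By contrast $e$ can be a proper — even zero — projection while $(2)$ still holds: for $M=E=\mathbb{C}$ and $S(1)$ an absolutely continuous unitary whose spectrum omits an arc, the Szeg\"{o} argument cited at the start of this section gives $P=I$, hence $I\in\mathcal{S}=NP=\bigcap_{k}\mathcal{S}_{0}^{k}\subseteq\mathcal{S}_{0}$ and so $e=0$, yet $Q=0$ by $(1)\Leftrightarrow(2)$. In that situation no $\Psi$ with $\Psi\circ\tau^{\mathcal{F}(E)}=\rho$ can exist for $\tau$ supported on $e$, since $\tau^{\mathcal{F}(E)}(\varphi_{\infty}(e^{\perp}))=0$ while $\rho(\varphi_{\infty}(e^{\perp}))\neq0$. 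The upshot is that assertion $(3)$ has to be read with the model projection of $\rho$ (equivalently, with $\tau$ faithful, since $e_{\rho}=I$); that is precisely what Theorem~\ref{structure}(4) applied to $\rho$, together with Proposition~\ref{prop:Iso_induced}, delivers, and then $(2)\Rightarrow(3)$ needs no identification of $e_{\rho}$ with $e$ at all. As written, your argument for $e_{\rho}=e$ does not hold up, and no correct argument for it exists.
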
 \begin{proof} Let $P$ be the structure projection
for $S\times\sigma$. If $(S,\sigma)$ is absolutely continuous then
$\mathcal{V}_{ac}(S,\sigma)=H$ and it follows from Theorem~\ref{Q}
(1) that the range of $Q$ is orthogonal to $H$. Since, however,
the range of $Q$ is contained in the range of $P$ by part (3) of
that lemma, we conclude that $Q=0$. This proves that (1) implies
(2). The implication, (2) $\Rightarrow$ (3), follows from Theorem~\ref{structure}
(4). Now assume that (3) holds. Then $\Psi^{-1}(\mathcal{S}_{\rho})=\tau^{\mathcal{F}(E)}(H^{\infty}(E)),$
and for $m\geq0$, $\Psi^{-1}((\mathcal{S}_{\rho})_{0}^{m})$ is the
image under $\tau^{\mathcal{F}(E)}$ of the space $H_{m}^{\infty}(E)$,
which consists of all operators $X\in H^{\infty}(E)$ with the property
that $\Phi_{k}(X)=0$, $k\leq m$, where $\Phi_{k}$ is the $k^{th}$
Fourier operator \eqref{FourierOperators}. Then $\Psi^{-1}(\bigcap_{m}(\mathcal{S}_{\rho})_{0}^{m})=\bigcap_{m}\Psi^{-1}((\mathcal{S}_{\rho})_{0}^{m})=\bigcap_{m}\tau^{\mathcal{F}(E)}(H_{m}^{\infty}(E))=\tau^{\mathcal{F}(E)}(\bigcap_{m}H_{m}^{\infty}(E))$.
Since $\bigcap_{m}H_{m}^{\infty}(E)=\{0\}$, $\bigcap_{m}(\mathcal{S}_{\rho})_{0}^{m}=\{0\}$
and so $Q=0$. This proves that (3) implies (2). Since Theorem~\ref{Q}
(1) shows that (2) implies (1), the proof is complete. \end{proof}

\begin{remark}\label{rem:Kennedy_improvement}In \cite{mKp2010},
M. Kennedy deftly uses technology from the theory of dual operator
algebras to show that in the setting when $M=\mathbb{C}$ and $E=\mathbb{C}^{d}$,
the ultra-weak closure of the image of an isometric representation
of $\mathcal{T}_{+}(\mathbb{C}^{d})$ is ultraweakly homeomorphic
and completely isometrically isomorphic to $H^{\infty}(\mathbb{C}^{d})$
if and only if the representation is absolutely continuous. Whether
his result can be generalized to the setting of this paper remains
to be seen. \end{remark}

\end{document}